\newtheorem{assumption_in_section}[theorem]{Assumption}
\newtheorem{assumption}{Assumption}
\numberwithin{equation}{section}
\newcommand{\mathbbR}{\mathbb{R}}
\renewcommand*\cite[1]{\citep{#1}}
\begin{document}

\title{Generalization Error Estimates of Machine Learning Methods for Solving High Dimensional Schrödinger Eigenvalue Problems}

\author{\name Hao Yu \email yuhao@amss.ac.cn \\
\addr LSEC, Institute of Computational Mathematics and Scientific/Engineering Computing, Academy of Mathematics and Systems Science, Chinese Academy of Sciences \\
Beijing 100190, China \\
School of Mathematical Sciences, University of Chinese Academy of Sciences \\
Beijing 100049, China\\
\AND
\name Yixiao Guo \email guoyixiao@lsec.cc.ac.cn \\
\addr LSEC, Institute of Computational Mathematics and Scientific/Engineering Computing, Academy of Mathematics and Systems Science, Chinese Academy of Sciences \\
Beijing 100190, China \\
School of Mathematical Sciences, University of Chinese Academy of Sciences \\
Beijing 100049, China \\
\AND
\name Pingbing Ming \email mpb@lsec.cc.ac.cn \\
\addr LSEC, Institute of Computational Mathematics and Scientific/Engineering Computing, Academy of Mathematics and Systems Science, Chinese Academy of Sciences \\
Beijing 100190, China \\
School of Mathematical Sciences, University of Chinese Academy of Sciences \\
Beijing 100049, China 
}

\editor{My editor}

\maketitle

\begin{abstract}
{We propose a machine learning method for computing eigenvalues and eigenfunctions of the Schrödinger operator on a $d$-dimensional hypercube with Dirichlet boundary conditions. The cut-off function technique is employed to construct trial functions that precisely satisfy the homogeneous boundary conditions. This approach eliminates the error caused by the standard boundary penalty method, improves the overall accuracy of the method, as demonstrated by the typical numerical examples. Under the assumption that the eigenfunctions belong to a spectral Barron space, we derive an explicit convergence rate of the generalization error of the proposed method, which does not suffer from the curse of dimensionality. We verify the assumption by proving a new regularity shift result for the eigenfunctions when the potential function belongs to an appropriate spectral Barron space. Moreover, we extend the generalization error bound to the normalized penalty method, which is widely used in practice.}
\end{abstract}

\begin{keywords}
Generalization error, Schrödinger operator, Dirichlet boundary condition, Spectral Barron space, Curse of dimensionality
\end{keywords}


\section{Introduction}

The high-dimensional Schrödinger eigenvalue problem plays a crucial role in various fields, such as computational chemistry, condensed matter physics, and quantum computing~\cite{lubich2008quantum,nielsen2010quantum}. Though classical numerical methods have achieved great success in solving low-dimensional PDE and eigenvalue problems, a major challenge persists: the curse of dimensionality (CoD), where computational costs grow exponentially with the dimension. Recently, machine learning has emerged as a promising approach to mitigate the CoD. Significant progress has been made in applying deep neural networks to solve PDEs~\cite{EweinanYB2017DRM, raissi2019physics, han2017deep, sirignano2018dgm, han2018solving, al2018solving, jagtap2020extended, jagtap2020conservative, zang2020weak, finzi2023stable, hu2023augmented, mouli2024using, raissi2024forward}  
and Schrödinger eigenvalue problems~\cite{cai2018approximating, carleo2017solving, choo2020fermionic, gao2017efficient, eigenvalue_han2020solving, han2019solvingusingDNN, hermann2020deep, li2022semigroup} to just name a few. 

Many theoretical attempts have been made to understand how machine learning methods for solving PDEs can overcome CoD and explain the empirical success.
In~\cite{luo2020two, xu2020finite, DRMlu2021priori, siegel2023greedy}, the authors established a priori generalization error bounds for solving elliptic PDEs by two-layer neural networks, demonstrating convergence rates independent of dimensionality. In~\cite{grohs2023proof, berner2020analysis, hutzenthaler2022overcoming}, the authors disclose that neural networks can efficiently approximate high-dimensional functions in the case of numerical approximations of Black-Scholes PDEs and high-dimensional nonlinear heat equations.  Gonon and Schwab showed that deep ReLU neural networks~\cite{gonon2023deep} and random feature neural networks~\cite{gonon2023random} can overcome CoD for partial integrodifferential equations and Black-Scholes type PDEs, respectively.
Despite these advances, theoretical analysis for learning high-dimensional eigenvalue problems is still scarce, apart from the notable exceptions~\cite{lu2021priori, Jixia2024DeepRM4Eigenvalue}. This study aims to develop a high-precision machine learning method for Schrödinger eigenvalue problems and analyze its generalization error bound.

One of the main challenges of employing neural networks to solve PDEs and eigenvalue problems is how to accurately deal with the essential boundary conditions properly. A common way to address this is to incorporate a boundary penalty term to the loss function ~\cite{EweinanYB2017DRM, Jixia2024DeepRM4Eigenvalue}.
However, recent studies~\cite{wang2021understanding, lyu2020enforcing, chen2020comparison, dwivedi2020physics, krishnapriyan2021characterizing, SUKUMAR2022114333} have demonstrated that inexact imposing boundary conditions can negatively impact network training and accuracy.

The work in~\cite{xu2020finite,Jixia2024DeepRM4Eigenvalue} established that the error caused by the boundary penalty is inversely proportional to the penalty factor. To alleviate this issue, many works use a trial function that contains two terms~\cite{lagaris1998artificial, lagaris2000neural, berg2018unified}. The first component is a function satisfying the Dirichlet boundary condition, while the second component is the product of the neural network output and a function that vanishes on the boundary. The second function can be manually constructed or using approximated distance functions (ADFs)~\cite{mcfall2009artificial, sheng2021pfnn, SUKUMAR2022114333}. In our approach, we adopt this approach that are the product of neural network outputs and the cut-off functions, which {\em exactly} satisfy the boundary conditions. This method aims to enhance accuracy by ensuring that boundary conditions are perfectly met, thus avoiding the issues associated with approximating the boundary conditions.

Approximating the high-dimensional functions in Sobolev spaces or Hölder spaces poses significant challenge due to the CoD, regardless of the traditional numerical methods~\cite{xu2020finite} or neural networks~\cite{YAROTSKY2017103, yarotsky2018optimal}. To achieve dimension free approximation rates, the researchers have turned to the Barron function classes~\cite{barron1993universal}, which offers a promising alternative, such spaces have been recently elaborated on~\cite{siegel2022high, weinan2022some, DRMlu2021priori, LiaoMing:2023}. 

We introduce a new Barron function space on the hypercube $\Omega=(0,1)^d$ by employing a sine series expansion, which is dubbed as the sine spectral Barron space $\mathfrak{B}^{s}(\Omega)$. Functions within this space inherently satisfies the homogeneous Dirichlet boundary condition. This space may be regarded as a variant of the cosine spectral Barron space $\mathfrak{C}^{s}(\Omega)$, which was initially introduced in~\cite{DRMlu2021priori, lu2021priori}. We study the properties of $\mathfrak{B}^{s}(\Omega)$ and prove certain novel approximation results for functions lying in $\mathfrak{B}^{s}(\Omega)$, the rate of convergence is also dimension free.

The regularity properties of PDEs within Barron spaces have been investigated recently. In~\cite{weinan2022some}, the regularity results for the screened Poisson equation and various time-dependent equations in the Barron space have been proved through integral representations. Similarly, ~\cite{DRMlu2021priori} established a solution theory for the Poisson and the Schr\"odinger equations on the hypercube with homogeneous Neumann boundary condition. This was further extended in~\cite{lu2021priori} to include regularity estimates for the ground state of the Schrödinger operator. 
Additionally, \cite{CZLJ2023RegularSSonRd} extended the regularity theory for the static Schr\"odinger equations on $\mathbb{R}^d$ within the spectral Barron space. 

We use a loss function in the form of Rayleigh quotient for high numerical accuracy. However, since the denominator is the square of  $L^2$ norm of the trial functions, the loss function is not Lipschitz, which brings new difficulties to the derivation of generalization bounds. To address this issue, we exploit the concentration inequalities for ratio type empirical processes~\cite{RatioLimitTheorems4EP, Gine2006ConcentrationIA} and bounds for expected values of sup-norms of empirical processes~\cite{Gin2001OnCO}. The study of ratio type empirical processes has a long history that goes back to the 1970s-1980s when certain classical function classes $\{\mathbf{1}_{(-\infty,t]} : t \in \mathbb{R}\}$ have been explored in great detail~\cite{Wellner1978LimitTF} and Alexander extended this theory to ratio type empirical processes indexed by VC classes of sets~\cite{Alexander1987RatesOG, Alexander1987TheCL} in the late 1980s. Thereafter, there has been a great deal of work on the development of ratio type inequalities, primarily, in more specialized contexts of nonparametric statistics~\cite{Talagrand2018SomeAO, Geer2000ApplicationsOE} and learning theory~\cite{Bartlett1999AnIF, Koltchinskii2000RademacherProBound, Bartlett2002LocalizedRC, Bousquet2002ConcentrationIA, Bousquet2002SomeLocM, Koltchinskii2002EmpiricalMD, Panchenko2002SomeEO, Panchenko2003SYMMETRIZATIONAT, Bartlett2006EmpiricalM, Bartlett2006LocalRCOrIneq, Massart2007ConcentrationIA}, etc. These inequalities have become the main ingredients in determining asymptotically sharp convergence rates in regression, classification and other nonparametric problems and they proved to be crucial in bounding the generalization error of learning algorithms based on empirical risk minimization. 
\subsection{Our contributions}
We propose a machine learning method for solving high dimensional Schrödinger eigenvalue problems on the $d$-dimensional hypercube with Dirichlet boundary conditions and establish the generalization error bounds. The thorough error analysis provides deeper insights and guidance for the design and application of such algorithm. Our contributions are summarized as follows.

\textbullet~We introduce a sine spectral Barron space $\mathfrak{B}^{s}(\Omega)$ on $\Omega = (0, 1)^d$, which is suitable for analyzing PDEs and eigenvalue problems on the hypercube with Dirichlet boundary conditions.  We develop novel regularity estimates showing that all eigenfunctions lie in the sine spectral Barron space provided that the potential function belongs to an appropriate spectral Barron space (see Theorem \ref{Thm: Regularity of eigenfunctions}). Similar arguments may be used for other boundary value problems.

\textbullet~We introduce the cut-off function technique to construct trial functions that exactly satisfy the essential boundary conditions. We show that functions in the sine spectral Barron space may be well approximated in the $H^1$-norm using two-layer ReLU or Softplus networks multiplied by a specific cut-off function (see Theorem~\ref{Thm: u H1 approximation by varphi ReLU networks} and Theorem~\ref{Thm: u H1 approximation by varphi Softplus networks}). The approximation rate is $O(m^{-1/2})$, which is independent of dimension. The prefactors in the error bound linearly depend on the spectral Barron norm.

\textbullet~We introduce concentration inequalities for ratio-type suprema to handle the Rayleigh quotient and derive an oracle inequality for the empirical loss. To bound the statistical error, we propose an exponential inequality to bound the normalized complexity measure in ratio-type concentration inequalities.

\textbullet~We prove a prior generalization error estimates for our machine learning method applied to Schrödinger eigenvalue problems (see Theorem \ref{Main generalization theorem}). We emphasize that the error bound is valid for the higher-order eigenstates, not just the ground state; and the convergence rate $\tilde{O}(n^{-1/4})$ is independent of dimension. 
The prefactors in the error estimates depend explicitly on all relevant parameters, including the order of eigenstates and dimension, with these dependencies being polynomial and of lower degree.

\textbullet~As extensions, we demonstrate the effectiveness of incorporating a normalization penalty term by proving the solutions of the normalization penalty method\cite{EweinanYB2017DRM} are away from zero with high probability (Theorem~\ref{theorem: solutions obtained by penalty method are away from 0}). We prove the generalization error bounds for the normalization penalty method (Corollary~\ref{corollary: Main generalization theorem for the penalty method}), which reflects the advantages and disadvantages of our method and the normalization penalty method. We also characterize the accumulative error in practice (Theorem~\ref{Main generalization theorem when cumulative error is added}) and prove sharp accumulative rate of generalization errors (Proposition~\ref{proposition: Quadratic growth of cumulative error}).

\textbullet~We test our method in various scenarios, including problems posed on high-dimensional domain, irregular domains and problems with singular potentials. The numerical results demonstrate that our method performs well in these cases, achieving relative error of the eigenvalues as good as $\mathcal{O}(10^{-3})$ or even better for up to the first $30$ eigenvalues. The results also highlight that enforcing boundary conditions exactly significantly improves numerical accuracy compared to the boundary penalty method.
\subsection{Related works}
\cite{lu2021priori} solved the Schrödinger eigenvalue problems with Neumann boundary condition with neural networks. They defined a spectral Barron space $\mathfrak{C}^{s}(\Omega)$ via the cosine function expansion and prove that functions in $\mathfrak{C}^{s}(\Omega)$ may be well approximated in the $H^1$-norm using two layer ReLU or Softplus networks. Using the Krein-Rutman theorem~\cite{krein1948linear}, they proved the existence of ground state in $\mathfrak{C}^{s+2}(\Omega)$ if the potential functions lie in $\mathfrak{C}^{s}(\Omega)$ with $s \geq 0$. In the present work, we use two layer ReLU or Softplus networks multiplied by a cut-off function to approximate functions in $\mathfrak{B}^{s}(\Omega)$. 
We improve the approximation result in~\cite{lu2021priori}. The term $\ln m$ is removed from the error bound for Softplus network. Additionally, we prove that all eigenfunctions lie in $\mathfrak{B}^{s+2}(\Omega)$ if the potential function belongs to $\mathfrak{C}^{s}(\Omega)$ with $s \geq 0$. Our approach may also be applied to the Neumann boundary condition, and recovers the regularity of the eigenfunctions proved in~\cite{lu2021priori}.

\cite{Jixia2024DeepRM4Eigenvalue} also employed neural networks to solve the Schrödinger eigenvalue problems posed on a bounded $C^{m}$ domain $\Omega$ with Dirichlet boundary condition. They have assumed that the potential function $V \in C^{m-1}(\Omega)$ with $m>\max \left\{2, d/2-2\right\}$, and utilize a loss function that contains a boundary penalty, a normalization penalty and an orthogonal penalty. They proved a convergence rate $\mathcal{O}(n^{-1/16})$ for the generalization error. By using the Rayleigh quotient and trial functions with exact imposition of boundary conditions, our method only needs to consider the orthogonal penalty, which leads to better numerical accuracy. We also prove a better accumulation rate of the generalization errors.

The main limitation of our work lies in the following two parts. The first is the \textbf{Assumption 1}, in which we assume that the potential function $V$ is bounded from below and above. Such assumption definitely exclude the commonly used singular potential. It should be pointed out that the method equally applies to the singular function, e.g., the inverse square potential in the numerical tests. Another limitation is the regularity assumption on $V$, for which we require it belongs to $\mathfrak{C}^s$ with $s\ge 1$, this is quite smooth as demonstrated in~\cite{lu2021priori}.
\subsection{Notations}
Let $H^{1}(\Omega)$  be the standard Sobolev space \cite{bookSobolevSpaces} with the norm  $\|\cdot\|_{H^{1}(\Omega)}$, while  $H_{0}^{1}(\Omega)$  is the closure of $C_{0}^{\infty}(\Omega)$  in $H^{1}(\Omega)$. For a function set $\mathcal{F}$, we denote $\mathcal{F}_{>r} := \{ f \in \mathcal{F} : \|f\|_{L^{2}(\Omega)} > r \}$.
When $C$ is used to denote some absolute constant, its value might not necessarily be the same in each occurrence. 
\section{Basic settings}
Let $\Omega = (0, 1)^d$  be the unit hypercube.
Consider the Dirichlet eigenvalue problem for the Schrödinger operator
\begin{equation}
\begin{cases} \label{eq1}
\mathcal{H}u(x) := -\Delta u(x) + V(x) u(x) 
= \lambda u(x), & x \in \Omega, \\
u(x) = 0, & x \in \partial \Omega,
\end{cases}
\end{equation}
where $\mathcal{H}$ is the Schrödinger operator and $V$ is a potential function. We are interested in computing the eigenvalues and the corresponding eigenfunctions.

Throughout this paper, we make the following assumption on the potential function $V$.
\begin{assumption}\hypertarget{Assumption 1}{} \label{Assumption: V is bounded above and below}
There exist positive constants  $V_{\min}$  and  $V_{\max}$  such that  $V_{\min} \leq  V(x) \leq V_{\max}$  for every  $x \in \Omega$.
\end{assumption}
Without loss of generality, we may assume that $V_{\min}$ is positive, since adding a constant to $V$ does not alter the eigenfunctions.

Given Assumption \ref{Assumption: V is bounded above and below} and the fact that $\Omega$ is a connected open bounded domain, $(-\Delta + V)^{-1}$ is a compact self-adjoint operator on $L^2(\Omega)$. 
Consequently, $\mathcal{H}$  always has a purely discrete spectrum  $\left\{\lambda_{j}\right\}_{j=1}^{\infty}$ with $\infty$ as its only point of accumulation, and the eigenfunctions of $\mathcal{H}$ form an orthonormal basis on $L^2(\Omega)$~\cite{Evans2010PartialDE}.
Here, we list all the eigenvalues in an ascending order, with multiplicities, as  $0 < \lambda_{1}\leq \lambda_{2} \leq \lambda_{3} \leq \cdots  \uparrow \infty$, and denote by $\{\psi_{j}\}_{j = 1}^{k-1}$ the first $k-1$ normalized orthogonal eigenfunctions.

For Problem~\eqref{eq1}, we intend to find the eigenvalues and their corresponding eigenfunctions in $H_{0}^{1}(\Omega)$.
For any positive integer $k$, the $k$-th smallest eigenvalue may be determined using the Rayleigh quotient
\begin{equation*}
\lambda_{k} = \min_{E} \max_{u \in E \backslash\{0\}} \frac{\langle u, \mathcal{H} u\rangle_{H^{1} \times H^{-1}} }{\|u\|_{L^{2}(\Omega)}^{2}},
\end{equation*}
where the minimum is taken over all  $k$-dimensional subspace $E \subset H_{0}^{1}(\Omega)$,  and $\langle \cdot, \cdot\rangle_{H^{1} \times H^{-1}}$ denotes the dual product on $H^{1}(\Omega) \times H^{-1}(\Omega)$.
Given the first  $k-1$ eigenpairs $\left(\lambda_{1}, \psi_{1}\right)$, $\cdots$, $\left(\lambda_{k-1}, \psi_{k-1}\right)$, the subsequent eigenvalue $\lambda_{k}$ may be characterized as
\begin{equation} \label{characterization of lambdak with orthogonal constraints}
\lambda_{k} = \min_{u \in E^{(k-1)}} \frac{\langle u, \mathcal{H} u\rangle_{H^{1} \times H^{-1}}}{\|u\|_{L^{2}(\Omega)}^{2}},
\end{equation}
where
$E^{(k-1)}=\left\{u \in H_{0}^{1}(\Omega) \backslash\{0\}: u \perp \psi_{i}, 1 \leq i \leq k-1\right\}$.

It is intuitive to seek an approximate solution to Problem (\ref{characterization of lambdak with orthogonal constraints}) within a
hypothesis class $\mathcal{F} \subset H^{1}_{0}(\Omega)$ that is parameterized by neural networks. 
We formulate a loss function that integrates orthogonal penalty terms to enforce the orthogonal constraints.
Specifically, the loss function for computing the  $k$-th eigenfunction is
\begin{equation}
\begin{aligned} \label{the first definition of the loss function Lk(u)}
L_{k}(u) & = \frac{ \langle u, \mathcal{H} u\rangle_{H^{1} \times H^{-1}} }{\|u\|_{L^{2}(\Omega)}^{2}}+\beta \sum_{j=1}^{k-1} \frac{\langle u, \psi_{j}\rangle^{2} }{\|u\|_{L^{2}(\Omega)}^{2}},
\end{aligned}
\end{equation}
where $\langle \cdot, \cdot\rangle$ denotes the inner product on $L^{2}(\Omega)$, and $\beta$ is a penalty parameter that should be greater than $\lambda_{k} - \lambda_{1}$.
To compute the ground state, i.e., $k = 1$, the loss function becomes the Rayleigh quotient and the orthogonal penalty term is no longer necessary.
In practice, the Monte Carlo method is employed to compute the high dimensional integral in the loss function and an approximate solution is then obtained through empirical loss (or risk) minimization.
Denote by $\mathcal{P}_{\Omega}$   the uniform probability distribution on  $\Omega$  and  $X$, $X_{1}$, $X_{2}$, $\cdots$ are $i.i.d.$ random variables according to  $\mathcal{P}_{\Omega}$.
The population loss  $L_{k}(u)$  is expressed  as
\begin{equation}
\begin{aligned} \label{population loss}
L_{k}(u) & = \frac{\mathcal{E}_{V}(u) +  \mathcal{E}_{P}(u)}{\mathcal{E}_{2}(u)} := \frac{\mathbf{E} \left[|\nabla u(X)|^{2} + V(X)|u(X)|^{2}\right] + \beta \sum_{j=1}^{k-1}  \left( \mathbf{E} \left[ u(X) \psi_j(X)\right]\right)^2}{\mathbf{E}\left[|u(X)|^{2}\right]} .
\end{aligned}
\end{equation}
The population loss $L_{k}(u)$ may be approximated by the empirical loss
\begin{equation}
\begin{aligned} \label{empirical loss}
L_{k, n}(u)=\frac{\mathcal{E}_{n, V}(u)+\mathcal{E}_{n, P}(u)}{\mathcal{E}_{n, 2}(u)},
\end{aligned}
\end{equation}
where
\begin{equation*}
\begin{aligned} \label{}
\mathcal{E}_{n, V} & := \frac{1}{n} \sum_{i=1}^{n}\left(\left|\nabla u\left(X_{i}\right)\right|^{2} + V\left(X_{i}\right)\left|u\left(X_{i}\right)\right|^{2}\right), \\
\mathcal{E}_{n, P} & := \beta \sum_{j=1}^{k-1}  \left(  \frac{1}{n} \sum_{i=1}^{n} u(X_{i}) \psi_j(X_{i}) \right)^{2}, \quad \quad
\mathcal{E}_{n, 2}  :=\frac{1}{n} \sum_{i=1}^{n}\left|u\left(X_{i}\right)\right|^{2}.
\end{aligned}
\end{equation*}
Let  $\widehat{u}_{n}$  be a minimizer of $L_{k, n}(u)$  within  $\mathcal{F}$, i.e.,
\(
\widehat{u}_{n}=\mathop{\arg\min}_{u \in \mathcal{F}} L_{k, n}(u),
\)
which is regarded as an approximation of the $k$-th eigenfunction. Then, we can approximate $\lambda_{k}$ by 
\begin{equation*}
\begin{aligned} \label{}
\widehat{\lambda}_{k, n} = \frac{ \langle \widehat{u}_{n}, \mathcal{H} \widehat{u}_{n}\rangle}{\langle \widehat{u}_{n},  \widehat{u}_{n}\rangle}.
\end{aligned}
\end{equation*}

Let $U_{k}$ be the true solution space for the $k$-th eigenfunction in $H^{1}_{0}(\Omega)$, i.e.,
\begin{equation} \label{subspace Uk, solution space}
U_{k} := \operatorname{span}\left\{\psi_{1}, \psi_{2}, \ldots, \psi_{k-1}\right\}^{\perp} \cap \operatorname{ker}\left( \mathcal{H} - \lambda_{k} I\right)  \subset H^{1}_{0}(\Omega).
\end{equation}
Any non-zero function is a minimizer of $L_{k}(u)$ if and only if it lies in $U_{k}$. Our goal is to derive quantitative estimates for  $|\widehat{\lambda}_{k, n} - \lambda_{k}|$  and the offset of the direction of $\widehat{u}_{n}$ from the subspace $U_{k}$, which is commonly referred to as the generalization error.

The following proposition shows that $|\widehat{\lambda}_{k, n} - \lambda_{k}|$ may be bounded by the energy excess $L_{k}(\widehat{u}_{n}) - \lambda_{k}$.
\begin{proposition} \label{proposition: error between Rayleigh quotient and lambdak bounded by energy excess}
Under Assumption \ref{Assumption: V is bounded above and below},  for any  nonzero $u \in H^{1}(\Omega)$ and $\beta > \lambda_{k} - \lambda_{1}$, 
\begin{equation*}
\begin{aligned}
\left| \frac{\langle u, \mathcal{H} u\rangle}{\langle u,  u\rangle} - \lambda_{k}\right| \leqslant \max\left\{\frac{\lambda_{k} - \lambda_{1}}{\beta+\lambda_{1}-\lambda_{k}}, 1\right\} \left( L_{k}(u)-\lambda_{k} \right) .
\end{aligned}
\end{equation*}
\end{proposition}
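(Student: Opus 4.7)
The plan is to expand $u$ in the orthonormal eigenbasis $\{\psi_j\}_{j\geq 1}$ of $\mathcal{H}$ and reduce everything to a weighted sum inequality on the coefficients.

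Write $u = \sum_{j\geq 1} c_j \psi_j$ with $c_j = \langle u, \psi_j\rangle$. Then $\|u\|_{L^2}^2 = \sum_j c_j^2$, $\langle u, \mathcal{H}u\rangle = \sum_j \lambda_j c_j^2$, and so, denoting the Rayleigh quotient by $R(u) = \langle u, \mathcal{H}u\rangle/\|u\|_{L^2}^2$,
\begin{equation*}
R(u) - \lambda_k \;=\; \frac{\sum_{j<k}(\lambda_j - \lambda_k)c_j^2 + \sum_{j\geq k}(\lambda_j - \lambda_k)c_j^2}{\sum_j c_j^2},
\end{equation*}
\begin{equation*}
L_k(u) - \lambda_k \;=\; \frac{\sum_{j<k}(\beta + \lambda_j - \lambda_k)c_j^2 + \sum_{j\geq k}(\lambda_j - \lambda_k)c_j^2}{\sum_j c_j^2}.
\end{equation*}
The hypothesis $\beta > \lambda_k - \lambda_1$ ensures $\beta + \lambda_j - \lambda_k \geq \beta + \lambda_1 - \lambda_k > 0$ for $j < k$, so both coefficient families in $L_k(u) - \lambda_k$ are nonnegative, giving $L_k(u) - \lambda_k \geq 0$.

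Now I split into two cases according to the sign of $R(u) - \lambda_k$. In the case $R(u)\geq \lambda_k$, subtracting the two identities above yields
\begin{equation*}
L_k(u) - \lambda_k - (R(u)-\lambda_k) = \beta\,\frac{\sum_{j<k} c_j^2}{\sum_j c_j^2} \;\geq\; 0,
\end{equation*}
so $|R(u)-\lambda_k| \leq L_k(u)-\lambda_k$, which is covered by the factor $1$ in the max. In the case $R(u) < \lambda_k$, set $P := \sum_{j<k}(\lambda_k-\lambda_j)c_j^2$ and $Q := \sum_{j\geq k}(\lambda_j-\lambda_k)c_j^2$ and $S_- := \sum_{j<k}c_j^2$; then $\|u\|_{L^2}^2(\lambda_k - R(u)) = P - Q$ and $\|u\|_{L^2}^2 (L_k(u)-\lambda_k) = \beta S_- - P + Q$. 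I want a constant $C$ with $P - Q \leq C(\beta S_- - P + Q)$, i.e., $P \leq \frac{C\beta}{1+C}S_- + Q$. Since $\lambda_k - \lambda_j \leq \lambda_k - \lambda_1$ for $j<k$, we have $P \leq (\lambda_k-\lambda_1)S_-$, and a direct computation shows that $C = (\lambda_k-\lambda_1)/(\beta+\lambda_1-\lambda_k)$ makes $C\beta/(1+C) = \lambda_k - \lambda_1$ exactly, so the desired inequality reduces to $Q \geq 0$, which holds.

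There is no real obstacle: once the eigenfunction expansion is in place, everything is algebraic. The only subtlety is to notice that the orthogonal penalty supplies precisely the extra $\beta S_-$ term needed to dominate the negative contribution of low eigenmodes in $R(u)-\lambda_k$, and that the sharp constant in the case $R(u)<\lambda_k$ is obtained by using the crudest bound $\lambda_j - \lambda_k \geq \lambda_1 - \lambda_k$ for indices $j<k$. Taking the maximum of the two case-specific constants yields the stated bound.
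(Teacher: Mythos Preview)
Your proof is correct and essentially matches the paper's: both expand $u$ in the eigenbasis and compare the expressions for $R(u)-\lambda_k$ and $L_k(u)-\lambda_k$ term by term, using $\lambda_k-\lambda_j\le\lambda_k-\lambda_1$ for $j<k$. The only cosmetic difference is that the paper bounds $|R(u)-\lambda_k|$ in one stroke via the triangle inequality on the negative low-mode part and the nonnegative high-mode part rather than through a sign-based case split, but the key inequality and the resulting constant are identical.
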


To quantify the offset of a direction $u$ from the subspace $U_{k}$, we define  $P^{\perp}$ as the orthogonal projection operator from $L^2(\Omega)$ to $U_{k}^{\perp}$, the orthogonal complement of $U_{k}$. 
Let $\lambda_{k^{\prime}}$ be the first eigenvalue of $\mathcal{H}$ that is strictly greater than $\lambda_{k}$, i.e., $k^{\prime} \geq k+1,\ \lambda_{k^{\prime}} > \lambda_{k}$ and $\lambda_{k^{\prime}-1} = \lambda_{k}$. 
The following proposition shows that $\|P^{\perp}\widehat{u}_{n}\|_{H^1(\Omega)}$ may also be bounded by  the energy excess $L_{k}(\widehat{u}_{n}) - \lambda_{k}$.
\begin{proposition}
\label{Generalized proposition 2.1}
Under Assumption \ref{Assumption: V is bounded above and below},  for any  $u \in H^{1}(\Omega)$ and $\beta > \lambda_{k} - \lambda_{1}$,
\begin{subequations}
\begin{align}
\left\|P^{\perp} u\right\|^2_{L^{2}(\Omega)} & \leqslant \frac{L_{k}(u)-\lambda_{k}}{\min \left\{\beta+\lambda_{1}-\lambda_{k}, \lambda_{k^{\prime}}-\lambda_{k}\right\}}\|u\|_{L^{2}(\Omega)}^{2},  \label{Eq: stable estimate for u}  \\ 
\left\|\nabla\left(P^{\perp} u\right)\right\|^2_{L^{2}(\Omega)} & \leqslant \left( L_{k}(u)-\lambda_{k} \right) \left(\frac{\lambda_{k} - V_{\min}}{\min \left\{\beta+\lambda_{1}-\lambda_{k}, \lambda_{k^{\prime}}-\lambda_{k}\right\}}+1\right)  \|u\|_{L^{2}(\Omega)}^{2}.  \label{Eq: stable estimate for nabla u} 
\end{align}
\end{subequations}
\end{proposition}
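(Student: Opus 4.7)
The plan is to work in the $L^2$-orthonormal eigenbasis $\{\psi_j\}_{j=1}^\infty$ of $\mathcal{H}$. Writing $u=\sum_{j=1}^\infty c_j\psi_j$ with $c_j=\langle u,\psi_j\rangle$, the solution subspace $U_k$ is exactly $\mathrm{span}\{\psi_k,\dots,\psi_{k'-1}\}$, so $P^\perp u=\sum_{j<k}c_j\psi_j+\sum_{j\ge k'}c_j\psi_j$ and
\[
\|P^\perp u\|_{L^2(\Omega)}^2=\sum_{j<k}c_j^2+\sum_{j\ge k'}c_j^2.
\]
Using $\langle u,\mathcal{H}u\rangle=\sum_j \lambda_j c_j^2$, $\|u\|_{L^2}^2=\sum_j c_j^2$ and $\sum_{j<k}\langle u,\psi_j\rangle^2=\sum_{j<k}c_j^2$, a direct computation yields the master identity
\[
(L_k(u)-\lambda_k)\|u\|_{L^2(\Omega)}^2
=\sum_{j<k}(\beta+\lambda_j-\lambda_k)c_j^2+\sum_{j\ge k'}(\lambda_j-\lambda_k)c_j^2,
\]
where the middle block $k\le j\le k'-1$ drops out because $\lambda_j=\lambda_k$ there.

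For \eqref{Eq: stable estimate for u}, each coefficient on the right is positive under the hypothesis $\beta>\lambda_k-\lambda_1$: for $j<k$ it is at least $\beta+\lambda_1-\lambda_k$, and for $j\ge k'$ it is at least $\lambda_{k'}-\lambda_k$. Replacing each coefficient by the common lower bound $m:=\min\{\beta+\lambda_1-\lambda_k,\lambda_{k'}-\lambda_k\}$ and summing gives $(L_k(u)-\lambda_k)\|u\|_{L^2}^2\ge m\|P^\perp u\|_{L^2}^2$, which is exactly \eqref{Eq: stable estimate for u}.

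For \eqref{Eq: stable estimate for nabla u}, integration by parts on $v:=P^\perp u\in H^1_0(\Omega)$ gives $\|\nabla v\|_{L^2}^2=\langle \mathcal{H}v,v\rangle-\int_\Omega V|v|^2\,\mathrm{d}x\le\langle \mathcal{H}v,v\rangle-V_{\min}\|v\|_{L^2}^2$ by Assumption \ref{Assumption: V is bounded above and below}. Expanding $\langle \mathcal{H}(P^\perp u),P^\perp u\rangle=\sum_{j<k}\lambda_j c_j^2+\sum_{j\ge k'}\lambda_j c_j^2$, add and subtract $\lambda_k\|P^\perp u\|_{L^2}^2$ to obtain
\[
\langle \mathcal{H}(P^\perp u),P^\perp u\rangle
=\lambda_k\|P^\perp u\|_{L^2}^2+\sum_{j<k}(\lambda_j-\lambda_k)c_j^2+\sum_{j\ge k'}(\lambda_j-\lambda_k)c_j^2.
\]
The $j<k$ sum is $\le 0$ (sign-friendly), while the tail sum is bounded by the master identity above by $(L_k(u)-\lambda_k)\|u\|_{L^2}^2$ (since the $j<k$ contributions there are nonnegative and can be dropped). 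Combining these and applying \eqref{Eq: stable estimate for u} to replace $\|P^\perp u\|_{L^2}^2$ produces exactly \eqref{Eq: stable estimate for nabla u}.

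The proof is almost entirely bookkeeping in the eigenbasis; the only subtlety is respecting the degenerate eigenspace by using $k'$ rather than $k+1$ and getting the signs right in the $H^1$ bound, namely that the $j<k$ block contributes nonpositively to $\langle \mathcal{H}(P^\perp u),P^\perp u\rangle-\lambda_k\|P^\perp u\|_{L^2}^2$ and so can be discarded in an upper bound. That is the main place an oversight would appear; everything else is a one-line application of $\beta>\lambda_k-\lambda_1$ and the spectral gap $\lambda_{k'}>\lambda_k$.
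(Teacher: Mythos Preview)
Your proof is correct and takes essentially the same approach as the paper. The paper uses the abstract decomposition $u=Pu+w+z$ with $w=\sum_{j<k}c_j\psi_j$ and $z=\sum_{j\ge k'}c_j\psi_j$, while you work directly in eigencoordinates, but the resulting ``master identity'' is the same; for the gradient bound, the paper goes in one step to $(L_k(u)-\lambda_k)\|u\|^2\ge\langle P^\perp u,(\mathcal H-\lambda_k)P^\perp u\rangle$ and then expands, whereas you first split off $V_{\min}$ and then bound $\langle\mathcal H(P^\perp u),P^\perp u\rangle-\lambda_k\|P^\perp u\|^2$ by dropping the $j<k$ block and using the master identity---both routes yield the same intermediate inequality $\|\nabla P^\perp u\|^2\le(L_k(u)-\lambda_k)\|u\|^2+(\lambda_k-V_{\min})\|P^\perp u\|^2$.
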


Proposition \ref{proposition: error between Rayleigh quotient and lambdak bounded by energy excess} suggests that $\beta + \lambda_{1} - \lambda_{k}$ serves as a metric for evaluating the stability of the approximating eigenvalues, while Proposition \ref{Generalized proposition 2.1} demonstrates that both the gap $\lambda_{k^{\prime}} - \lambda_{k}$ and the factor $\beta + \lambda_{1} - \lambda_{k}$ influence the stability of the approximating eigenfunctions. We postpone the proof of Proposition \ref{proposition: error between Rayleigh quotient and lambdak bounded by energy excess} and Proposition \ref{Generalized proposition 2.1} to Appendix \ref{section: Stability estimate of the k-th eigenfunction}.

\section{Main Results}
We are interested in establishing quantitative generalization error that is free of CoD.
To this end, we assume that the eigenfunctions lie in a function space, which should be smaller than the typical Sobolev space, where functions can be approximated by neural networks without CoD.
Specifically, we work with the Sine spectral Barron space defined below. Consider the set of sine functions
$$\mathfrak{S}=\left\{\Phi_{k}\right\}_{k \in \mathbb{N}_{+}^{d}} := \left\{\prod_{i=1}^{d} \sin \left(\pi k_{i} x_{i}\right) \mid k = (k_{1}, k_{2}, ... , k_{d}),  k_{i} \in \mathbb{N}_{+}\right\} .$$ 
Let  $\{\hat{u}(k)\}_{k \in \mathbb{N}_{+}^{d}}$  be the Fourier coefficients of $u \in L^{1}(\Omega)$ with respect to the basis  $\left\{\Phi_{k}\right\}_{k \in \mathbb{N}_{+}^{d}}$. For  $s \geq 0$, the sine spectral Barron space  $\mathfrak{B}^{s}(\Omega)$ is defined by
\begin{equation} \label{definition of sine spectral Barron space}
	\begin{aligned}
		\mathfrak{B}^{s}(\Omega) := \left\{u \in L^{1}(\Omega): \|u\|_{\mathfrak{B}^{s}(\Omega)}<\infty \right\},
	\end{aligned}
\end{equation}
which is equipped with the spectral Barron norm
\begin{equation} \label{definition of sine spectral Barron norm}
	\begin{aligned}
		\|u\|_{\mathfrak{B}^{s}(\Omega)} = \sum_{k \in \mathbb{N}_{+}^{d}}\left(1+\pi^{s}|k|_{1}^{s}\right)|\hat{u}(k)| ,
	\end{aligned}
\end{equation}
where $|k|_{1}$ is the  $\ell^{1}$-norm of a vector  $k$.  
$\mathfrak{B}^{s}(\Omega)$  is a Banach space since it can be viewed as a weighted  $\ell^{1}$  space  $\ell_{W_{s}}^{1}\left(\mathbb{N}_{+}^{d}\right)$  of the sine coefficients defined on the lattice  $\mathbb{N}_{+}^{d}$  with the weight  $W_{s}(k) = 1+\pi^{s}|k|_{1}^{s} $. 
Moreover, it is straightforward to verify that the functions in  $\mathfrak{B}^{s}(\Omega)$   are continuous and precisely satisfy the homogeneous Dirichlet boundary condition. In what follows, we prove new approximation results for functions in $\mathfrak{B}^{s}(\Omega)$.

The solution is represented as $u = \varphi  v$, where $\varphi: \mathbb{R}^d \to \mathbb{R}$ is an approximate distance function and $v$ is a neural network function. 
Subsequently, we employ a two layer neural network to construct  $v = v_{nn}(x;\theta)$ and take $\varphi$ as 
\begin{equation*} \label{sin d-dim cutoff func}
	\begin{aligned}
		\varphi(x) = \left[\sum_{i=1}^{d} \frac{1}{\sin \left(\pi x_{i}\right)}\right]^{-1} = \frac{\prod_{i=1}^{d} \sin \left(\pi x_{i}\right)}{\sum_{i=1}^{d} \prod_{\substack{1 \leqslant j \leqslant d \\
					j \neq i}} \sin \left(\pi x_{j}\right)}, \qquad x \in \Omega.
	\end{aligned}
\end{equation*}
Moreover, one can continuously extend $\varphi(x)$ to be $0$ on $\partial \Omega$. 

Our approximation result indicates that functions in $\mathfrak{B}^{s}(\Omega)$ may be well approximated by $\varphi v_{nn}(x;\theta)$ with respect to $H^{1}$-norm when using either the ReLU or Softplus activation functions.
More precisely, for a given activation function $\phi$, the number of hidden neurons $m$ and a positive constant $B$, we consider 
\begin{equation*} \label{hypothesis spaces of two layer neural networks, general activation function}
	\begin{aligned}
		\mathcal{F}_{\operatorname{\phi}, m}(B) := \left\{c+\sum_{i=1}^{m} \gamma_{i} \phi\left(w_{i} \cdot x - t_{i}\right)\mid |c| \leq  B, \left|w_{i}\right|_{1}=1, \left|t_{i}\right| \leq 1, \sum_{i=1}^{m}\left|\gamma_{i}\right| \leq 4 B\right\}.
	\end{aligned}
\end{equation*}

The first approximation result concerns the approximation of the hypothesis space $\varphi \mathcal{F}_{\operatorname{ReLU}, m}(B)$\footnote{In this paper, let $\varphi \mathcal{F} := \{ \varphi v : v \in \mathcal{F}\}$, where $\mathcal{F}$ is a function set.} with the activation function $\operatorname{ReLU}(x) = \max\{x, 0\}$. 
\begin{theorem}
	\label{Thm: u H1 approximation by varphi ReLU networks}
	For $u \in \mathfrak{B}^{s}(\Omega)$ with $s\geq 3$, there exists  $v_{m} \in \mathcal{F}_{\mathrm{ReLU}, m}(B)$ with $B = \left(1+\frac{2 d}{\pi  s}\right) \cdot$ $\|u\|_{\mathfrak{B}^{s}(\Omega)}$ such that
	\begin{equation*} \label{}
		\begin{aligned}
			\left\|u-\varphi v_{m}\right\|_{H^{1}(\Omega)} \leq \frac{28 B}{\sqrt{m}}.
		\end{aligned}
	\end{equation*}
\end{theorem}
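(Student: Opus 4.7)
The plan is to reduce the approximation problem to the cosine-spectral-Barron ReLU approximation theorem from~\cite{lu2021priori} via the cutoff factor $\varphi$, and then invoke that theorem together with a quantitative ReLU approximation of cosine ridges. The central observation is that approximating $u$ by $\varphi v_m$ in $H^1(\Omega)$ is equivalent, up to a bounded factor depending on $\|\varphi\|_{L^\infty}$ and $\|\nabla\varphi\|_{L^\infty}$, to approximating $v := u/\varphi$ by $v_m$ in $H^1(\Omega)$; the task is then to exhibit $v_m \in \mathcal{F}_{\mathrm{ReLU}, m}(B)$ with $v \approx v_m$ at rate $O(B/\sqrt m)$ for a suitable $B$.

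First, I would establish a \emph{division lemma}: if $u \in \mathfrak{B}^s(\Omega)$ with $s\geq 3$, then $v = u/\varphi$ lies in a cosine spectral Barron space (of appropriately adjusted regularity) with norm bounded by $(1 + 2d/(\pi s))\|u\|_{\mathfrak{B}^s(\Omega)}$. The derivation starts from the identity $1/\varphi = \sum_{i=1}^d 1/\sin(\pi x_i)$, which gives $v = \sum_{i=1}^d u/\sin(\pi x_i)$, and then uses the Chebyshev identity $\sin(\pi k x)/\sin(\pi x) = U_{k-1}(\cos\pi x)$ to rewrite each summand as a sum of cosine-sine products. A careful termwise estimate, using the $|k|_1^s$ weight in the $\mathfrak{B}^s$ norm, produces the prefactor $1 + 2d/(\pi s)$: the factor of $d$ reflects the $d$ one-dimensional divisions, and the $1/s$ comes from summing the residual harmonic tails against the $s$-th moment weight.

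Second, I would apply the cosine-spectral-Barron ReLU approximation theorem of~\cite{lu2021priori} to $v$: expand $v$ into an integral representation over ridge functions $\cos(\pi|k|_1\, w\cdot x + b)$ with $w$ on the simplex and $|b|\leq 1$, apply Maurey's sampling lemma in $H^1(\Omega)$ to obtain a sparse $m$-term sum of cosine ridges, and replace each sampled ridge by its two-layer ReLU approximation on $\Omega$. The regularity $s\geq 3$ is precisely what is needed to ensure that the $H^1$ error from the sinusoid-to-ReLU step is summable against the Barron weights. This produces $v_m \in \mathcal{F}_{\mathrm{ReLU}, m}(B)$ with $B = (1 + 2d/(\pi s))\|u\|_{\mathfrak{B}^s}$ and $\|v - v_m\|_{H^1(\Omega)} \lesssim B/\sqrt{m}$.

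Finally, I multiply back by $\varphi$: since $\varphi$ and $\nabla\varphi$ are bounded on $\overline\Omega$ with explicit constants computable from the closed form of $\varphi$, one obtains $\|u - \varphi v_m\|_{H^1} = \|\varphi(v - v_m)\|_{H^1} \leq C\|v - v_m\|_{H^1}$, and tracking constants produces the stated bound $28B/\sqrt m$. The main obstacle is the division lemma: producing a sharp bound on the cosine-Barron norm of $u/\varphi$ with prefactor no worse than $1 + 2d/(\pi s)$ requires a delicate analysis of the Chebyshev expansion of $\sin(\pi k x)/\sin(\pi x)$ and of its multiplicative interaction with the remaining sine factors in the multi-dimensional basis, avoiding any dimension-exponential blowup. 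This is the key new technical ingredient relative to the Neumann setting of~\cite{lu2021priori}, and is precisely where the $s\geq 3$ assumption enters.
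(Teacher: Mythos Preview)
Your high-level plan---division lemma, then ReLU approximation of the quotient, then multiply back by $\varphi$---matches the paper's architecture, but there is a genuine gap in the middle step. The quotient $v = u/\varphi$ does \emph{not} belong to the cosine spectral Barron space $\mathfrak{C}^s$ of \cite{lu2021priori} for $d\geq 2$, so that theorem cannot be invoked as a black box. As you correctly note, the expansion of $v$ is in mixed products $\cos(k_i\pi x_i)\prod_{j\neq i}\sin(k_j\pi x_j)$, with one cosine and $d-1$ sine factors; such a function is generally not in $\mathfrak{C}^s$ for $s\geq 1$. (Already for $u=\Phi_{(1,\dots,1)}$ one gets $v=\sum_i\prod_{j\neq i}\sin(\pi x_j)$, and the cosine-series coefficients of $\sin(\pi x_j)$ on $[0,1]$ decay only like $k^{-2}$.) When one reduces these cos-sin products to ridge functions via the addition formula, the ridges are $\cos(\pi(k\cdot x+b))$ only when $d$ is odd; when $d$ is even they are $\sin(\pi(k\cdot x+b))$ (see the paper's Proposition~\ref{proposition: u H1 approximation by cos-sin}). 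The ReLU approximation lemma in \cite{DRMlu2021priori} (their Lemma~4.5) covers only the cosine case, because it needs $g'(0)=0$. The paper therefore proves a new lemma (Lemma~\ref{lemma: ReLU for both sine and cosine case}) relaxing this to $g'(\rho)=0$ for some $\rho\in[0,1/2]$, which captures sine ridges via $\rho=1/(2|k|_1)$. This generalization, not the division step, is where the $s\geq 3$ hypothesis is actually consumed.

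A secondary point on the constant: the paper does not approximate $v$ in $H^1$ and then multiply by $\varphi$. Instead it shows $u\in\overline{\operatorname{conv}(\varphi\mathcal{F}_{\mathrm{ReLU}}(B))}$ and applies Maurey directly to $\varphi\mathcal{F}_{\mathrm{ReLU}}(B)$, bounding $\|\varphi w\|_{H^1}^2\leq 740\,B^2$ for each ReLU atom $w$ by a direct calculation. Your route---Maurey on $\mathcal{F}_{\mathrm{ReLU}}(B)$ followed by the generic estimate $\|\varphi h\|_{H^1}\leq\sqrt{21}\,\|h\|_{H^1}$---would give roughly $\sqrt{21\cdot 67}\approx 37.5$ rather than $28$.
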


Next, we consider the approximation by the hypothesis space $\varphi \mathcal{F}_{\mathrm{SP}_{\tau}, m}(B)$ with the Softplus activation
function
\[
\operatorname{SP}_{\tau}(z):=\tau^{-1}\operatorname{SP}(\tau z)=\tau^{-1}\ln \left(1+e^{\tau z}\right),
\]
where $\tau>0$  is a scaling parameter. The rescaled Softplus function may be viewed as a smooth approximation of the ReLU function 
\citep[see][Lemma 4.6]{DRMlu2021priori}.
This approximation is particularly useful for bounding
the complexities of function classes that involve the derivatives of the neural network functions.
%
The following theorem demonstrates that  $\varphi \mathcal{F}_{\mathrm{SP}_{\tau}, m}(B)$ admits a similar approximation bound as $\varphi \mathcal{F}_{\operatorname{ReLU}, m}(B)$.
\begin{theorem}\label{Thm: u H1 approximation by varphi Softplus networks}
For $u \in \mathfrak{B}^{s}(\Omega)$ with $s\geq 3$,  there exists  $v_{m} \in \mathcal{F}_{\mathrm{SP}_{\tau}, m}\left(B\right)$  with  $B = \left(1+\frac{2 d}{\pi  s}\right)\cdot $ $\|u\|_{\mathfrak{B}^{s}(\Omega)}$ 
	and 
	$\tau=9\sqrt{m}$ such that
	$$\left\|u-\varphi v_{m}\right\|_{H^{1}(\Omega)} \leq \frac{64B}{\sqrt{m}}.$$
\end{theorem}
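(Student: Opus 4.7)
The plan is to parallel the proof of Theorem~\ref{Thm: u H1 approximation by varphi ReLU networks}: I would start from the same Barron-type integral representation of $\tilde v = u/\varphi$ that underlies the ReLU case, but sample Softplus features via Maurey's lemma and carefully track the $\mathrm{ReLU}$--$\mathrm{SP}_\tau$ mismatch. The particular scale $\tau = 9\sqrt m$ is the balance point where this mismatch contributes at most $\mathcal{O}(B/\sqrt m)$ to the $H^1$-error, matching the rate obtained from Maurey sampling.

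First, from the proof of Theorem~\ref{Thm: u H1 approximation by varphi ReLU networks} I would recover an integral representation of the form
\begin{equation*}
\tilde v(x) = c + \int \mathrm{ReLU}(w\cdot x - t)\,\eta(w, t)\,dw\,dt,
\end{equation*}
with $|c|\le B$, $\int|\eta|\,dw\,dt \le 4B$, $|w|_1 = 1$, and $|t|\le 1$. Using the exact identity $\mathrm{ReLU}(z) = \mathrm{SP}_\tau(z) - \tau^{-1}\ln(1 + e^{-\tau|z|})$, I would rewrite $\tilde v$ as a Softplus integral minus the remainder
\begin{equation*}
r_\tau(x) = \int \tau^{-1}\ln(1 + e^{-\tau|w\cdot x - t|})\,\eta(w, t)\,dw\,dt.
\end{equation*}
Maurey's lemma, applied to the Softplus integral by drawing $(w_i, t_i)$ i.i.d.\ from $|\eta|/\|\eta\|_{L^1}$ and setting $\gamma_i = 4B\,\operatorname{sgn}(\eta(w_i, t_i))/m$, produces $v_m \in \mathcal{F}_{\mathrm{SP}_\tau, m}(B)$ with expected $\varphi$-weighted $H^1(\Omega)$-error of order $B/\sqrt m$; the implicit constant is independent of $\tau$ because the feature $\mathrm{SP}_\tau(w\cdot x - t)$ has $H^1(\Omega)$-norm uniformly bounded on $\{|w|_1 = 1,\ |t|\le 1,\ \tau\ge 1\}$.

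The main obstacle is to show $\|\varphi r_\tau\|_{H^1(\Omega)} \le \mathcal{O}(B/\sqrt m)$ under $\tau = 9\sqrt m$. The pointwise bound $|\mathrm{SP}_\tau - \mathrm{ReLU}| \le \tau^{-1}\ln 2$ together with $\int|\eta| \le 4B$ handles the $L^2$ contribution at once, of order $\tau^{-1} = \mathcal{O}(1/\sqrt m)$. For the gradient, the factor $\sigma(\tau z) - \mathbf{1}_{z>0}$ has $L^2$-mass only $\mathcal{O}(\tau^{-1/2})$, and a direct triangle inequality yields only $\mathcal{O}(B/\sqrt\tau) = \mathcal{O}(B/m^{1/4})$, which is insufficient. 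To overcome this I would use the vanishing moment $\int_{\mathbb{R}}(\sigma(\tau z) - \mathbf{1}_{z>0})\,dz = 0$ and integrate by parts in the $t$-variable: the inner integral $\int(\sigma - \mathbf{1}_{>0})(w\cdot x - t)\,\eta\,dt$ equals $\int(\mathrm{SP}_\tau - \mathrm{ReLU})(w\cdot x - t)\,\partial_t\eta\,dt$ plus boundary contributions, which is pointwise dominated by $\tau^{-1}\ln 2\cdot\bigl(|\eta(w,\pm 1)| + \int|\partial_t\eta|\,dt\bigr)$. The uniform $t$-BV regularity of $\eta$, inherited from the explicit sine-to-ReLU rewriting of the series $u = \sum_k \hat u(k)\Phi_k$ when $s\ge 3$, then upgrades the gradient bound to $\mathcal{O}(B/\tau) = \mathcal{O}(B/\sqrt m)$. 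Combining this with the Maurey contribution yields $\|u - \varphi v_m\|_{H^1(\Omega)} \le 64B/\sqrt m$, as claimed.
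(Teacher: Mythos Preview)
Your overall plan---replace the ReLU features by Softplus, apply Maurey to the Softplus class, and control the remainder---is sound, and is close in spirit to the paper's proof. But the specific device you invoke for the gradient of the remainder has a real gap at the endpoint $s=3$.

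You correctly observe that Minkowski in $L^2$ applied term-by-term only gives $\|\nabla r_\tau\|_{L^2}=\mathcal O(B/\sqrt\tau)$. Your fix is integration by parts in $t$, which requires control of $\int|\partial_t\eta(w,t)|\,dt$. Trace through the representation: for each trigonometric component $g_k(z)=\frac{\gamma}{1+\pi^{s-1}|k|_1^{s-1}}f(\pi|k|_1 z+\pi b)$ of $u/\varphi$ (these are the elements of $\mathcal F_{s-1}(B)$, with $s-1\ge 2$), the Taylor formula $g_k(z)=g_k(\rho)+\int g_k''(t)\,\mathrm{ReLU}(\pm(z-t))\,dt$ shows that the $t$-density is $g_k''$, so $\partial_t\eta\sim g_k'''$. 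But $\|g_k'''\|_{L^\infty}\le \dfrac{|\gamma|(\pi|k|_1)^3}{1+\pi^{s-1}|k|_1^{s-1}}$, which is uniformly bounded only when $s-1\ge 3$, i.e.\ $s\ge 4$. At $s=3$ this blows up linearly in $|k|_1$, so your ``uniform $t$-BV regularity of $\eta$'' fails and the IBP argument does not close.

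The simpler route---and the one the paper takes (in discrete form, via Lemmas~\ref{lemma: ReLU for both sine and cosine case} and~\ref{lemma: Softplus for sine and cosine case, modified})---is to use not $\int|\eta|\le 4B$ but the \emph{pointwise} density bound $|\eta(w,t)|\le B$, which holds precisely because $|g_k''|\le B$ under $s\ge 3$. With that, for each direction $w$,
\[
\Bigl|\int\bigl(\sigma(\tau(w\cdot x-t))-\mathbf 1_{w\cdot x>t}\bigr)\eta(w,t)\,dt\Bigr|
\le B\int_{-1}^{1}e^{-\tau|w\cdot x-t|}\,dt\le \frac{2B}{\tau},
\]
using $|\mathrm{SP}_\tau'-\mathrm{ReLU}'|\le e^{-\tau|\cdot|}$ from Lemma~\ref{lemma: ReLU-Softplus difference}. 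Averaging over the probability measure in $w$ gives $|\nabla r_\tau|\le 2B/\tau$ pointwise, hence $\|\varphi r_\tau\|_{H^1}=\mathcal O(B/\tau)$---no integration by parts, no extra regularity. The paper executes exactly this estimate at the discrete level: the coefficients $|a_i|\le 2Bh$ in the piecewise linear interpolant encode $|g''|\le B$, and the sum $\sum_i|a_i|e^{-\tau|z-z_i|}$ is bounded by $\mathcal O(B(h+\tau^{-1}))$. So your proposal is salvageable, but only after replacing the IBP step by the $L^\infty_t$-density bound.
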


Motivated by Theorem \ref{Thm: u H1 approximation by varphi ReLU networks} and Theorem \ref{Thm: u H1 approximation by varphi Softplus networks}, we make the following regularity assumption.
\begin{assumption} \label{Assumption: exist u* in both mathfrakB^s for s geq 3 and U_k}
	There exists a normalized eigenfunction $u^{*} \in \mathfrak{B}^{s}(\Omega)$ for some  $s \geq 3$ lying in the subspace $U_{k}$ defined in (\ref{subspace Uk, solution space}). 
\end{assumption}

Denote $\bar{\mu}_{1} = 0$ and $\bar{\mu}_{k} = \max_{1 \leq j \leq k-1} \|\psi_{j}\|_{L^{\infty}(\Omega)}$ for $k\geq 2$.
We establish a-priori generalization error estimate of our machine learning method for solving Schrödinger eigenvalue problems, which is the main result of this work.
\begin{theorem}  \label{Main generalization theorem}
Under Assumptions \ref{Assumption: V is bounded above and below} and \ref{Assumption: exist u* in both mathfrakB^s for s geq 3 and U_k}, 
let $\mathcal{F} = \varphi \mathcal{F}_{\mathrm{SP}_{\tau}, m}(B)$
with $B = \left(1+\frac{2 d}{\pi s}\right) \|u^{*}\|_{\mathfrak{B}^{s}(\Omega)}$  and   $\tau=9\sqrt{m}$. 
For $r\in (0,1/2)$ and let $u_{n}^{m}$  be a minimizer of the empirical loss $L_{k,n}$  within $\mathcal{F}_{> r}$. Given  $\delta \in\left(0, 1/3\right)$, assume that $n$ and $m$ are large enough so that $64B/\sqrt{m} \leq 1/2$ and
\begin{subequations} \label{ineqs: conditions for Main generalization theorem}
\begin{align}
			& \Upsilon_{1}(n,m,B,r,\delta) := \frac{B}{r}\sqrt{\frac{1 + V_{\max }}{n}}  \left( \sqrt{m \ln \frac{ B \left(1 + \sqrt{m}/d\right) \left(1 + V_{\max} \right)}{r d}}  +  \sqrt{\frac{\ln(1/\delta)}{d}}\right) \leq C, \label{ineq: assumption in Main generalization theorem to ensure xi1+xi2<1/2}\\
			& \Upsilon_{2}(n,m,k,B,\bar{\mu}_{k},r,\delta) := \sqrt{\frac{k \bar{\mu}_{k} B}{n  r}}\left[ \sqrt{m \ln \left(\frac{\bar{\mu}_{k} B}{r d}\right)} + \sqrt{\frac{\ln( k/\delta)}{d}} \right] \leq 1, \label{Simplifying condition for xi3 term}
		\end{align}    
\end{subequations}
where  $C$ is an absolute constant. Then with probability at least $1-3 \delta$, 
\begin{equation} \label{ineq: error bound in Main generalization theorem, full parameter}
		\begin{aligned}
			L_{k}\left(u_{n}^{m}\right)-\lambda_{k} & \leqslant  C \big[ \lambda_{k} \Upsilon_{1}(n,m,B,r,\delta) +  \beta \Upsilon_{2}(n,m,k,B,\bar{\mu}_{k},r,\delta)  +  \left(V_{\max}  + \beta + \lambda_{k}\right) B/\sqrt{m}\big], 
		\end{aligned}
	\end{equation}
where $C$ is an absolute constant. In particular, with the choice of $m = O(\sqrt{n/k})$ and $n$ large enough, there exists  $\tilde{C} >0$  such that with probability at least $1-3 \delta$,
\[
L_{k}\left(u_{n}^{m}\right)-\lambda_{k} \leq \tilde{C} \left[ \left(\frac{k}{n}\right)^{1/4} \left(\sqrt{\frac{\ln (n/k)}{k}} + 1\right) + \sqrt{\frac{k\ln \left(k/\delta\right)}{n}}\, \right].
\]
\end{theorem}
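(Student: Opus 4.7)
The plan is to bound the excess risk $L_k(u_n^m)-\lambda_k$ by a standard bias-variance decomposition, using Theorem \ref{Thm: u H1 approximation by varphi Softplus networks} for the approximation part and ratio-type concentration inequalities for the statistical part. First, by Assumption \ref{Assumption: exist u* in both mathfrakB^s for s geq 3 and U_k} and Theorem \ref{Thm: u H1 approximation by varphi Softplus networks}, pick $\tilde u := \varphi v_m \in \mathcal{F}$ with $\|u^*-\tilde u\|_{H^1(\Omega)}\le 64B/\sqrt{m}\le 1/2$. Since $u^*\in U_k$ has $\|u^*\|_{L^2}=1$, the triangle inequality gives $\|\tilde u\|_{L^2}\ge 1/2 > r$, so $\tilde u\in\mathcal{F}_{>r}$ is a valid competitor for the empirical minimizer. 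A direct perturbation estimate of $\mathcal{E}_V$, $\mathcal{E}_P$, and $\mathcal{E}_2$ in terms of $\|u^*-\tilde u\|_{H^1}$ (using Assumption \ref{Assumption: V is bounded above and below} for $V$, Cauchy–Schwarz plus $\|\psi_j\|_{L^2}=1$ for $\mathcal{E}_P$, and the $L^2$ lower bound on $\tilde u$) yields the bias estimate $L_k(\tilde u)-\lambda_k \le C(V_{\max}+\beta+\lambda_k)B/\sqrt{m}$, which accounts for the last term of \eqref{ineq: error bound in Main generalization theorem, full parameter}.

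Second, I would use the oracle decomposition
\[
L_k(u_n^m)-\lambda_k \;\le\; \bigl[L_k(u_n^m)-L_{k,n}(u_n^m)\bigr] + \bigl[L_{k,n}(\tilde u)-L_k(\tilde u)\bigr] + \bigl[L_k(\tilde u)-\lambda_k\bigr],
\]
in which the suppressed middle comparison $L_{k,n}(u_n^m)-L_{k,n}(\tilde u)\le 0$ holds by definition of $u_n^m$. The bias term was just handled, so the problem reduces to a uniform control of $|L_k(u)-L_{k,n}(u)|$ over $u\in\mathcal{F}_{>r}$. Using the algebraic identity $A/B - A'/B' = (A-A')/B' - (A/B)(B'-B)/B'$ applied to the Rayleigh quotient, this deviation is governed by the three ratio-type suprema
\[
\sup_{u\in\mathcal{F}_{>r}}\frac{|\mathcal{E}_{n,V}(u)-\mathcal{E}_V(u)|}{\mathcal{E}_2(u)},\quad \sup_{u\in\mathcal{F}_{>r}}\frac{|\mathcal{E}_{n,P}(u)-\mathcal{E}_P(u)|}{\mathcal{E}_2(u)},\quad \sup_{u\in\mathcal{F}_{>r}}\frac{|\mathcal{E}_{n,2}(u)-\mathcal{E}_2(u)|}{\mathcal{E}_2(u)},
\]
after showing (via ratio concentration) that the $\mathcal{E}_{n,2}$-denominator version is within a factor $2$ of the $\mathcal{E}_2$-denominator version with high probability.

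Third, each supremum is controlled through a concentration inequality for ratio-type empirical processes in the Giné–Koltchinskii style referenced in the introduction, combined with a Dudley-type bound on the expected sup-norm. For this, one needs covering-number estimates for the auxiliary classes $\{|\nabla(\varphi v)|^2\}$, $\{V|\varphi v|^2\}$, $\{(\varphi v)\psi_j\}$, $\{|\varphi v|^2\}$ with $v\in\mathcal{F}_{\mathrm{SP}_\tau,m}(B)$; here the smoothness of $\mathrm{SP}_\tau$ and the scaling $\tau=9\sqrt{m}$ give Lipschitz constants of order $\sqrt{m}$ and produce precisely the $\sqrt{m\ln(\cdot)}$ factors inside $\Upsilon_1,\Upsilon_2$. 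The $\sqrt{\ln(k/\delta)/d}$ tail term in $\Upsilon_2$ comes from a Bernstein/Talagrand tail bound combined with a union bound over the $k-1$ orthogonality projections $\langle u,\psi_j\rangle$, where the factor $\bar\mu_k$ enters through the $L^\infty$ bound on $\psi_j$. Combining these pieces under the hypotheses \eqref{ineq: assumption in Main generalization theorem to ensure xi1+xi2<1/2}–\eqref{Simplifying condition for xi3 term} (which ensure the denominator supremum is $\le 1/2$) yields the full bound \eqref{ineq: error bound in Main generalization theorem, full parameter} after a union bound over three events, each of probability at least $1-\delta$.

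The main obstacle is the non-Lipschitz nature of the Rayleigh quotient: $L_k(u)$ blows up as $\|u\|_{L^2}\downarrow 0$, so standard symmetrization/contraction arguments do not apply directly to the loss. Restricting to $\mathcal{F}_{>r}$ and deploying ratio-type concentration circumvents this, but carefully tracking the normalized complexity so that prefactors remain polynomial in $d$ and $k$ is delicate. The final $\tilde O(n^{-1/4})$ rate then follows by balancing the bias $B/\sqrt{m}$ against the statistical terms $\sqrt{m/n}$ entering through $\Upsilon_1,\Upsilon_2$, which is optimized at $m\asymp\sqrt{n/k}$.
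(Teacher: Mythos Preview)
Your proposal is correct and follows essentially the same route as the paper: the same oracle decomposition with $\tilde u=\varphi v_m$ as competitor, Theorem~\ref{Thm: u H1 approximation by varphi Softplus networks} for the bias, and ratio-type concentration plus covering-number bounds on the Softplus classes for the three statistical suprema. One technical wrinkle worth flagging is that the paper normalizes the $\mathcal{E}_V$-deviation by $\mathcal{E}_V$ itself (exploiting $\mathcal{E}_V(u)\ge\lambda_1 r^2$ on $\mathcal{F}_{>r}$) rather than by $\mathcal{E}_2$, which produces a self-referential inequality $L_k(u_n^m)-\lambda_k\le\frac{\xi_1+\xi_2}{1-\xi_1}L_k(u_n^m)+\cdots$ that is rearranged to extract the clean $\lambda_k\Upsilon_1$ prefactor, and it bounds the expected sup-norm via Gin\'e's direct VC-type estimate (Lemma~\ref{Proposition 2.1 in cite Gin2001OnCO, original version}) rather than Dudley's integral---though your variant would also go through.
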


To prove Theorem \ref{Main generalization theorem}, we introduce concentration inequalities for ratio-type suprema to deal with Rayleigh quotient and derive an oracle inequality for the empirical loss in \S \ref{section: Oracle inequality for the generalization error}, which decomposes the generalization error into the sum of the approximation error and the statistical error. Combining the approximation results in \S \ref{section: Approximation theory for sine spectral Barron functions} and the statistical error estimates in \S \ref{section: Statistical error}, we prove Theorem \ref{Main generalization theorem}.

In~\eqref{ineq: error bound in Main generalization theorem, full parameter}, $\lambda_{k} \Upsilon_{1}$ and $\beta \Upsilon_{2}$ correspond to the statistical errors of the Rayleigh quotient and the orthogonal penalty term, respectively, and $\left(V_{\max}+ \beta + \lambda_{k}\right) B/\sqrt{m}$ stands for the approximation error. The error bound can be applied to the eigenvalue problem of any order. The convergence rate $\tilde{O}(n^{-1/4})$ is independent of the dimension. 
The dependence of $\tilde{C}$ on all parameters is explicit and at most a polynomial of low degree. We have established a high probability bound for the generalization error, which immediately implies an expectation bound. 
\begin{remark}
Under Assumption \ref{Assumption: V is bounded above and below},  there exist constants $c_1$, $c_2$ and $c_3$ such that for $k=1,2,\cdots$, the asymptotic distribution of the eigenvalues
\[
	c_1 d k^{2/d} + V_{\min} \leq \lambda_{k} \leq c_2 d k^{2/d} + V_{\max},
\] 
and $L^{\infty}(\Omega)$ estimate of the normalized eigenfunctions
\[
	\|\psi_{k}\|_{L^{\infty}(\Omega)} \leq \left(c_3 k^{2/d} +  \frac{e\left(V_{\max}- V_{\min}\right)}{\pi d}\right)^{d / 4}.
\] 
Thus, $\bar{\mu}_{k} < \infty$ for all $k$. We refer to Appendix \ref{appendix section: Simple properties of eigenvalues and eigenfunctions} for a proof.
\end{remark}

\begin{remark}
Condition~\eqref{Simplifying condition for xi3 term} may be removed provided that $\Upsilon_{2}$ in (\ref{ineq: error bound in Main generalization theorem, full parameter}) is replaced by $\Upsilon_{2}(1+\Upsilon_{2})^{3}$, i.e., some high order terms are in demand. 
\end{remark}

Substituting Theorem \ref{Main generalization theorem} into Proposition \ref{proposition: error between Rayleigh quotient and lambdak bounded by energy excess} and Proposition \ref{Generalized proposition 2.1}, we obtain
\begin{corollary}\label{Corollary: the generalization error in terms of eigenvalues and $H^{1}$-norm of eigenfunctions}
Under the same assumptions of Theorem \ref{Main generalization theorem}, with $m =\mathcal{O}(\sqrt{n/k})$, there exist constants $\tilde{C}_{1}$ depending only on $B$, $d$, $V_{\max}$, $\lambda_{k}$, $\bar{\mu}_{k}$, $\beta$, $r^{-1}$, $\left(\beta+\lambda_{1}-\lambda_{k}\right)^{-1}$ polynomially and  $\tilde{C}_{2}$ depending only on $B$, $d$, $V_{\max}$, $\lambda_{k}$, $\bar{\mu}_{k}$, $\beta$, $r^{-1}$,   $\left(\beta+\lambda_{1}-\lambda_{k}\right)^{-1}$, $\left(\lambda_{k^{\prime}}-\lambda_{k}\right)^{-1}$ polynomially such that with probability at least  $1-3 \delta$,
\begin{align*}
			& \left| \frac{\langle u_{n}^{m}, \mathcal{H} u_{n}^{m}\rangle}{\langle u_{n}^{m},  u_{n}^{m}\rangle} - \lambda_{k}\right|\leq \tilde{C}_{1} \left[  \left(\frac{k}{n}\right)^{1/4} \left(\sqrt{\frac{\ln (n/k)}{k}} + 1\right) + \sqrt{\frac{k\ln \left(k/\delta\right)}{n}} \right] ,\\
			& \left\|P^{\perp} u_{n}^{m}\right\|_{H^{1}(\Omega)}^{2}  \leq \tilde{C}_{2} \left[  \left(\frac{k}{n}\right)^{1/4} \left(\sqrt{\frac{\ln (n/k)}{k}} + 1\right) + \sqrt{\frac{k\ln \left(k/\delta\right)}{n}} \right].
\end{align*}
\end{corollary}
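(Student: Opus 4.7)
The plan is to obtain both inequalities as immediate consequences of Theorem~\ref{Main generalization theorem} combined with the deterministic stability estimates of Propositions~\ref{proposition: error between Rayleigh quotient and lambdak bounded by energy excess} and~\ref{Generalized proposition 2.1}, applied to $u=u_n^m$. No new probabilistic argument is needed; the task reduces to substitution together with careful tracking of the prefactors.

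First, I would invoke Theorem~\ref{Main generalization theorem} with $m=\mathcal{O}(\sqrt{n/k})$ to obtain, with probability at least $1-3\delta$,
\[
L_k(u_n^m)-\lambda_k \leq \tilde{C}\,R_{n,k,\delta}, \qquad R_{n,k,\delta} := \left(\frac{k}{n}\right)^{1/4}\!\left(\sqrt{\frac{\ln(n/k)}{k}}+1\right)+\sqrt{\frac{k\ln(k/\delta)}{n}},
\]
where $\tilde{C}$ depends polynomially on $B,d,V_{\max},\lambda_k,\bar{\mu}_k,\beta,r^{-1}$. For the first inequality, since $u_n^m\in\mathcal{F}_{>r}$ is nonzero, Proposition~\ref{proposition: error between Rayleigh quotient and lambdak bounded by energy excess} yields
\[
\left|\frac{\langle u_n^m,\mathcal{H}u_n^m\rangle}{\langle u_n^m,u_n^m\rangle}-\lambda_k\right|\leq \max\!\left\{\frac{\lambda_k-\lambda_1}{\beta+\lambda_1-\lambda_k},\,1\right\}\bigl(L_k(u_n^m)-\lambda_k\bigr),
\]
whose multiplicative factor is polynomial in $\lambda_k$ and $(\beta+\lambda_1-\lambda_k)^{-1}$. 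Multiplying by the energy excess bound produces the first claim with a prefactor $\tilde{C}_1$ of the stated form.

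For the second inequality, I would apply Proposition~\ref{Generalized proposition 2.1} to bound $\|P^\perp u_n^m\|_{L^2(\Omega)}^2$ and $\|\nabla(P^\perp u_n^m)\|_{L^2(\Omega)}^2$ separately, each being of the form $(\text{constant})\cdot\bigl(L_k(u_n^m)-\lambda_k\bigr)\cdot\|u_n^m\|_{L^2(\Omega)}^2$ with constants polynomial in $\lambda_k,V_{\min},(\beta+\lambda_1-\lambda_k)^{-1},(\lambda_{k'}-\lambda_k)^{-1}$, and then add them to obtain the $\|P^\perp u_n^m\|_{H^1(\Omega)}^2$ bound. The only missing piece is a uniform $L^2$ upper bound on $u_n^m$. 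Writing $u_n^m=\varphi v$ with $v\in\mathcal{F}_{\mathrm{SP}_\tau,m}(B)$, I would observe that $\varphi(x)\leq 1/d$ on $\Omega$ (since $1/\sin(\pi x_i)\geq 1$ for each coordinate), and that the constraints $|c|\leq B$, $\sum_i|\gamma_i|\leq 4B$, $|w_i\cdot x-t_i|\leq 2$, combined with the elementary bound $|\mathrm{SP}_\tau(z)|\leq |z|+\tau^{-1}\ln 2$ and $\tau=9\sqrt{m}\geq 1$, give $\|v\|_{L^\infty(\Omega)}\leq CB$, hence $\|u_n^m\|_{L^2(\Omega)}^2\leq C(d)B^2$. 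Substituting this together with the energy excess bound produces the second claim with a prefactor $\tilde{C}_2$ of the stated form.

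The main obstacle is purely organizational: verifying that the stability factors from Propositions~\ref{proposition: error between Rayleigh quotient and lambdak bounded by energy excess} and~\ref{Generalized proposition 2.1}, together with the uniform $L^2$ bound on $u_n^m$, each contribute only a polynomial factor in the listed parameters, so that no hidden exponential blow-up enters $\tilde{C}_1$ or $\tilde{C}_2$. Because every step is a bounded polynomial multiplication, this bookkeeping is routine rather than substantive.
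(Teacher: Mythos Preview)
Your proposal is correct and follows exactly the approach the paper uses: the corollary is obtained by substituting the energy-excess bound of Theorem~\ref{Main generalization theorem} into Propositions~\ref{proposition: error between Rayleigh quotient and lambdak bounded by energy excess} and~\ref{Generalized proposition 2.1}. The only step you spell out that the paper leaves implicit is the uniform bound $\|u_n^m\|_{L^2(\Omega)}\leq \|u_n^m\|_{L^\infty(\Omega)}\leq CB/d$ (cf.\ the paper's $M_{\mathcal{F}}=9.5B/d$ in~\eqref{upper bound of maximum value of hypothesis class with Softplus activation}), which is indeed needed to absorb the factor $\|u\|_{L^2(\Omega)}^2$ from Proposition~\ref{Generalized proposition 2.1} into the polynomial constant $\tilde C_2$.
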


Next, we prove a regularity result of the eigenfunctions in Barron type spaces, which justifies Assumption \ref{Assumption: exist u* in both mathfrakB^s for s geq 3 and U_k}. We firstly recall the spectral Barron space defined in~\cite{DRMlu2021priori, lu2021priori}. Consider the set of cosine functions
$$\left\{\Psi_{k}\right\}_{k \in \mathbb{N}_{0}^{d}}:=\left\{\prod_{i=1}^{d} \cos \left(\pi k_{i} x_{i}\right) \mid k_{i} \in \mathbb{N}_{0}\right\} .$$
Let $\{\check{w}(k)\}_{k \in \mathbb{N}_{0}^{d}}$  be the Fourier coefficients of a function  $w \in L^{1}(\Omega)$ against the basis $\left\{\Psi_{k}\right\}_{k \in \mathbb{N}_{0}^{d}}$. For $s \geq 0$, the cosine spectral Barron space $\mathfrak{C}^s(\Omega)$ is defined by
\begin{equation} \label{eq: definition of cosine spectral Barron space}
	\mathfrak{C}^{s}(\Omega):=\left\{w \in L^{1}(\Omega): \|w\|_{\mathfrak{C}^{s}(\Omega)} <\infty\right\},
\end{equation}
which is equipped with the spectral Barron norm
\[
\|w\|_{\mathfrak{C}^{s}(\Omega)}=\sum_{k \in \mathbb{N}_{0}^{d}}\left(1+\pi^{s}|k|_{1}^{s}\right)|\check{w}(k)|.
\]
\begin{theorem} \label{Thm: Regularity of eigenfunctions}
Assume that  $V \in \mathfrak{C}^{s}(\Omega)$  with  $s \geq 0$.  Then any eigenfunction of Problem (\ref{eq1}) lies in $ \mathfrak{B}^{s+2}(\Omega)$.
\end{theorem}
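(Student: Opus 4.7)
The plan is to translate the eigenvalue equation into a Fourier recursion in the sine basis and then bootstrap the sine spectral Barron regularity of $u$ via two core ingredients: a multiplicative lemma $\mathfrak{C}^\sigma\cdot\mathfrak{B}^\sigma\subseteq\mathfrak{B}^\sigma$ and a gain-of-two-derivatives estimate from inverting the Dirichlet Laplacian in sine modes.

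First I would expand $u=\sum_{k\in\mathbb{N}_+^d}\hat{u}(k)\Phi_k$ and match sine coefficients in $-\Delta u+Vu=\lambda u$ to get
\[
(\pi^2|k|_2^2-\lambda)\,\hat{u}(k)=-\widehat{Vu}(k),\qquad k\in\mathbb{N}_+^d.
\]
The coefficient $\widehat{Vu}(k)$ is expanded using the product identity $\cos(\pi\ell_i x_i)\sin(\pi k_i x_i)=\tfrac{1}{2}[\sin(\pi(k_i+\ell_i)x_i)+\sin(\pi(k_i-\ell_i)x_i)]$, which writes $\Psi_\ell\Phi_k$ as a signed $\pm 2^{-d}$-combination of at most $2^d$ sine basis functions $\Phi_m$ with $|m|_1\leq |k|_1+|\ell|_1$ whose total absolute coefficient is at most one. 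Combined with the sub-multiplicative inequality $1+\pi^\sigma(|k|_1+|\ell|_1)^\sigma\leq 2^\sigma(1+\pi^\sigma|k|_1^\sigma)(1+\pi^\sigma|\ell|_1^\sigma)$, valid for $\sigma\geq 0$, exchanging the summation order yields the multiplicative bound
\[
\|Vg\|_{\mathfrak{B}^\sigma}\leq 2^\sigma\|V\|_{\mathfrak{C}^\sigma}\|g\|_{\mathfrak{B}^\sigma}\qquad(\sigma\geq 0).
\]

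For the resolvent gain I would split the modes at $\pi^2|k|_2^2=2\lambda$. Using $|k|_2^2\geq|k|_1^2/d$, the high-mode Fourier relation gives
\[
(1+\pi^{\sigma+2}|k|_1^{\sigma+2})|\hat{u}(k)|\leq 2d\,(1+\pi^\sigma|k|_1^\sigma)|\widehat{Vu}(k)|,
\]
while the finitely many low modes contribute at most $C(d,\sigma,\lambda)\|u\|_{L^2}$. Summing and invoking the multiplicative lemma with $V\in\mathfrak{C}^s$ embedded into $\mathfrak{C}^\sigma$ for $\sigma\leq s$, one obtains the bootstrap step
\[
\|u\|_{\mathfrak{B}^{\sigma+2}}\leq 2^{\sigma+1}d\,\|V\|_{\mathfrak{C}^\sigma}\|u\|_{\mathfrak{B}^\sigma}+C(d,\sigma,\lambda)\|u\|_{L^2}.
\]
Starting from $\sigma=0$ and iterating in increments of two then delivers $u\in\mathfrak{B}^{s+2}$ after at most $\lceil s/2\rceil+1$ steps.

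The main obstacle is establishing the base case $u\in\mathfrak{B}^0$. Standard elliptic regularity (available because $V\in\mathfrak{C}^0\subseteq L^\infty$) places the eigenfunction in $H^2(\Omega)\cap H_0^1(\Omega)$, and a Cauchy--Schwarz bound
\[
\sum_k|\hat{u}(k)|\leq \|u\|_{H^2}\Bigl(\sum_{k\in\mathbb{N}_+^d}(1+|k|_2^2)^{-2}\Bigr)^{1/2}
\]
closes the argument in dimensions $d\leq 3$. For $d\geq 4$ a single application is insufficient and I would iterate the representation $u=(-\Delta+\mu)^{-1}\bigl((\mu+\lambda-V)u\bigr)$ for large $\mu$: composing it $n$ times introduces resolvent weights $(\pi^2|k|_2^2+\mu)^{-n}$ between successive multiplications by $W=\mu+\lambda-V$, and choosing $n>d/4$ makes the resulting H\"older-type bound summable, controlling $\sum|\hat u(k)|$ by $\|u\|_{L^2}$ and powers of $\|V\|_{\mathfrak{C}^0}$. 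Tracking the sine coefficients carefully through the iterated cosine--sine convolutions at each level, using the Barron product bound to keep the algebra closed, is the technical heart of the proof.
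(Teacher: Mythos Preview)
Your proposal is correct and follows the same two-stage structure as the paper (base case + bootstrap), but the mechanics differ in both stages.

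For the bootstrap, the paper does not split into high/low modes. Instead it proves that the full Schr\"odinger inverse $\mathcal{H}^{-1}$ is bounded from $\mathfrak{B}^\sigma$ to $\mathfrak{B}^{\sigma+2}$ by writing the sine-coefficient equation as $(\mathbb{I}+\mathbb{M}^{-1}\mathbb{V})\hat u=\mathbb{M}^{-1}\hat f$, showing $\mathbb{M}^{-1}\mathbb{V}$ is compact on $\ell^1_{W_\sigma}$, and invoking the Fredholm alternative. Your mode-splitting argument is more elementary and avoids Fredholm theory; the paper's route has the advantage of producing a clean operator-level statement (boundedness of $\mathcal{S}=\mathcal{H}^{-1}$) that can be quoted independently.

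For the base case, the paper's argument is closely related to yours but packaged differently and more precisely. It passes to the full torus $[-1,1]^d$ via odd extension of $u$ and even extension of $V$, so that the product $Vu$ becomes a genuine Fourier convolution $\tilde V_e*\tilde\psi_o$. Young's inequality then gives that $\tilde\psi_o\in\ell^p$ implies $(\pi^2|k|_2^2+1)\tilde\psi_o\in\ell^p$, and H\"older with the resolvent weight $(\pi^2|k|_2^2+1)^{-1}\in\ell^r$ for $r>d/2$ lowers the exponent by $2/d$ each pass. This is exactly your iterated-resolvent idea, but working in the scale of $\ell^p$ spaces on the Fourier side keeps the bookkeeping clean and sidesteps the need to track iterated cosine--sine convolutions explicitly. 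If you carry out your $n>d/4$ program carefully you will find yourself reproducing this $\ell^p$ chain.
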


Theorem \ref{Thm: Regularity of eigenfunctions} shows that Assumption \ref{Assumption: exist u* in both mathfrakB^s for s geq 3 and U_k} is true if $V \in \mathfrak{C}^{1}(\Omega)$. 
To prove Theorem \ref{Thm: Regularity of eigenfunctions}, we show that the inverse of the Schrödinger operator $\mathcal{H}^{-1}: \mathfrak{B}^{s}(\Omega) \rightarrow \mathfrak{B}^{s+2}(\Omega)$ is bounded, and then prove regularity estimates for the eigenfunctions through a bootstrap argument. See \S \ref{Section: Solution theory in spectral Barron Spaces} for a complete proof. Our proof may be applied to more general situations such as other boundary conditions and less smoother assumptions on $V$.

In Theorem \ref{Main generalization theorem}, we require that the functions for problem solving have a lower bound $r \in (0, 1/2)$ of $L^{2}$-norm.
Such an assumption is reasonable. In practice, too small $L^{2}$ norm leads to numerical instability when calculating the Rayleigh quotient. From a theoretical perspective, the $L^{2}$ normalization $u/\|u\|_{L^{2}(\Omega)}$ of all functions $u$ in $\varphi \mathcal{F}_{\mathrm{SP}_{\tau}, m}$ is not uniformly $L^{\infty}$ bounded, which may lead to a blow up of statistical error bounds. It follows from our results that the statistical error bound depends on $r^{-1}$ linearly. In practice, we treat the minimization of $L_{k, n}$ in $\mathcal{F}_{> r}$ as solving an optimization problem in $\mathcal{F}$ with constraint $\|u\|_{L^{2}(\Omega)} > r$.
In numerical experiments, due to the scaling invariance of $L_{k, n}$, we have observed that the $L^{2}$-norm of the network function always increases oscilatorily as one minimizes $L_{k, n}$ using stochastic gradient descent methods. The gradient descent methods have an implicit regularization effect on the $L^{2}$-norm of the solution when minimizing a scaling invariant loss function. This allows us to obtain solutions that satisfy the constraint $\|u\|_{L^{2}(\Omega)}>r$ automatically and generalizes well without specifically adding normalization penalties to enforce the $L^{2}$-norm 
away from zero.
	
Adding a normalization penalty term is also a natural way to solve the constrained optimization problem and drive the $L^2$-norm of the solution away from zero. In \cite{EweinanYB2017DRM, Jixia2024DeepRM4Eigenvalue}, the authors add a normalization penalty term $\gamma\left(\mathcal{E}_{2}(u)-1\right)^{2}$ in the loss function to make numerical solution closer to the normalized one. We shall analyze the normalization penalty method in \S \ref{subsection: Extensions to the penalty method}.
\subsection{Extensions to the normalization penalty method} \label{subsection: Extensions to the penalty method}
The normalization penalty method in~\cite{EweinanYB2017DRM} employs the population loss
\begin{equation*}
	\begin{aligned}
		\mathscr{L}_{k}(u) & := L_{k}(u) + \gamma\left(\mathcal{E}_{2}(u)-1\right)^{2} 
	\end{aligned}
\end{equation*}
and the corresponding empirical loss
\begin{equation*}
	\begin{aligned}
		\mathscr{L}_{k, n}(u) & := L_{k, n}(u)+\gamma\left(\mathcal{E}_{n, 2}(u)-1\right)^{2},
	\end{aligned}
\end{equation*}
where $\gamma>0$ is a penalty parameter. Let $\mathscr{u}_{n}$ be a minimizer of $\mathscr{L}_{k, n}(u)$ within $\mathcal{F}$. Firstly, we show that when $\gamma$ is chosen properly large, $\|\mathscr{u}_{n}\|_{L^{2}(\Omega)} \geq 1/2$ with high probability. 
\begin{theorem}\label{theorem: solutions obtained by penalty method are away from 0}
Under Assumptions \ref{Assumption: V is bounded above and below} and \ref{Assumption: exist u* in both mathfrakB^s for s geq 3 and U_k},  
let $\gamma \geq 4 \lambda_{k}$ and $\mathscr{u}_{n} = \mathop{\arg\min}_{u \in \mathcal{F}} \mathscr{L}_{k, n}(u)$ 
where $\mathcal{F} = \varphi \mathcal{F}_{\mathrm{SP}_{\tau}, m}(B)$
with $B = \left(1+\frac{2 d}{\pi s}\right) \|u^{*}\|_{\mathfrak{B}^{s}(\Omega)}$  and  $\tau=9\sqrt{m}$. 
Given  $\delta \in\left(0, 1/4\right)$, assume that  $n$  and  $m$  are large enough so that $C \left(1 + V_{\max } + \beta/\gamma\right) B /\sqrt{m} \leq  1 $ and
\begin{subequations} \label{ineqs: conditions for theorem: solutions obtained by penalty method are away from 0}
\begin{align}
			&  \frac{C B}{d}\left(\frac{B}{d}+1\right) \sqrt{ \frac{d \left(1+\ln B\right) m +  \ln (1 / \delta)}{n}} \leq 1, \label{ineq: condition in theorem: solutions obtained by penalty method are away from 0, ensure R_1 < 1/16} \\
			& C \left(1 + V_{\max } + \beta/\gamma\right) B^{2}\sqrt{\frac{\ln (1 / \delta)}{n}} + \frac{C \beta \bar{\mu}_{k} B}{\gamma d}  \sqrt{\frac{k\ln (k / \delta)}{n}} \leq 1,   \label{ineq: condition in theorem: solutions obtained by penalty method are away from 0, ensure xi_5, R_2, R_4 small} 
\end{align}
\end{subequations}
where  $C$ is an absolute constant. Then with probability at least  $1-4 \delta$, 
\[
\mathcal{E}_{2}(\mathscr{u}_{n}) \geq 1/4.
\]
\end{theorem}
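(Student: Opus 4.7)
The plan is to use the normalization penalty itself as a barrier: if $\mathcal E_2(\mathscr u_n)$ were smaller than $1/4$, the term $\gamma(\mathcal E_{n,2}(\mathscr u_n)-1)^2$ would be too large to be compatible with $\mathscr u_n$ minimizing $\mathscr L_{k,n}$ against a good comparator. The hypothesis $\gamma\geq 4\lambda_k$ supplies exactly the needed slack, and the scale-sensitive Rayleigh quotient is never evaluated divisively at $\mathscr u_n$ — only its non-negativity (from $V\geq V_{\min}>0$) is used. This is the essential departure from the proof of Theorem \ref{Main generalization theorem}, where a prior lower bound $\|u\|_{L^2}\geq r$ was built into the hypothesis class.

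\emph{Step 1 (comparator).} Apply Theorem \ref{Thm: u H1 approximation by varphi Softplus networks} to $u^*$ to produce $v_m\in\mathcal F_{\mathrm{SP}_\tau,m}(B)$ with $\|u^*-\varphi v_m\|_{H^1(\Omega)}\leq 64B/\sqrt m=:\eta$, small by the hypothesis $C(1+V_{\max}+\beta/\gamma)B/\sqrt m\leq 1$. Write $\varphi v_m=u^*+\epsilon$. Using $\mathcal H u^*=\lambda_k u^*$, $u^*\perp\psi_j$ for $j<k$, $\|u^*\|_{L^2}=1$, and integration by parts,
\begin{align*}
\mathcal E_V(\varphi v_m) &\leq \lambda_k+2\lambda_k\eta+(1+V_{\max})\eta^2,\\
\mathcal E_P(\varphi v_m) &\leq \beta\|\epsilon\|_{L^2}^2\leq \beta\eta^2,\qquad (1-\eta)^2\leq \mathcal E_2(\varphi v_m)\leq(1+\eta)^2.
\end{align*}
These combine to $\mathscr L_k(\varphi v_m)\leq \lambda_k+C(\lambda_k+V_{\max}+\beta)\eta+9\gamma\eta^2$, at most $\lambda_k+\gamma/16$ under the standing hypotheses.

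\emph{Step 2 (empirical deviation at the comparator, and lower bound on $\mathcal E_{n,2}(\mathscr u_n)$).} The function $\varphi v_m$ and $\nabla(\varphi v_m)$ are bounded by polynomials in $B$ and $d$, so Bernstein's inequality applied separately to the non-ratio averages $\mathcal E_{n,V}(\varphi v_m)$ and $\mathcal E_{n,2}(\varphi v_m)$ — and, after Cauchy–Schwarz plus a union bound in $j<k$ (using $\|\psi_j\|_\infty\leq\bar\mu_k$), to $\mathcal E_{n,P}(\varphi v_m)$ — combined with condition \eqref{ineq: condition in theorem: solutions obtained by penalty method are away from 0, ensure xi_5, R_2, R_4 small} yields, with probability at least $1-3\delta$, $\mathscr L_{k,n}(\varphi v_m)\leq \lambda_k+\gamma/8\leq 3\gamma/8$. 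Since $L_{k,n}(\mathscr u_n)\geq 0$ and $\mathscr u_n$ minimizes $\mathscr L_{k,n}$,
\[
\gamma(\mathcal E_{n,2}(\mathscr u_n)-1)^2\leq \mathscr L_{k,n}(\mathscr u_n)\leq \mathscr L_{k,n}(\varphi v_m)\leq 3\gamma/8,
\]
whence $\mathcal E_{n,2}(\mathscr u_n)\geq 1-\sqrt{3/8}\geq 1/3$.

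\emph{Step 3 (uniform transfer to the population).} Control $\sup_{u\in\mathcal F}|\mathcal E_2(u)-\mathcal E_{n,2}(u)|$ by symmetrization. The class $\{|u|^2:u\in\mathcal F\}$ is uniformly bounded by $O(B^2/d^2)$; a Dudley chaining estimate based on an $L^\infty$-covering of two-layer Softplus networks (the fixed multiplier $\varphi$ does not enter the covering number) produces an expected supremum of order $(B/d)(B/d+1)\sqrt{dm(1+\ln B)/n}$, and Talagrand's inequality together with \eqref{ineq: condition in theorem: solutions obtained by penalty method are away from 0, ensure R_1 < 1/16} upgrades this to the bound $1/12$ with probability $\geq 1-\delta$. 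Hence $\mathcal E_2(\mathscr u_n)\geq \mathcal E_{n,2}(\mathscr u_n)-1/12\geq 1/3-1/12=1/4$, finishing the proof. The main technical obstacle throughout is the absence of an a-priori lower bound on $\|\mathscr u_n\|_{L^2}$ that would be needed for ratio-type concentration; the resolution is that $\gamma(\mathcal E_{n,2}-1)^2$ is a polynomial (not a rational function) in bounded empirical averages, so only standard empirical-process tools are required, at the cost of the $\sqrt{dm(1+\ln B)}$ covering-number factor appearing in condition \eqref{ineq: condition in theorem: solutions obtained by penalty method are away from 0, ensure R_1 < 1/16}.
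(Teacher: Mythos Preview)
Your proposal is correct and follows essentially the same route as the paper. Both arguments (i) build a comparator $u_{\mathcal F}=\varphi v_m$ from Theorem~\ref{Thm: u H1 approximation by varphi Softplus networks}, (ii) use pointwise concentration (the paper uses Hoeffding, you use Bernstein) to bound $\mathscr L_{k,n}(u_{\mathcal F})$, (iii) exploit $L_{k,n}(\mathscr u_n)\geq 0$ together with the minimizer property to deduce $(\mathcal E_{n,2}(\mathscr u_n)-1)^2\leq \mathscr L_{k,n}(u_{\mathcal F})/\gamma$, and (iv) transfer from $\mathcal E_{n,2}$ to $\mathcal E_2$ by a uniform deviation bound on $\{u^2:u\in\mathcal F\}$ obtained via Dudley's entropy integral. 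The paper organizes the same ingredients through the decomposition $|\mathcal E_2(\mathscr u_n)-1|\leq |R_1|+[\lambda_k/\gamma+(R_2+R_3)/\gamma+(R_4+R_5)^2]^{1/2}$ and juggles the constants slightly differently, but the substance and the tools are identical.
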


\begin{remark}
The assumption $\gamma \geq 4 \lambda_{k}$  in Theorem \ref{theorem: solutions obtained by penalty method are away from 0} may be relaxed.
Proceeding along the same line of our proof, one can prove that $\|\mathscr{u}_{n}\|_{L^{2}(\Omega)} \geq r$ with high probability when $n, m$ are large enough, as long as $\gamma > \lambda_{k}/(1-r^{2})^{2}$.
\end{remark}

The conditions~\eqref{ineqs: conditions for theorem: solutions obtained by penalty method are away from 0} are weaker than~\eqref{ineqs: conditions for Main generalization theorem} to certain degree because the left hand sides of
(\ref{ineq: assumption in Main generalization theorem to ensure xi1+xi2<1/2}), (\ref{Simplifying condition for xi3 term}) are $O(\sqrt{m\ln m /n})$, $O(\sqrt{m/n})$ while the left hand sides of
(\ref{ineq: condition in theorem: solutions obtained by penalty method are away from 0, ensure R_1 < 1/16}), (\ref{ineq: condition in theorem: solutions obtained by penalty method are away from 0, ensure xi_5, R_2, R_4 small}) are $O(\sqrt{m/n})$, $O(1/\sqrt{n})$. 

As a direct application of our method, we obtain the generalization error estimate of the normalization penalty method.
 Denote by $\Upsilon(n,m,k,B,\bar{\mu}_{k},\beta,r,\delta)$ the error bound in the right hand side of~\eqref{ineq: error bound in Main generalization theorem, full parameter}.
\begin{corollary} \label{corollary: Main generalization theorem for the penalty method}
Under the same assumptions of Theorem \ref{theorem: solutions obtained by penalty method are away from 0}, for $\delta \in (0, 1/7)$, assume further that  $n$  and  $m$  are large enough so that (\ref{ineq: assumption in Main generalization theorem to ensure xi1+xi2<1/2}) and (\ref{Simplifying condition for xi3 term}) with $r = 0.49$ hold.
Then, there is an absolute constant $C$ such that with probability at least  $1-7 \delta$,
\begin{equation} \label{ineq: error bound in corollary: Main generalization theorem for the penalty method}
\begin{aligned}
			\ L_{k}(\mathscr{u}_{n})  -  \lambda_{k} 
			& \le\Upsilon(n,m,k,B,\bar{\mu}_{k},\beta,1,\delta) \\
			&\quad+  C\gamma \left[ \! \left(\frac{B^{2}}{d^{2}} \! + \! 1\right)  \! \sqrt{ \frac{d \left(1 \! + \! \ln  \! B\right) m \!  +  \!  \ln (1/\delta)}{n}}  \!  +  \! \frac{B^{2}}{m}\right] .
\end{aligned}
\end{equation}
\end{corollary}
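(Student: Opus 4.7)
The strategy is to reduce the analysis of the penalized minimizer $\mathscr{u}_n$ to the situation treated in Theorem~\ref{Main generalization theorem}, plus extra contributions coming from the normalization penalty. First, I would condition on the good event $\mathscr{A}$ from Theorem~\ref{theorem: solutions obtained by penalty method are away from 0}, on which $\mathcal{E}_2(\mathscr{u}_n) \geq 1/4$, i.e., $\|\mathscr{u}_n\|_{L^2(\Omega)} \geq 1/2 > 0.49 = r$. This event has probability at least $1-4\delta$, and on it the $1/r$ factors that appear inside $\Upsilon_1, \Upsilon_2$ are bounded by $2$ and can be absorbed into absolute constants, which is what the ``$r = 1$'' shorthand in the statement encodes.

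Because the penalty term is non-negative, $L_k(\mathscr{u}_n) - \lambda_k \leq \mathscr{L}_k(\mathscr{u}_n) - \lambda_k$. Choosing $\tilde u = \varphi v_m \in \mathcal{F}$ as the approximant of $u^{*}$ from Theorem~\ref{Thm: u H1 approximation by varphi Softplus networks} and using $\mathscr{L}_{k,n}(\mathscr{u}_n) \leq \mathscr{L}_{k,n}(\tilde u)$, I obtain the oracle decomposition
\[
\mathscr{L}_k(\mathscr{u}_n) - \lambda_k \;\leq\; (\mathscr{L}_k - \mathscr{L}_{k,n})(\mathscr{u}_n) + (\mathscr{L}_{k,n} - \mathscr{L}_k)(\tilde u) + (L_k(\tilde u) - \lambda_k) + \gamma\bigl(\mathcal{E}_2(\tilde u) - 1\bigr)^{2}.
\]
The third summand is bounded by the approximation analysis from the proof of Theorem~\ref{Main generalization theorem}, and the fourth by $\gamma\|u^{*} - \tilde u\|_{L^2}^{2}(\|u^{*}\|_{L^2} + \|\tilde u\|_{L^2})^{2} = O(\gamma B^{2}/m)$, which accounts for the $\gamma B^{2}/m$ summand of~\eqref{ineq: error bound in corollary: Main generalization theorem for the penalty method}. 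The ratio-type fluctuations of $L_k - L_{k,n}$ on $\mathcal{F}_{>r}$ reproduce the $\Upsilon(n,m,k,B,\bar\mu_k,\beta,1,\delta)$ contribution by reusing verbatim the concentration inequalities developed for Theorem~\ref{Main generalization theorem}.

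The genuinely new ingredient is the penalty fluctuation. Factoring
\[
(\mathcal{E}_{n,2}(u) - 1)^{2} - (\mathcal{E}_2(u) - 1)^{2} = \bigl(\mathcal{E}_{n,2}(u) - \mathcal{E}_2(u)\bigr)\bigl(\mathcal{E}_{n,2}(u) + \mathcal{E}_2(u) - 2\bigr),
\]
the second factor is uniformly bounded by $O(1 + B^{2}/d^{2})$ since $|\varphi| \leq 1/d$ and the network output lies in $O(B)$, hence $\|u\|_{L^{\infty}(\Omega)} = O(B/d)$ on $\mathcal{F}$. A uniform empirical-process bound on $\{u^{2} : u \in \mathcal{F}\}$ via Rademacher complexity and covering numbers for the $O(dm)$-parameter class $\mathcal{F}_{\mathrm{SP}_{\tau}, m}(B)$ (whose elements are $O(B)$-Lipschitz) yields $\sup_{u \in \mathcal{F}}|\mathcal{E}_{n,2}(u) - \mathcal{E}_2(u)| = O(\sqrt{(d(1+\ln B)m + \ln(1/\delta))/n})$ with probability at least $1-\delta$. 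Multiplying the two factors and by $\gamma$ produces the $\gamma(B^{2}/d^{2} + 1)\sqrt{\cdot}$ summand. Union-bounding the event $\mathscr{A}$ ($1-4\delta$), the two ratio-type events from the proof of Theorem~\ref{Main generalization theorem} ($2\delta$ total), and the penalty-concentration event ($\delta$) gives the claimed probability $1-7\delta$.

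The main obstacle will be the uniform concentration of the quadratic functional $u \mapsto (\mathcal{E}_{n,2}(u) - 1)^{2}$ on the nonlinear class $\varphi\mathcal{F}_{\mathrm{SP}_{\tau},m}(B)$ with the precise polynomial dependence on $B$, $d$, $m$ stated in~\eqref{ineq: error bound in corollary: Main generalization theorem for the penalty method}; once that is in hand, the rest is a careful bookkeeping of approximation and ratio-type oracle estimates already established in previous sections.
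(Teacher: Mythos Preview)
Your plan matches the paper's proof: the paper likewise drops the nonnegative penalty, uses the oracle decomposition for $\mathscr{L}_k$ via $\mathscr{L}_{k,n}(\mathscr{u}_n)\le \mathscr{L}_{k,n}(u_{\mathcal F})$, bounds $L_k-L_{k,n}$ at both $\mathscr{u}_n$ and $u_{\mathcal F}$ with the ratio-type events $A_1,A_2,A_3$ from \S\ref{section: Oracle inequality for the generalization error}, and controls the penalty fluctuation through the same factorization together with the uniform Rademacher bound $\xi_4$ on $\sup_{u\in\mathcal F}|\mathcal{E}_{n,2}(u)-\mathcal{E}_2(u)|$ (this is the event $A_4$, handled by Dudley's integral over the covering numbers of $\mathcal{G}_1$).

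Two bookkeeping corrections. First, there are \emph{three} ratio-type events, not two, and your ``penalty-concentration event'' is precisely $A_4$, which is already one of the four events comprising $\mathscr{A}$ in Theorem~\ref{theorem: solutions obtained by penalty method are away from 0}; the correct tally is $3\delta+4\delta=7\delta$. Second, in the penalty factorization the $O(B^2/d^2+1)$ prefactor actually lives on the \emph{first} factor $|\mathcal{E}_{n,2}-\mathcal{E}_2|$ (the envelope $M_{\mathcal F}^2=O(B^2/d^2)$ enters both the Rademacher complexity and the deviation term), whereas on the good event of Theorem~\ref{theorem: solutions obtained by penalty method are away from 0} the second factor $|\mathcal{E}_{n,2}(\mathscr{u}_n)+\mathcal{E}_2(\mathscr{u}_n)-2|$ is $O(1)$, not $O(1+B^2/d^2)$. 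The product is what you claim, but your attribution as written would yield the square of the prefactor.
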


The second term in (\ref{ineq: error bound in corollary: Main generalization theorem for the penalty method}) corresponds to the statistical and approximation error for the normalization penalty term.
For the normalization penalty method, there is a trade-off: a large $\gamma$ guarantees $\mathscr{u}_{n}$ away from $0$, but results in a larger generalization error.
To prove Theorem \ref{theorem: solutions obtained by penalty method are away from 0}, we shall derive a new oracle inequality to bound $\left|\mathcal{E}_{2}(\mathscr{u}_{n})-1\right| $ with the aid of Hoeffding's inequality and generalization bound via the Rademacher complexity. We use Dudley’s theorem and bounds for the covering numbers obtained in \S \ref{section: Statistical error} to bound the Rademacher complexity.

The poof of Corollary \ref{corollary: Main generalization theorem for the penalty method} relies on the fact $\|\mathscr{u}_{n}\|_{L^{2}(\Omega)} \geq 1/2$ and so all the estimates used to prove Theorem \ref{Main generalization theorem} with $r = 0.49$ are fully applicable. 
See Appendix \ref{appendix section: About the penalty method} for the proof of Theorem \ref{theorem: solutions obtained by penalty method are away from 0} and Corollary \ref{corollary: Main generalization theorem for the penalty method}.
\begin{remark}[About the choice of hyperparameters] 
	By Proposition \ref{proposition: error between Rayleigh quotient and lambdak bounded by energy excess} and Proposition \ref{Generalized proposition 2.1}, $\beta - \lambda_{k} + \lambda_{1} > 0$ should not be excessively small. 
	For the normalization penalty method, $\gamma$ should be larger than constant times $\lambda_{k}$.
	It follows from (\ref{ineq: error bound in Main generalization theorem, full parameter}) and (\ref{ineq: error bound in corollary: Main generalization theorem for the penalty method}),
	a reasonable choice of $\beta$ and $\gamma$ are $\mathcal{O}(\lambda_k)$  with approriate constants.
\end{remark}

\subsection{Analysis of the error accumulation}
In practice, since the exact eigenfunctions are unknown, we replace the exact eigenfunctions in the orthogonal penalty term with the approximate eigenfunctions obtained in the previous $k-1$ steps, which introduces the cumulative error. 
Theorem \ref{Main generalization theorem when cumulative error is added} below delineates the influence of using approximate eigenfunctions 
on the generalization bound of the $k$-th step, and Proposition \ref{proposition: Quadratic growth of cumulative error} gives the accumulation rate of generalization errors.
Denote by $\mathfrak{u}_{\theta j}$ the $j$-th  approximate eigenfunction parameterized by the neural network.
We use the $L^{2}$ normalization of $\mathfrak{u}_{\theta j}$ as a natural approximation of $\psi_{j}$ and denote $\bar{\nu}_{k} = \max_{1 \leq j \leq k-1} \|\mathfrak{u}_{\theta j}\|_{L^{\infty}(\Omega)}/\|\mathfrak{u}_{\theta j}\|_{L^{2}(\Omega)}$.
Consider the loss function 
\begin{equation*} 
	\begin{aligned}
		\widetilde{L}_{k}(u) = \frac{\mathcal{E}_{V}(u)}{\mathcal{E}_{2}(u)} + \beta_{k} \sum_{j=1}^{k-1} \frac{\mathscr{P}_{j}(u) }{\mathcal{E}_{2}(u) \mathcal{E}_{2}(\mathfrak{u}_{\theta j})}
	\end{aligned}
\end{equation*}
and the corresponding empirical loss
\begin{equation*} 
	\begin{aligned}
		\widetilde{L}_{k, n}(u) = \frac{\mathcal{E}_{n, V}(u)}{\mathcal{E}_{n, 2}(u)} + \beta_{k} \sum_{j=1}^{k-1} \frac{\mathscr{P}_{n, j}(u)}{\mathcal{E}_{n, 2}(u) \mathcal{E}_{n, 2}\left(\mathfrak{u}_{\theta j}\right)},
	\end{aligned}
\end{equation*}
where
$\mathscr{P}_{j}(u):=\left\langle u, \mathfrak{u}_{\theta j}\right\rangle^{2}$ and  $\mathscr{P}_{n, j}(u) := \big(n^{-1} \sum_{i=1}^{n} u(X_{i}) \mathfrak{u}_{\theta j}(X_{i})\big)^{2}.$ 
\begin{theorem}  \label{Main generalization theorem when cumulative error is added}
	Under Assumptions \ref{Assumption: V is bounded above and below} and \ref{Assumption: exist u* in both mathfrakB^s for s geq 3 and U_k}, 
	let $\mathcal{F} = \varphi \mathcal{F}_{\mathrm{SP}_{\tau}, m}(B)$
	with $B \! = \! \left(1 \! + \! \frac{2 d}{\pi s}\right) \! \|u^{*}\|_{\mathfrak{B}^{s}(\Omega)}$  and   $\tau=9\sqrt{m}$.  
	Let $r \in (0, 1/2)$, $\|\mathfrak{u}_{\theta j}\|_{L^{2}(\Omega)} \geq r$ for all $j$  and $\mathfrak{u}_{\theta k} = \mathop{\arg\min}_{u \in \mathcal{F}_{> r}} \widetilde{L}_{k,n}(u)$. 
	Given  $\delta \in\left(0, 1/3\right)$, assume that  $n$  and  $m$  are large enough so that $64B/\sqrt{m} \leq 1/2$, (\ref{ineq: assumption in Main generalization theorem to ensure xi1+xi2<1/2}) and (\ref{Simplifying condition for xi3 term}) with $\bar{\mu}_{k}$ replaced by $\bar{\nu}_{k}$ hold true. Then, with probability at least  $1-3 \delta$,
	\begin{equation} \label{ineq: error bound in Main generalization theorem when cumulative error is added}
		\begin{aligned}
			L_{k}\left(\mathfrak{u}_{\theta k}\right)-\lambda_{k}  
			&  \leqslant \Upsilon(n,m,k,B,\bar{\nu}_{k},\beta_{k},r,\delta) + 8 \beta_{k} \sum_{j=1}^{k-1} \sqrt{\frac{L_{j}(\mathfrak{u}_{\theta j})-\lambda_{j}}{\min \left\{\beta_{j}+\lambda_{1}-\lambda_{j}, \lambda_{j^{\prime}}-\lambda_{j}\right\}} }  ,
		\end{aligned}
	\end{equation}
	where $C$ in $\Upsilon$ is an absolute constant.
\end{theorem}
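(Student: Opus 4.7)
The plan is to split
$$L_k(\mathfrak{u}_{\theta k}) - \lambda_k = \bigl(\widetilde{L}_k(\mathfrak{u}_{\theta k}) - \lambda_k\bigr) + \bigl(L_k(\mathfrak{u}_{\theta k}) - \widetilde{L}_k(\mathfrak{u}_{\theta k})\bigr),$$
and bound each piece separately, where $\widetilde{\psi}_j := \mathfrak{u}_{\theta j}/\|\mathfrak{u}_{\theta j}\|_{L^2(\Omega)}$. The first piece is exactly the object bounded by Theorem \ref{Main generalization theorem} once $\widetilde{\psi}_j$ is substituted for $\psi_j$ throughout: the oracle inequality, the ratio-type concentration bounds, and the covering-number estimates for the orthogonal penalty depend on the reference vectors only through their $L^\infty$ norms, so rerunning the proof of Theorem \ref{Main generalization theorem} verbatim with $\bar{\nu}_k$ in place of $\bar{\mu}_k$ yields the bound $\Upsilon(n,m,k,B,\bar{\nu}_k,\beta_k,r,\delta)$. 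The only deviation is in the approximation step, where the approximant $v_m \in \varphi\mathcal{F}_{\mathrm{SP}_\tau,m}(B)$ of $u^*$ provided by Theorem \ref{Thm: u H1 approximation by varphi Softplus networks} is no longer exactly orthogonal to $\widetilde{\psi}_j$; the resulting deficit has the same form as the second piece and is absorbed into the cumulative tail below.

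For the second piece I would use the algebraic identity
$$\langle u,\widetilde{\psi}_j\rangle^2 - \langle u,\psi_j\rangle^2 = \langle u,\widetilde{\psi}_j-\psi_j\rangle\,\langle u,\widetilde{\psi}_j+\psi_j\rangle$$
and Cauchy--Schwarz together with $\|\widetilde{\psi}_j+\psi_j\|_{L^2(\Omega)}\leq 2$ to obtain
$$|L_k(u) - \widetilde{L}_k(u)| \leq 2\beta_k \sum_{j=1}^{k-1}\|\widetilde{\psi}_j - \psi_j\|_{L^2(\Omega)}$$
uniformly in nonzero $u$, the factor $\|u\|_{L^2}^2$ cancelling the Rayleigh denominator $\mathcal{E}_2(u)$. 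Choosing $\psi_j$ as the $L^2$-normalized projection of $\widetilde{\psi}_j$ onto the $j$-th eigenspace $U_j$ (resolving any multiplicity by this greedy alignment), a short Pythagorean computation using $\|\widetilde\psi_j\|_{L^2}=1$ yields $\|\widetilde{\psi}_j-\psi_j\|_{L^2(\Omega)}\leq \sqrt{2}\,\|P_j^\perp\widetilde{\psi}_j\|_{L^2(\Omega)}$, and Proposition \ref{Generalized proposition 2.1} applied to $\mathfrak{u}_{\theta j}$ at level $j$ controls the latter by $\sqrt{(L_j(\mathfrak{u}_{\theta j})-\lambda_j)/\min\{\beta_j+\lambda_1-\lambda_j,\lambda_{j'}-\lambda_j\}}$. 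Combining with the analogous contribution folded into the first piece and a crude constant count produces the claimed $8\beta_k\sum_j\sqrt{\cdot}$ tail.

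The main obstacle is twofold. First, the references $\widetilde{\psi}_j$ are random, depending on the samples used at earlier steps, so to invoke the ratio-type concentration machinery at step $k$ one must condition on the $\sigma$-algebra generated by $\mathfrak{u}_{\theta 1},\ldots,\mathfrak{u}_{\theta k-1}$; the fresh samples at step $k$ may be assumed independent of this history, after which the $\widetilde{\psi}_j$ behave as deterministic bounded functions with $L^\infty$ norm at most $\bar\nu_k$. Second, when $\lambda_j$ has multiplicity greater than one the orthonormal basis $\{\psi_j\}$ underlying Assumption \ref{Assumption: exist u* in both mathfrakB^s for s geq 3 and U_k} must be chosen consistently with the greedy alignment above; this can always be arranged by iteratively diagonalising within each eigenspace, and the orthogonality $u^*\perp\psi_j$ for $j<k$ is preserved because $U_k$ itself depends only on the eigenspaces, not on the chosen basis within them, so the perturbation estimate carries through unchanged.
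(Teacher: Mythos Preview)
Your proposal is essentially correct and follows the same route as the paper. The paper's proof (Appendix~\ref{appendix section: About the cumulative error}) carries out the five-term decomposition $L_k(\mathfrak{u}_{\theta k})-\lambda_k \le S_1+S_2+S_3+S_4+S_5$ with $S_1=L_k(\mathfrak{u}_{\theta k})-\widetilde L_k(\mathfrak{u}_{\theta k})$ and $S_4=\widetilde L_k(u_{\mathcal F})-L_k(u_{\mathcal F})$, which is your two-term split with the first piece further unpacked; both then rely on the same uniform bound for $|L_k-\widetilde L_k|$ (Proposition~\ref{proposition: uniform bound on the cumulative error widetildeLk(u) - Lk(u)}) obtained from Proposition~\ref{Generalized proposition 2.1} after aligning $\psi_j$ with $P_j\mathfrak{u}_{\theta j}/\|P_j\mathfrak{u}_{\theta j}\|_{L^2}$.

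Two small differences worth noting. First, the paper squeezes the constant: instead of bounding $\|\widetilde\psi_j+\psi_j\|_{L^2}\le 2$ and $\|\widetilde\psi_j-\psi_j\|_{L^2}\le\sqrt{2}\,\|P_j^\perp\widetilde\psi_j\|_{L^2}$ separately, it uses the identity $\|\widetilde\psi_j+\psi_j\|_{L^2}\,\|\widetilde\psi_j-\psi_j\|_{L^2}=2\sqrt{1-\langle\widetilde\psi_j,\psi_j\rangle^2}=2\|P_j^\perp\widetilde\psi_j\|_{L^2}$, yielding the per-term bound $2\beta_k\sqrt{\cdot}$; after the self-bounding rearrangement the factors $2S_1+2S_4$ give exactly the $8\beta_k$ in the statement, whereas your cruder estimate would produce $8\sqrt{2}\beta_k$ or worse. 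Second, because the statistical bound on $S_2$ involves the $\widetilde L_k$ penalty (not $L_k$), the self-bounding step where one moves $(\text{factor})\cdot L_k(\mathfrak{u}_{\theta k})$ to the left side needs a touch more care than in Theorem~\ref{Thm: oracle inequality for the generalization error}; the paper handles this by tracking the slightly different prefactors $(1-\xi_1)^{-2}$ and requiring $4\xi_1+\xi_2\le 1/2$ rather than $2\xi_1+\xi_2\le 1/2$. Your plan to ``rerun Theorem~\ref{Main generalization theorem} verbatim'' glosses over this, but the fix is routine and already covered by the hypothesis~(\ref{ineq: assumption in Main generalization theorem to ensure xi1+xi2<1/2}).
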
 

The first term on the right side of (\ref{ineq: error bound in Main generalization theorem when cumulative error is added})  is an extra generalization error at step $k$ and the second term represents the cumulative error. 
$\bar{\nu}_{k}$ plays a similar role as $\bar{\mu}_{k}$ in Theorem \ref{Main generalization theorem}.
Proposition \ref{proposition: Quadratic growth of cumulative error} shows that the generalization error grows quadratically as the order of the eigenfunction. 
\begin{proposition} \label{proposition: Quadratic growth of cumulative error}
Assume that for all $k \geq 1$, 
	\begin{equation*} \label{Error accumulation relation in proposition: Quadratic growth of cumulative error}
		\begin{aligned}
			L_{k}\left(\mathfrak{u}_{\theta k}\right)-\lambda_{k}  
			& \leqslant \Delta_{k} + 8 \beta_{k} \sum_{j=1}^{k-1} \sqrt{\frac{L_{j}(\mathfrak{u}_{\theta j})-\lambda_{j}}{\min \left\{\beta_{j}+\lambda_{1}-\lambda_{j}, \lambda_{j^{\prime}}-\lambda_{j}\right\}} }.
		\end{aligned}
	\end{equation*}
	Let $\tau_{k} = \max _{1 \leq j \leq k} \Delta_{j}/\beta_{j}$, $\rho_{0} = 0$ and $\rho_{k} = \max _{1 \leq j \leq k} 4 \sqrt{\beta_{j} / \min \left\{\beta_{j}+\lambda_{1}-\lambda_{j}, \lambda_{j^{\prime}}-\lambda_{j}\right\}} .$ 
	Then, 
	\begin{equation} \label{eq: conclusion of prop: Quadratic growth of cumulative error, recursive relation}
		L_{k}\left(\mathfrak{u}_{\theta k}\right)-\lambda_{k} \leq \beta_{k} \Big((k-1) \rho_{k-1} + \sqrt{\tau_{k}} \Big)^{2}.
	\end{equation}
\end{proposition}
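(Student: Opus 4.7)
\medskip

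\noindent\textbf{Proof plan for Proposition \ref{proposition: Quadratic growth of cumulative error}.}
The plan is to reduce the recursive inequality to a clean one-dimensional recursion in a single normalized quantity, and then prove the claimed bound by induction on $k$. Set $e_j:=L_j(\mathfrak{u}_{\theta j})-\lambda_j\ge 0$ and
\[
\alpha_j := 4\sqrt{\beta_j/\min\{\beta_j+\lambda_1-\lambda_j,\lambda_{j'}-\lambda_j\}},
\]
so that $\rho_k=\max_{1\le j\le k}\alpha_j$ is nondecreasing in $k$. The factor $8\beta_k\sqrt{e_j/\min\{\beta_j+\lambda_1-\lambda_j,\lambda_{j'}-\lambda_j\}}$ can be rewritten as $2\beta_k\alpha_j\sqrt{e_j/\beta_j}$, so dividing the hypothesis by $\beta_k$ and setting $f_j:=\sqrt{e_j/\beta_j}$ gives the simplified recursion
\[
f_k^2 \le \frac{\Delta_k}{\beta_k}+2\sum_{j=1}^{k-1}\alpha_j f_j \le \tau_k + 2\sum_{j=1}^{k-1}\alpha_j f_j.
\]
The target inequality \eqref{eq: conclusion of prop: Quadratic growth of cumulative error, recursive relation} is then equivalent to $f_k\le (k-1)\rho_{k-1}+\sqrt{\tau_k}$.

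I would prove this last inequality by induction on $k$. For $k=1$ the sum is empty and $f_1^2\le \tau_1$, while $(k-1)\rho_0=0$, so the base case is immediate. For the inductive step, suppose $f_j\le (j-1)\rho_{j-1}+\sqrt{\tau_j}$ for every $j<k$. Since both $\tau_j$ and $\rho_j$ are nondecreasing in $j$, one has $f_j\le (j-1)\rho_{k-1}+\sqrt{\tau_k}$ and $\alpha_j\le \rho_{k-1}$ for all $j\le k-1$. Substituting these two bounds into the simplified recursion and using $\sum_{j=1}^{k-1}(j-1)=(k-1)(k-2)/2$ yields
\[
f_k^2\le \tau_k + 2\rho_{k-1}\sum_{j=1}^{k-1}\bigl[(j-1)\rho_{k-1}+\sqrt{\tau_k}\bigr] = \tau_k + (k-1)(k-2)\rho_{k-1}^2 + 2(k-1)\rho_{k-1}\sqrt{\tau_k}.
\]

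To conclude, I would compare this with the desired square
\[
\bigl((k-1)\rho_{k-1}+\sqrt{\tau_k}\bigr)^2 = (k-1)^2\rho_{k-1}^2 + 2(k-1)\rho_{k-1}\sqrt{\tau_k} + \tau_k,
\]
and observe that the only gap is the elementary inequality $(k-1)(k-2)\le (k-1)^2$, which is the mild slack the bound $(k-1)\rho_{k-1}+\sqrt{\tau_k}$ leaves for the induction to close. Taking square roots gives $f_k\le (k-1)\rho_{k-1}+\sqrt{\tau_k}$, and multiplying by $\sqrt{\beta_k}$ and squaring recovers \eqref{eq: conclusion of prop: Quadratic growth of cumulative error, recursive relation}. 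There is no genuine obstacle in the argument; the only place that deserves care is the reduction step, namely verifying that after dividing by $\beta_k$ the coefficient of each $f_j$ is exactly $2\alpha_j$ and that the definitions of $\tau_k$ and $\rho_k$ as running maxima are precisely what is needed to allow the induction hypothesis at index $j<k$ to be replaced uniformly by $(j-1)\rho_{k-1}+\sqrt{\tau_k}$.
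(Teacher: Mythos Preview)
Your proposal is correct and follows essentially the same strategy as the paper: normalize by setting $f_j=\sqrt{(L_j(\mathfrak{u}_{\theta j})-\lambda_j)/\beta_j}$, reduce the hypothesis to a clean scalar recursion, and close by induction using the monotonicity of $\tau_k$ and $\rho_k$ together with a perfect-square identity.

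The only organizational difference is that the paper inducts on the partial sums $d_k:=\sum_{j\le k} f_j$, proving $d_k\le \rho_{k-1}k^2/2+\sqrt{\tau_k}\,k$ and then reading off the bound on $f_k^2=(d_k-d_{k-1})^2$ from $f_k^2\le \tau_k+2\rho_{k-1}d_{k-1}$, whereas you induct directly on $f_k$. Your route is a little more direct for this statement; the paper's route additionally yields a bound on the cumulative sum $d_k$, which is not needed here but could be useful elsewhere.
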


When $k = 1$, the bound (\ref{eq: conclusion of prop: Quadratic growth of cumulative error, recursive relation}) changes to $L_{k}\left(\mathfrak{u}_{\theta k}\right)-\lambda_{k}  \leqslant \Delta_{k}$ and there is no cumulative error.
To prove Theorem \ref{Main generalization theorem when cumulative error is added}, we shall derive a uniform bound for $\left|\widetilde{L}_{k}(u) - L_{k}(u)\right|$ and a new oracle inequality to handle the different penalty term. The Proposition \ref{proposition: Quadratic growth of cumulative error} is proved by induction. By a similar argument, one can prove that the quadratic growth rate of error accumulative with respect to $k$ is sharp.
We postpone the poof of Theorem \ref{Main generalization theorem when cumulative error is added} and Proposition \ref{proposition: Quadratic growth of cumulative error} to Appendix \ref{appendix section: About the cumulative error}.
Similar to Corollary \ref{Corollary: the generalization error in terms of eigenvalues and $H^{1}$-norm of eigenfunctions}, the error bounds in Corollary \ref{corollary: Main generalization theorem for the penalty method}, Theorem \ref{Main generalization theorem when cumulative error is added} and Proposition \ref{proposition: Quadratic growth of cumulative error} may also be translated into error bounds with respect to the eigenvalues and the $H^{1}$-norm of eigenfunctions. We will not delve into details for simplicity.

The remainder of the paper is organized as follows.
We firstly show the numerical results in \S \ref{section: Numerical results}. 
In \S \ref{section: Oracle inequality for the generalization error}, we derive oracle inequality for the generalization error. 
We prove the approximation results in \S \ref{section: Approximation theory for sine spectral Barron functions}, the statistical error estimate in \S \ref{section: Statistical error}, Theorem \ref{Main generalization theorem} in \S \ref{Proof of the main generalization theorem} and regularity estimate in \S \ref{Section: Solution theory in spectral Barron Spaces}.
Stability estimates and the proofs for certain technical results are postponed to Appendix \ref{section: Stability estimate of the k-th eigenfunction}-\ref{Appendix Section: Solution theory in spectral Barron Spaces}. We prove the results about the penalty method in Appendix \ref{appendix section: About the penalty method}, analyze of the accumulative error in Appendix \ref{appendix section: About the cumulative error}, and prove the a priori bounds for the eigenmodes in Appendix~\ref{appendix section: Simple properties of eigenvalues and eigenfunctions}.

\section{Numerical results}
\label{section: Numerical results}
\subsection{Deep Learning Scheme}
We firstly show the neural network architecture designed to solve the eigenvalue problem \eqref{eq1}; see Figure \ref{architecture}.
The left part of our model consists of a fully connected neural network with several hidden layers, each of equal width. We denote the number of hidden layers as $l$ and the width of each layer as $m$. In our experiments, we take $l=3$ and $m=40$. Thus, each network has approximately $3,500$ parameters. The activation function is $\sigma = \tanh$, and we vectorize $\sigma(x)$ as $\tilde{\sigma}(x)$, i.e., $ \tilde{\sigma}(x) = (\sigma(x_1), \sigma(x_2), \dots, \sigma(x_m)).$
\begin{figure}
\begin{center}		
	\includegraphics[width=0.6\columnwidth]{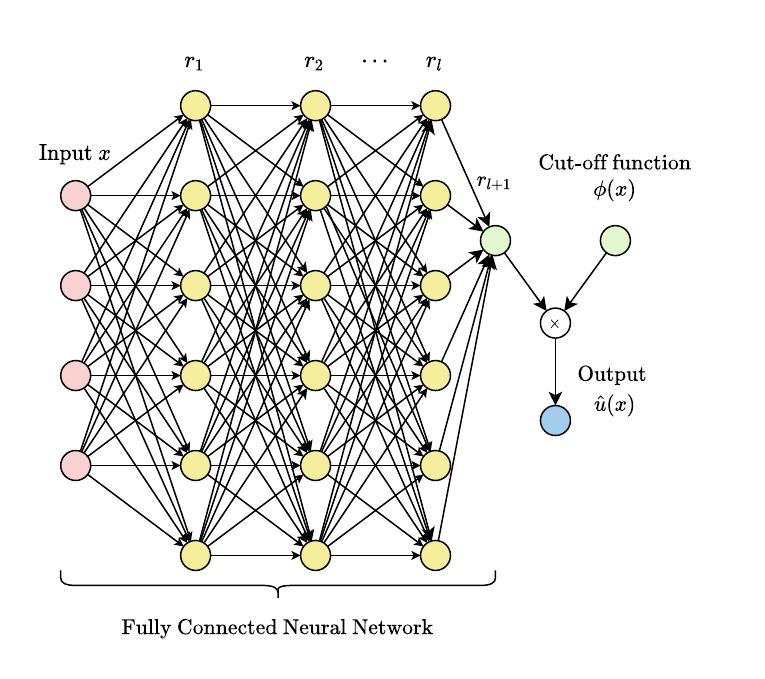}
	\caption{The neural network architecuture combined with cut-off funtcions.}
	\label{architecture}
\end{center}
\end{figure}

The input layer maps the coordinates of the sampling points, from $\mathbbR^d$ to $\mathbbR^m$. The output of the first layer is given by
$r_1 = \tilde{\sigma}(W_1 x + b_1)$,
where $W_1 \in \mathbbR^{m \times d}$ and $b_1 \in \mathbbR^{m}$. 
Subsequent hidden layers also contain similar transformations, mapping values from $\mathbbR^m$ to $\mathbbR^m$, and the output of the $i$-th layer is represented as
\begin{equation*}
	r_i = \tilde{\sigma}(W_ir_{i-1} + b_i), \quad 2 \leq i \leq l,
\end{equation*}
where $W_i \in \mathbbR^{m \times m}$ and $b_i \in \mathbbR^m$.
The final layer on the left part is an output node, yielding a value given by
\(
	r_{l+1} = W_{l+1}r_l + b_{l+1},
\)
where $W_{l+1} \in \mathbbR^{m}$, $b_{l+1} \in \mathbbR$.
The final output function $u(x)$ is obtained by multiplying the output of the left part network with a cut-off function
\[
	u(x) = r_{l+1}(x) \phi(x).
\]
As mentioned earlier, the cut-off function plays a key role in the architecture by ensuring the output function satisfies the Dirichlet boundary condition. Similar architectures appear in \cite{architecture1_khoo2019solving, architecture2_guo2024deep, architecture3_holliday2023solving}.

The complete set of parameters in our architecture is defined as
\begin{equation*}
	\theta{:}= \{W_1, \dots, W_{l+1}, b_1, \dots, b_{l+1}\}.
\end{equation*}
In each epoch, the loss function is computed using \eqref{empirical loss}, and parameters are updated using the adaptive moment estimation (ADAM) optimizer. This process is iterated over multiple epochs until the loss function decreases to a sufficiently small value, indicating approximate eigenmodes have been achieved. Initially, the learning rate is set to $5\times10^{-3}$, and $1,000$ points are used for calculating the empirical loss. To balance accuracy and efficiency, the learning rate is reduced to a quarter of its current value, and the number of sampling points is doubled every $20,000$ epochs. Each eigenmode is derived through parameter updates over $120,000$ epochs. Besides, when solving for the $k$-th eigenvalue $(k \geq 2)$, we take $\beta = 4 \lambda_{k-1}$. This choice is sufficiently large to discover the subsequent correct eigenvalue in these cases.
\subsection{Regular potential}
In the first test, we use the potential function
\begin{equation}
	V(x_1, \cdots, x_d) = \frac{1}{d} \sum_{i=1}^d \cos(\pi x_i+\pi),
\label{potential_1}
\end{equation}
and test our method in the $d$-dimensional cube $\Omega = (-1,1)^d$.
Since this potential function is essentially decoupled, we compute the reference eigenmodes using the spectral method, as described in \cite{eigenvalue_han2020solving}.

We test four different cut-off functions,
\begin{equation*}
	\phi_a = \prod_{i=1}^{d} (1-x_i^2), \quad 
	\phi_b = \bigg(\sum_{i=1}^{d} \frac{1}{1-x_i^2}\bigg)^{-1}, \quad
	\phi_c = \prod_{i=1}^{d} \cos(\frac{\pi}{2} x_i), \quad 
	\phi_d = \bigg(\sum_{i=1}^{d} \frac{1}{\cos(\frac{\pi}{2} x_i)}\bigg)^{-1},
\end{equation*}
and the results are summarized in Table \ref{table1-1}. All results indicate that our method provides satisfactory solutions. Each of the four cut-off functions produced solutions with errors less than $6\times 10^{-4}$ for the first eigenvalue and less than $5\times 10^{-3}$ for at least the first $30$ eigenvalues. Notably, the performance varied slightly across different cut-off functions, with specific functions, such as $\phi_c$, achieving errors of less than $1 \times 10^{-3}$ for the first $30$ eigenvalues.

We also compare our results with the standard penalty method, which imposes the boundary conditions in a soft manner. The loss function we used is
\begin{equation}
	\frac{ \langle u, \mathcal{H} u\rangle_{H^{1} \times H^{-1}} }{\|u\|_{L^{2}(\Omega)}^{2}}+\beta \sum_{j=1}^{k-1} \frac{\langle u, \psi_{j}\rangle^{2} }{\|u\|_{L^{2}(\Omega)}^{2}} + \gamma \frac{\|u\|_{L^{2}(\partial \Omega)}^{2}}{\|u\|_{L^{2}(\Omega)}^{2}}.
	\label{loss_penalty}
\end{equation}
It includes an additional boundary penalty term compared to \eqref{the first definition of the loss function Lk(u)}.
As mentioned before, a small $\gamma$ will introduce considerable model error. Conversely, using a large $\gamma$ can enhance calculation accuracy, but training will become more difficult and inefficient due to the rough loss landscape.
To ensure a fair comparison, we use the boundary penalty method with different hyperparameters $\gamma$ to show its optimal performance.

For comparison, we let the cut-off function be the identity function in the network architecture and keep the rest of the configurations the same as in the previous test. Table~\ref{table1-2} demonstrates that the boundary penalty method performs much worse than our method, with errors one or more orders of magnitude greater than ours. Even with the optimal choice, $\gamma = 500$, the accuracy of numerical solutions deteriorate rapidly, and the error for the tenth eigenvalue exceeds $1 \times 10^{-2}$. While taking other hyperparameter $\gamma$, the calculation error is greater than $1 \times 10^{-2}$ even for the first eigenvalue.
%
\begin{table}
	\footnotesize
	\caption{Estimates of the eigenvalues with potential function \eqref{potential_1} and $d=5$, using different cut-off functions.}
	\begin{tabular}{ccccccccc}
		\hline
		&            & k=1      & k=2      & k=3      & k=5      & k=10     & k=15     & k=30     \\ \hline
		& Exact      & 11.8345  & 19.3369  & 19.3369  & 19.3369  & 26.8392  & 26.8392  & 34.3416  \\ \hline
		\multirow{2}{*}{$\phi_a$} & Result       & 11.8379  & 19.3338  & 19.3394  & 19.3512  & 26.8583  & 26.8621  & 34.3886  \\
		& Rel. error & 2.87$\times 10^{-4}$ & 1.60$\times 10^{-4}$ & 1.29$\times 10^{-4}$ & 7.40$\times 10^{-4}$ & 7.12$\times 10^{-4}$ & 8.53$\times 10^{-4}$ & 1.37$\times 10^{-3}$ \\ \hline
		\multirow{2}{*}{$\phi_b$} & Result       & 11.8396  & 19.3588  & 19.3700  & 19.3784  & 26.9059  & 26.9130  & 34.5095  \\
		& Rel. error & 4.31$\times 10^{-4}$ & 1.13$\times 10^{-3}$ & 1.71$\times 10^{-3}$ & 2.15$\times 10^{-3}$ & 2.49$\times 10^{-3}$ & 2.75$\times 10^{-3}$ & 4.89$\times 10^{-3}$ \\ \hline
		\multirow{2}{*}{$\phi_c$} & Result       & 11.8343  & 19.3358  & 19.3382  & 19.3428  & 26.8474  & 26.8553  & 34.3702  \\
		& Rel. error & 1.69$\times 10^{-5}$ & 5.69$\times 10^{-5}$ & 6.72$\times 10^{-5}$ & 3.05$\times 10^{-4}$ & 3.06$\times 10^{-4}$ & 6.00$\times 10^{-4}$ & 8.33$\times 10^{-4}$ \\ \hline
		\multirow{2}{*}{$\phi_d$} & Result       & 11.8413  & 19.3533  & 19.3692  & 19.3714  & 26.8972  & 26.9095  & 34.4887  \\
		& Rel. error & 5.75$\times 10^{-4}$ & 8.48$\times 10^{-4}$ & 1.67$\times 10^{-3}$ & 1.78$\times 10^{-3}$ & 2.16$\times 10^{-3}$ & 2.62$\times 10^{-3}$ & 4.28$\times 10^{-3}$ \\ \hline
	\end{tabular}
	\label{table1-1}
\end{table}
\begin{table}
\footnotesize
\caption{Estimates of the eigenvalues with potential function \eqref{potential_1} and $d=5$, using boundary penalty method with different parameters $\gamma$.}
\begin{tabular}{ccccccccc}
\hline
&            & k=1      & k=2      & k=3      & k=5      & k=10     & k=15     & k=30     \\ \hline
$\gamma$               & Exact      & 11.8345  & 19.3369  & 19.3369  & 19.3369  & 26.8392  & 26.8392  & 34.3416  \\ \hline
\multirow{2}{*}{100}   & Result       & 11.3854  & 18.6194  & 18.6198  & 18.6323  & 25.9356  & 25.9711  & 33.3787  \\
& Rel. error & 3.79$\times 10^{-2}$ & 3.71$\times 10^{-2}$ & 3.71$\times 10^{-2}$ & 3.64$\times 10^{-2}$ & 3.37$\times 10^{-2}$ & 3.23$\times 10^{-2}$ & 2.80$\times 10^{-2}$ \\ \hline
\multirow{2}{*}{500}   & Result       & 11.8023  & 19.3761  & 19.3781  & 19.4148  & 27.2571  & 27.3496  & 35.4504  \\
& Rel. error & 2.72$\times 10^{-3}$ & 2.03$\times 10^{-3}$ & 2.13$\times 10^{-3}$ & 4.03$\times 10^{-3}$ & 1.56$\times 10^{-2}$ & 1.90$\times 10^{-2}$ & 3.23$\times 10^{-2}$ \\ \hline
\multirow{2}{*}{2000}  & Result       & 11.9934  & 19.8330  & 19.9228  & 20.0627  & 28.3052  & 28.6824  & 38.7467  \\
& Rel. error & 1.34$\times 10^{-2}$ & 2.57$\times 10^{-2}$ & 3.03$\times 10^{-2}$ & 3.75$\times 10^{-2}$ & 5.46$\times 10^{-2}$ & 6.87$\times 10^{-2}$ & 1.28$\times 10^{-1}$ \\ \hline
\multirow{2}{*}{10000} & Result       & 12.4805  & 21.0279  & 21.2185  & 21.8146  & 30.5426  & 33.0487  & 45.7008  \\
& Rel. error & 5.46$\times 10^{-2}$ & 8.74$\times 10^{-2}$ & 9.73$\times 10^{-2}$ & 1.28$\times 10^{-1}$ & 1.38$\times 10^{-1}$ & 2.31$\times 10^{-1}$ & 3.31$\times 10^{-1}$ \\ \hline
	\end{tabular}
\label{table1-2}
\end{table}

Next, we set $d = 10$ and evaluate our method in a higher dimensional scenario. In this case, the smallest eigenvalue is unique, and the second to the eleventh eigenvalues are equal, followed by $45$ equal eigenvalues.
Therefore, we choose to only calculate the first $15$ eigenvalues. 
The cut-off functions $\phi_a$ and $\phi_c$ are employed, while also using the boundary penalty method with proper hyperparameter $\gamma$.
As shown in Table~\ref{table2}, our method demonstrates superior performance compared to the boundary penalty method. 
Although the performance of our method with different cut-off functions varies slightly, all calculations yield errors less than $1 \times 10^{-2}$ for the first $15$ eigenvalues, which is significantly less than that of the boundary penalty method. 
Notably, the cut-off function $\phi_c$ seems to be most suitable for $d=10$.
%
\begin{table}[H]
	\footnotesize
\caption{Estimates of the eigenvalues with potential function \eqref{potential_1} and $d=10$.} 
\begin{tabular}{ccccccccc}
\hline
&            & k=1      & k=2      & k=3      & k=5      & k=11     & k=12     & k=15     \\ \hline
& Exact      & 24.1728  & 31.6250   & 31.6250   & 31.6250   & 31.6250   & 39.0772  & 39.0772  \\ \hline
cut-off  & Result     & 24.2677  & 31.7994  & 31.8178  & 31.8693  & 31.9868  & 39.4044  & 39.4476  \\
($\phi_a$)& Rel. error & 3.93$\times 10^{-3}$ & 5.51$\times 10^{-3}$ & 6.10$\times 10^{-3}$ & 7.72$\times 10^{-3}$ & 1.14$\times 10^{-2}$ & 8.37$\times 10^{-3}$ & 9.48$\times 10^{-3}$ \\ \hline
cut-off  & Result     & 24.1895  & 31.6329  & 31.6541  & 31.6633  & 31.711   & 39.2598  & 39.2898  \\
($\phi_c$)& Rel. error & 6.91$\times 10^{-4}$ & 2.50$\times 10^{-4}$ & 9.20$\times 10^{-4}$ & 1.21$\times 10^{-3}$ & 2.72$\times 10^{-3}$ & 4.67$\times 10^{-3}$ & 5.44$\times 10^{-3}$ \\ \hline
penalty  & Result     & 20.6123  & 26.4801  & 26.5147  & 26.6332  & 26.7601  & 32.7747  & 32.9277  \\
($\gamma = 20$)& Rel. error & 1.47$\times 10^{-1}$ & 1.63$\times 10^{-1}$ & 1.62$\times 10^{-1}$ & 1.58$\times 10^{-1}$ & 1.54$\times 10^{-1}$ & 1.61$\times 10^{-1}$ & 1.87$\times 10^{-1}$ \\ \hline
penalty & Result     & 25.7217  & 34.8861  & 34.9384  & 35.2846  & 37.4258  & 44.3579  & 45.8481  \\
($\gamma = 100$)& Rel. error & 6.41$\times 10^{-2}$ & 1.03$\times 10^{-1}$ & 1.05$\times 10^{-1}$ & 1.16$\times 10^{-1}$ & 1.83$\times 10^{-1}$ & 1.35$\times 10^{-1}$ & 1.73$\times 10^{-1}$ \\ \hline
penalty & Result     & 31.7256  & 43.7200  & 43.9436  & 44.4301  & 49.7948  & 51.1777  & 54.2697  \\
($\gamma = 500$)& Rel. error & 3.12$\times 10^{-1}$ & 3.82$\times 10^{-1}$ & 3.90$\times 10^{-1}$ & 4.05$\times 10^{-1}$ & 5.75$\times 10^{-1}$ & 3.10$\times 10^{-1}$ & 3.89$\times 10^{-1}$ \\ \hline
\end{tabular}
\label{table2}
\end{table}
\subsection{Inverse square potential}
In the last test, we use the inverse square potential
\begin{equation}
V(x,y,z) = \frac{c^2}{x^2+y^2+z^2},
\label{potential_2}
\end{equation} and solve the Schrödinger eigenvalue problem with $d=3$. We compare our result with the spectral method based on separation of variable \cite{eigenvalue_sui2020efficient} to demonstrate the effectiveness of our approach.

We let the domain be a unit ball, $c=1/3$ and take the cut-off function
\begin{equation*}
	\phi(x,y,z) = 1-(x^2+y^2+z^2).
\end{equation*}
As presented in Table \ref{table3-1}, the relative difference between these two methods for the first five eigenvalues is less than $4 \times 10^{-4}$. For the tenth eigenvalue, the highest eigenvalue provided in \cite{eigenvalue_sui2020efficient}, the relative difference remains around $1 \times 10^{-3}$.

Additionally, we test our method in a 3-dimensional ring, i.e., $\Omega = \{(x,y,z) \in \mathbbR^3: \frac{1}{2} \leq \sqrt{x^2+y^2+z^2} \leq 1\}$ and take $c=1/2$. The cut-off function we used here is
\begin{equation}
\phi(x,y,z) = \bigg(1-(x^2+y^2+z^2)\bigg) \times \bigg((x^2+y^2+z^2) - \frac{1}{4}\bigg).
\end{equation}
Table~\ref{table3-2} demonstrates the effectiveness of the method, which keeps the differences of the first nine eigenvalue less than $2 \times 10^{-3}$.
\begin{table}
\caption{Estimates of the eigenvalues with potential function \eqref{potential_2} in a unit ball.}
\begin{tabular}{cccccc}
	\hline
	& k=1      & k=2      & k=3      & k=5      & k=10     \\ \hline
	Our        & 10.7873  & 20.6167  & 20.6184  & 33.5391  & 41.4362  \\
	\cite{eigenvalue_sui2020efficient}       & 10.7836  & 20.6206  & 20.6206  & 33.5352  & 41.3859  \\
	Rel. diff. & 3.43$\times 10^{-4}$ & 1.89$\times 10^{-4}$ & 1.07$\times 10^{-4}$ & 1.16$\times 10^{-4}$ & 1.22$\times 10^{-3}$ \\ \hline
\end{tabular}
\label{table3-1}
\end{table}
\begin{table}
\caption{Estimates of the eigenvalues with potential function \eqref{potential_2} in a $3-$dimensional ring.}
\begin{tabular}{cccccc}
\hline
& k=1      & k=2      & k=3      & k=5      & k=9      \\ \hline
Our        & 40.0149  & 43.7195  & 43.7281  & 51.1062  & 51.1355  \\
\cite{eigenvalue_sui2020efficient}        & 39.9433  & 43.6545  & 43.6545  & 51.0341  & 51.0341  \\
Rel. diff. & 1.79$\times 10^{-3}$ & 1.49$\times 10^{-3}$ & 1.69$\times 10^{-3}$ & 1.41$\times 10^{-3}$ & 1.99$\times 10^{-3}$ \\ \hline
\end{tabular}
\label{table3-2}
\end{table}

\section{Oracle inequality for the generalization error}  \label{section: Oracle inequality for the generalization error}
In this part, we introduce an oracle inequality for the empirical loss.
As a preparation, we firstly introduce concentration inequalities for ratio-type suprema of empirical processes. 

\subsection{Concentration inequalities for normalized empirical processes} \label{Concentration inequalities for normalized empirical processes}
Let  $\mathscr{F}$  be a class of real valued measurable functions\footnote{In order to avoid measurability problems, we shall assume that the supremum over the class $\mathscr{F}$ or over any of the subclasses we consider is in fact a countable supremum. In this case we say that the class $\mathscr{F}$ is measurable.} taking values in  $[0, 1]$. 
Let  $X$, $X_{1}$, $X_{2}$, $...$ be $i.i.d.$ random variables with distribution  $\mathcal{P}$. 
We denote by  $\mathcal{P}_{n} := n^{-1} \sum_{i=1}^{n} \delta_{X_{i}} $  the empirical measure based on the sample  $\left(X_{1}, \ldots, X_{n}\right)$. 
Let $\mathcal{P} f = \mathbf{E}f(X)$ and $\operatorname{Var}_{\mathcal{P}}(f) = \mathcal{P} f^{2}-(\mathcal{P} f)^{2}$.
Suppose that  $\sigma_{\mathcal{P}}(f)$  is defined such that 
$$\operatorname{Var}_{\mathcal{P}}(f) \leq \sigma_{\mathcal{P}}^{2}(f) \leq 1, \quad  f \in \mathscr{F}.$$
In particular,  $\sigma_{\mathcal{P}}(f)$  may be the standard deviation itself or equal to  $\sqrt{\mathcal{P} f}$ because $f$  takes values in  $[0, 1]$.
Here, we present concentration inequalities for the supremum of the normalized empirical process
$$\sup _{f \in \mathscr{F}, \sigma_{\mathcal{P}}(f)>r } \frac{\left|\mathcal{P}_{n} f - \mathcal{P} f\right|}{\sigma_{\mathcal{P}}^{2}(f)}$$
for some properly chosen cutoff $r\in (0, 1)$. Define the random variable
$$\left\|\mathcal{P}_{n}-\mathcal{P}\right\|_{\mathscr{F}} := \sup _{f \in \mathscr{F}} \left|\mathcal{P}_{n} f-\mathcal{P} f\right| =  \sup _{f \in \mathscr{F}} \left|\frac{1}{n} \sum_{i=1}^{n} f\left(X_{i}\right)-\mathbf{E}f(X)\right|,$$
which measures the absolute deviation between the sample average  $\mathcal{P}_{n} f$  and the population average  $\mathcal{P}f$, uniformly over the class  $\mathscr{F}$.  For  $0<r<s$,  we define
$$\mathscr{F}(r):=\left\{f \in \mathscr{F}: \sigma_{\mathcal{P}}(f) \leq r\right\}
\quad \text{and} \quad 
\mathscr{F}(r, s]:=\mathscr{F}(s) \backslash \mathscr{F}(r) .$$ For  $q > 1$  and  $r < s \leq r q^{l}$  with $l \in \mathbb{N}$, let $\rho_j:= r q^{j}$ and define\footnote{When there is no ambiguity, we omit the superscript $\mathscr{F}$ of $K$.}
\begin{equation} \label{the quantity En,q(r, s]}
	\begin{aligned} 
		K^{\mathscr{F}}_{n, q}(r, s] :=
		\max_{1\leq j\leq l} \frac{\mathbf{E}\left\|\mathcal{P}_{n}-\mathcal{P}\right\|_{\mathscr{F}\left(\rho_{j-1}, \rho_{j}\right]}}{\rho_{j-1}^{2}}.
	\end{aligned}
\end{equation}

We recall a concentration inequality proved in \citep[Lemma 2]{RatioLimitTheorems4EP} when $\sigma_{\mathcal{P}}(f) = \sqrt{\mathcal{P} f}$.
\begin{lemma}{\citep[Lemma 2]{RatioLimitTheorems4EP}}
	For  $t>0$,
	\begin{equation*} \label{cite RatioLimitTheorems4EP Lemma 2}
		\begin{aligned} 
			\mathbf{P}\left\{\sup _{f \in \mathscr{F}(r, s]}\left|\frac{\mathcal{P}_{n} f}{\mathcal{P} f} - 1\right| \geq K_{n, q}(r, s]+\sqrt{\frac{2t}{n r^{2}}\left(q^{2}+2 K_{n, q}(r, s]\right)}+\frac{t}{3 n r^{2}}\right\} 
			\leq \frac{q^{2}}{q^{2}-1} \frac{q}{t} e^{-t / q}.
		\end{aligned}
	\end{equation*}
\end{lemma}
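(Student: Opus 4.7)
The plan is to use the classical peeling (stratification) technique together with Bousquet's version of Talagrand's concentration inequality for bounded empirical processes. The idea is to decompose $\mathscr{F}(r,s]$ into geometric sub-shells on which the weak variance is essentially constant, apply a local concentration inequality on each sub-shell, and then patch the estimates together by a union bound calibrated to the geometric scaling.

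First, I would partition $\mathscr{F}(r,s] = \bigcup_{j=1}^{l}\mathscr{F}_j$ with $\mathscr{F}_j := \mathscr{F}(\rho_{j-1},\rho_j]$ and $\rho_j = r q^j$. On $\mathscr{F}_j$ the weak variance is controlled by $\sigma_{\mathcal{P}}^2(f) \le \rho_j^2 = q^2\rho_{j-1}^2$, while the condition $\sigma_\mathcal{P}(f) > \rho_{j-1}$ (which here means $\mathcal{P} f > \rho_{j-1}^2$) provides a uniform lower bound on the denominator of the ratio $|\mathcal{P}_n f/\mathcal{P} f - 1|$. Since $|f|\le 1$, the centered functions are bounded by $1$ in $L^{\infty}$, so Bousquet's inequality on each slice, with a deviation parameter $t_j$ to be chosen, yields with probability at least $1-e^{-t_j}$
\[
\sup_{f \in \mathscr{F}_j}\lvert\mathcal{P}_n f - \mathcal{P} f\rvert \;\le\; E_j + \sqrt{\tfrac{2 t_j(\rho_j^2 + 2E_j)}{n}} + \tfrac{t_j}{3n},
\]
where $E_j := \mathbf{E}\|\mathcal{P}_n - \mathcal{P}\|_{\mathscr{F}_j}$.

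Next, I would divide both sides by the lower bound $\rho_{j-1}^2$ for $\mathcal{P} f$ and make the slice-dependent choice $t_j = t\,q^{2(j-1)}$. This calibration keeps the quotient $t_j/\rho_{j-1}^2 = t/r^2$ constant across $j$ and, combined with the definitional bound $E_j/\rho_{j-1}^2 \le K^{\mathscr{F}}_{n,q}(r,s]$, collapses the slice-$j$ bound into the slice-independent expression
\[
K^{\mathscr{F}}_{n,q}(r,s] + \sqrt{\tfrac{2t}{nr^2}\bigl(q^2 + 2 K^{\mathscr{F}}_{n,q}(r,s]\bigr)} + \tfrac{t}{3 n r^2},
\]
which is exactly the threshold appearing inside the claimed tail probability.

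Finally, a union bound over the $l$ sub-shells reduces the task to estimating the geometric failure sum $\sum_{j=1}^{l} e^{-t_j} = \sum_{j=1}^{l} e^{-t q^{2(j-1)}}$. I would dominate this by the continuous integral $\int_{0}^{\infty} e^{-t q^{2x}}\,dx$; the substitution $u = t q^{2x}$ turns this into $\frac{1}{2\ln q}\int_{t}^{\infty} u^{-1} e^{-u}\,du$, which is then handled by elementary tail estimates. The main obstacle is not the peeling + Bousquet machinery, which is routine, but rather matching the precise prefactor $\frac{q^2}{q^2-1}\cdot\frac{q}{t}\, e^{-t/q}$: this requires combining the tail integral with a monotonicity bound such as $\ln q \ge (q-1)/q$ and, if necessary, refining the slice-wise choice of $t_j$, so as to convert the natural $e^{-t}$-decay produced by the stratification into the looser $e^{-t/q}$ form of the stated inequality while extracting the advertised geometric constant.
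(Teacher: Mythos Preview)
The paper does not prove this lemma at all: it is quoted verbatim as \cite[Lemma~2]{RatioLimitTheorems4EP} and simply \emph{recalled}, with the subsequent generalization (Lemma~\ref{Ratio Limit Theorems for EP, cited}) also left unproved (``the proof is quite straightforward, and we omit the details''). So there is no in-paper argument to compare against.

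That said, your peeling-plus-Bousquet outline is exactly the mechanism behind the cited result, and your stratification and normalization steps are correct. The one place where your sketch drifts is the calibration of $t_j$ and the final union-bound sum. With $t_j = t\,q^{2(j-1)}$ you obtain $\sum_j e^{-t q^{2(j-1)}}$, whose leading term is $e^{-t}$, strictly smaller than the claimed $e^{-t/q}$; you then propose to ``convert'' this into the looser $e^{-t/q}$ form, but that is going the wrong direction and will not manufacture the prefactor $\frac{q^2}{q^2-1}\frac{q}{t}$. In the original argument the slice deviations are chosen as $t_j = t\,q^{2j-3}$ (so that $t_j/\rho_{j-1}^2 = t/(r^2 q)$ is constant and the $j=1$ term already gives $e^{-t/q}$), and the resulting geometric sum $\sum_{j\ge 1} e^{-t q^{2j-3}}$ is bounded directly by $\frac{q^2}{q^2-1}\frac{q}{t}e^{-t/q}$ via the elementary estimate $\sum_{j\ge 1} e^{-a q^{2(j-1)}} \le e^{-a} + \int_1^\infty e^{-a q^{2(x-1)}}\,dx$ with $a=t/q$. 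Your integral idea is right; only the anchoring of $t_j$ needs a shift by one power of $q$ to make the constants line up.
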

Proceeding along the same line that leads to \citep[Lemma 2]{RatioLimitTheorems4EP}, we may extend the above result to the more general $\sigma_{\mathcal{P}}(f)$. The proof is quite straightforward, and we omit the details and leave it to interested readers.
\begin{lemma} \label{Ratio Limit Theorems for EP, cited}
	For  $t>0$,
	\begin{equation*} \label{}
		\begin{aligned} 
			\mathbf{P}\left\{\sup _{f \in \mathscr{F}(r, s]}\frac{\left|\mathcal{P}_{n} f - \mathcal{P} f\right|}{\sigma_{\mathcal{P}}^{2}(f)} \geq K_{n, q}(r, s]+\sqrt{\frac{2t}{n r^{2}}\left(q^{2}+2 K_{n, q}(r, s]\right)}+\frac{t}{3 n r^{2}}\right\} 
			\leq \frac{q^{2}}{q^{2}-1} \frac{q}{t} e^{-t / q}.
		\end{aligned}
	\end{equation*}
\end{lemma}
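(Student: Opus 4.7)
The plan is to reproduce the peeling proof of the cited Lemma 2 in~\cite{RatioLimitTheorems4EP} essentially verbatim. There, the quantity $\mathcal{P}f$ plays exactly two roles: (i) as a lower bound on each peeled slice $\mathscr{F}(\rho_{j-1},\rho_j]$, and (ii) as the variance proxy in the Talagrand/Bousquet concentration inequality applied slice-by-slice. The hypothesis $\operatorname{Var}_{\mathcal{P}}(f) \leq \sigma_{\mathcal{P}}^2(f) \leq 1$ is tailored precisely so that the general $\sigma_{\mathcal{P}}^2$ can substitute for $\mathcal{P}f$ in both roles, so the adaptation is mechanical.

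Concretely, I would first peel the class: write $\mathscr{F}(r, s] = \bigcup_{j=1}^{l} \mathscr{F}(\rho_{j-1}, \rho_j]$ with $\rho_j = rq^j$. On the $j$-th slice the definition of $\mathscr{F}(\rho_{j-1}, \rho_j]$ forces $\sigma_{\mathcal{P}}^2(f) > \rho_{j-1}^2$, hence
\[
\sup_{f \in \mathscr{F}(\rho_{j-1},\rho_j]} \frac{|\mathcal{P}_n f - \mathcal{P}f|}{\sigma_{\mathcal{P}}^2(f)} \leq \frac{\|\mathcal{P}_n - \mathcal{P}\|_{\mathscr{F}(\rho_{j-1},\rho_j]}}{\rho_{j-1}^2},
\]
and it suffices to control the numerator on each slice.

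Next I would apply Bousquet's version of Talagrand's inequality to each $\|\mathcal{P}_n - \mathcal{P}\|_{\mathscr{F}(\rho_{j-1},\rho_j]}$, using the envelope bound $\|f\|_\infty \leq 1$ and the variance estimate $\operatorname{Var}_{\mathcal{P}}(f) \leq \sigma_{\mathcal{P}}^2(f) \leq \rho_j^2 = q^2 \rho_{j-1}^2$. This variance step is the sole location where the generalized hypothesis actually enters. Selecting the deviation parameter as $t_j := t q^{2(j-1)}$ and dividing the resulting Talagrand inequality by $\rho_{j-1}^2 = r^2 q^{2(j-1)}$ exactly cancels the $j$-dependence, producing on each slice (with probability at least $1 - e^{-t_j}$) the uniform bound
\[
\frac{E_j}{\rho_{j-1}^2} + \sqrt{\frac{2t}{n r^2}\left(q^2 + \frac{2 E_j}{\rho_{j-1}^2}\right)} + \frac{t}{3 n r^2},
\]
where $E_j := \mathbf{E}\|\mathcal{P}_n - \mathcal{P}\|_{\mathscr{F}(\rho_{j-1},\rho_j]}$. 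The estimate $E_j/\rho_{j-1}^2 \leq K_{n,q}(r,s]$ combined with the monotonicity of the square root then gives the target expression on every slice.

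Finally, a union bound over the $l$ slices together with the super-geometric decay of $\{e^{-t_j}\}_{j\ge 1}$ yields the stated failure probability $\frac{q^2}{q^2-1}\frac{q}{t}e^{-t/q}$ by the same geometric-series bookkeeping carried out in the original reference. The principal (and only) obstacle worth flagging is matching the exact numerical constants appearing in the tail; however, since none of the underlying inequalities require the identity $\sigma_{\mathcal{P}}^2(f) = \mathcal{P}f$, the constants propagate line-for-line from the cited proof, which is why we omit the routine verification.
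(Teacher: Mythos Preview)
Your proposal is correct and follows exactly the approach the paper indicates: the paper itself omits the proof entirely, stating only that one proceeds ``along the same line that leads to \citep[Lemma 2]{RatioLimitTheorems4EP}'' and that the extension is ``quite straightforward.'' Your peeling argument with Bousquet's inequality applied slice-by-slice, using $\sigma_{\mathcal{P}}^2(f)$ in place of $\mathcal{P}f$ both for the slicing and as the variance proxy, is precisely the mechanical adaptation the authors have in mind, and you have supplied the details they left to the reader.
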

Define $\mathcal{K}_{n}(\mathscr{F}, r) := K^{\mathscr{F}}_{n, \sqrt{2}}(r, 1]$. It follows from Lemma \ref{Ratio Limit Theorems for EP, cited} that
\begin{lemma} \label{Ratio Limit Theorems for EP, high probability version deduced by ourselves}
	For $0 < \delta < 2/e$, with probability at least $1 - \delta$,
	\begin{equation*} \label{}
		\begin{aligned} 
			\sup _{f \in \mathscr{F}, \sigma_{\mathcal{P}}(f) > r}\frac{\left|\mathcal{P}_{n} f - \mathcal{P} f\right|}{\sigma_{\mathcal{P}}^{2}(f)} < 2 \mathcal{K}_{n}(\mathscr{F}, r) + \frac{5}{2}\sqrt{\frac{ \ln(2/\delta)}{n r^{2}}} +\frac{2 \ln(2/\delta)}{ n r^{2}}.
		\end{aligned}
	\end{equation*}
\end{lemma}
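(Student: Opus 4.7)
The plan is to apply Lemma \ref{Ratio Limit Theorems for EP, cited} directly with $q=\sqrt{2}$ and $s=1$, choose $t$ as a function of $\delta$ so the tail probability is at most $\delta$, and then simplify the deviation threshold using the elementary inequalities $\sqrt{a+b}\leq \sqrt{a}+\sqrt{b}$ and $2\sqrt{ab}\leq a+b$.

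First I would note that since $f\in[0,1]$ forces $\sigma_{\mathcal{P}}(f)\le 1$, the supremum over $\{f\in\mathscr{F}:\sigma_{\mathcal{P}}(f)>r\}$ coincides with the supremum over $\mathscr{F}(r,1]$, and by definition $K^{\mathscr{F}}_{n,\sqrt{2}}(r,1]=\mathcal{K}_n(\mathscr{F},r)$; so Lemma \ref{Ratio Limit Theorems for EP, cited} applied with $q=\sqrt{2}$, $s=1$ is exactly what we need (choosing $l\in\mathbb{N}$ with $\sqrt{2}^{\,l}\ge 1/r$ makes the definition of $K$ consistent). Next I would pick $t=\sqrt{2}\,\ln(2/\delta)$, which gives $e^{-t/q}=\delta/2$ and
\[
\frac{q^{2}}{q^{2}-1}\cdot\frac{q}{t}\,e^{-t/q}=\frac{2\sqrt{2}}{\sqrt{2}\ln(2/\delta)}\cdot\frac{\delta}{2}=\frac{\delta}{\ln(2/\delta)}\le\delta,
\]
where the last inequality uses $\ln(2/\delta)\ge 1$, which is equivalent to the hypothesis $\delta<2/e$.

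It remains to simplify the deviation threshold from Lemma \ref{Ratio Limit Theorems for EP, cited}. Writing $K:=\mathcal{K}_n(\mathscr{F},r)$ for brevity and using $q^{2}=2$, I would bound
\[
\sqrt{\frac{2t}{nr^{2}}\bigl(q^{2}+2K\bigr)}=2\sqrt{\frac{t(1+K)}{nr^{2}}}\le 2\sqrt{\frac{t}{nr^{2}}}+2\sqrt{\frac{tK}{nr^{2}}}\le 2\sqrt{\frac{t}{nr^{2}}}+K+\frac{t}{nr^{2}},
\]
where the first step uses $\sqrt{a+b}\le\sqrt{a}+\sqrt{b}$ and the second uses $2\sqrt{ab}\le a+b$. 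Adding the remaining $K+t/(3nr^{2})$ from the cited lemma yields the clean bound
\[
2K+2\sqrt{\frac{t}{nr^{2}}}+\frac{4t}{3nr^{2}}.
\]
Finally, substituting $t=\sqrt{2}\,\ln(2/\delta)$ gives coefficients $2\cdot 2^{1/4}=2^{5/4}$ and $4\sqrt{2}/3$ on the $\sqrt{\ln(2/\delta)/(nr^{2})}$ and $\ln(2/\delta)/(nr^{2})$ terms respectively; since $2^{5/4}<5/2$ and $4\sqrt{2}/3<2$, these are majorised by the constants $5/2$ and $2$ claimed in the statement.

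There is no real obstacle here; the only piece requiring a bit of care is verifying that the chosen $t$ simultaneously makes the Lemma's tail probability at most $\delta$ (which forces the restriction $\delta<2/e$) and yields constants strictly below $5/2$ and $2$ after substitution. Everything else is a routine combination of the cited lemma with AM--GM.
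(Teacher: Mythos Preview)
Your proposal is correct and follows essentially the same approach as the paper: both choose $q=\sqrt{2}$, $s=1$, and $t=\sqrt{2}\ln(2/\delta)$, verify that the tail probability equals $\delta/\ln(2/\delta)\le\delta$ under the hypothesis $\delta<2/e$, split $\sqrt{1+K}\le 1+\sqrt{K}$, and absorb the cross term $2\sqrt{tK/(nr^{2})}$ via AM--GM before rounding up the resulting constants $2^{5/4}$ and $4\sqrt{2}/3$ to $5/2$ and $2$.
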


\begin{proof} Recall that $\sigma_{\mathcal{P}}(f) \leq 1$. 
	Taking $s = 1$, $q = \sqrt{2}$ and $t = \sqrt{2} \ln(2/\delta)$ in Lemma \ref{Ratio Limit Theorems for EP, cited}, we obtain
	$\frac{q^{2}}{q^{2}-1} \frac{q}{t} e^{-t / q} = \delta / \ln(2/\delta) < \delta,$
	and with probability at least $1 - \delta$, 
	\begin{equation*} \label{}
		\begin{aligned} 
			\sup _{f \in \mathscr{F}(r, 1]}\frac{\left|\mathcal{P}_{n} f - \mathcal{P} f\right|}{\sigma_{\mathcal{P}}^{2}(f)} & < 
			\mathcal{K}_{n}(\mathscr{F}, r) + 2\sqrt{\frac{t}{n r^{2}} \mathcal{K}_{n}(\mathscr{F}, r)} + 2\sqrt{\frac{t}{n r^{2}}} +\frac{t}{3 n r^{2}}\\
			& \leq 2 \mathcal{K}_{n}(\mathscr{F}, r) + 2\sqrt{\frac{\sqrt{2} \ln(2/\delta)}{n r^{2}}} +\frac{4 \sqrt{2} \ln(2/\delta)}{3 n r^{2}}\\
			& \leq 2 \mathcal{K}_{n}(\mathscr{F}, r) + \frac{5}{2}\sqrt{\frac{ \ln(2/\delta)}{n r^{2}}} +\frac{2 \ln(2/\delta)}{ n r^{2}},
		\end{aligned}
	\end{equation*}
	which completes the proof.
\end{proof}
\subsection{Oracle inequality for the generalization error}
Recall the population loss $L_{k}(u)$ and the empirical loss $L_{k, n}(u)$ defined in (\ref{population loss}) and (\ref{empirical loss}) respectively.
Let $\mathcal{F}$ be some hypothesis class and $0 < r < 1/2$.
Consider the minimization of $L_{k, n}(u)$ in $\mathcal{F}_{>r} = \{ u \in \mathcal{F} : \mathcal{E}_{2}(u)  > r^{2}\}$ with a minimizer $u_{n} = \mathop{\arg\min}_{u \in \mathcal{F}_{>r}} L_{k, n}(u) .$
We aim to bound the energy excess  $L_{k}(u_{n}) - \lambda_{k}$. For any $u_{\mathcal{F}} \in \mathcal{F}_{>r}$, we write
\begin{equation} \label{ineq: decomposition of L_k(u_n)-lambda_k}
	\begin{aligned}
		L_{k}\left(u_{n}\right)-\lambda_{k}
		& = L_{k}\left(u_{n}\right)-L_{k, n}\left(u_{n}\right) + L_{k, n}\left(u_{n}\right)-L_{k, n}\left(u_{\mathcal{F}}\right) \\
		& \quad + L_{k, n}\left(u_{\mathcal{F}}\right)-L_{k}\left(u_{\mathcal{F}}\right)+L_{k}\left(u_{\mathcal{F}}\right)-\lambda_{k}.
	\end{aligned}
\end{equation}
Note that  $L_{k, n}\left(u_{n}\right)-L_{k, n}\left(u_{\mathcal{F}}\right) \leq 0$  because  $u_{n}$  is the minimizer of  $L_{k, n}\left(u\right)$. Therefore,
\begin{equation} \label{decompose generalization error}
	\begin{aligned}
		L_{k}\left(u_{n}\right)-\lambda_{k}
		& \leq  \Big(L_{k}(u_{n})-L_{k, n}(u_{n})\Big) + \Big(L_{k, n}\left(u_{\mathcal{F}}\right)-L_{k}\left(u_{\mathcal{F}}\right)\Big) + \Big(L_{k}\left(u_{\mathcal{F}}\right)-\lambda_{k}\Big)\\
		&  =: T_1+T_2+T_3,
	\end{aligned}
\end{equation}
where $T_1$  is the statistical error arising from the random approximation of the integrals,  $T_2$  is the Monte Carlo error and  $T_3$  is the approximation error due to restricting the minimization of  $L_{k}(u)$  from  $H_{0}^{1}(\Omega)$  to  $\mathcal{F}_{>r}$. 

\textbf{Bounding  $T_{1}$:}
\begin{equation*} \label{}
	\begin{aligned}
		T_{1} & \leq \left| \frac{\mathcal{E}_{n, V}\left(u_{n}\right)}{\mathcal{E}_{n, 2}\left(u_{n}\right)} - \frac{\mathcal{E}_{V}\left(u_{n}\right)}{\mathcal{E}_{2}\left(u_{n}\right)} \right| + \left|\frac{\mathcal{E}_{n, P}\left(u_{n}\right)}{\mathcal{E}_{n, 2}\left(u_{n}\right)} - \frac{\mathcal{E}_{P}\left(u_{n}\right)}{\mathcal{E}_{2}\left(u_{n}\right)}\right|\\
		& \leq \left|\frac{\mathcal{E}_{n, V}\left(u_{n}\right)}{\mathcal{E}_{V}\left(u_{n}\right)} \! \frac{\mathcal{E}_{2}\left(u_{n}\right)}{\mathcal{E}_{n, 2}\left(u_{n}\right)}  \! - \!  1\right| \frac{\mathcal{E}_{V}\left(u_{n}\right)}{\mathcal{E}_{2}\left(u_{n}\right)} + 
		\frac{\mathcal{E}_{P}\left(u_{n}\right) \left|\mathcal{E}_{2}\left(u_{n}\right) \! - \! \mathcal{E}_{n, 2}\left(u_{n}\right)\right|}{\mathcal{E}_{2}\left(u_{n}\right) \mathcal{E}_{n, 2}\left(u_{n}\right)} + \frac{\left|\mathcal{E}_{n, P}\left(u_{n}\right) \! - \! \mathcal{E}_{P}\left(u_{n}\right)\right|}{\mathcal{E}_{n, 2}\left(u_{n}\right)}\\
		& =: T_{11}+T_{12}+T_{13}.
	\end{aligned}
\end{equation*}

To bound  $T_{11}$, $T_{12}$  and  $T_{13}$, we define the following sets of functions
\begin{equation} \label{function classes defined for bounding generalization error}
	\begin{aligned}
		& \mathcal{G}_{1} := \left\{g: g=u^{2} \text { where } u \in \mathcal{F}\right\}, \\
		& \mathcal{G}_{2} := \left\{g: g=|\nabla u|^{2}+V|u|^{2} \text { where } u \in \mathcal{F}\right\}, \\
		& \mathcal{F}_{j} := \left\{g: g = u \psi_{j}  \text { where } u \in \mathcal{F}\right\}, \quad \text{for }  j = 1, 2, \ldots, k-1.
	\end{aligned}
\end{equation}
We assume that the set  $\mathcal{F}$  satisfies  $\sup _{u \in \mathcal{F}}\|u\|_{L^{\infty}(\Omega)} \leq M_{\mathcal{F}}<\infty$  so that  $\sup _{g \in \mathcal{G}_{1}}  \|g\|_{L^{\infty}(\Omega)} \leq   M_{\mathcal{F}}^{2}$. Assume further that  $\sup _{g \in \mathcal{G}_{2}} \|g\|_{L^{\infty}(\Omega)} \leq M_{\mathcal{G}_{2}}<\infty$ and $\|\psi_{j}\|_{L^{\infty}(\Omega)} \leq \mu_{j}$ for each $j$. 
So $\sup _{g \in \mathcal{F}_{j}}\|g\|_{L^{\infty}(\Omega)} \leq \mu_{j} M_{\mathcal{F}}.$ 
In what follows, we shall derive high probability bounds for $T_{11}$, $T_{12}$ and $T_{13}$ by Lemma \ref{Ratio Limit Theorems for EP, high probability version deduced by ourselves}.
To this end, we rescale the function classes $\mathcal{G}_{1}$, $\mathcal{G}_{2}$ and $\mathcal{F}_{j}$ so that their elements take values in $[0, 1]$. 

We firstly derive a high probability bound for  $\mathcal{E}_{n, 2}\left(u_{n}\right)/\mathcal{E}_{2}\left(u_{n}\right)$. 
For the rescaled set $\mathcal{G}_{1}/M_{\mathcal{F}}^{2}$\footnote{In this paper, $a\mathscr{F} + b := \{a f + b : f \in \mathscr{F}\}$ where $\mathscr{F}$ is a set of functions and $a, b \in \mathbb{R}$ are some constants.}, we take $\sigma_{\mathcal{P}}(f) = \sqrt{\mathcal{P}f}$ and
define for  $n \in \mathbb{N}$  and  $0< \delta < 1/3$  the constant
\begin{equation} \label{xi1(n, r1, delta)}
	\xi_{1}(n, r, \delta):= 2 \mathcal{K}_n(\mathcal{G}_{1}/M_{\mathcal{F}}^{2}, r/M_{\mathcal{F}}) + \frac{5 M_{\mathcal{F}}}{2}\sqrt{\frac{ \ln(2/\delta)}{n r^{2}}} + \frac{2 M_{\mathcal{F}}^{2} \ln(2/\delta)}{ n r^{2}}.
\end{equation}
We also define the event
\begin{equation*} 
	A_{1}(n, r, \delta):= \left\{\sup _{u \in \mathcal{F}, \mathcal{E}_{2}\left(u\right) > r^{2}}  \left|\frac{\mathcal{E}_{n, 2}\left(u\right)}{\mathcal{E}_{2}\left(u\right)} - 1\right|  <  \xi_{1}(n, r, \delta)\right\} .
\end{equation*}
Applying Lemma \ref{Ratio Limit Theorems for EP, high probability version deduced by ourselves} to  $\mathcal{G}_{1}/M_{\mathcal{F}}^{2}$,  we get
\begin{equation} \label{bound A1(n, r1, delta)}
	\mathbf{P}\left[A_{1}(n, r, \delta)\right] \geq 1-\delta.
\end{equation}
Recall that  $\mathcal{E}_{2}(u_{n}) > r^{2}$. So if $\xi_{1}(n, r, \delta) < 1$, then, on the event $A_{1}(n, r, \delta)$,  there holds
\begin{equation} \label{bound T12}
	T_{12} \leq \frac{\xi_{1}(n, r, \delta)}{1 - \xi_{1}(n, r, \delta)} \frac{\mathcal{E}_{P}\left(u_{n}\right)}{\mathcal{E}_{2}\left(u_{n}\right)}.
\end{equation}

To bound  $T_{11}$, it follows from $\mathcal{E}_{2}(u) > r^{2}$ that $\mathcal{E}_{V}(u) > \lambda_{1}r^{2}$.
For the rescaled set $\mathcal{G}_{2}/M_{\mathcal{G}_{2}}$, we take $\sigma_{\mathcal{P}}(f) = \sqrt{\mathcal{P}f}$ and
define the constant
\begin{equation} \label{xi2(n, r2, delta)}
	\xi_{2}(n, r, \delta):= 2 \mathcal{K}_{n}(\mathcal{G}_{2}/M_{\mathcal{G}_{2}}, r\sqrt{\lambda_{1}/M_{\mathcal{G}_{2}}}) + \frac{5}{2}\sqrt{\frac{M_{\mathcal{G}_{2}} \ln(2/\delta)}{n \lambda_{1} r^{2}}} + \frac{2 M_{\mathcal{G}_{2}} \ln(2/\delta)}{n \lambda_{1} r^{2}}.
\end{equation}
Define the event
\begin{equation*} 
	A_{2}(n, r, \delta) := \left\{\sup _{u \in \mathcal{F}, \mathcal{E}_{V}\left(u\right) > \lambda_{1}r^{2}} \left|\frac{\mathcal{E}_{n, V}\left(u\right)}{\mathcal{E}_{V}\left(u\right)} - 1\right|  <  \xi_{2}(n, r, \delta)\right\} .
\end{equation*}
Applying Lemma \ref{Ratio Limit Theorems for EP, high probability version deduced by ourselves} to  $\mathcal{G}_{2}/M_{\mathcal{G}_{2}}$, we obtain
\begin{equation} \label{bound A2(n, r2, delta)}
	\mathbf{P}\left[A_{2}(n, r, \delta)\right] \geq 1-\delta .
\end{equation}
Therefore, if $\xi_{1}(n, r, \delta) < 1$, then, on the event $A_{1}(n, r, \delta) \cap A_{2}(n, r, \delta)$,  there holds
\begin{equation} \label{bound T11}
	\begin{aligned}
		T_{11} \leq \left(\frac{1+\xi_{2}(n, r, \delta)}{1-\xi_{1}(n, r, \delta)}-1\right) \frac{\mathcal{E}_{V}\left(u_{n}\right)}{\mathcal{E}_{2}\left(u_{n}\right)}
		= \frac{\xi_{1}(n, r, \delta)+\xi_{2}(n, r, \delta)}{1-\xi_{1}(n, r, \delta)} \cdot \frac{\mathcal{E}_{V}\left(u_{n}\right)}{\mathcal{E}_{2}\left(u_{n}\right)}.
	\end{aligned}
\end{equation}

To bound  $T_{13}$, we rescale $\mathcal{F}_{j}$ to 
$\mathcal{F}_{j}/\left(2\mu_{j} M_{\mathcal{F}}\right) + 1/2$. For any $g \in \mathcal{F}_{j}$, $g = u \psi_{j}$,
\begin{equation*} 
	\begin{aligned}
		\operatorname{Var}\left(\frac{g}{2 \mu_{j} M_{\mathcal{F}}} +\frac{1}{2}\right) 
		\leq \frac{1}{4} \mathbf{E}\left|\frac{u \psi_{j}}{\mu_{j} M_{\mathcal{F}}}\right|^{2} \leq \frac{1}{4} \mathbf{E}\left|\frac{u \psi_{j}}{\mu_{j} M_{\mathcal{F}}}\right| 
		\leq  \frac{\|u\|_{L^{2}(\Omega)}}{4 \mu_{j} M_{\mathcal{F}}},
	\end{aligned}
\end{equation*}
where we have used $\left|u \psi_{j}\right| \leq  \mu_{j} M_{\mathcal{F}}$ in the second inequality and the fact $\|\psi_{j}\|_{L^{2}(\Omega)} = 1$ in the last inequality. 
Therefore, we may take 
\begin{equation} \label{def sigma_P^2(f) for mathcalF_j/(2mu_j M_mathcalF) + 1/2}
	\sigma_{\mathcal{P}}^{2}(f) = \frac{\|u\|_{L^{2}(\Omega)}}{4 \mu_{j} M_{\mathcal{F}}}\quad \text{for any}\  f := \frac{u \psi_{j}}{2 \mu_{j} M_{\mathcal{F}}} +\frac{1}{2} \in \frac{\mathcal{F}_{j}}{2\mu_{j} M_{\mathcal{F}}} + \frac{1}{2}.
\end{equation}
Note that $\mathcal{E}_{2}(u) > r^{2}$ is precisely $\|u\|_{L^{2}(\Omega)} > r$. 
For all $1 \leq j \leq k-1$, we define 
\begin{equation*}  \label{xi3(n, r3, delta)}
	\begin{gathered} 
		\xi_{3, j}(n, r, \delta):= 2 \mathcal{K}_{n}\left(\frac{\mathcal{F}_{j}}{2\mu_{j} M_{\mathcal{F}}} + \frac{1}{2},  \sqrt{\frac{r}{4 \mu_{j} M_{\mathcal{F}}}}\right) + 5\sqrt{\frac{ \mu_{j} M_{\mathcal{F}} \ln(2 k/\delta)}{n r}} + \frac{8 \mu_{j} M_{\mathcal{F}}\ln(2 k/\delta)}{ n r},   
	\end{gathered}
\end{equation*}
and $\xi_{3}(n, r, \delta) := \max _{1 \leq j \leq k-1} \xi_{3, j}(n, r, \delta).$ 
Notice that for $f \in \mathcal{F}_{j}/\left(2\mu_{j} M_{\mathcal{F}}\right) + 1/2$,
$$\sup _{\sigma_{\mathcal{P}}(f) > \sqrt{r/4 \mu_{j} M_{\mathcal{F}}} } \frac{\left|\mathcal{P}_{n} f - \mathcal{P} f\right|}{\sigma_{\mathcal{P}}^{2}(f)} = \sup _{u \in \mathcal{F}, \|u\|_{L^{2}(\Omega)}>r } \frac{2 \left|\mathcal{P}_{n}\left(u \psi_{j}\right)  - \left\langle u, \psi_{j}\right\rangle\right|}{\|u\|_{L^{2}(\Omega)}}.$$
For each  $1 \leq j \leq k-1$, we define the events 
\begin{equation*} 
	\begin{gathered}
		A_{3, j}(n, r, \delta) := \left\{\sup _{u \in \mathcal{F}, \|u\|_{L^{2}(\Omega)}> r} \frac{2 \left|\mathcal{P}_{n}\left(u \psi_{j}\right)  - \left\langle u, \psi_{j}\right\rangle\right|}{\|u\|_{L^{2}(\Omega)}}  <  \xi_{3, j}(n, r, \delta)\right\} ,
	\end{gathered}
\end{equation*}
and $A_{3}(n, r, \delta) := \bigcap_{j=1}^{k-1} A_{3, j}(n, r, \delta) .$
Applying Lemma \ref{Ratio Limit Theorems for EP, high probability version deduced by ourselves} to  $\mathcal{F}_{j}/\left(2\mu_{j} M_{\mathcal{F}}\right) + 1/2$, we get $\mathbf{P}\left[A_{3, j}(n, r, \delta)\right] \geqslant 1-\delta/k.$
Hence,
\begin{equation} \label{bound A3(n, r3, delta)}
	\mathbf{P}\left[A_{3}(n, r, \delta) \right] \geqslant 1-\frac{\delta}{k}\left(k-1\right) \geqslant 1-\delta.
\end{equation}
Using  $a^{2} - b^{2} = (a - b)^{2} + 2b (a - b)$ for $a, b \in \mathbb{R}$, on event $A_{3}(n, r, \delta)$, we obtian 
\begin{equation} \label{ineq: bound |mathcalE_n, P(u_n) -mathcalE_P(u_n)|/beta mathcalE_2(u_n), references for similar formulas}
	\begin{aligned}
		\frac{\left|\mathcal{E}_{n, P}\left(u_{n}\right)  \! - \! \mathcal{E}_{P}\left(u_{n}\right)\right|}{\beta \mathcal{E}_{2}\left(u_{n}\right)} & \leq \sum_{j=1}^{k-1}\left[\frac{\left|\mathcal{P}_{n}\left(u_{n} \psi_{j}\right)-\mathcal{P}\left(u_{n} \psi_{j}\right)\right|^{2}}{\left\|u_{n}\right\|_{L^{2}(\Omega)}^{2}}+\frac{2 \left|\left\langle u_{n} , \psi_{j}\right\rangle\right|}{\left\|u_{n}\right\|_{L^{2}(\Omega)}} \frac{\left| \mathcal{P}_{n}\left(u_{n} \psi_{j}\right)-\mathcal{P}\left(u_{n} \psi_{j}\right)\right|}{\left\|u_{n}\right\|_{L^{2}(\Omega)}}\right] \\
		& \leq \sum_{j=1}^{k-1}\left[\frac{\xi_{3, j}^{2}}{4}  + \xi_{3, j}  \frac{\left|\left\langle u_{n}, \psi_{j}\right\rangle\right|}{\left\|u_{n}\right\|_{L^{2}(\Omega)}} \right] \\
		& \leq \sum_{j=1}^{k-1} \frac{\xi_{3, j}^{2}}{4} + \left(\sum_{j=1}^{k-1} \xi_{3, j}^{2}\right)^{1/2} \left(\sum_{j=1}^{k-1} \frac{\left|\left\langle u_{n}, \psi_{j}\right\rangle\right|^{2}}{\left\|u_{n}\right\|_{L^{2}(\Omega)}^{2}}\right)^{1/2}  \\
		& \leq \frac{k}{4} \xi_{3}(n, r, \delta)^{2} + \sqrt{k} \xi_{3}(n, r, \delta) .
	\end{aligned}
\end{equation}
Hence, on event $A_{1}(n, r, \delta) \cap A_{3}(n, r, \delta)$, if  $\xi_{1}(n, r, \delta) < 1$, then
\begin{equation} \label{bound T13}
	T_{13} = \frac{\mathcal{E}_{2}\left(u_{n}\right)}{\mathcal{E}_{n, 2}\left(u_{n}\right)} \cdot \frac{\left|\mathcal{E}_{n, P}\left(u_{n}\right)-\mathcal{E}_{P}\left(u_{n}\right)\right|}{\mathcal{E}_{2}\left(u_{n}\right)}  \leq \frac{\beta}{1-\xi_{1}}\left(\frac{k}{4} \xi_{3}^{2}+\sqrt{k} \xi_{3}\right).
\end{equation}
It follows from (\ref{bound T12}), (\ref{bound T11}) and (\ref{bound T13}) that if  $\xi_{1}(n, r, \delta)<1$, then, within event $\bigcap_{i=1}^{3} A_{i}(n, r, \delta)$, we obtain
\begin{equation} \label{bound T1}
	\begin{aligned}
		T_{1} & \leq \frac{\xi_{1}+\xi_{2}}{1-\xi_{1}} \cdot \frac{\mathcal{E}_{V}\left(u_{n}\right)}{\mathcal{E}_{2}\left(u_{n}\right)} + \frac{\xi_{1}}{1 - \xi_{1}} \frac{\mathcal{E}_{P}\left(u_{n}\right)}{\mathcal{E}_{2}\left(u_{n}\right)} + \frac{\beta}{1-\xi_{1}}\left(\frac{k}{4} \xi_{3}^{2}+\sqrt{k} \xi_{3}\right)\\
		& \leq \frac{\xi_{1}+\xi_{2}}{1-\xi_{1}} L_{k}\left(u_{n}\right) + \frac{\beta}{1-\xi_{1}}\left(\frac{k}{4} \xi_{3}^{2}+\sqrt{k} \xi_{3}\right),
	\end{aligned}
\end{equation}
while it follows from (\ref{bound A1(n, r1, delta)}), (\ref{bound A2(n, r2, delta)}) and (\ref{bound A3(n, r3, delta)}) that 
\begin{equation} \label{bound probability of intersection of A(n, r, delta)}
	\begin{aligned}
		\mathbf{P}\left[A_{1}(n, r, \delta) \cap A_{2}(n, r, \delta) \cap A_{3}(n, r, \delta) \right] \geqslant 1-3\delta.
	\end{aligned}
\end{equation}

\textbf{Bounding  $T_{2}$.} Similar to the process of bounding  $T_{1}$, since $\|u_{\mathcal{F}}\|_{L^{2}(\Omega)} > r$, we know that if  $\xi_{1}(n, r, \delta)<1$, then, within event $\bigcap_{i=1}^{3} A_{i}(n, r, \delta)$, we get
\begin{equation} \label{bound T2}
	\begin{aligned}
		T_{2} 
		& \leq \frac{\xi_{1}+\xi_{2}}{1-\xi_{1}} L_{k}\left(u_{\mathcal{F}}\right) + \frac{\beta}{1-\xi_{1}}\left(\frac{k}{4} \xi_{3}^{2}+\sqrt{k} \xi_{3}\right).
	\end{aligned}
\end{equation}
A combination of the bounds (\ref{decompose generalization error}), (\ref{bound T1})  and (\ref{bound T2}) leads to
\begin{equation*} \label{}
	\begin{aligned}
		L_{k}\left(u_{n}\right)-\lambda_{k}
		& \leq  \frac{\xi_{1}+\xi_{2}}{1-\xi_{1}} \left(L_{k}\left(u_{n}\right) - \lambda_{k}\right) + \frac{1 +\xi_{2}}{1-\xi_{1}} \left(L_{k}\left(u_{\mathcal{F}}\right)-\lambda_{k}\right) + 2\lambda_{k} \frac{\xi_{1}+\xi_{2}}{1-\xi_{1}} \\
        & \quad + \frac{2\beta}{1-\xi_{1}}\left(\frac{k}{4} \xi_{3}^{2}+\sqrt{k} \xi_{3}\right).
	\end{aligned}
\end{equation*} 
If $2\xi_{1}+\xi_{2}<1$, then
\begin{equation*} \label{}
	\begin{aligned}
		L_{k}\left(u_{n}\right)-\lambda_{k}
		& \leq  \frac{\left(1 +\xi_{2}\right) \left(L_{k}\left(u_{\mathcal{F}}\right)-\lambda_{k}\right)  + 2\lambda_{k}\left(\xi_{1}+\xi_{2}\right)   + \beta\left(k \xi_{3}^{2}/2 + 2\sqrt{k} \xi_{3}\right)}{1-2\xi_{1}-\xi_{2}}.
	\end{aligned}
\end{equation*}
Combining the above estimate with (\ref{bound probability of intersection of A(n, r, delta)}), we obtain 
\begin{theorem}
	\label{Thm: oracle inequality for the generalization error}
	Let  $u_{n} = \mathop{\arg\min}_{u \in \mathcal{F}_{>r}} L_{k, n}(u),$  $\delta \in\left(0, 1/3\right)$  
	and $\{\xi_{i}(n, r, \delta)\}_{i = 1}^{3}$  be defined in (\ref{xi1(n, r1, delta)}), (\ref{xi2(n, r2, delta)}), (\ref{xi3(n, r3, delta)}), respectively.
	Assume that $2\xi_{1}+\xi_{2} \leq 1/2$ and  $u_{\mathcal{F}}\in \mathcal{F}_{>r}.$ 
	Then, with probability at least  $1-3 \delta$,
	\begin{equation*} \label{}
		\begin{aligned}
			L_{k}\left(u_{n}\right)-\lambda_{k}
			& \leq  4\lambda_{k}\left(\xi_{1}+\xi_{2}\right)   + \beta\left(k \xi_{3}^{2} + 4 \sqrt{k} \xi_{3}\right)  + 3 \left(L_{k}\left(u_{\mathcal{F}}\right)-\lambda_{k}\right).
		\end{aligned}
	\end{equation*}
\end{theorem}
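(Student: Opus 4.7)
The statement has essentially been set up in the running derivation that precedes it, so my plan is to close the loop cleanly under the strengthened hypothesis $2\xi_1+\xi_2\le 1/2$.

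First I would record what has already been established. From the decomposition (\ref{decompose generalization error}), the bound (\ref{bound T1}) for $T_1$ and the analogous bound (\ref{bound T2}) for $T_2$, together with (\ref{bound probability of intersection of A(n,r,\delta)}), one obtains on the event $A_1(n,r,\delta)\cap A_2(n,r,\delta)\cap A_3(n,r,\delta)$ (of probability at least $1-3\delta$) the preliminary inequality
\[
L_k(u_n)-\lambda_k \le \frac{\xi_1+\xi_2}{1-\xi_1}\bigl(L_k(u_n)-\lambda_k\bigr) + \frac{1+\xi_2}{1-\xi_1}\bigl(L_k(u_\mathcal{F})-\lambda_k\bigr) + \frac{2\lambda_k(\xi_1+\xi_2)}{1-\xi_1} + \frac{2\beta}{1-\xi_1}\Bigl(\tfrac{k}{4}\xi_3^2+\sqrt{k}\,\xi_3\Bigr),
\]
where I have used $L_k(u)=\mathcal E_V(u)/\mathcal E_2(u)+\mathcal E_P(u)/\mathcal E_2(u)$ and $L_k(u)\ge\lambda_k$ by Assumption \ref{Assumption: V is bounded above and below} and the variational characterization (\ref{characterization of lambdak with orthogonal constraints}) (to replace $L_k(u)$ by $L_k(u)-\lambda_k+\lambda_k$ when isolating $\lambda_k$-terms in $T_1+T_2$). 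Note that the condition $\xi_1<1$ needed to invoke (\ref{bound T1}) and (\ref{bound T2}) is automatic from $2\xi_1+\xi_2\le 1/2$.

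Next I would absorb the $L_k(u_n)$ term appearing on the right-hand side. Moving it to the left and multiplying through by $1-\xi_1$, one obtains
\[
\bigl(1-2\xi_1-\xi_2\bigr)\bigl(L_k(u_n)-\lambda_k\bigr) \le (1+\xi_2)\bigl(L_k(u_\mathcal{F})-\lambda_k\bigr) + 2\lambda_k(\xi_1+\xi_2) + \beta\Bigl(\tfrac{k}{2}\xi_3^2+2\sqrt{k}\,\xi_3\Bigr),
\]
which is precisely the penultimate display preceding the theorem.

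Finally, I would apply the hypothesis $2\xi_1+\xi_2\le 1/2$, which yields $1-2\xi_1-\xi_2\ge 1/2$ and $\xi_2\le 1/2$ (so $1+\xi_2\le 3/2$); dividing by $1-2\xi_1-\xi_2$ gives the prefactor $\tfrac{1+\xi_2}{1-2\xi_1-\xi_2}\le 3$ on $L_k(u_\mathcal F)-\lambda_k$, while the other two terms pick up a factor of at most $2$, yielding $4\lambda_k(\xi_1+\xi_2)$ and $\beta(k\xi_3^2+4\sqrt{k}\,\xi_3)$, which is exactly the claimed bound.

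There is no real obstacle left at this stage — the heavy lifting is in the three high-probability events $A_i(n,r,\delta)$ constructed from Lemma \ref{Ratio Limit Theorems for EP, high probability version deduced by ourselves} and in the algebraic bookkeeping for $T_{11},T_{12},T_{13}$ already carried out in the preceding subsection. The only mild care I would take is to ensure the event on which the self-bounding step (absorbing $L_k(u_n)$ from the right) is valid coincides with $A_1\cap A_2\cap A_3$, which it does because all the bounds on $T_1$ and $T_2$ hold on this common event, and to verify nonnegativity of $\xi_1,\xi_2,\xi_3$ (immediate from their definitions) so that the inequalities $1-\xi_1>0$ and $1-2\xi_1-\xi_2\ge 1/2$ propagate through division.
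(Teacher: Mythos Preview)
Your proposal is correct and follows essentially the same approach as the paper: combine the bounds (\ref{bound T1}) and (\ref{bound T2}) with the decomposition (\ref{decompose generalization error}), rewrite $L_k(u)=(L_k(u)-\lambda_k)+\lambda_k$ to isolate the energy excesses, absorb the $L_k(u_n)-\lambda_k$ term on the left to obtain the factor $1-2\xi_1-\xi_2$, and then use $2\xi_1+\xi_2\le 1/2$ (hence $1-2\xi_1-\xi_2\ge 1/2$ and $1+\xi_2\le 3/2$) to read off the constants $4$, $4$, and $3$. The only superfluous remark is the appeal to $L_k(u)\ge\lambda_k$, which is not actually needed for the algebraic absorption step (if $L_k(u_n)-\lambda_k$ were negative the claimed inequality would hold trivially since the right-hand side is nonnegative).
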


\section{Approximation theorem for sine spectral Barron functions} \label{section: Approximation theory for sine spectral Barron functions}
In this section, we study the properties of the sine spectral Barron functions on the hypercube as well as the neural network approximation. 

\subsection{Preliminaries}
We start by stating some preliminary results about the sine functions $\mathfrak{S} = \left\{\Phi_{k}\right\}_{k \in \mathbb{N}_{+}^{d}}$. It is clear that the set  $\mathfrak{S}$  forms an orthogonal basis of  $L^{2}(\Omega)$  and  $H_{0}^{1}(\Omega)$.
Given  $u \in L^{2}(\Omega)$, let  $\{\hat{u}(k)\}_{k \in \mathbb{N}_{+}^{d}}$  be the Fourier coefficients of  $u$ against the basis  $\left\{\Phi_{k}\right\}_{k \in \mathbb{N}_{+}^{d}}$. Then,
\begin{equation*}
u(x) = \sum_{k \in \mathbb{N}_{+}^{d}} \hat{u}(k) \Phi_{k}(x) \quad \text{and}\quad  \|u\|_{L^{2}(\Omega)}^{2}=\sum_{k \in \mathbb{N}_{+}^{d}} 2^{-d}|\hat{u}(k)|^{2},
\end{equation*}
where we have used  $\left\langle\Phi_{k}, \Phi_{k}\right\rangle_{L^{2}(\Omega)}=2^{-d}$. 
A straightforward calculation yields that for  $u \in H_{0}^{1}(\Omega)$,
\begin{equation*}
\|u\|_{H^{1}(\Omega)}^{2}=\sum_{k \in \mathbb{N}_{+}^{d}} 2^{-d}\left(1+\pi^{2}|k|^{2}\right)|\hat{u}(k)|^{2}.
\end{equation*}
\begin{lemma} \label{lemma: embedding results for sine spectral Barron space}
The following embedding hold:\\
\quad (1)  $\mathfrak{B}^{0}(\Omega) \hookrightarrow L^{\infty}(\Omega)$;\\
\quad (2)  $\mathfrak{B}^{2}(\Omega) \hookrightarrow H_{0}^{1}(\Omega)$.
\end{lemma}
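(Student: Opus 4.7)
The plan is to prove both embeddings directly from the Fourier representation, exploiting the uniform bound $|\Phi_k(x)| \leq 1$ on $\Omega$ for the $L^\infty$ embedding, and a simple $\ell^1 \hookrightarrow \ell^2$ comparison together with the trivial inequality $|k| \leq |k|_1$ for the $H^1$ embedding.

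For part (1), I would first observe that whenever $u \in \mathfrak{B}^0(\Omega)$, the series $\sum_{k \in \mathbb{N}_+^d} \hat{u}(k) \Phi_k(x)$ converges absolutely and uniformly on $\overline{\Omega}$, since $|\Phi_k(x)| \leq 1$ and $\sum_k |\hat{u}(k)| = \|u\|_{\mathfrak{B}^0(\Omega)} < \infty$. Hence its sum $\tilde u$ is a continuous function on $\overline\Omega$ that coincides with $u$ in $L^2$ (and therefore almost everywhere). Then $\|u\|_{L^\infty(\Omega)} = \|\tilde u\|_{L^\infty(\Omega)} \leq \sum_k |\hat u(k)| = \|u\|_{\mathfrak{B}^0(\Omega)}$, which is the claimed embedding.

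For part (2), the key identity is $\|u\|_{H^1(\Omega)}^2 = 2^{-d}\sum_{k \in \mathbb{N}_+^d}(1+\pi^2 |k|^2)|\hat u(k)|^2$, stated in the excerpt. Since $|k|^2 \leq |k|_1^2$, I can replace $|k|$ by $|k|_1$ at the cost of only enlarging the bound. Setting $c_k = (1+\pi^2|k|_1^2)|\hat u(k)| \geq 0$ and using $|k|_1 \geq 1$ on $\mathbb{N}_+^d$, I get $(1+\pi^2|k|_1^2)|\hat u(k)|^2 \leq c_k^2$, and the elementary inequality $\sum_k c_k^2 \leq \bigl(\sum_k c_k\bigr)^2$ for nonnegative terms yields
\begin{equation*}
\|u\|_{H^1(\Omega)}^2 \leq 2^{-d}\sum_{k}(1+\pi^2|k|_1^2)|\hat u(k)|^2 \leq 2^{-d}\|u\|_{\mathfrak{B}^2(\Omega)}^2.
\end{equation*}
To upgrade $u \in H^1(\Omega)$ to $u \in H_0^1(\Omega)$, I would consider the partial sums $u_N := \sum_{|k|_\infty \leq N}\hat u(k)\Phi_k$, note that each $u_N$ is smooth on $\overline\Omega$ and vanishes on $\partial\Omega$ (so $u_N \in H_0^1(\Omega)$, e.g.\ by mollification), and observe that the same Parseval-type identity together with dominated convergence applied to $\sum_k (1+\pi^2|k|_1^2)|\hat u(k)|^2 < \infty$ gives $u_N \to u$ in $H^1(\Omega)$; since $H_0^1(\Omega)$ is closed in $H^1(\Omega)$, the limit $u$ lies in $H_0^1(\Omega)$.

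Neither step presents a real obstacle: part (1) is a one-line uniform convergence argument, and the only mild subtlety in part (2) is verifying membership in $H_0^1$ rather than merely $H^1$, which I would handle by the explicit approximation by partial sums as above.
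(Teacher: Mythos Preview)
Your proof is correct and follows essentially the same route as the paper: part~(1) via the uniform bound $|\Phi_k|\le 1$, and part~(2) via $|k|\le |k|_1$ together with the $\ell^1\hookrightarrow\ell^2$ comparison, yielding the same estimate $\|u\|_{H^1}^2\le 2^{-d}\|u\|_{\mathfrak{B}^2}^2$. Two small remarks: since $1+\pi^0|k|_1^0=2$, one actually has $\sum_k|\hat u(k)|=\tfrac12\|u\|_{\mathfrak{B}^0}$ (your constant is off by a harmless factor of $2$); and for membership in $H_0^1$, the paper instead invokes that $u$ is continuous by part~(1) and vanishes on $\partial\Omega$, while your partial-sum approximation is an equally valid alternative.
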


We postpone the proof of Lemma \ref{lemma: embedding results for sine spectral Barron space} to appendix \ref{appendix proof of Lemma: embedding results for sine spectral Barron space}.

\subsection{Sine Spectral Barron Space and the Neural Network Approximation}

The main results in this part are summarized in the following two propositions.
\begin{proposition}
\label{proposition: v cos-sin expansion}
Assume that $u \in \mathfrak{B}^{s+1}(\Omega)$ for some $s\geq 0$. Then, $u/\varphi$  admits the representation
\begin{equation} \label{u(x)/varphi(x) expansion form respect to cos-sin functions}
\begin{aligned}
\frac{u(x)}{\varphi(x)}=\sum_{(k, i) \in \Gamma} \hat{v}(k, i) \cos \left(k_{i} \pi x_{i}\right) \prod_{\substack{1 \leq j \leq d \\ j \neq i}} \sin \left(k_{j} \pi x_{j}\right) , 
\end{aligned}
\end{equation}
where 
$\Gamma=\left\{(k, i) \mid k \in \mathbb{N}_{0}^{d},\  1 \leqslant i \leqslant d,\  \left(k+e_{i}\right) \in \mathbb{N}_{+}^{d}\right\}$
and $e_{i}$ is the $i$-th canonical basis. 

Moreover, the coefficients $\hat{v}(k, i)$ satisfy 
\begin{equation} \label{u(x)/varphi(x) expansion, coefficients norm bound}
\begin{aligned}
\sum_{(k, i) \in \Gamma} \left(1+\pi^{s}|k|_{1}^{s}\right) |\hat{v}(k, i)| \leqslant \left(1+\frac{2 d}{\pi (s + 1)}\right) \|u\|_{\mathfrak{B}^{s+1}(\Omega)} . 
\end{aligned}
\end{equation}
\end{proposition}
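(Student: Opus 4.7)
The starting point is the tautology
\[
\frac{1}{\varphi(x)} = \sum_{i=1}^{d}\frac{1}{\sin(\pi x_{i})}, \qquad \text{so that}\qquad \frac{u(x)}{\varphi(x)} = \sum_{i=1}^{d}\frac{u(x)}{\sin(\pi x_{i})}.
\]
Substituting the sine expansion $u = \sum_{k\in\mathbb{N}_{+}^{d}}\hat{u}(k)\prod_{j}\sin(\pi k_{j}x_{j})$ reduces the task to expanding each ratio $\sin(\pi k_{i}x_{i})/\sin(\pi x_{i})$. Writing $\sin$ via $e^{\pm i\theta}$ and summing a geometric series yields the Dirichlet-kernel identity
\[
\frac{\sin(\pi k_{i}x_{i})}{\sin(\pi x_{i})} = \sum_{\ell=0}^{k_{i}-1}\cos\bigl(\pi(k_{i}-1-2\ell)x_{i}\bigr);
\]
folding $\pm m$ together by the evenness of cosine gives $\sum_{K_{i}=0}^{k_{i}-1}a^{(k_{i})}_{K_{i}}\cos(\pi K_{i}x_{i})$, where $a^{(k_{i})}_{K_{i}} = 0$ unless $K_{i}\equiv k_{i}-1\pmod{2}$, $|a^{(k_{i})}_{K_{i}}|\leq 2$, and $a^{(k_{i})}_{0}=1$ whenever it is nonzero. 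Plugging this in and swapping the order of summation produces (\ref{u(x)/varphi(x) expansion form respect to cos-sin functions}) with the explicit coefficient
\[
\hat{v}(K,i) = \sum_{\substack{k_{i}>K_{i} \\ k_{i}\equiv K_{i}+1\ (\mathrm{mod}\ 2)}} a^{(k_{i})}_{K_{i}}\,\hat{u}(K_{1},\dots,K_{i-1},k_{i},K_{i+1},\dots,K_{d}).
\]
The condition $k+e_{i}\in\mathbb{N}_{+}^{d}$ defining $\Gamma$ is precisely the requirement $K_{j}\geq 1$ for $j\neq i$ and $K_{i}\geq 0$, which falls out automatically.

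\textbf{Norm bound.} For (\ref{u(x)/varphi(x) expansion, coefficients norm bound}), I would apply the triangle inequality to $|\hat{v}(K,i)|$ and interchange the $K$- and $k_{i}$-sums, reducing the estimate to controlling, for each $k\in\mathbb{N}_{+}^{d}$ and each $i\in\{1,\dots,d\}$,
\[
S_{i}(k) := \sum_{\substack{0\leq K_{i}<k_{i} \\ K_{i}\equiv k_{i}-1\ (\mathrm{mod}\ 2)}} \bigl|a^{(k_{i})}_{K_{i}}\bigr|\Bigl(1+\pi^{s}(|k|_{1}-k_{i}+K_{i})^{s}\Bigr).
\]
The identity $\sum_{K_{i}}|a^{(k_{i})}_{K_{i}}| = k_{i}$ (obtained by evaluating the Dirichlet-kernel identity at $x_{i}=0$, noting all coefficients are nonnegative) handles the constant-$1$ piece, while $|a^{(k_{i})}_{K_{i}}|\leq 2$ combined with the integral comparison $\sum_{K_{i}=0}^{k_{i}-1}(|k|_{1}-k_{i}+K_{i})^{s}\leq \int_{0}^{k_{i}}(|k|_{1}-k_{i}+t)^{s}\,dt\leq |k|_{1}^{s+1}/(s+1)$ controls the polynomial piece. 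This yields $S_{i}(k)\leq k_{i} + 2\pi^{s}|k|_{1}^{s+1}/(s+1)$; summing over $i$ and using $\sum_{i}k_{i}=|k|_{1}$ gives
\[
\sum_{i=1}^{d}S_{i}(k)\leq |k|_{1} + \frac{2d\pi^{s}}{s+1}|k|_{1}^{s+1}.
\]
A short comparison based on $|k|_{1}\geq 1 \Rightarrow |k|_{1}\leq \pi^{s+1}|k|_{1}^{s+1}$ then shows this is majorized by $\bigl(1+\tfrac{2d}{\pi(s+1)}\bigr)\bigl(1+\pi^{s+1}|k|_{1}^{s+1}\bigr)$. Multiplying by $|\hat{u}(k)|$ and summing over $k$ delivers (\ref{u(x)/varphi(x) expansion, coefficients norm bound}); the resulting $\ell^{1}$-bound also retroactively legitimises the rearrangement of series performed at the outset.

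\textbf{Main obstacle.} The only delicate step is the bookkeeping in the Dirichlet-kernel expansion: one must correctly identify the active frequencies $K_{i}\in\{0,\dots,k_{i}-1\}$, the parity constraint $K_{i}\equiv k_{i}-1\pmod{2}$, and the special role of $K_{i}=0$ (where the folding $\pm m$ does not double the coefficient). Once the coefficient structure is pinned down, the norm estimate is routine, and the prefactor $1+2d/(\pi(s+1))$ emerges transparently from the $d$ copies of $\sin(\pi x_{i})^{-1}$ in $1/\varphi$ and the single integration against $t^{s}$ (which supplies both the factor $1/(s+1)$ and the conversion of $\pi^{s}$ into $\pi^{s+1}$).
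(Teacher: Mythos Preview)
Your proposal is correct and follows essentially the same route as the paper: split $1/\varphi$ as $\sum_i 1/\sin(\pi x_i)$, expand each $\sin(k_i\pi x_i)/\sin(\pi x_i)$ via the Dirichlet-kernel identity (the paper's Lemma~\ref{Lemma: sin(kx)/sin(x) expansion}), and control the weighted $\ell^1$-norm of the resulting coefficients by an integral comparison against $t^s$.

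The one noteworthy difference is in the final summation over $i$. The paper keeps the sharper intermediate bound $S_i(k)\leq k_i+\tfrac{2\pi^s}{s+1}\bigl(|k|_1^{s+1}-(|k|_1-k_i)^{s+1}\bigr)$, sums over $i$, and then invokes Jensen's inequality to control $\sum_i(|k|_1-k_i)^{s+1}$ from below; this produces an extra factor $1-((d-1)/d)^{s+1}$, which the paper then simply bounds by $1$. You instead discard the negative term immediately and go straight to $\sum_i S_i(k)\leq |k|_1+\tfrac{2d\pi^s}{s+1}|k|_1^{s+1}$. Since the paper's Jensen step is ultimately thrown away, your path is more direct and reaches the identical endpoint; nothing is lost.
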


Roughly speaking, the above result indicates that $u/\varphi$ lies in a spectral Barron space of one order lower than $u$.
Using this proposition, we may prove a preliminary $H^1$ approximation result for functions in the sine spectral Barron space. Denote by $\operatorname{conv}(G)$ the convex hull of a set  $G$, and denote by $\overline{G}$ the $H^{1}$-closure of $G$. Let
$$\Gamma_{1} = \left\{ k \in \mathbb{Z}^{d} : \exists 1 \leqslant i \leqslant d,  \left((|k_{1}|, |k_{2}|, \cdots, |k_{d}|), i\right) \in \Gamma \right\}.$$
Note that $k\in \Gamma_{1}$ if and only if $k$ has at most one zero component.
\begin{proposition}
\label{proposition: u H1 approximation by cos-sin}
For $s \geq 0$, define 
\begin{equation*} \label{}
\begin{aligned}
\mathcal{F}_{s}(B) := \left\{\frac{\gamma}{1+\pi^{s}|k|_{1}^{s}} f(\pi(k \cdot x+b)) : |\gamma| \leqslant B, b \in\{0,1\},   k \in \Gamma_{1}  \right\},
\end{aligned}
\end{equation*}
where $f(x)=\cos x$ if $d$ is odd and $f(x)=\sin x$ if $d$ is even. Then, for any $u  \in \mathfrak{B}^{s+1}(\Omega)$ with $s\geq 1$,  $u \in \overline{\operatorname{conv}(\varphi\mathcal{F}_{s}(B_{u}))}$ with 
\begin{equation*} \label{}
\begin{aligned}
B_{u}=\left(1+\frac{2 d}{\pi(s+1)}\right) \|u\|_{\mathfrak{B}^{s+1}(\Omega)},
\end{aligned}
\end{equation*}
and there exists $v_m$ which is a convex combination of $m$ functions in $\mathcal{F}_{s}(B_{u})$ 
such that 
\begin{equation*} \label{}
\begin{aligned}
\left\|u - \varphi v_{m}\right\|_{H^{1}(\Omega)} \leqslant \sqrt{\frac{6} {m}}B_{u}.
\end{aligned}
\end{equation*}
When $d > 1$, the constant $\sqrt{6}$ may be replaced by $2$.
\end{proposition}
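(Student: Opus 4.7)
The plan is to write $u/\varphi$ as a convex combination of elements of $\mathcal{F}_{s}(B_u)$ by expanding in a sine–cosine basis, then invoke Maurey's lemma in the Hilbert space $H^{1}(\Omega)$ after multiplication by the fixed cut-off $\varphi$.

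First, I would apply Proposition \ref{proposition: v cos-sin expansion} to obtain the expansion (\ref{u(x)/varphi(x) expansion form respect to cos-sin functions}) together with the coefficient bound (\ref{u(x)/varphi(x) expansion, coefficients norm bound}). Each basis element $\cos(\pi k_i x_i)\prod_{j\neq i}\sin(\pi k_j x_j)$ is then decomposed by iterated product-to-sum identities $\sin a\sin b = \tfrac12[\cos(a-b)-\cos(a+b)]$ and $\cos a\sin b = \tfrac12[\sin(a+b)-\sin(a-b)]$ into $2^{d-1}$ terms of the form $\pm 2^{-(d-1)} f(\pi\tilde k_\epsilon\cdot x)$, where $\tilde k_\epsilon$ is obtained from $k$ by independent sign flips of the $d-1$ sine arguments. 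The parity of the number of sines forces $f=\cos$ when $d$ is odd and $f=\sin$ when $d$ is even; $|\tilde k_\epsilon|_1 = |k|_1$; and $\tilde k_\epsilon \in \Gamma_{1}$ because the only component that can vanish is the one at index $i$, inherited from $k\in\mathbb{N}_0^d$ with $k+e_i\in\mathbb{N}_+^d$. The $\pm$ signs get absorbed into the shift parameter $b\in\{0,1\}$ using $f(\theta+\pi)=-f(\theta)$.

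Collecting terms yields a representation $u/\varphi = \sum_{\ell}\alpha_\ell h_\ell$ with each $h_\ell \in \mathcal{F}_{s}(1)$ and $\sum_\ell|\alpha_\ell| \leq B_u$ by the norm bound just recalled. Since $\mathcal{F}_{s}(B_u)$ is symmetric under $\gamma\mapsto -\gamma$ and contains the zero function, rescaling gives $u/\varphi\in \overline{\operatorname{conv}(\mathcal{F}_{s}(B_u))}$; multiplying by the fixed function $\varphi$ and closing in $H^{1}(\Omega)$ yields $u \in \overline{\operatorname{conv}(\varphi\mathcal{F}_{s}(B_u))}$. Applying Maurey's lemma in the Hilbert space $H^{1}(\Omega)$ then produces, for each $m$, a convex combination $v_m$ of $m$ functions in $\mathcal{F}_{s}(B_u)$ such that $\|u-\varphi v_m\|_{H^{1}(\Omega)} \leq D/\sqrt m$, where $D := \sup_{h\in\varphi\mathcal{F}_{s}(B_u)}\|h\|_{H^{1}(\Omega)}$.

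The main obstacle is to sharpen $D$ to match the claimed constants $\sqrt{6}\,B_u$ (and $2B_u$ when $d>1$). The key pointwise inequality is $\varphi(x)\leq \sin(\pi x_i)$ for every $i$, an immediate consequence of $\sum_j 1/\sin(\pi x_j)\geq 1/\sin(\pi x_i)$. This gives $\|\varphi\|_{L^{\infty}}\leq 1/d$ and, via $\partial_i\varphi = \pi\varphi^{2}\cos(\pi x_i)/\sin^{2}(\pi x_i)$, a uniform pointwise bound $|\partial_i\varphi|\leq \pi$ (since $\varphi^{2}/\sin^{2}(\pi x_i)\leq 1$). The hypothesis $s\geq 1$ guarantees $\pi|k|_1/(1+\pi^{s}|k|_1^{s})\leq 1$, so the derivative of the ridge factor $f(\pi(k\cdot x+b))$ is controlled by the normalisation in the definition of $\mathcal{F}_{s}$. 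Assembling these estimates in $\|\varphi v\|_{H^{1}}^{2}=\|\varphi v\|_{L^{2}}^{2}+\|\nabla(\varphi v)\|_{L^{2}}^{2}$ for $v\in\mathcal{F}_{s}(B_u)$ produces the asserted constants, with the extra $1/d$ factor available when $d\geq 2$ yielding the improved constant $2$.
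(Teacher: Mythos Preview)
Your strategy is the paper's: use Proposition~\ref{proposition: v cos-sin expansion}, linearise each product $\cos(k_i\pi x_i)\prod_{j\ne i}\sin(k_j\pi x_j)$ into ridge functions (the paper packages this as Lemma~\ref{cos decomposition}), deduce $u\in\overline{\operatorname{conv}(\varphi\mathcal{F}_s(B_u))}$, bound $\sup_{h\in\mathcal{F}_s(B_u)}\|\varphi h\|_{H^1}$, and apply Maurey's Lemma~\ref{Lemma: Maurey method, Universal approximation bounds for superpositions of a sigmoidal function}. The gap is in the last bound. From the componentwise estimate $|\partial_i\varphi|\le\pi$ you only get $|\nabla\varphi|\le\pi\sqrt d$, and feeding this into $\|\nabla(\varphi h)\|_{L^2}^2\le 2\|\nabla\varphi\|_\infty^2+2\|\varphi\|_\infty^2\,\pi^2|k|_2^2$ yields, for $|k|_1=1$ and large $d$, a bound of order $d$ --- not a dimension-free constant. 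What is actually needed is the Euclidean bound $|\nabla\varphi|<\pi$ (the paper's Lemma~\ref{preliminary bounds for cutoff function varphi(x)}). Your own formula gives it with one extra observation: setting $t_i:=\varphi/\sin(\pi x_i)$ one has $t_i\in(0,1)$ and $\sum_i t_i=1$, hence
\[
|\nabla\varphi|^2=\pi^2\sum_i t_i^4\cos^2(\pi x_i)\le \pi^2\sum_i t_i^2\le \pi^2\Bigl(\sum_i t_i\Bigr)^2=\pi^2.
\]

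Second, the passage from $\sqrt 6$ to $2$ is not caused by an ``extra $1/d$ factor''. For $d>1$ every $k\in\Gamma_1$ satisfies $|k|_1\ge 1$, and with $s\ge 1$ one checks $(1/d^2+2\pi^2+2\pi^2|k|_2^2/d^2)/(1+\pi^s|k|_1^s)^2\le 4$, giving $\|\varphi h\|_{H^1}\le 2B_u$. When $d=1$ the index $k=0$ belongs to $\Gamma_1$, the corresponding element is $\gamma\varphi=\gamma\sin(\pi x)$, and $\|\gamma\sin(\pi x)\|_{H^1}^2=\gamma^2(1+\pi^2)/2\le 6\gamma^2$; this single case is what forces the larger constant $\sqrt 6$.
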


We postpone the proof of Proposition \ref{proposition: v cos-sin expansion} and Proposition \ref{proposition: u H1 approximation by cos-sin} to Appendix \ref{appendix subsection: Sine Spectral Barron Space and Neural Network Approximation}.
%
We cite a famous result for nonlinear approximation known as Maurey's method, which is exploited to prove the approximation bounds.
\begin{lemma}{\cite{MaureyPisier1981RemarquesSU, barron1993universal}}
\label{Lemma: Maurey method, Universal approximation bounds for superpositions of a sigmoidal function}
If  $\bar{f}$  is in the closure of the convex hull of a set  $G$  in a Hilbert space, with  $\|g\| \leq b$  for each  $g \in G$, then for every  $m \geq 1$, there is an  $f_{m}$  in the convex hull of  $m$  points in  $G$  such that
$$\left\|\bar{f}-f_{m}\right\|^{2} \leq \frac{b^{2}}{m} .$$
\end{lemma}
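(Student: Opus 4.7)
The plan is to apply the classical probabilistic argument of Maurey: construct $f_m$ as the empirical average of i.i.d.\ samples drawn from a finite convex representation of (an approximation to) $\bar{f}$, then invoke the probabilistic method.

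First, I would reduce to the case $\bar{f}\in \operatorname{conv}(G)$. Given $\epsilon>0$, pick a finite convex combination $\tilde{f}=\sum_{i=1}^N \alpha_i g_i$ with $g_i\in G$, $\alpha_i\geq 0$, $\sum_i\alpha_i=1$, and $\|\bar{f}-\tilde{f}\|<\epsilon$. Interpret $(\alpha_i)_{i=1}^N$ as a probability distribution on $\{g_1,\ldots,g_N\}$ and draw i.i.d.\ random elements $X_1,\ldots,X_m$ with $\mathbf{P}(X_j=g_i)=\alpha_i$, so that $\mathbf{E}X_j=\tilde{f}$. Set
\[
f_m := \frac{1}{m}\sum_{j=1}^m X_j,
\]
which is automatically a convex combination of at most $m$ elements of $G$.

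Next, I would compute the expected squared Hilbert error. By independence of the $X_j$,
\[
\mathbf{E}\|\tilde{f}-f_m\|^2 = \frac{1}{m^2}\sum_{j=1}^m \mathbf{E}\|X_j-\tilde{f}\|^2 = \frac{1}{m}\bigl(\mathbf{E}\|X_1\|^2-\|\tilde{f}\|^2\bigr) \leq \frac{b^2}{m},
\]
using $\|g_i\|\leq b$ for every $i$. The Hilbert space orthogonality identity
\[
\mathbf{E}\|\bar{f}-f_m\|^2 = \|\bar{f}-\tilde{f}\|^2 + \mathbf{E}\|\tilde{f}-f_m\|^2
\]
(the cross term vanishes since $\mathbf{E}f_m=\tilde{f}$) then gives $\mathbf{E}\|\bar{f}-f_m\|^2 \leq \epsilon^2 + b^2/m$, so by the probabilistic method some realization satisfies $\|\bar{f}-f_m\|^2 \leq \epsilon^2 + b^2/m$.

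Finally, since $\epsilon>0$ is arbitrary, the infimum of $\|\bar{f}-f_m\|^2$ over $m$-term convex combinations of elements of $G$ is at most $b^2/m$; a weak-compactness argument on the resulting sequence of $m$-fold combinations (uniformly norm-bounded by $b$) as $\epsilon\to 0$ produces an $f_m$ attaining the stated bound. The whole argument is essentially a single variance computation, so no genuine obstacle arises; the only delicacy is the closure step, and the Hilbert structure is used exactly once, to kill the cross term in the orthogonal decomposition above.
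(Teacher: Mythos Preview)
The paper does not prove this lemma; it is cited as a classical result (Maurey--Pisier, Barron) and used as a black box. Your argument is precisely the standard probabilistic proof attributed to Maurey, and the core variance computation is correct.

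One small point on the final closure step: the weak-compactness argument as you state it does not quite close, because the set of $m$-term convex combinations of elements of $G$ need not be weakly closed (the weak limits of the $g_j^{(\epsilon)}$ may fall outside $G$). The clean fix is to keep the sharper variance bound $\mathbf{E}\|X_1-\tilde f\|^2 = \mathbf{E}\|X_1\|^2 - \|\tilde f\|^2 \le b^2 - \|\tilde f\|^2$ rather than discarding the $-\|\tilde f\|^2$ term. Then
\[
\mathbf{E}\|\bar f - f_m\|^2 \le \epsilon^2 + \frac{b^2 - \|\tilde f\|^2}{m},
\]
and since $\|\tilde f\|\to\|\bar f\|$ as $\epsilon\to 0$, the right-hand side drops strictly below $b^2/m$ for small $\epsilon$ whenever $\bar f\neq 0$, giving the existence of a suitable $f_m$ without any compactness. (The edge case $\bar f=0$ is a well-known wrinkle in the exact statement; Barron's original formulation allows any constant strictly larger than $b^2-\|\bar f\|^2$ in the numerator, and the paper's applications all have $\bar f\neq 0$.)
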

\subsection{Reduction to ReLU and Softplus Activation Functions}  \label{subsection: Reduction to ReLU and Softplus Activation Functions}
We have found that if $u \in \mathfrak{B}^{s+1}$ with $s\geq1$, $u$ lies in the bounded set $\overline{\operatorname{conv}(\varphi\mathcal{F}_{s}(B_{u}))} \subset H^1(\Omega)$ with $B_{u}=\left(1+\frac{2 d}{\pi(s+1)}\right) \|u\|_{\mathfrak{B}^{s+1}}$.
Define the function classes
\begin{equation} \label{function class F ReLU(B) and F mathrmSPtau(B)}
\begin{aligned}
& \mathcal{F}_{\mathrm{ReLU}}(B):=\left\{c+\gamma \operatorname{ReLU}(w \cdot x-t) : |c| \leq B, |w|_{1}=1, |t| \leq 1, |\gamma| \leq 4 B \right\},\\
& \mathcal{F}_{\mathrm{SP}_{\tau}}(B):=\left\{c+\gamma \mathrm{SP}_{\tau}(w \cdot x-t):|c| \leq  B, |w|_{1}=1, |t| \leq 1, |\gamma| \leq 4 B\right\} .
\end{aligned}
\end{equation}
In this subsection, we aim to illustrate that when $s\geq2$, each function in $\varphi\mathcal{F}_{s}(B)$ lies in  $\overline{\operatorname{conv}(\varphi\mathcal{F}_{\operatorname{ReLU}}(B))}$ and $\overline{\operatorname{conv}(\varphi \mathcal{F}_{\mathrm{SP}_{\tau}}(B))}$, 
and yield
the approximation bounds in Theorem \ref{Thm: u H1 approximation by varphi ReLU networks} and Theorem \ref{Thm: u H1 approximation by varphi Softplus networks} with Lemma \ref{Lemma: Maurey method, Universal approximation bounds for superpositions of a sigmoidal function}

By Lemma \ref{lemma: Magnification of the H1-norm caused by the action of varphi}, to prove $\varphi\mathcal{F}_{s}(B)$ lies in    $\overline{\operatorname{conv}(\varphi\mathcal{F}_{\operatorname{ReLU}}(B))}$ and $\overline{\operatorname{conv}(\varphi \mathcal{F}_{\mathrm{SP}_{\tau}}(B))}$, one only needs to show $\mathcal{F}_{s}(B)$  lies in 
$\overline{\operatorname{conv}(\mathcal{F}_{\operatorname{ReLU}}(B))}$ and $\overline{\operatorname{conv}(\mathcal{F}_{\mathrm{SP}_{\tau}}(B))}$, respectively.
Notice that every function in $\mathcal{F}_{s}(B)$ is a composition of a function $g$ defined on $[-1, 1]$ by
\begin{equation} \label{the part of function in Fs(B) approximated by the network, g(z)}
g(z) = \begin{cases}
\frac{\gamma}{1+\pi^{s}|k|_{1}^{s}} \cos(\pi(|k|_1 z +b)), & d \text{ is odd},\\
\frac{\gamma}{1+\pi^{s}|k|_{1}^{s}} \sin(\pi(|k|_1 z +b)), & d \text{ is even},
\end{cases}
\end{equation}
where $k \in \Gamma_{1}$, $|\gamma| \leq B$, $b \in \{0, 1\}$, and a linear function $z = w \cdot x$ with $w = k/|k|_1$ or $z = x_1$ in case $k = 0$. When $s\geq 2$, it is clear that $g \in C^2([-1, 1])$ and $g$ satisfies  $$\|g^{(r)}\|_{L^{\infty}([-1,1])}\leq|\gamma|\leq B, \quad \text{ for } r = 0, 1, 2.$$
The uniform boundness of $\|g\|_{W^{2,\infty}([-1,1])}$ for all $k$ is key to the proof. In addition, we observe that $g^{\prime}(0) = 0$ if $d$ is odd and $g^{\prime}(\frac{1}{2|k|_1}) = 0$ if $d$ is even. Proceeding along the same line as  \citep[Section 4.3]{DRMlu2021priori}, we prove that functions in $\mathcal{F}_{s}(B)$ can be well approximated by two-layer ReLU networks. 
Lemma \ref{lemma: ReLU for both sine and cosine case} generalizes \citep[Lemma 4.5]{DRMlu2021priori} to handle both sine and cosine cases.
\begin{lemma} \label{lemma: ReLU for both sine and cosine case}
Let  $g \in C^{2}([-1,1])$  with  $\left\|g^{(r)}\right\|_{L^{\infty}([-1,1])} \leq B$  for  $r=0,1,2$. 
Assume that  $g^{\prime}(\rho)=0$ for some $\rho \in [0,1/2]$. 
Let  $\left\{z_{j}\right\}_{j=0}^{2 m}$  be a partition of  $[-1,1]$  with  $z_{0}=-1$, $z_{m}=\rho$, $z_{2 m}=1$  and  $z_{j+1}-z_{j}=h_{1}=(\rho+1) / m$  for each  $j=0, \cdots, m-1$; $z_{j+1}-z_{j}=h_{2}=(1-\rho) / m$  for each  $j=m, \cdots, 2m-1$. Then there exists a two-layer ReLU network 
\begin{equation} \label{sin piecewise linear interpolation}
\begin{aligned}
g_{m}(z)=c+\sum_{i=1}^{2 m} a_{i} \operatorname{ReLU}\left(\epsilon_{i} z-b_{i}\right), \quad z \in[-1,1]
\end{aligned}
\end{equation}
with  $c = g(\rho)$, $b_{i} \in[-1,1]$  and  $\epsilon_{i} \in\{ \pm 1\}, i=1, \cdots, 2 m$, such that
\begin{equation} \label{sin piecewise linear interpolation, W1,inf error}
\begin{aligned}
\left\|g-g_{m}\right\|_{W^{1, \infty}([-1,1])} \leq \frac{2 B}{m} .
\end{aligned}
\end{equation}
Moreover, we have $|c| \leq B $, $\left|a_{i}\right| \leq 2 B h_{1}$  if  $i<m$,  $\left|a_{m}\right| \leq B h_{1}$,  $\left|a_{m+1}\right| \leq B h_{2}$  and $\left|a_{i}\right| \leq 2 B h_{2}$  if  $i>m+1$  so that  $\sum_{i=1}^{2 m} |a_{i}| \leq 4B$.
\end{lemma}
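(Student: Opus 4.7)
The plan is to take $g_m$ to be the continuous piecewise linear interpolant of $g$ on the full grid $\{z_j\}_{j=0}^{2m}$, and then realise $g_m$ as a two-layer ReLU network whose $2m$ neurons are split symmetrically about $z_m = \rho$. Explicitly, I would set
\[
g_m(z) = g(\rho) + \sum_{i=1}^{m} a_i\,\operatorname{ReLU}(z_i - z) + \sum_{j=1}^{m} a_{m+j}\,\operatorname{ReLU}(z - z_{m+j-1}),
\]
so $\epsilon_i = -1,\ b_i = -z_i$ for $i \le m$ and $\epsilon_{m+j} = +1,\ b_{m+j} = z_{m+j-1}$ for $j \ge 1$; since every $z_j \in [-1,1]$, the constraint $|b_i| \le 1$ is automatic. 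The left-side neurons vanish on $[\rho,1]$ and the right-side ones vanish on $[-1,\rho]$, so on each half $g_m$ is a genuine piecewise linear interpolant that meets the value $g(\rho)$ continuously at $z=\rho$.

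Matching the slopes $s_j^L := (g(z_j)-g(z_{j-1}))/h_1$ for $j \le m$ and $s_j^R := (g(z_j)-g(z_{j-1}))/h_2$ for $j > m$ forces
\[
a_m = -s_m^L,\quad a_i = s_{i+1}^L - s_i^L\ (i<m),\quad a_{m+1} = s_{m+1}^R,\quad a_{m+j} = s_{m+j}^R - s_{m+j-1}^R\ (j>1).
\]
By the mean value theorem, $s_j^L = g'(\eta_j)$ with $\eta_j \in (z_{j-1},z_j)$, so $|a_i| \le \|g''\|_\infty\cdot 2h_1 \le 2Bh_1$ for $i<m$; the symmetric argument gives $|a_{m+j}| \le 2Bh_2$ for $j>1$. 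The crucial use of $g'(\rho)=0$ is at the two neurons adjacent to $\rho$: $|a_m| = |g'(\eta_m)| \le \|g''\|_\infty|\rho - \eta_m| \le Bh_1$, and analogously $|a_{m+1}| \le Bh_2$. Summing with $h_1 + h_2 = 2/m$ gives $\sum_i |a_i| \le (2m-1)B(h_1+h_2) \le 4B$, while $|c|=|g(\rho)| \le B$ is immediate.

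The $W^{1,\infty}$ error follows from standard piecewise linear interpolation estimates on each cell of length $h \in \{h_1,h_2\}$: $\|g-g_m\|_{L^\infty}\le h^2B/8$ and $\|(g-g_m)'\|_{L^\infty}\le hB$, the latter because $g_m'\equiv g'(\eta)$ for some $\eta$ in the cell. Since $\rho \in [0,1/2]$, $h_1\le 3/(2m)$ and $h_2\le 1/m$, so adding the two contributions is bounded by $2B/m$ for every $m\ge 1$.

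The main subtlety is bookkeeping rather than estimation: a naive representation of the piecewise linear interpolant with one ReLU per interior node would bundle the entire slope jump at $z_m=\rho$ into a single coefficient controlled only by $B(h_1+h_2)$, losing the sharper individual bounds $|a_m|\le Bh_1$ and $|a_{m+1}|\le Bh_2$ asserted in the statement. Splitting the representation into a left half and a right half that each ``die'' at $\rho$, and invoking $g'(\rho)=0$ separately on each side to turn a slope jump into an absolute slope bound, is what makes the finer estimates available and keeps the constant in $\sum_i|a_i|\le 4B$ independent of $\rho$.
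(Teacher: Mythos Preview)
Your proposal is correct and is essentially identical to the paper's proof: the paper also takes $g_m$ to be the piecewise linear interpolant on the grid, writes it in exactly the same left/right ReLU form split at $z_m=\rho$, defines the coefficients $a_i$ as (second) divided differences equivalent to your slope-jump formulas, uses $g'(\rho)=0$ via the mean value theorem to get the sharper bounds on $a_m$ and $a_{m+1}$, and combines the standard $h^2B/8$ and $hB$ interpolation estimates with $h\le 3/(2m)$ to reach $2B/m$.
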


Given Proposition \ref{proposition: u H1 approximation by cos-sin} and Lemma \ref{lemma: ReLU for both sine and cosine case}, we are ready to prove Theorem \ref{Thm: u H1 approximation by varphi ReLU networks}.
\begin{proof}[Proof of Theorem \ref{Thm: u H1 approximation by varphi ReLU networks}]
By Proposition \ref{proposition: u H1 approximation by cos-sin}, for any $u \in \mathfrak{B}^{s}(\Omega)$ with $s\geq 3$, $u \in \overline{\operatorname{conv}(\varphi\mathcal{F}_{s-1}(B_{u}))}$ with $B_{u}=\left(1+\frac{2 d}{\pi s}\right) \|u\|_{\mathfrak{B}^{s}(\Omega)}$. Since $s-1\geq 2$, a combination of Lemma \ref{lemma: ReLU for both sine and cosine case} and Lemma \ref{lemma: Magnification of the H1-norm caused by the action of varphi} yields that the set  $\varphi\mathcal{F}_{s-1}(B_{u})$  lies in the  $H^{1}$-closure of   $\operatorname{conv}(\varphi\mathcal{F}_{\mathrm{ReLU}}(B_{u}))$. 
Hence, $u\in \overline{\operatorname{conv}(\varphi\mathcal{F}_{\mathrm{ReLU}}(B_{u}))}$. 
Notice that when $|w|_{1} = 1$, for constants $a$, $b \geq 0$,
\begin{equation} \label{simple fact about integral of w cdot x}
\begin{aligned}
\int_{\Omega}\left(a + b w \cdot x\right)^{2} d x
& = a^{2} + a b \sum_{i=1}^{d} w_{i} + b^{2} \left(\frac{1}{3}\sum_{i=1}^{d}w_{i}^2 + \frac{1}{4} \sum_{i\neq j} w_{i}w_{j} \right)\\
& \leq a^{2} + a b |w|_{1} +  b^{2} \left(\frac{1}{4}|w|_{1}^2 + \frac{1}{12} |w|_{2}^2 \right)\\
& \leq a^{2} + a b + b^{2}/3.
\end{aligned}
\end{equation}
By (\ref{simple fact about integral of w cdot x}), we check that for each $v \in \mathcal{F}_{\mathrm{ReLU}}(B)$,
\begin{equation*}
\begin{aligned}
\|v\|_{L^{2}(\Omega)}^{2}
& \leq \int_{\Omega}\left[B+4B (w \cdot x + 1)\right]^{2} d x 
\leq 51 B^{2}.
\end{aligned}
\end{equation*}
Since $|v(x)| \leq (5 + 4 w \cdot x) B$, $\|\nabla v\|_{L^{\infty}(\Omega)} \leq |\gamma| \leq 4 B$,  by Lemma \ref{preliminary bounds for cutoff function varphi(x)} and the inequality (\ref{simple fact about integral of w cdot x}),  \begin{equation*}
\begin{aligned}
\|\nabla(\varphi v)\|_{L^{2}(\Omega)}^{2} 
& \leq \int_{\Omega}\left(\left|\nabla\varphi\right| |v| + \varphi \left|\nabla v\right| \right)^{2} d x
\leq B^{2} \int_{\Omega}\left[ \pi (5 + 4 w \cdot x) + 4 \right]^{2} d x 
\leq 689 B^{2}.
\end{aligned}
\end{equation*}
Hence,  $\|\varphi v\|_{H^{1}(\Omega)}^{2} 
\leq 740 B^{2}.$ Therefore, the $H^{1}$-norm of each function in $\varphi\mathcal{F}_{\mathrm{ReLU}}(B)$ can be bounded by $28B$. 
Theorem \ref{Thm: u H1 approximation by varphi ReLU networks} follows immediately from Lemma \ref{Lemma: Maurey method, Universal approximation bounds for superpositions of a sigmoidal function}.
\end{proof}
Lemma \ref{lemma: Softplus for sine and cosine case, modified} shows that functions in $\mathcal{F}_{s}(B)$ are also well approximated by two-layer Softplus networks.

\begin{lemma} \label{lemma: Softplus for sine and cosine case, modified}
Under the same assumption of Lemma \ref{lemma: ReLU for both sine and cosine case},
there exists a two-layer neural network  $g_{\tau, m}$  of the form
\begin{equation} \label{gtau,m Softplus 1}
\begin{aligned}
g_{\tau, m}(z)=c+\sum_{i=1}^{2 m} a_{i} \mathrm{SP}_{\tau}\left(\epsilon_{i} z-b_{i}\right),\quad z \in[-1,1]
\end{aligned}
\end{equation}
with $\tau>0$, $c = g(\rho)$, $b_{i} \in[-1,1]$  and  $\epsilon_{i} \in\{ \pm 1\}$, $i=1, \cdots, 2 m $ such that
\begin{equation} \label{approximation error by two-layer Softplus networks for functions in mathcalFs(B)}
\begin{aligned}
\left\|g-g_{\tau, m}\right\|_{W^{1, \infty}([-1,1])} \leq \frac{4 B}{\tau} \left(1+\frac{1}{\tau}\right).
\end{aligned}
\end{equation}
Moreover, the bounds for $|c|$, $\left|a_{i}\right|$  and  $\sum_{i=1}^{2 m} |a_{i}|$ all hold true as in Lemma \ref{lemma: ReLU for both sine and cosine case}.
\end{lemma}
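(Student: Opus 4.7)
The plan is to reuse exactly the same parameters $c$, $a_i$, $\epsilon_i$, $b_i$ produced in Lemma \ref{lemma: ReLU for both sine and cosine case}, but with the Softplus activation $\mathrm{SP}_\tau$ in place of $\mathrm{ReLU}$. This realizes the required form \eqref{gtau,m Softplus 1} immediately and inherits the bounds on $|c|$, on each $|a_i|$, and on $\sum_{i=1}^{2m}|a_i|\le 4B$ verbatim, so all of the work reduces to establishing the $W^{1,\infty}$ error bound \eqref{approximation error by two-layer Softplus networks for functions in mathcalFs(B)}.

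My starting point would be two elementary pointwise estimates obtained by direct calculation:
\[
0 \le \mathrm{SP}_\tau(x) - \mathrm{ReLU}(x) \le \frac{\ln 2}{\tau}, \qquad |\sigma(\tau x) - \mathbf{1}_{\{x>0\}}| \le e^{-\tau |x|} \quad (x \ne 0).
\]
Summing the first over the $2m$ terms together with $\sum|a_i|\le 4B$ immediately yields $\|g_m - g_{\tau,m}\|_{L^\infty([-1,1])}\le 4B\ln 2/\tau$. For the derivative difference, I would exploit that the breakpoints $b_i$ lie on a uniform grid of spacing $h\in\{h_1,h_2\}$ with $h\le 1/m$, that each $|a_i|\le 2Bh$, and group terms by distance from the evaluation point $z$:
\[
|g_m'(z)-g_{\tau,m}'(z)| \le \sum_i |a_i|\,e^{-\tau|\epsilon_i z - b_i|} \le 4Bh \sum_{k\ge 0} e^{-\tau k h} \le \frac{4Bh}{1-e^{-\tau h}}.
\]
In the regime $\tau h \le 1$ (which is in force in the application since $\tau = 9\sqrt{m}$), the right-hand side collapses to $O(B/\tau)$. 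Combining with the bound $\|g - g_m\|_{W^{1,\infty}}\le 2B/m$ from Lemma \ref{lemma: ReLU for both sine and cosine case} and absorbing the $2B/m$ residual into the $4B/\tau^2$ slack then delivers the announced bound.

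I expect the main obstacle to be pinning down precisely the constant $\frac{4B}{\tau}(1+\tau^{-1})$, in particular eliminating the extraneous logarithmic factor that appears in the analogous result of \cite{DRMlu2021priori}. This will require a sharper treatment of the geometric series (exploiting the tight identity $|\sigma(\tau x)-\mathbf{1}_{\{x>0\}}|= 1/(1+e^{\tau|x|})$) and a careful accounting of where $1/m$ versus $1/\tau$ dependence is genuinely needed. An arguably cleaner route that sidesteps the piecewise-linear intermediate entirely is to use the exact Taylor-with-remainder identity (valid because $g'(\rho)=0$)
\[
g(z) = g(\rho) + \int_{\rho}^{1} g''(s)\,\mathrm{ReLU}(z-s)\,ds + \int_{-1}^{\rho} g''(s)\,\mathrm{ReLU}(s-z)\,ds,
\]
substitute $\mathrm{SP}_\tau$ for $\mathrm{ReLU}$ to obtain a smooth surrogate whose $W^{1,\infty}$ error to $g$ is bounded by $B\int_{-1}^{1}e^{-\tau|z-s|}\,ds \le 2B/\tau$ directly from $\|g''\|_{L^\infty}\le B$, and finally discretize the $s$-integral by a $2m$-node quadrature chosen to preserve the coefficient bounds $|c|$, $|a_i|$, $\sum|a_i|\le 4B$ of Lemma \ref{lemma: ReLU for both sine and cosine case}.
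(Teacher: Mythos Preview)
Your high-level plan—reuse the coefficients from Lemma \ref{lemma: ReLU for both sine and cosine case} and bound the effect of swapping $\mathrm{ReLU}$ for $\mathrm{SP}_\tau$—is exactly the paper's approach. Two points, however, need repair before the argument yields the stated constant.

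First, the crude $L^\infty$ estimate $|\mathrm{SP}_\tau-\mathrm{ReLU}|\le \ln 2/\tau$ is not sharp enough. Summing it against $\sum|a_i|\le 4B$ gives a contribution $4B\ln 2/\tau$, and even after sending $m\to\infty$ your total reads $(1+\ln 2)\cdot 4B/\tau>\frac{4B}{\tau}(1+1/\tau)$ for large $\tau$. The paper instead uses the pointwise bound $|\mathrm{SP}_\tau(z)-\mathrm{ReLU}(z)|\le \tau^{-1}e^{-\tau|z|}$ (Lemma \ref{lemma: ReLU-Softplus difference}) for \emph{both} the function and its derivative, so that the $L^\infty$ sum is exactly $1/\tau$ times the derivative sum; this is what produces the factor $(1+1/\tau)$. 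Your geometric-series treatment of the derivative is fine and is essentially equivalent to the paper's integral comparison; you just need to apply the same estimate to the $L^\infty$ piece.

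Second, there is a quantifier confusion around $m$. In this lemma $m$ is a \emph{free} parameter of the construction, not the network width from Theorem \ref{Thm: u H1 approximation by varphi Softplus networks}; the lemma is only used to place $g$ in the closed convex hull of $\mathcal{F}_{\mathrm{SP}_\tau}(B)$, after which Maurey's lemma selects a fresh $m$. The paper makes this explicit by taking $m\ge 3\tau e^\tau$ at the end of the proof, which kills the residual $O(B/m)$ terms. Your parenthetical ``which is in force in the application since $\tau=9\sqrt m$'' therefore points to the wrong relation: with that coupling one has $2B/m=162B/\tau^2$, and the absorption into the $4B/\tau^2$ slack fails outright. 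Once you decouple $m$ and let it be large, the absorption is immediate.

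Your alternative integral-representation route is legitimate and in fact cleaner: the identity $g(z)=g(\rho)+\int g''(s)\,\mathrm{ReLU}(\pm(z-s))\,ds$ together with the exponential bounds gives $\|g-\tilde g\|_{W^{1,\infty}}\le \frac{2B}{\tau}(1+1/\tau)$ for the continuous surrogate, and a Riemann-sum discretisation (which converges since $g''$ is continuous) then lands in the convex hull with $|a_i|\le Bh$ and $\sum|a_i|\le 2B$. The paper does not take this route, sticking instead with the piecewise-linear intermediate.
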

Now we are ready to prove Theorem \ref{Thm: u H1 approximation by varphi Softplus networks}.  It follows from (\ref{ReLU-Softplus difference 
Wi,inf bound}) that 
\begin{equation} \label{convex hull Softplus H1 bound}
\begin{aligned}
\sup _{f \in \mathcal{F}_{\mathrm{SP}_{\tau}(B)}}\|f\|_{H^{1}(\Omega)} \leq  B+4 B\left\|\mathrm{SP}_{\tau}\right\|_{W^{1, \infty}([-2,2])} \leq 13 B+ 4 B/ \tau .
\end{aligned}
\end{equation}

\begin{proof}[Proof of Theorem \ref{Thm: u H1 approximation by varphi Softplus networks}]
According to Proposition \ref{proposition: u H1 approximation by cos-sin}, for any $u \in \mathfrak{B}^{s}(\Omega)$ with $s\geq 3$, $u\in$ $\overline{\operatorname{conv}(\varphi\mathcal{F}_{s-1}(B))}$ with $B=\left(1+\frac{2 d}{\pi s}\right) \|u\|_{\mathfrak{B}^{s}(\Omega)}$. 
Note that each function in $\mathcal{F}_{s-1}(B)$ with $s\geq 3$ is a composition of the multivariate linear function  $z=w \cdot x$  with  $|w|_{1}=1$  and the univariate function  $g(z)$  defined in (\ref{the part of function in Fs(B) approximated by the network, g(z)}) such that  $g^{\prime}(\rho)=0$ for some $\rho\in [0, 1/2]$ and  $\left\|g^{(r)}\right\|_{L^{\infty}([-1,1])} \leq B$  for  $r=0,1,2$. 
By Lemma \ref{lemma: Softplus for sine and cosine case, modified}, such  $g$  may be approximated by  $g_{\tau, m}$  which lies in the convex hull of the set of functions
$\left\{c+\gamma \mathrm{SP}_{\tau}(\epsilon z-b)  \! :  \! |c|  \! \leq \!  B, \epsilon  \! \in  \! \{ \pm 1\},|b|  \! \leq  \! 1,\right.$ $\left.\gamma \leq 4 B\right\} .$
Moreover,  $\left\|g-g_{\tau, m}\right\|_{W^{1, \infty}([-1,1])} \leq 4 B (1+\tau)/\tau^{2}$. As a consequence, we have 
$$\left\|g(w \cdot x)-g_{\tau, m}(w \cdot x)\right\|_{H^{1}(\Omega)} \leq\left\|g-g_{\tau, m}\right\|_{W^{1, \infty}([-1,1])} \leq \frac{4 B}{\tau} \left(1+\frac{1}{\tau}\right).$$
By Lemma \ref{lemma: Magnification of the H1-norm caused by the action of varphi}, there exists a function  $v_{\tau}$  in the convex hull of  $\mathcal{F}_{\mathrm{SP}_{\tau}}(B)$  such that
$$\left\|u-\varphi v_{\tau}\right\|_{H^{1}(\Omega)} \leq \frac{4\sqrt{21} B}{\tau} \left(1+\frac{1}{\tau}\right).$$ 
Thanks to Lemma \ref{Lemma: Maurey method, Universal approximation bounds for superpositions of a sigmoidal function} and the bound (\ref{convex hull Softplus H1 bound}), there exists  $v_{m} \in \mathcal{F}_{\mathrm{SP}_{\tau}, m}(B) $, which is a convex combination of  $m$  functions in  $\mathcal{F}_{\mathrm{SP}_{\tau}}(B)$  such that
$$\left\|\varphi v_{\tau}-\varphi v_{m}\right\|_{H^{1}(\Omega)} \leq \sqrt{21} \left\| v_{\tau} - v_{m}\right\|_{H^{1}(\Omega)} \leq\sqrt{\dfrac{21}m}B\left(\frac{4}{\tau}+13\right),$$
where the first inequality follows from Lemma \ref{lemma: Magnification of the H1-norm caused by the action of varphi}.

Combining the last two inequalities and setting  $\tau = 9\sqrt{m}$, we obtain~\eqref{approximation error by two-layer Softplus networks for functions in mathcalFs(B)}.
\end{proof}
\subsection{Bounding the approximation error}
The following theorem bounds the approximation error in (\ref{decompose generalization error}) when  $\mathcal{F} = \varphi \mathcal{F}_{\mathrm{SP}_{\tau}, m} .$
We postpone the proof of Theorem \ref{Thm: bounding the approximation error Lk(u in F)-lambdak} to Appendix \ref{Appendix proof of Thm: bounding the approximation error Lk(u in F)-lambdak}.
\begin{theorem} \label{Thm: bounding the approximation error Lk(u in F)-lambdak}
Under Assumptions \ref{Assumption: V is bounded above and below} and \ref{Assumption: exist u* in both mathfrakB^s for s geq 3 and U_k}, 
let $B_{u^{*}} = \left(1+\frac{2 d}{\pi s}\right) \|u^{*}\|_{\mathfrak{B}^{s}}$  and  $v_{m} \in   \mathcal{F}_{\mathrm{SP}_{\tau}, m}\left(B_{u^{*}}\right)$  be defined in Theorem \ref{Thm: u H1 approximation by varphi Softplus networks}.
Assume in addition that $\eta\left(B_{u^{*}}, m\right) := 64B_{u^{*}}/\sqrt{m} \leq 1/2.$ Then,
\begin{equation*} \label{Ineq: conclusion in Thm: bounding the approximation error Lk(u in F)-lambdak}
\begin{aligned}
L_{k}(\varphi v_{m})-L_{k}(u^{*})  
\leq  \left(3\max \left\{1, V_{\max }\right\}  + 7\lambda_{k} + 5 \beta\right) \eta\left(B_{u^{*}}, m\right) .
\end{aligned}
\end{equation*}
\end{theorem}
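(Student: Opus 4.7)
The plan is to set $w := \varphi v_m$ and $e := w - u^{\ast}$, exploit Theorem~\ref{Thm: u H1 approximation by varphi Softplus networks} to obtain $\|e\|_{H^{1}(\Omega)} \leq \eta := \eta(B_{u^{\ast}}, m) \leq 1/2$, and then bound the energy excess $L_k(w) - L_k(u^{\ast})$ by a direct perturbation argument around $u^{\ast}$. First I would observe that, since $u^{\ast} \in U_k$ is a normalized eigenfunction orthogonal to $\psi_1, \ldots, \psi_{k-1}$, we have $\mathcal{E}_V(u^{\ast}) = \lambda_k$, $\mathcal{E}_P(u^{\ast}) = 0$ and $\mathcal{E}_2(u^{\ast}) = 1$, so that $L_k(u^{\ast}) = \lambda_k$.

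Next, I would write the excess as
\[
L_k(w) - \lambda_k = \frac{\mathcal{E}_V(w) + \mathcal{E}_P(w) - \lambda_k \mathcal{E}_2(w)}{\mathcal{E}_2(w)}
\]
and analyze the numerator term by term. Introducing the symmetric bilinear form $a(u,v) := \int_{\Omega} \nabla u \cdot \nabla v + V u v$, I would expand
\[
\mathcal{E}_V(w) - \mathcal{E}_V(u^{\ast}) = a(e, e) + 2 a(u^{\ast}, e),
\qquad
\mathcal{E}_2(w) - \mathcal{E}_2(u^{\ast}) = \|e\|_{L^{2}(\Omega)}^{2} + 2\langle u^{\ast}, e\rangle.
\]
Since $\varphi$ vanishes on $\partial\Omega$, we have $e = w - u^{\ast} \in H_0^1(\Omega)$, so integration by parts and the eigenvalue equation give $a(u^{\ast}, e) = \langle \mathcal{H} u^{\ast}, e\rangle = \lambda_k \langle u^{\ast}, e\rangle$. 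The two $\lambda_k \langle u^{\ast}, e\rangle$ cross-terms cancel, leaving the clean identity
\[
\mathcal{E}_V(w) + \mathcal{E}_P(w) - \lambda_k \mathcal{E}_2(w) = a(e, e) - \lambda_k \|e\|_{L^{2}(\Omega)}^{2} + \mathcal{E}_P(w).
\]

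For the penalty term, since $\langle u^{\ast}, \psi_j\rangle = 0$ for $1 \leq j \leq k-1$, we have $\langle w, \psi_j\rangle = \langle e, \psi_j\rangle$, and Bessel's inequality gives $\mathcal{E}_P(w) \leq \beta \|e\|_{L^{2}(\Omega)}^{2} \leq \beta \eta^2$. For the quadratic form, $a(e,e) \leq \max\{1, V_{\max}\} \|e\|_{H^{1}(\Omega)}^{2} \leq \max\{1, V_{\max}\}\eta^{2}$. Finally, $\mathcal{E}_2(w) \geq (1-\eta)^{2} \geq 1/4$ since $\eta \leq 1/2$. Assembling these estimates yields an upper bound of the form $C\bigl(\max\{1,V_{\max}\} + \lambda_k + \beta\bigr)\eta^{2} \leq C\bigl(\max\{1,V_{\max}\} + \lambda_k + \beta\bigr)\eta$, matching the claimed bound after tracking constants (using $\eta \leq 1/2$ to absorb higher-order terms).

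There is no serious obstacle: the only mild subtlety is justifying the integration by parts $a(u^{\ast}, e) = \lambda_k \langle u^{\ast}, e\rangle$, which uses the crucial fact that the cut-off construction forces $e \in H_0^{1}(\Omega)$. Bookkeeping the absolute constants to reproduce the coefficients $3$, $7$, and $5$ in the stated bound will require keeping the negative term $-\lambda_k \|e\|^{2}$ and using the inequality $(1-\eta)^{-2} \leq 4$ together with $\eta \leq 1/2$ in a slightly lossy way, but this is routine arithmetic.
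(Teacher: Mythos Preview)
Your argument is correct, and in fact sharper than the paper's. The paper does not exploit the eigenvalue equation: it uses the three-term decomposition
\[
L_k(u_m)-\lambda_k=\frac{\mathcal{E}_V(u_m)-\mathcal{E}_V(u^*)}{\mathcal{E}_2(u_m)}+\frac{\mathcal{E}_P(u_m)-\mathcal{E}_P(u^*)}{\mathcal{E}_2(u_m)}+\frac{\mathcal{E}_2(u^*)-\mathcal{E}_2(u_m)}{\mathcal{E}_2(u_m)}\lambda_k,
\]
and then bounds the cross term $2a(u^*,e)$ in $\mathcal{E}_V(u_m)-\mathcal{E}_V(u^*)$ via Cauchy--Schwarz as $2\sqrt{\lambda_k\,\mathcal{E}_V(e)}\le 2\sqrt{\lambda_k\max\{1,V_{\max}\}}\,\eta$, which is only first order in~$\eta$ and is the origin of the $\lambda_k$-dependence in the stated bound. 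Your key step---using $e\in H_0^1(\Omega)$ together with the weak eigenvalue equation to get $a(u^*,e)=\lambda_k\langle u^*,e\rangle$---kills that cross term exactly and leaves a numerator of order $\eta^2$, namely $a(e,e)-\lambda_k\|e\|_{L^2}^2+\mathcal{E}_P(w)\le(\max\{1,V_{\max}\}+\beta)\eta^2$. After dividing by $\mathcal{E}_2(w)\ge 1/4$ and using $\eta\le 1/2$, your bound is $2(\max\{1,V_{\max}\}+\beta)\eta$, which already implies the stated inequality (so you do not need to reproduce the specific coefficients $3,7,5$; the $7\lambda_k$ term is simply superfluous in your route). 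The paper's approach has the mild advantage of not needing the boundary condition on $e$, but since the whole point of the cut-off construction is that $\varphi v_m\in H_0^1(\Omega)$, your use of it is perfectly natural here.
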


\section{Statistical error}  \label{section: Statistical error}
According to Theorem \ref{Thm: oracle inequality for the generalization error}, in order to bound the statistical error, we need to control 
\begin{equation} \label{quantities to be bounded for statistical error}
\mathcal{K}_{n}(\frac{\mathcal{G}_{1}}{M_{\mathcal{F}}^{2}}, \frac{r}{M_{\mathcal{F}}}),\ \ \mathcal{K}_{n}(\frac{\mathcal{G}_{2}}{M_{\mathcal{G}_{2}}}, r\sqrt{\frac{\lambda_{1}}{M_{\mathcal{G}_{2}}}}) \ \text{  and  }\  \mathcal{K}_{n}(\frac{\mathcal{F}_{j}}{2\mu_{j} M_{\mathcal{F}}} + \frac{1}{2},  \sqrt{\frac{r}{4 \mu_{j} M_{\mathcal{F}}}}) , \ \ 1 \leq j \leq k-1.
\end{equation}
To this end, we firstly bound the covering numbers of the properly rescaled function classes in \S \ref{subsection: Bounding the covering numbers}.
Secondly, we derive inequalities to bound the quantity  $K_{n, q}(r, s]$ defined in (\ref{the quantity En,q(r, s]}) with respect to covering numbers in \S \ref{subsection: Estimates of the expectation of suprema of empirical processes}.
Then, we obtain the bounds for the quantities in (\ref{quantities to be bounded for statistical error}).

\subsection{Bounding the covering numbers} \label{subsection: Bounding the covering numbers}
For fixed positive constants  $C$, $\Gamma$, $W$  and  $T$, we consider the set of two-layer neural networks
\begin{equation} \label{widetilde mathcal Fm}
\begin{aligned}
\widetilde{\mathcal{F}}_{m}=\left\{v_{\theta}(x)=c+\sum_{i=1}^{m} \gamma_{i} \phi\left(w_{i} \cdot x+t_{i}\right): x \in \Omega,  \left| c\right|\leq C, \sum_{i=1}^{m} \left|\gamma_{i}\right|  \leq \Gamma,\left|w_{i}\right|_{1} \leq W,\left|t_{i}\right| \leq T \right\} ,
\end{aligned}
\end{equation}
where  $\phi$  is the activation function,  $\theta=\left(c,\left\{\gamma_{i}\right\}_{i=1}^{m},\left\{w_{i}\right\}_{i=1}^{m},\left\{t_{i}\right\}_{i=1}^{m}\right)$  denotes collectively the parameters of the two-layer neural network. Denote  the parameter space $$\Theta=\Theta_{c} \times \Theta_{\gamma} \times \Theta_{w} \times \Theta_{t}=[-C, C] \times B_{1}^{m}(\Gamma) \times   \left(B_{1}^{d}(W)\right)^{m} \times[-T, T]^{m}.$$   
We consider the set  $\Theta$  endowed with the metric  $\rho$  defined for  $\theta=(c, \gamma, w, t),\ \theta^{\prime}=\left(c^{\prime}, \gamma^{\prime}, w^{\prime}, t^{\prime}\right)$ in $\Theta$  by
\begin{equation} \label{metric rho defined on parameter space Theta}
\begin{aligned}
\rho_{\Theta}\left(\theta, \theta^{\prime}\right)=\max \left\{\left|c-c^{\prime}\right|,\left|\gamma-\gamma^{\prime}\right|_{1}, \max _{i}\left|w_{i}-w_{i}^{\prime}\right|_{1},\left\|t-t^{\prime}\right\|_{\infty}\right\}.
\end{aligned}
\end{equation}
Assume that  $\phi$  satisfies the following assumption, which is valid for the Softplus activation function.
\begin{assumption_in_section} \label{assumption for activation function} \hypertarget{Assumption 3}{}
$\phi \in C^{2}(\mathbb{R})$  and   $\phi$  (resp.  $\phi^{\prime}$, the derivative of  $\phi$) is $L$-Lipschitz (resp. is  $L^{\prime}$-Lipschitz) for some  $L$, $L^{\prime}>0$. Moreover, there exist positive constants  $\phi_{\max }$  and  $\phi_{\max }^{\prime}$  such that
$$\sup _{w \in \Theta_{w}, t \in \Theta_{t}, x \in \Omega}|\phi(w \cdot x+t)| \leq \phi_{\max } \quad \text{and} \quad \sup _{w \in \Theta_{w}, t \in \Theta_{t}, x \in \Omega}\left|\phi^{\prime}(w \cdot x+t)\right| \leq \phi_{\max }^{\prime} .$$
\end{assumption_in_section}

\begin{example} 
Let $\Theta_{w} = \left(B_{1}^{d}(1)\right)^{m}$ and $\Theta_{t} = [-1, 1]^{m}$.
It is clear that  $\left\|\mathrm{SP}_{\tau}^{\prime}\right\|_{L^{\infty}(\mathbb{R})} \leq 1$  and  
$\left\|\mathrm{SP}_{\tau}^{\prime \prime}\right\|_{L^{\infty}(\mathbb{R})} \leq \tau$, hence  $\mathrm{SP}_{\tau}$  satisfies Assumption \ref{assumption for activation function} with
\begin{equation} \label{constants in assumption for Softplus activation function}
L = \phi_{\max }^{\prime}=1, \quad L^{\prime}=\tau, \quad  \phi_{\max } \leq 2 + 1/\tau .
\end{equation}
\end{example}
\begin{example}
The activation function $tanh$ satisfies Assumption \ref{assumption for activation function} with
$L = \phi_{\max }^{\prime}=1,\ L^{\prime} = 4\sqrt{3}/9  \text{ and }  \phi_{\max } = 1.$
\end{example}

Let  $(E, \rho)$  be a metric space with metric  $\rho $. A  $\delta $-cover of a set  $A \subset E$  with respect to  $\rho$  is a collection of points  $\left\{x_{1}, \cdots, x_{n}\right\} \subset A$  such that for every  $x \in A $, there exists  $i \in\{1, \cdots, n\}$  such that  $\rho\left(x, x_{i}\right) \leq \delta $. The  $\delta $-covering number  $\mathcal{N}(\delta, A, \rho)$  is the cardinality of the smallest  $\delta$  cover of the set  $A$  with respect to the metric  $\rho $. Equivalently, the  $\delta $-covering number  $\mathcal{N}(\delta, A, \rho)$  is the minimal number of balls  $B_{\rho}(x, \delta)$  of radius  $\delta$  needed to cover the set $A$.

Let $Q$ be any probability measure on $\Omega$ and $\|g\|_{*} = \sup_{x \in \Omega} \left|g(x)\right|$. Define
\begin{equation*} \label{}
\begin{aligned}
& \mathcal{G}_{m}^{1} := \left\{g: \Omega \rightarrow \mathbb{R}\ \left|\ g= \varphi^{2} v_{\theta}^{2} \text { where } v_{\theta} \in \widetilde{\mathcal{F}}_{m}\right.\right\} ,\\
& \mathcal{G}_{m}^{2} := \left\{g: \Omega \rightarrow \mathbb{R}\ \left|\ g=|\nabla \left(\varphi v_{\theta}\right)|^{2} + V\varphi^{2} v_{\theta}^{2} \text { where } v_{\theta} \in \widetilde{\mathcal{F}}_{m}\right.\right\} ,\\
& \mathcal{G}_{m}^{3} := \left\{g: \Omega \rightarrow \mathbb{R}\ \left|\ g= \varphi \psi v_{\theta} \text { where } v_{\theta} \in \widetilde{\mathcal{F}}_{m}\right.\right\}.
\end{aligned}
\end{equation*}

Thanks to Assumption \ref{assumption for activation function}, 
\begin{align}
& \max_{\theta \in \Theta}\left\| v_{\theta} \right\|_{*}  \leq \left|c\right| + \sum_{i=1}^{m}\left|\gamma_{i}\right| 
\sup _{w_{i} \in \Theta_{w}, t_{i} \in \Theta_{t}, x \in \Omega} \left|\phi\left(w_{i} \cdot x+t_{i}\right)\right| \leq  C + \Gamma  \phi_{\max } ,  \label{maximum of Linf norm for vtheta}  \\
& \max_{\theta \in \Theta}\left\|\left|\nabla v_{\theta}\right|\right\|_{*}  \leq \sum_{i=1}^{m}\left|\gamma_{i}\right|\left|w_{i}\right| \sup _{w_{i} \in \Theta_{w}, t_{i} \in \Theta_{t}, x \in \Omega} \left|\phi^{\prime}\left(w_{i} \cdot x+t_{i}\right)\right| \leq \Gamma W \phi_{\max }^{\prime} . \notag 
\end{align}
Since $|\nabla \left(\varphi v_{\theta}\right)| \leq \left|\varphi\right| \left|\nabla v_{\theta}\right| + \left| v_{\theta}\right| \left|\nabla \varphi\right|$ and $0 \leq V \leq V_{\max}$, 
\begin{equation} \label{maximum of Linf norm for functions in Gm2}
\begin{aligned}
\sup _{g \in \mathcal{G}_{m}^{2}} \|g\|_{*} \leq \left[\Gamma W \phi_{\max}^{\prime}/d + \pi \left(C + \Gamma\phi_{\max}\right) \right]^{2} + V_{\max} \left(C+\Gamma \phi_{\max }\right)^{2} /d^{2}.
\end{aligned}
\end{equation}
The next proposition provides upper bounds for
$\mathcal{N}\left(\delta, \mathcal{G}_{m}^{i}/M_{i},\|\cdot\|_{L^{2}(Q)}\right)$, $i = 1,2,3$, where $M_{i}$ are some scaling parameters.
Given $C$, $\Gamma$,  $W$  and  $T$  in (\ref{widetilde mathcal Fm}), as in \cite{DRMlu2021priori}, we define
\begin{equation*} \label{mathcal M used for covering number}
\begin{aligned} 
\mathcal{M}(\delta, \Lambda, m, d):=  \frac{2 C \Lambda}{\delta} \left(\frac{3 \Gamma \Lambda}{\delta}\right)^{m} \left(\frac{3 W \Lambda}{\delta}\right)^{d m} \left(\frac{3 T \Lambda}{\delta}\right)^{m} .
\end{aligned}
\end{equation*}
\begin{proposition} \label{proposition: covering number for Gm1, Gm2 and Gm3}   \label{covering number for Gm1}  \label{covering number for Gm2}   \label{covering number for Gm3}
If  $0 \leq V \leq V_{\max}$ and 
$\phi$  satisfies Assumption \ref{assumption for activation function}. Then 
\[\mathcal{N}\left(\delta, \mathcal{G}_{m}^{i}/M_{i},\|\cdot\|_{L^{2}(Q)}\right) \leq \mathcal{M}\left(\delta, \Lambda_{i}/M_{i}, m, d\right)
\]
for $i = 1, 2, 3$, where
\begin{subequations}  \label{}
\begin{align*}
\Lambda_{1} &  := 2\left(C+\Gamma \phi_{\max }\right)\left(1+\phi_{\max }+2 L \Gamma\right) /d^{2}, \\   
\Lambda_{2} & := 2\left[\Gamma W \phi_{\max }^{\prime}/d + \pi \left(C \! + \!  \Gamma\phi_{\max }\right)\right] \left[\Big((W \!  + \! \Gamma) \phi_{\max }^{\prime}+2 \Gamma W L^{\prime}\Big)/d + \pi\left(1 \! + \! \phi_{\max } \! + \! 2 L \Gamma\right) \right]\\
& \quad \ + 2V_{\max}\left(C+\Gamma \phi_{\max }\right)\left(1+\phi_{\max }+2 L \Gamma\right), \\  
\Lambda_{3} & := \|\psi\|_{L^{2}(Q)}\left(1+\phi_{\max }+2 L \Gamma\right)/d.   
\end{align*}    
\end{subequations}
\end{proposition}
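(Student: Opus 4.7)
The strategy is to reduce covering-number bounds on each $\mathcal{G}_m^i/M_i$ to a covering-number bound on the parameter space $\Theta$, exploiting Lipschitz continuity of the map $\theta \mapsto g_\theta$ with respect to the metric $\rho_\Theta$ in \eqref{metric rho defined on parameter space Theta}. Since the four factors of $\Theta$ are respectively an interval, an $\ell^1$-ball in $\mathbb{R}^m$, a product of $m$ copies of an $\ell^1$-ball in $\mathbb{R}^d$, and an $\ell^\infty$-cube, a standard volume-comparison argument gives
\[
\mathcal{N}(\delta, \Theta, \rho_\Theta) \leq \frac{2C}{\delta}\left(\frac{3\Gamma}{\delta}\right)^m\left(\frac{3W}{\delta}\right)^{dm}\left(\frac{3T}{\delta}\right)^m.
\]
If $\theta \mapsto g_\theta$ is $\Lambda_i$-Lipschitz from $(\Theta,\rho_\Theta)$ into $(L^\infty(\Omega), \|\cdot\|_*)$, then, since $\|\cdot\|_{L^2(Q)}\leq \|\cdot\|_*$, a $\delta M_i/\Lambda_i$-cover of $\Theta$ transports to a $\delta$-cover of $\mathcal{G}_m^i/M_i$ in $\|\cdot\|_{L^2(Q)}$. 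Substituting $\delta\mapsto \delta M_i/\Lambda_i$ in the parameter-space bound yields exactly $\mathcal{M}(\delta, \Lambda_i/M_i, m, d)$.

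The task therefore reduces to three Lipschitz estimates. The workhorse is a perturbation bound for the network itself: splitting the difference $v_\theta-v_{\theta'}$ according to $c-c'$, $\gamma_i-\gamma_i'$, and $\phi(w_i\cdot x+t_i)-\phi(w_i'\cdot x+t_i')$, applying the Lipschitz bound $|\phi(u)-\phi(u')|\leq L|u-u'|$, and using $|x|_\infty\leq 1$ from $\Omega=(0,1)^d$, I get
\[
\|v_\theta - v_{\theta'}\|_* \leq (1+\phi_{\max}+2L\Gamma)\,\rho_\Theta(\theta,\theta').
\]
For $\mathcal{G}_m^1$, write $g_\theta - g_{\theta'} = \varphi^2(v_\theta+v_{\theta'})(v_\theta-v_{\theta'})$, use $\|\varphi\|_*\leq 1/d$ together with $\|v_\theta\|_*\leq C+\Gamma\phi_{\max}$ from \eqref{maximum of Linf norm for vtheta}, and the constant $\Lambda_1$ falls out. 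For $\mathcal{G}_m^3$, write $g_\theta - g_{\theta'}=\varphi\psi(v_\theta-v_{\theta'})$, pull $\|\varphi\|_*\leq 1/d$ out, and take the $L^2(Q)$-norm to pick up the factor $\|\psi\|_{L^2(Q)}$, giving $\Lambda_3$.

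The main obstacle, and the longest calculation, is $\mathcal{G}_m^2$, since the gradient $\nabla(\varphi v_\theta)=\varphi\nabla v_\theta+v_\theta\nabla\varphi$ must be perturbed too. This requires the companion estimate
\[
\|\nabla v_\theta-\nabla v_{\theta'}\|_* \leq \bigl((W+\Gamma)\phi_{\max}'+2\Gamma W L'\bigr)\,\rho_\Theta(\theta,\theta'),
\]
obtained by the same splitting but now invoking the Lipschitz constant $L'$ of $\phi'$ together with $\|\phi'\|_*\leq \phi_{\max}'$. Using the identity
\[
g_\theta-g_{\theta'} = \bigl(\nabla(\varphi v_\theta)+\nabla(\varphi v_{\theta'})\bigr)\cdot\bigl(\nabla(\varphi v_\theta)-\nabla(\varphi v_{\theta'})\bigr) + V\varphi^2(v_\theta+v_{\theta'})(v_\theta-v_{\theta'}),
\]
bounding each factor by the network and gradient perturbation estimates, and combining them with $\|\varphi\|_*\leq 1/d$, $\|\nabla\varphi\|_*\leq \pi$ (from the cited bounds on $\varphi$) and $0\leq V\leq V_{\max}$, produces exactly the constant $\Lambda_2$. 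The bookkeeping is intricate but elementary: the two summands of $\Lambda_2$ correspond respectively to the kinetic-type term $|\nabla(\varphi v_\theta)|^2$ and the potential-type term $V\varphi^2 v_\theta^2$, and within the kinetic summand the product structure $\bigl(\Gamma W\phi_{\max}'/d+\pi(C+\Gamma\phi_{\max})\bigr)\cdot\bigl(\cdots\bigr)$ reflects the two appearances of $\nabla(\varphi v_\theta)$—one bounded in sup-norm via \eqref{maximum of Linf norm for functions in Gm2}, the other bounded in Lipschitz seminorm.
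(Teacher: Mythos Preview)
Your proposal is correct and follows essentially the same route as the paper: transport the covering-number bound from the parameter space $\Theta$ via Lipschitz continuity of $\theta\mapsto g_\theta$, using the perturbation bounds $\|v_\theta-v_{\theta'}\|_*\leq(1+\phi_{\max}+2L\Gamma)\rho_\Theta(\theta,\theta')$ and its gradient analogue, and then factor each $g_\theta-g_{\theta'}$ exactly as you describe. The only nuance worth noting is that the paper mixes norms slightly more than you indicate (e.g., for $\mathcal{G}_m^2$ it bounds the sum factor in $\|\cdot\|_*$ and the difference factor in $\|\cdot\|_{L^2(Q)}$, and similarly for $\mathcal{G}_m^3$ the Lipschitz bound is directly in $L^2(Q)$ to pick up $\|\psi\|_{L^2(Q)}$), but you already flag this for $\mathcal{G}_m^3$ and the arithmetic is unchanged.
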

The proof is postponed to Appendix \ref{Appendix proof of Proposition: covering number for Gm1, Gm2 and Gm3}.
\subsection{Estimates of the expectation of suprema of empirical processes} \label{subsection: Estimates of the expectation of suprema of empirical processes}
Let $\{\epsilon_i\}_{i = 1}^{\infty}$ be independent Rademacher variables\footnote{A Rademacher variable $\epsilon$ is one that satisfies $\mathbf{P}(\epsilon = 1) = \mathbf{P}(\epsilon = -1) = 1/2.$} independent from  $\{X_i\}_{i = 1}^{\infty}$,  and let $F \geq \sup_{f \in \mathcal{F}}  \left|f\right|$  be a measurable envelope of the function class $\mathscr{F}$. Here, we call $\mathscr{F}$ a VC class if there exist some finite  $A \geq 3 \sqrt{e}$  and  $v \geq 1$ such that for all probability measures $Q$ and  $0 < \tau < 1$,
$\mathcal{N}\left(\tau \|F\|_{L^{2}(Q)}, \mathscr{F}, \|\cdot\|_{L^{2}(Q)}\right) \leq \left(A/\tau\right)^{v}.$
We firstly recall the following useful lemma.
\begin{lemma}{{\citep[Proposition 2.1]{Gin2001OnCO}}} \label{Proposition 2.1 in cite Gin2001OnCO, original version}
Let  $\mathscr{F}$  be a measurable uniformly bounded VC class. 
Let $U \geq \sup_{f \in \mathscr{F}}\|f\|_{L^{\infty}}$ and $\sigma^{2} \geq \sup_{f \in \mathscr{F}} E_{\mathcal{P}} f^{2}$  be such that  $0<\sigma \leq U$. 
Then there exists a universal constant $C$ such that for all  $n \in \mathbb{N} ,$
$$\mathbf{E}\left\|\sum_{i=1}^{n} \epsilon_{i} f\left(X_{i}\right)\right\|_{\mathscr{F}} \leq C\left[v U \ln \frac{A U}{\sigma} + \sigma \sqrt{v n \ln \frac{A U}{\sigma}}\right].$$
\end{lemma}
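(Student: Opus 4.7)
Since $\sum_{i=1}^n \epsilon_i f(X_i)$ is already symmetrized, no symmetrization step is needed. My strategy is to work conditionally on the sample $(X_1,\dots,X_n)$, bound the resulting subgaussian Rademacher process by Dudley's entropy integral, plug in the VC bound $\mathcal{N}(\tau,\mathscr{F},L^2(\mathcal{P}_n)) \le (A\|F\|_{L^2(\mathcal{P}_n)}/\tau)^v$, and then take expectation in the sample to trade the random radius $\hat{\sigma}_n := \sup_{f\in\mathscr{F}}\|f\|_{L^2(\mathcal{P}_n)}$ for the deterministic $\sigma$.

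\textbf{Conditional Dudley and VC.} The Rademacher process is subgaussian with respect to $d_n(f,g)=\sqrt{n}\,\|f-g\|_{L^2(\mathcal{P}_n)}$, so conditional chaining gives
\[
\mathbf{E}_{\epsilon} \Big\|\sum_{i=1}^n \epsilon_i f(\cdot)\Big\|_{\mathscr{F}} \le C\sqrt{n}\int_0^{\hat{\sigma}_n}\sqrt{v\log(A\|F\|_{L^2(\mathcal{P}_n)}/\tau)}\,d\tau.
\]
Using the elementary splitting $\sqrt{\log(AU/\tau)} \le \sqrt{\log(AU/\hat{\sigma}_n)} + \sqrt{\log(\hat{\sigma}_n/\tau)}$ together with $\int_0^{\hat{\sigma}_n}\sqrt{\log(\hat{\sigma}_n/\tau)}\,d\tau = O(\hat{\sigma}_n)$, the integral is of order $\hat{\sigma}_n\sqrt{v\log(AU/\hat{\sigma}_n)}$, so the conditional bound is $\lesssim \sqrt{vn}\,\hat{\sigma}_n\sqrt{\log(AU/\hat{\sigma}_n)}$.

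\textbf{From $\hat{\sigma}_n$ to $\sigma$ --- the main obstacle.} The technical heart is taking expectation in the sample without picking up an extra $\sqrt{n}$. I would write $\hat{\sigma}_n^2 \le \sigma^2 + \sup_f|(\mathcal{P}_n-\mathcal{P})f^2|$, notice that the squared class $\{f^2:f\in\mathscr{F}\}$ is again VC with envelope $U^2$, and apply the conditional chaining estimate to it, obtaining $\mathbf{E}\sup_f|(\mathcal{P}_n-\mathcal{P})f^2| \lesssim U\sigma\sqrt{v\log(AU/\sigma)/n} + vU^2\log(AU/\sigma)/n$. A self-bounding (quadratic) argument in $\mathbf{E}\hat{\sigma}_n^2$ then yields $\mathbf{E}\,\hat{\sigma}_n \lesssim \sigma + U\sqrt{v\log(AU/\sigma)/n}$. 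Since $x\mapsto x\sqrt{\log(AU/x)}$ is concave on the relevant range, Jensen combined with this bound reproduces the two target summands $\sigma\sqrt{vn\log(AU/\sigma)}$ and $vU\log(AU/\sigma)$. The difficulty is exactly in closing this self-bounding loop without losing a factor of $\sqrt{n}$: this variance-versus-envelope split is precisely the content of Talagrand's inequality for empirical processes on VC classes, and either invoking it directly on $\sup_f|(\mathcal{P}_n-\mathcal{P})f^2|$ or reproducing its Bernstein-type decomposition via generic chaining is the principal technical step.
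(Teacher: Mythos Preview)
The paper does not prove this lemma; it is quoted verbatim from \citep[Proposition 2.1]{Gin2001OnCO} and used as a black box (the paper immediately applies it to derive Lemma~\ref{Prop 2.1 from Gin2001OnCO, U=2, centered} by passing to the centered class $\bar{\mathscr{F}}$). So there is no in-paper proof to compare your attempt against.

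That said, your sketch is a faithful outline of the standard proof in the cited reference: conditional Dudley on the Rademacher process with the empirical $L^2$ metric, insertion of the VC entropy bound, and then the self-bounding step to replace the random radius $\hat\sigma_n$ by the deterministic $\sigma$. One minor correction: the claim ``the squared class $\{f^2:f\in\mathscr{F}\}$ is again VC'' is not how the argument usually runs and is not quite right as stated. The cleaner route is to use symmetrization plus Rademacher contraction (since $x\mapsto x^2$ is $2U$-Lipschitz on $[-U,U]$) to get $\mathbf{E}\sup_f|(\mathcal{P}_n-\mathcal{P})f^2| \lesssim U\,\mathbf{E}\|n^{-1}\sum_i\epsilon_i f(X_i)\|_{\mathscr{F}}$, which feeds the expected supremum back into itself and yields the quadratic self-bounding inequality directly, without needing Talagrand's concentration inequality. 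This is exactly what Gin\'e and Koltchinskii do, and it closes the loop you flagged as the main obstacle.
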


To bound $\mathbf{E}\left\|\mathcal{P}_{n}-\mathcal{P}\right\|_{\mathscr{F}}$ with $\mathscr{F}$ a subset of a rescaled class $\mathcal{G}_{m}^{i}/M_{i}$ ($1\leq i\leq 3$) in \S \ref{subsection: Bounding the covering numbers}, we need the following lemma, whose proof is deferred to Appendix \ref{appendix proof of Lemma: Prop 2.1 from Gin2001OnCO, U=2, centered}.

\begin{lemma} \label{Prop 2.1 from Gin2001OnCO, U=2, centered}
Let  $\mathscr{F}$  be a class of real valued measurable functions taking values in  $[-1, 1]$. 
Let  $F \leq 1$  be a measurable envelope of $\mathscr{F}$ and $\sup_{f \in \mathscr{F}} \operatorname{Var}_{\mathcal{P}} f \leq \sigma^{2} \leq 1$.
Assume that for all  $0 < \tau < 1$, there exists a universal net $\left\{f_{i}\right\}_{i = 1}^{M}$ for all probability measures $Q$ such that  $M \leq \left(A/\tau\right)^{v}$ and for any $f \in \mathscr{F}$, 
$$\min_{1 \leq  i \leq  M} \left\| f - f_{i} \right\|_{L^{2}(Q)} \leq \tau\|F\|_{L^{2}(Q)}.$$
Then, there exists a universal $C$ such that for all  $n \in \mathbb{N}$,
$$\mathbf{E} \left\|\mathcal{P}_{n}-\mathcal{P}\right\|_{\mathscr{F}} \leq C\left( \frac{v}{n} \ln \frac{A}{\sigma} +  \sigma \sqrt{\frac{v}{n} \ln \frac{A}{\sigma}}\right).$$
\end{lemma}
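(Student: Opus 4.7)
The plan is to reduce Lemma~\ref{Prop 2.1 from Gin2001OnCO, U=2, centered} to Lemma~\ref{Proposition 2.1 in cite Gin2001OnCO, original version} via symmetrization combined with centering. The subtle point is that the cited lemma controls the Rademacher process in terms of $\sup_f \mathbf{E}_\mathcal{P} f^2$, whereas our hypothesis only provides the variance bound $\sup_f \operatorname{Var}_\mathcal{P}(f) \le \sigma^2$. Since $\mathbf{E}f^2 = \operatorname{Var}_\mathcal{P}(f) + (\mathcal{P}f)^2 \le \sigma^2 + 1$, a naive application would destroy the sharp $\sigma$-dependence in the sub-Gaussian term; to retain it we work with the centered class $\bar{\mathscr{F}} := \{f - \mathcal{P}f : f \in \mathscr{F}\}$, for which the second moment coincides with the variance.

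The steps are as follows. From $(\mathcal{P}_n - \mathcal{P})f = (\mathcal{P}_n - \mathcal{P})(f - \mathcal{P}f)$ one has $\|\mathcal{P}_n - \mathcal{P}\|_{\mathscr{F}} = \|\mathcal{P}_n - \mathcal{P}\|_{\bar{\mathscr{F}}}$, so the standard symmetrization inequality yields
\[
\mathbf{E}\|\mathcal{P}_n - \mathcal{P}\|_{\mathscr{F}} \;\le\; \frac{2}{n}\, \mathbf{E}\,\Big\|\sum_{i=1}^n \epsilon_i \bar{f}(X_i)\Big\|_{\bar{\mathscr{F}}}.
\]
The class $\bar{\mathscr{F}}$ has pointwise envelope $\bar F \le F + |\mathcal{P}f| \le 2$ and satisfies $\sup_{\bar f \in \bar{\mathscr{F}}} \mathbf{E}_\mathcal{P}\bar{f}^2 \le \sigma^2$. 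To verify the VC-type covering hypothesis of Lemma~\ref{Proposition 2.1 in cite Gin2001OnCO, original version} for $\bar{\mathscr{F}}$, pair the given universal net $\{f_i\}_{i=1}^M$ with a $\tau$-net $\{c_j\}$ of $[-1,1]$ of cardinality at most $3/\tau$. For any $\bar f = f - \mathcal{P}f$ and any probability measure $Q$, choose $f_i$ with $\|f-f_i\|_{L^2(Q)} \le \tau\|F\|_{L^2(Q)} \le \tau$ and $c_j$ with $|c_j - \mathcal{P}f| \le \tau$; then $\|\bar f - (f_i - c_j)\|_{L^2(Q)} \le 2\tau$. Hence the augmented collection $\{f_i - c_j\}$ is a universal net for $\bar{\mathscr{F}}$ with parameters $A' = O(A)$, $v' = v+1$, and pointwise envelope $U' = 2$, after a harmless rescaling $\tau \mapsto \tau/2$.

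Applying Lemma~\ref{Proposition 2.1 in cite Gin2001OnCO, original version} to $\bar{\mathscr{F}}$ then bounds the Rademacher process by $C[v' U' \ln(A'U'/\sigma) + \sigma\sqrt{v' n \ln(A'U'/\sigma)}]$. Dividing by $n/2$ and absorbing universal constants (using $\ln(12 A/\sigma) \le C\ln(A/\sigma)$, which is valid because $A \ge 3\sqrt{e}$ and $\sigma \le 1$, together with $v+1 \le 2v$ for $v \ge 1$) yields the claimed bound. The main obstacle is the covering step: because the centering $f \mapsto f - \mathcal{P}f$ is governed by the fixed measure $\mathcal{P}$ rather than by the varying $Q$, a universal $L^2(Q)$-net for $\mathscr{F}$ does not automatically descend to one for $\bar{\mathscr{F}}$, and the augmented-net construction above is the essential technical step; all remaining ingredients are standard.
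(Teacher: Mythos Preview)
Your proposal is correct and follows the same overall strategy as the paper: symmetrize, pass to the centered class $\bar{\mathscr{F}} = \{f - \mathcal{P}f : f \in \mathscr{F}\}$ so that second moments coincide with variances, verify a VC-type covering condition for $\bar{\mathscr{F}}$, and then invoke Lemma~\ref{Proposition 2.1 in cite Gin2001OnCO, original version}.

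The only genuine difference is in how the covering for $\bar{\mathscr{F}}$ is obtained. The paper reads the universal-net hypothesis in its strong form (for each $f$ the \emph{same} $f_i$ works for every probability measure $Q$); specializing to point masses $Q=\delta_x$ yields the pointwise bound $|f-f_i|\le \tau F$, hence $|\mathcal{P}(f-f_i)|\le \tau\,\mathcal{P}F$. Consequently the centered net $\{f_i-\mathcal{P}f_i\}_{i=1}^M$ already covers $\bar{\mathscr{F}}$ at scale $\sqrt{2}\,\tau\,\|F+\mathcal{P}F\|_{L^2(Q)}$, keeping the \emph{same} exponent $v$ and replacing $A$ by $\sqrt{2}A$. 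Your tensor-product construction---pairing $\{f_i\}$ with a $\tau$-net of $[-1,1]$ for the centering constants $\mathcal{P}f$---is a valid alternative that does not rely on the ``same $i$ for all $Q$'' reading of the hypothesis; the price is inflating the exponent to $v+1$, which is harmlessly absorbed via $v+1\le 2v$. The paper's route is slightly sharper and exploits the stated hypothesis more fully, while yours is more elementary and more robust to weaker covering assumptions; both lead to the same final bound up to the universal constant.
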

\begin{remark}
If $\mathscr{F} \subset \mathcal{G}_{m}^{i}/M_{i}$ ($1\leq i\leq 3$),
the universal nets exist due to Proposition \ref{Proposition: covering number of parameter space Theta} and the procedure by which we control the covering numbers.
The universal nets correspond to the nets for the 
parameter space $\Theta$.
\end{remark}

Recall $K^{\mathscr{F}}_{n, q}(r, s]$ defined in (\ref{the quantity En,q(r, s]}) and $\mathcal{K}_{n}(\mathscr{F}, r) = K^{\mathscr{F}}_{n, \sqrt{2}}(r, 1]$. 
Corollary \ref{estimates of En,q(r, s], U = 1} is a direct consequence of Lemma \ref{Prop 2.1 from Gin2001OnCO, U=2, centered}. 
We postpone its proof to Appendix \ref{appendix proof of Corollary: estimates of En,q(r, s], U = 1}.
\begin{corollary} \label{estimates of En,q(r, s], U = 1}
Let  $\mathscr{F}$  satisfy the assumptions in Lemma \ref{Prop 2.1 from Gin2001OnCO, U=2, centered} and all functions in  $\mathscr{F}$  take values in  $[0, 1]$.
Then, for all  $n \in \mathbb{N}$,
\begin{equation*} \label{}
\begin{aligned} 
\mathcal{K}_{n}(\mathscr{F}, r) & \leq C \left( \frac{v}{n r^{2}} \ln \frac{A}{r} +   \sqrt{\frac{v}{n r^{2}} \ln \frac{A}{r}}\right) ,
\end{aligned}
\end{equation*}
where $C \geq 1$ is an absolute constant. In particular, if $\frac{v}{n r^{2}} \ln \frac{A}{r} \leq 1,$  then  $$\mathcal{K}_{n}(\mathscr{F}, r) \leq  2C \sqrt{\frac{v}{n r^{2}} \ln \frac{A}{r}} .$$
\end{corollary}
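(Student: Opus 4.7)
\textbf{Proof proposal for Corollary \ref{estimates of En,q(r, s], U = 1}.} The plan is to apply Lemma \ref{Prop 2.1 from Gin2001OnCO, U=2, centered} to each ``peeled'' subclass $\mathscr{F}(\rho_{j-1}, \rho_{j}]$ appearing in the definition of $\mathcal{K}_{n}(\mathscr{F}, r) = K^{\mathscr{F}}_{n, \sqrt{2}}(r, 1]$, then divide by $\rho_{j-1}^{2}$ and take the maximum over $j$. Because $f \in \mathscr{F}$ takes values in $[0,1]$, the envelope may be chosen as $F \equiv 1$ so that $\|F\|_{L^{2}(Q)} \leq 1$ for every probability measure $Q$, and for any $f \in \mathscr{F}(\rho_{j-1},\rho_{j}]$ we have $\operatorname{Var}_{\mathcal{P}}(f) \leq \sigma_{\mathcal{P}}^{2}(f) \leq \rho_{j}^{2}$. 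Hence Lemma \ref{Prop 2.1 from Gin2001OnCO, U=2, centered} (applied with $\sigma = \rho_{j}$) yields
\[
\mathbf{E}\|\mathcal{P}_{n}-\mathcal{P}\|_{\mathscr{F}(\rho_{j-1},\rho_{j}]} \leq C\left(\frac{v}{n}\ln\frac{A}{\rho_{j}} + \rho_{j}\sqrt{\frac{v}{n}\ln\frac{A}{\rho_{j}}}\right).
\]

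Next I would use $\rho_{j-1}^{2} = \rho_{j}^{2}/2$ (since $q = \sqrt{2}$), so that dividing the previous bound by $\rho_{j-1}^{2}$ and rearranging gives
\[
\frac{\mathbf{E}\|\mathcal{P}_{n}-\mathcal{P}\|_{\mathscr{F}(\rho_{j-1},\rho_{j}]}}{\rho_{j-1}^{2}} \leq 2C\left(\frac{v}{n\rho_{j}^{2}}\ln\frac{A}{\rho_{j}} + \sqrt{\frac{v}{n\rho_{j}^{2}}\ln\frac{A}{\rho_{j}}}\right).
\]
The function $h(t) := \frac{v}{n t^{2}}\ln(A/t)$ is strictly decreasing on $(0, A/\sqrt{e}]$, and since $A \geq 3\sqrt{e}$ while $\rho_{j} \leq 1$, we are in that range. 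Composing with the increasing map $y \mapsto y + \sqrt{y}$, the quantity on the right-hand side is decreasing in $\rho_{j}$, so its maximum over $j = 1,\dots,l$ is attained at the smallest value $\rho_{1} = r\sqrt{2}$.

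Substituting $\rho_{1} = r\sqrt{2}$ and using the elementary bounds $\ln\bigl(A/(r\sqrt{2})\bigr) \leq \ln(A/r)$ and $1/(2r^{2}) \leq 1/r^{2}$ yields
\[
\mathcal{K}_{n}(\mathscr{F}, r) \leq 2C\left(\frac{v}{nr^{2}}\ln\frac{A}{r} + \sqrt{\frac{v}{nr^{2}}\ln\frac{A}{r}}\,\right),
\]
which is the claimed bound after absorbing the factor $2$ into the absolute constant. For the ``in particular'' statement, if $x := \frac{v}{n r^{2}}\ln(A/r) \leq 1$ then $x \leq \sqrt{x}$, so $x + \sqrt{x} \leq 2\sqrt{x}$, and the previous display collapses to $\mathcal{K}_{n}(\mathscr{F}, r) \leq 4C\sqrt{x}$, which is of the claimed form (again absorbing constants).

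The argument is essentially a standard peeling/chaining computation, so I do not expect any serious obstacle; the only care needed is in (i) checking that Lemma \ref{Prop 2.1 from Gin2001OnCO, U=2, centered} applies uniformly to the subclasses $\mathscr{F}(\rho_{j-1},\rho_{j}]$ with the same $(A,v)$ (which is immediate since these subclasses inherit universal nets from $\mathscr{F}$), and (ii) verifying monotonicity of $h$ on the relevant range so that the maximum is truly attained at the smallest $\rho_{j}$.
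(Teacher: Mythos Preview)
Your proposal is correct and follows essentially the same approach as the paper: apply Lemma \ref{Prop 2.1 from Gin2001OnCO, U=2, centered} at each scale with $\sigma=\rho_j$, divide by $\rho_{j-1}^2=\rho_j^2/2$, observe monotonicity in $j$, and evaluate at $j=1$. The paper applies the lemma to the full sublevel set $\mathscr{F}(\rho_j)$ rather than to the annulus $\mathscr{F}(\rho_{j-1},\rho_j]$, but this is an inessential difference since the annulus is contained in $\mathscr{F}(\rho_j)$; your remark (i) about inherited universal nets is exactly what justifies either version. One tiny slip: $h(t)=\tfrac{v}{nt^2}\ln(A/t)$ is actually decreasing on $(0,A\sqrt{e})$, not $(0,A/\sqrt{e}]$, but since $\rho_j\le 1$ lies well inside both intervals this does not affect the argument.
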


\subsection{Bounding \texorpdfstring{$\mathcal{K}_{n}$}{} in the statistical error}
We take $\mathcal{F} = \varphi \widetilde{\mathcal{F}}_{m}$  and $\widetilde{\mathcal{F}}_{m} = \mathcal{F}_{\mathrm{SP}_{\tau}, m}(B)$ with $\tau = 9\sqrt{m}$, and consider
\begin{equation*} \label{function classes defined for bounding generalization error, 2}
\begin{aligned}
& \mathcal{G}_{1} = \mathcal{G}_{\mathrm{SP}_{\tau}, m, 1}(B) := \left\{g: g=\varphi^2 v^{2} \text { where } v \in \mathcal{F}_{\mathrm{SP}_{\tau}, m}(B)\right\}, \\
& \mathcal{G}_{2} = \mathcal{G}_{\mathrm{SP}_{\tau}, m, 2}(B) := \left\{g: g=|\nabla \left(\varphi v\right)|^{2}+V|\varphi v|^{2} \text { where } v \in \mathcal{F}_{\mathrm{SP}_{\tau}, m}(B)\right\}, \\
& \mathcal{F}_{j} = \mathcal{F}_{\mathrm{SP}_{\tau}, m, j}(B) := \left\{g: g = \varphi \psi_{j}  v  \text { where } v \in \mathcal{F}_{\mathrm{SP}_{\tau}, m}(B)\right\}, \quad \text{for }  j = 1, 2, \ldots, k-1.
\end{aligned}
\end{equation*}
Note that  $\mathcal{F}_{\mathrm{SP}_{\tau}, m}(B)$ coincides with the set  $\widetilde{\mathcal{F}}_{m} $  defined in (\ref{widetilde mathcal Fm}) with 
\begin{equation} \label{concrete representation of constants C Gamma W T}
C = B,\ \Gamma = 4B,\ W = 1,\ T = 1.
\end{equation}
By (\ref{maximum of Linf norm for vtheta}) and (\ref{maximum of Linf norm for functions in Gm2}), with (\ref{constants in assumption for Softplus activation function}) and (\ref{concrete representation of constants C Gamma W T}),  we take $M_{\mathcal{F}}$, $M_{\mathcal{G}_{2}}$ as
\begin{equation} \label{upper bound of maximum value of hypothesis class with Softplus activation}
\begin{aligned}
\sup _{g \in \mathcal{F}} \|g\|_{L^{\infty}(\Omega)} 
\leq 9.5B/d =: M_{\mathcal{F}}, \quad
\sup _{g \in \mathcal{G}_{2}} \|g\|_{L^{\infty}(\Omega)} \leq 34^{2}B^{2} + V_{\max } \left(9.5B/d\right)^{2} =: M_{\mathcal{G}_{2}} .
\end{aligned}
\end{equation}

Applying Lemma \ref{Prop 2.1 from Gin2001OnCO, U=2, centered} or Corollary \ref{estimates of En,q(r, s], U = 1} to certain rescaled function classes, we obtain the estimates for $\mathcal{K}_{n}$.

\begin{theorem} \label{Thm:  bounds for all spaces needed in oracle inequality for the generalization error}
Assume that  $0 \leq V \leq V_{\max }$  and  $\|\psi_{j}\|_{L^{\infty}(\Omega)} \leq \mu_{j}$ for $1\leq j \leq k-1$. 
Consider the sets $\mathcal{F} = \varphi \mathcal{F}_{\mathrm{SP}_{\tau}, m}(B)$, $\mathcal{G}_{1} = \mathcal{G}_{\mathrm{SP}_{\tau}, m, 1}(B)$, $\mathcal{G}_{2} = \mathcal{G}_{\mathrm{SP}_{\tau}, m, 2}(B)$  and  $\mathcal{F}_{j} = \mathcal{F}_{\mathrm{SP}_{\tau}, m, j}(B)$   with  $\tau=9\sqrt{m}$ and $B \geq 1$. Assume that  $n$  is large enough such that 
\begin{equation} \label{Global simplifying condition for statistical error}
C_{0} \frac{m B^{2} \left(1 + V_{\max } \right)}{n r^{2}} \ln \frac{ B \left(1 + \sqrt{m}/d\right) \left(1 + V_{\max}\right)}{r d } \leq 1,
\end{equation}
where $C_{0}$ is an absolute constant.
Then,  there exists an absolute constant
$C$ such that
\begin{subequations}
\begin{align}
& \mathcal{K}_{n}\left(\mathcal{G}_{1}/M_{\mathcal{F}}^{2}, r/M_{\mathcal{F}}\right) \leq  C   \sqrt{ \frac{mB^{2}}{n d r^{2}} \ln \frac{B}{r d}},  
\label{bound for mathcalK_n(mathcalG_1/M_mathcalF^2)}  \\
& \mathcal{K}_{n}\left(\mathcal{G}_{2}/M_{\mathcal{G}_{2}}, r\sqrt{\lambda_{1}/M_{\mathcal{G}_{2}}}\right)   \leq C   \sqrt{\frac{m B^{2} \left(1 + V_{\max } \right)}{n r^{2}} \ln \frac{ B \left(1 + \sqrt{m}/d\right) \left(1 + V_{\max}\right)}{r d}},   
\label{bound for mathcalK_n(mathcalG_2/M_mathcalG_2)}  \\
& \mathcal{K}_{n}\left(\frac{\mathcal{F}_{j}}{2\mu_{j} M_{\mathcal{F}}} + \frac{1}{2},  \sqrt{\frac{r}{4 \mu_{j} M_{\mathcal{F}}}}\right)   \leq  C\left[ \frac{m\mu_{j} B}{n  r} \ln \left(\frac{\mu_{j} B}{r d}\right) +  \sqrt{\frac{m\mu_{j} B}{n r} \ln \left(\frac{\mu_{j} B}{r d}\right)}\right].
\label{bound for mathcalK_n(mathcalF_j/2mu_j M_mathcalF + 1/2)}  
\end{align}
\end{subequations}
\end{theorem}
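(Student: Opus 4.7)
The strategy is to treat each of the three quantities as an instance of Corollary~\ref{estimates of En,q(r, s], U = 1} applied to the appropriate rescaled function class, which requires two ingredients: (a) the rescaled class takes values in $[0,1]$ and (b) it admits a universal VC-type covering bound $\mathcal{N}(\tau,\cdot,\|\cdot\|_{L^{2}(Q)})\leq (A/\tau)^{v}$. Ingredient (a) is immediate from the choices of $M_{\mathcal{F}}$, $M_{\mathcal{G}_{2}}$ in~\eqref{upper bound of maximum value of hypothesis class with Softplus activation}, together with the additive shift $+1/2$ in case~3. For ingredient (b) I invoke Proposition~\ref{proposition: covering number for Gm1, Gm2 and Gm3}, whose bound $\mathcal{M}(\tau,\Lambda_{i}/M_{i},m,d)$ may be written as $(A_{i}/\tau)^{v}$ with $v=(d+2)m+1$ (counting exponents of $\tau^{-1}$) and $A_{i}$ proportional to the largest numerator factor, which is $\leq CB\cdot \Lambda_{i}/M_{i}$ in all cases. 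The additive shift in case~3 does not alter any $L^{2}$-covering number, and the nets supplied by Proposition~\ref{proposition: covering number for Gm1, Gm2 and Gm3} are universal across $Q$ because they come from discretizing the parameter space~$\Theta$ under the metric $\rho_{\Theta}$ of~\eqref{metric rho defined on parameter space Theta}, which is exactly what Lemma~\ref{Prop 2.1 from Gin2001OnCO, U=2, centered} requires.

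\textbf{Plugging in the Softplus constants.} For $\mathrm{SP}_{\tau}$ with $\tau=9\sqrt{m}$, Assumption~\ref{assumption for activation function} holds with $L=\phi'_{\max}=1$, $L'=\tau=9\sqrt{m}$, $\phi_{\max}\leq 3$; for $\mathcal{F}_{\mathrm{SP}_{\tau},m}(B)$ one has $C=B$, $\Gamma=4B$, $W=T=1$. Substituting these into the formulas for $\Lambda_{1},\Lambda_{2},\Lambda_{3}$ in Proposition~\ref{proposition: covering number for Gm1, Gm2 and Gm3} and into~\eqref{upper bound of maximum value of hypothesis class with Softplus activation} yields, up to absolute constants,
\[
\Lambda_{1}/M_{\mathcal{F}}^{2}\leq C,\qquad \Lambda_{2}/M_{\mathcal{G}_{2}}\leq C(1+V_{\max})(1+\sqrt{m}/d),\qquad \Lambda_{3}/(2\mu_{j}M_{\mathcal{F}})\leq C.
\]
Consequently $A_{1}\leq CB$, $A_{2}\leq CB(1+V_{\max})(1+\sqrt{m}/d)$ and $A_{3}\leq CB$. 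The delicate piece here is the term $2\Gamma W L'$ inside $\Lambda_{2}$, which is precisely what generates the $\sqrt{m}/d$ factor visible inside the logarithm in~\eqref{bound for mathcalK_n(mathcalG_2/M_mathcalG_2)}.

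\textbf{Applying the corollary.} In each case I take $\sigma_{\mathcal{P}}(f)=\sqrt{\mathcal{P}f}$ and compute $v/(n(r')^{2})$ for the respective cutoff $r'$. For case~1, $(r')^{2}=r^{2}/M_{\mathcal{F}}^{2}$ gives $v/(n(r')^{2})\leq C mB^{2}/(ndr^{2})$, controlled by~\eqref{Global simplifying condition for statistical error}; hence the square-root form of Corollary~\ref{estimates of En,q(r, s], U = 1} applies and yields~\eqref{bound for mathcalK_n(mathcalG_1/M_mathcalF^2)}. For case~2, $(r')^{2}=r^{2}\lambda_{1}/M_{\mathcal{G}_{2}}$; the Dirichlet lower bound $\lambda_{1}\geq d\pi^{2}$ (valid for $(0,1)^{d}$ with $V\geq 0$) is essential to cancel the extra factor of $d$ coming from $v\leq 3dm$, so that $v/(n(r')^{2})\leq CmB^{2}(1+V_{\max})/(nr^{2})$, again controlled by~\eqref{Global simplifying condition for statistical error}, producing~\eqref{bound for mathcalK_n(mathcalG_2/M_mathcalG_2)}. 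Case~3 gives $v/(n(r')^{2})\leq Cm\mu_{j}B/(nr)$, which need \emph{not} be small under~\eqref{Global simplifying condition for statistical error}; I therefore retain both the linear and the square-root terms of Corollary~\ref{estimates of En,q(r, s], U = 1}, producing~\eqref{bound for mathcalK_n(mathcalF_j/2mu_j M_mathcalF + 1/2)}.

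\textbf{Main obstacle.} The bookkeeping in case~2 is the hardest part: three different size parameters ($\sqrt{m}$ from $L'=9\sqrt{m}$, the factor $1/\lambda_{1}$ inside $(r')^{2}$, and the multi-term envelope $M_{\mathcal{G}_{2}}$) must be combined so that the resulting logarithmic factor collapses into the clean form $\ln(B(1+\sqrt{m}/d)(1+V_{\max})/(rd))$ asserted in the statement. The use of $\lambda_{1}\geq d\pi^{2}$ to absorb the extra $d$ from $v=(d+2)m+1$ is the only genuinely nontrivial input; everything else is routine constant-tracking and a direct appeal to Corollary~\ref{estimates of En,q(r, s], U = 1}.
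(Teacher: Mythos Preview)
Your plan is correct and matches the paper's argument for cases~1 and~2 almost verbatim: rescale, read off $(A,v)$ from Proposition~\ref{proposition: covering number for Gm1, Gm2 and Gm3} with $v=(d+2)m+1$, invoke Corollary~\ref{estimates of En,q(r, s], U = 1}, and use $\lambda_{1}\geq d\pi^{2}$ to absorb the extra factor of~$d$ in case~2. That is exactly what the paper does.

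One correction in case~3: you write ``In each case I take $\sigma_{\mathcal{P}}(f)=\sqrt{\mathcal{P}f}$,'' but the quantity $\mathcal{K}_{n}(\mathcal{F}_{j}/(2\mu_{j}M_{\mathcal{F}})+\tfrac12,\,\sqrt{r/(4\mu_{j}M_{\mathcal{F}})})$ is defined via the $\sigma_{\mathcal{P}}$ already fixed in~\eqref{def sigma_P^2(f) for mathcalF_j/(2mu_j M_mathcalF) + 1/2}, namely $\sigma_{\mathcal{P}}^{2}(f)=\|u\|_{L^{2}}/(4\mu_{j}M_{\mathcal{F}})$; you are not free to choose it here. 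Your arithmetic for $(r')^{2}=r/(4\mu_{j}M_{\mathcal{F}})$ is in fact consistent with the correct $\sigma_{\mathcal{P}}$, so the slip is in the description rather than in the computation, but it should be fixed. Note also that Corollary~\ref{estimates of En,q(r, s], U = 1} does apply with this $\sigma_{\mathcal{P}}$, since its proof only uses $\operatorname{Var}_{\mathcal{P}}f\leq\sigma_{\mathcal{P}}^{2}(f)$.

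For case~3 the paper takes a slightly different route: rather than applying Corollary~\ref{estimates of En,q(r, s], U = 1} to the shifted class with envelope $F\equiv 1$, it unfolds the definition of $\mathcal{K}_{n}$ and applies Lemma~\ref{Prop 2.1 from Gin2001OnCO, U=2, centered} directly to the unshifted slices $\{u\psi_{j}/(\mu_{j}M_{\mathcal{F}}):\|u\|_{L^{2}}/(\mu_{j}M_{\mathcal{F}})\in(4\rho_{i-1}^{2},4\rho_{i}^{2}]\}$ with the non-constant envelope $F_{j}=\psi_{j}/\mu_{j}$. The point of that choice is that the $\|\psi_{j}\|_{L^{2}(Q)}$ appearing in $\Lambda_{3}$ cancels exactly against $\|F_{j}\|_{L^{2}(Q)}$ in the covering condition, giving $A=8B^{2/(d+3)}$ independent of~$Q$ without the crude bound $\|\psi_{j}\|_{L^{2}(Q)}\leq\mu_{j}$. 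Your route with $F\equiv 1$ also works (you do need that crude bound to make $\Lambda_{3}/(2\mu_{j}M_{\mathcal{F}})$ uniform in $Q$), and leads to the same final estimate up to absolute constants.
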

The proof is postponed to Appendix \ref{Appendix proof of Thm:  bounds for all spaces needed in oracle inequality for the generalization error}.

\section{Proof of the main generalization theorem}
\label{Proof of the main generalization theorem}
Combining Theorem \ref{Thm: oracle inequality for the generalization error}, Theorem \ref{Thm: bounding the approximation error Lk(u in F)-lambdak} and Theorem \ref{Thm:  bounds for all spaces needed in oracle inequality for the generalization error}, we shall prove Theorem \ref{Main generalization theorem}.

\begin{proof}[Proof of Theorem \ref{Main generalization theorem}]
Thanks to Theorem \ref{Thm: oracle inequality for the generalization error}, taking $\mathcal{F}_{> r} = \varphi \mathcal{F}_{\mathrm{SP}_{\tau}, m}(B) \cap \{ \|u\|_{L^{2}(\Omega)} > r \}$  with $B =  \left(1+\frac{2 d}{\pi s}\right) \|u^{*}\|_{\mathfrak{B}^{s}(\Omega)}$  and  $\tau=9\sqrt{m}$, if $2\xi_{1}+\xi_{2}<1/2$, for any $u_{\mathcal{F}} \in \mathcal{F}_{> r},$
\begin{equation} \label{restate oracle ineq for generalization error in pf of Main generalization thm}
\begin{aligned}
L_{k}\left(u_{n}^{m}\right)-\lambda_{k}
& \leq  4\lambda_{k}\left(\xi_{1}+\xi_{2}\right)   + \beta\left(k \xi_{3}^{2} + 4 \sqrt{k} \xi_{3}\right)  + 3 \left(L_{k}\left(u_{\mathcal{F}}\right)-\lambda_{k}\right).
\end{aligned}
\end{equation}
By Theorem \ref{Thm: u H1 approximation by varphi Softplus networks}, there exists $u_{\mathcal{F}} \in \varphi \mathcal{F}_{\mathrm{SP}_{\tau}, m}(B)$ such that 
$\left\|u^{*}-u_{\mathcal{F}}\right\|_{H^{1}(\Omega)} \leq 64B/\sqrt{m}.$ 
Since $64B/\sqrt{m} \leq 1/2$ and $0 < r < 1/2$, $\|u_{\mathcal{F}}\|_{L^{2}(\Omega)} \geq \|u^{*}\|_{L^{2}(\Omega)} - 64B/\sqrt{m} > r.$
Thus, $u_{\mathcal{F}} \in \mathcal{F}_{> r}.$
By Theorem \ref{Thm: bounding the approximation error Lk(u in F)-lambdak}, 
\begin{equation} \label{restate approximation error in pf of Main generalization thm}
\begin{aligned}
L_{k}\left(u_{\mathcal{F}}\right)-\lambda_{k} \leq 64 \left(3\max \left\{1, V_{\max }\right\}  + 7\lambda_{k} + 5 \beta\right) B/\sqrt{m} .
\end{aligned}
\end{equation}
Substituting the bounds in Theorem \ref{Thm:  bounds for all spaces needed in oracle inequality for the generalization error} into the expression of $\{\xi_{i}(n, r, \delta)\}_{i = 1}^{3}$ gives that if (\ref{Global simplifying condition for statistical error}) holds, then
\begin{equation} \label{statistical error bound for xi1, xi2 and xi3}
\begin{aligned}
\xi_{1}(n, r, \delta) & \leq  \frac{C B}{ r d \sqrt{n}} \left( \sqrt{ m d \ln \frac{ B}{rd}} + \sqrt{ \ln(1/\delta)} \right), \\
\xi_{2}(n, r, \delta) & \leq C\Upsilon_{1}(n,m,B,r,\delta),\\
\xi_{3}(n, r, \delta) & \leq  C \Upsilon_{2}(n,m,k,B,\bar{\mu}_{k},r,\delta)/\sqrt{k},
\end{aligned}
\end{equation}
where we have used (\ref{upper bound of maximum value of hypothesis class with Softplus activation}), $\lambda_{1}\geq d\pi^{2}$ and (\ref{Simplifying condition for xi3 term}). 
Note that the bound for $\xi_{1}$ is smaller than the bound for $\xi_{2}$. Hence, there exists an absolute constant $C$ such that  
(\ref{ineq: assumption in Main generalization theorem to ensure xi1+xi2<1/2}) guarantees both (\ref{Global simplifying condition for statistical error}) and  $2\xi_{1} + \xi_{2} \leq 1/2$.
A combination of (\ref{restate oracle ineq for generalization error in pf of Main generalization thm}), (\ref{restate approximation error in pf of Main generalization thm}) and (\ref{statistical error bound for xi1, xi2 and xi3}) completes the proof.
\end{proof}

\section{Solution theory in spectral Barron Spaces} \label{Section: Solution theory in spectral Barron Spaces}
In this section we aim to prove the regularity of the eigenfunctions 
stated in Theorem \ref{Thm: Regularity of eigenfunctions}. 

Without loss of generality, we assume that $V\ge 0$.
Consider the static Schrödinger equation with Dirichlet boundary condition
\begin{equation}
\begin{cases} \label{static Schrödinger equation}
\mathcal{H}u(x) = -\Delta u(x) + V(x) u(x) = f(x), & x \in \Omega , \\
u(x) = 0, & x \in \partial\Omega,
\end{cases}
\end{equation}
where $f \in L^2(\Omega)$.
To show the boundedness of $\mathcal{H}^{-1}$, we prove an estimate for the solution of (\ref{static Schrödinger equation}).
\begin{theorem} \label{Thm: compactness of inverse of Schrödinger Operator}
Assume that  $f \in \mathfrak{B}^{s}(\Omega)$  and   $V \in \mathfrak{C}^{s}(\Omega) $  with  $s \geq 0$  and  $V(x) \geq 0$ for every $x \in \Omega$. Then 
(\ref{static Schrödinger equation}) has a unique solution  $u \in \mathfrak{B}^{s+2}(\Omega)$. Moreover, there exists  $C>0$  depending on  $V$  and  $d$  such that
$$\|u\|_{\mathfrak{B}^{s+2}(\Omega)} \leq C(V, d) \|f\|_{\mathfrak{B}^{s}(\Omega)} .$$
\end{theorem}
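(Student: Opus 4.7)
The plan is to combine (i) a smoothing estimate for the Dirichlet inverse Laplacian on the sine basis, (ii) an algebra estimate for multiplication by $V$ between the cosine and sine Barron spaces, and (iii) a finite bootstrap in the Barron scale. Since $V \ge 0$, Lax--Milgram gives a unique weak solution $u \in H_0^1(\Omega)$; expanding $u = \sum_{k \in \mathbb{N}_+^d}\hat{u}(k)\Phi_k$ and using $-\Delta \Phi_k = \pi^2 |k|^2 \Phi_k$, the PDE becomes the coefficient identity $\pi^2|k|^2 \hat{u}(k) + \widehat{Vu}(k) = \hat{f}(k)$, equivalently $u = (-\Delta)^{-1}(f - Vu)$.

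For (i), using $|k|^2 \ge |k|_1^2/d$ and $|k|_1 \ge d$ for $k \in \mathbb{N}_+^d$, I would verify the pointwise inequality $(1 + \pi^{s+2}|k|_1^{s+2})/(\pi^2|k|^2) \le d(1 + \pi^s|k|_1^s)$, which yields the clean smoothing bound $\|(-\Delta)^{-1} g\|_{\mathfrak{B}^{s+2}} \le d\|g\|_{\mathfrak{B}^s}$ whenever $g$ has a sine expansion. For (ii), applying the product-to-sum identity $\cos(\pi\ell_i x_i)\sin(\pi m_i x_i) = \tfrac{1}{2}[\sin(\pi(m_i+\ell_i)x_i) + \sin(\pi(m_i-\ell_i)x_i)]$ componentwise to $\Psi_\ell \Phi_m$ shows that $Vu$ has a sine expansion whose nonzero coefficients are indexed by $k_i = |m_i + \epsilon_i\ell_i|$ for some $\epsilon \in \{\pm 1\}^d$, so that $|k|_1 \le |m|_1 + |\ell|_1$. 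Combining this with the elementary bound $1 + \pi^s(a+b)^s \le 2^s(1+\pi^s a^s)(1+\pi^s b^s)$ and absorbing the $2^{-d}$ prefactor into the sum over $\epsilon$ yields the algebra estimate $\|Vu\|_{\mathfrak{B}^s} \le 2^s \|V\|_{\mathfrak{C}^s}\|u\|_{\mathfrak{B}^s}$.

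For (iii), once $u \in \mathfrak{B}^{s_0}$ for some $0 \le s_0 \le s$, combining (i)--(ii) with the identity $u = (-\Delta)^{-1}(f - Vu)$ gives $\|u\|_{\mathfrak{B}^{s_0+2}} \le d\|f\|_{\mathfrak{B}^s} + d\cdot 2^s\|V\|_{\mathfrak{C}^s}\|u\|_{\mathfrak{B}^{s_0}}$. Since $s$ is fixed, iterating this bound at most $\lceil(s+2)/2\rceil$ times reaches $\mathfrak{B}^{s+2}$ with a norm controlled by a constant depending on $V$, $d$, and $s$ times $\|f\|_{\mathfrak{B}^s}$. To initialize the bootstrap, I would first lift $u$ into a Sobolev space $H^m$ with $m > d/2$ by iterated elliptic regularity (using $\mathfrak{C}^s \hookrightarrow W^{\lfloor s \rfloor,\infty}$ to control $Vu$ on the Sobolev side, supplemented by Schauder-type arguments when $s$ is small compared with $d$), then pass to $u \in \mathfrak{B}^0$ through the Cauchy--Schwarz bound $\sum_k |\hat{u}(k)| \le \bigl(\sum_k (1+|k|^2)^{-m}\bigr)^{1/2} \|u\|_{H^m}$.

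The main obstacle is precisely this initial bridge between the Sobolev and Barron scales: the Barron norm imposes a weighted $\ell^1$ condition on Fourier coefficients, whereas standard elliptic regularity only delivers $\ell^2$-type control, and the transfer between them is inherently dimension-dependent. This is where the constant $C(V,d)$ acquires its explicit dependence on $d$. The Barron bootstrap in step (iii) is itself not a contraction---the iteration factor $d\cdot 2^s\|V\|_{\mathfrak{C}^s}$ is generically $\ge 1$---so the argument closes only because $s$ is fixed and only finitely many iterations are required.
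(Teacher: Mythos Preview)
Your ingredients (i) and (ii) are sound and parallel the paper's supporting lemmas: your smoothing estimate is the paper's observation that $(1+\pi^{s+2}|k|_1^{s+2})/(\pi^2|k|_2^2)\le(\pi^{-2}+d)(1+\pi^s|k|_1^s)$, and your algebra estimate is the boundedness of the multiplication operator $\mathbb{V}$ on $\ell^1_{W_s}(\mathbb{N}_+^d)$ (the paper's Lemma~\ref{lemma: operator mathbbV is bounded}). The genuine difference is in how the argument closes. The paper does \emph{not} bootstrap: it rewrites the coefficient equation as $(\mathbb{I}+\mathbb{M}^{-1}\mathbb{V})\hat{u}=\mathbb{M}^{-1}\hat{f}$, observes that $\mathbb{M}^{-1}\mathbb{V}$ is compact on $\ell^1_{W_s}$ (your (ii) gives $\mathbb{V}$ bounded; $\mathbb{M}^{-1}$ is diagonal with symbol $\to 0$), and invokes the Fredholm alternative. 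Triviality of the kernel follows because any $\hat{u}\in\ell^1_{W_s}$ annihilated by $\mathbb{I}+\mathbb{M}^{-1}\mathbb{V}$ corresponds to an $H_0^1$ solution of $-\Delta u+Vu=0$, forced to vanish by the energy identity. This delivers $\hat{u}\in\ell^1_{W_s}$ in one stroke, with no need ever to place $u$ in $\mathfrak{B}^0$ first.

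Your bootstrap, by contrast, hinges on the initialization $u\in\mathfrak{B}^0$, and this is where your proposal has a gap. Take $s=0$: then $V\in\mathfrak{C}^0$ gives only $V\in C(\bar\Omega)$, and with merely continuous $V$ iterated elliptic regularity stalls at $u\in H^2$. For $d\ge 5$ this is not enough to reach $\ell^1$ by Cauchy--Schwarz, and Schauder theory needs H\"older continuity of $V$ that $\mathfrak{C}^0$ does not supply. The correct bridge is not Sobolev-to-Barron but a direct $\ell^p$ bootstrap on Fourier coefficients: from $\tilde{u}_o\in\ell^2$ and Young's inequality $\|\tilde{V}_e*\tilde{u}_o\|_{\ell^p}\le\|\tilde{V}_e\|_{\ell^1}\|\tilde{u}_o\|_{\ell^p}$ one gets $(1+|k|_2^2)\tilde{u}_o\in\ell^p$, hence $\tilde{u}_o\in\ell^q$ for every $1/q<2/d+1/p$ by H\"older, and finitely many iterations reach $\ell^1$. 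The paper runs exactly this $\ell^p$ argument---but only for eigenfunctions (Proposition~\ref{Proposition: Regularity of eigenfunctions, case s=0}), not for the boundary value problem, where the Fredholm route makes it unnecessary. If you patch your initialization this way your scheme goes through; as written, the Sobolev/Schauder step does not cover the small-$s$, large-$d$ regime.
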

\begin{corollary} \label{corollary: inverse of Schrödinger operator is compact}
Assume that  $V \in \mathfrak{C}^{s}(\Omega) $ with $V(x) \geq  0$ for every $x \in \Omega$. Let   $\mathcal{S}:=\mathcal{H}^{-1}$  be the inverse of the Schrödinger operator. Then the operator  $\mathcal{S}: \mathfrak{B}^{s}(\Omega) \rightarrow \mathfrak{B}^{s+2}(\Omega)$  is bounded and $\mathcal{S}$ is a compact operator on $\mathfrak{B}^{s}(\Omega)$.
\end{corollary}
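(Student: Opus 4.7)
The plan is to deduce both claims directly from Theorem~\ref{Thm: compactness of inverse of Schrödinger Operator}. Boundedness of $\mathcal{S}: \mathfrak{B}^{s}(\Omega) \to \mathfrak{B}^{s+2}(\Omega)$ is immediate, since the theorem furnishes $\|\mathcal{S} f\|_{\mathfrak{B}^{s+2}(\Omega)} \le C(V,d)\|f\|_{\mathfrak{B}^{s}(\Omega)}$. For the second assertion, I would factor $\mathcal{S}$ on $\mathfrak{B}^{s}(\Omega)$ as the composition
\[
\mathfrak{B}^{s}(\Omega) \xrightarrow{\;\mathcal{S}\;} \mathfrak{B}^{s+2}(\Omega) \xhookrightarrow{\;\iota\;} \mathfrak{B}^{s}(\Omega),
\]
so the task reduces to showing that the inclusion $\iota$ is a compact operator. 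Composition of a bounded operator with a compact one is compact, giving the result.

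To verify compactness of $\iota$, I would use the isometric identification of $\mathfrak{B}^{s}(\Omega)$ with the weighted $\ell^{1}$ space $\ell^{1}_{W_{s}}(\mathbb{N}_{+}^{d})$, so that an element of $\mathfrak{B}^{s+2}(\Omega)$ is just a sequence $\{\hat{u}(k)\}$ with $\sum_{k} W_{s+2}(k)|\hat{u}(k)| < \infty$. Let $\{u_{n}\}$ be a bounded sequence in $\mathfrak{B}^{s+2}(\Omega)$, say $\|u_{n}\|_{\mathfrak{B}^{s+2}(\Omega)} \le M$. Coefficient-wise boundedness $|\hat{u}_{n}(k)| \le M/W_{s+2}(k)$ allows a standard Cantor diagonal extraction to produce a subsequence (still denoted $\{u_{n}\}$) with $\hat{u}_{n}(k) \to \hat{u}(k)$ for every $k \in \mathbb{N}_{+}^{d}$. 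Fatou's lemma on the counting measure gives $\|u\|_{\mathfrak{B}^{s+2}(\Omega)} \le M$.

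To upgrade coefficient-wise convergence to convergence in $\mathfrak{B}^{s}(\Omega)$, split the norm at a radius $N$:
\[
\|u_{n}-u\|_{\mathfrak{B}^{s}(\Omega)} = \sum_{|k|_{1} \le N} W_{s}(k)|\hat{u}_{n}(k)-\hat{u}(k)| + \sum_{|k|_{1} > N} W_{s}(k)|\hat{u}_{n}(k)-\hat{u}(k)|.
\]
The tail term is controlled by the key decay estimate
\[
\sup_{|k|_{1} > N} \frac{W_{s}(k)}{W_{s+2}(k)} \le \frac{C}{N^{2}},
\]
which bounds the tail by $2CM/N^{2}$ uniformly in $n$. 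Choosing $N$ large makes this as small as desired; then the head is a finite sum that vanishes as $n \to \infty$ by pointwise convergence. Hence $u_{n} \to u$ in $\mathfrak{B}^{s}(\Omega)$, proving that $\iota$ is compact.

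The main obstacle is simply verifying the compact embedding $\mathfrak{B}^{s+2}(\Omega) \hookrightarrow \mathfrak{B}^{s}(\Omega)$, which plays the role of a Rellich-Kondrachov type result in this weighted $\ell^{1}$ setting; the essential mechanism is the uniform polynomial decay $W_{s}/W_{s+2} = O(|k|_{1}^{-2})$ combined with diagonal extraction, and no deeper ingredient than Theorem~\ref{Thm: compactness of inverse of Schrödinger Operator} is needed.
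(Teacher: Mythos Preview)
Your proposal is correct and follows essentially the same strategy as the paper: both factor $\mathcal{S}$ on $\mathfrak{B}^{s}(\Omega)$ as the bounded map $\mathcal{S}:\mathfrak{B}^{s}\to\mathfrak{B}^{s+2}$ (from Theorem~\ref{Thm: compactness of inverse of Schrödinger Operator}) composed with the inclusion $\mathfrak{B}^{s+2}\hookrightarrow\mathfrak{B}^{s}$, and reduce to showing the latter is compact via the decay $W_{s}(k)/W_{s+2}(k)\to 0$. The only difference is packaging: the paper appeals to (an adaptation of) Lemma~\ref{compactness of a multiplication operator} on compactness of diagonal operators with vanishing entries, whereas you write out the equivalent diagonal-extraction plus tail-estimate argument directly.
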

The proof of the above two results may be found in Appendix \ref{Appendix proof of Thm: compactness of inverse of Schrödinger Operator}.
In what follows, we shall use the notations 
defined in Appendix \ref{section: Some useful facts on sine and cosine series}.
To prove Theorem \ref{Thm: Regularity of eigenfunctions}, we start with $s=0$.
\begin{proposition} \label{Proposition: Regularity of eigenfunctions, case s=0}
If  $V \in \mathfrak{C}^{0}(\Omega)$, then any eigenfunction of Problem (\ref{eq1}) lies in $\mathfrak{B}^{2}(\Omega)$.
\end{proposition}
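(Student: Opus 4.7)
The plan is to promote the $L^2$-eigenfunction $u$ into $\mathfrak{B}^0(\Omega)$ via a Riesz-projector argument, after which Corollary \ref{corollary: inverse of Schrödinger operator is compact} immediately yields $u = \lambda\,\mathcal{S}u \in \mathfrak{B}^2(\Omega)$, where $\mathcal{S} := \mathcal{H}^{-1}$. The main delicate point is a spectral identification between the $L^2$- and $\mathfrak{B}^0$-spectra of $\mathcal{S}$; once this is in hand, the rest is routine. Throughout, $\mathcal{S}$ is self-adjoint and compact on $L^2(\Omega)$ (standard elliptic regularity gives $\mathcal{S}:L^2 \to H^2 \cap H_0^1$, composed with the Rellich embedding), while Corollary \ref{corollary: inverse of Schrödinger operator is compact} makes $\mathcal{S}$ compact on $\mathfrak{B}^0(\Omega)$ as well, since $\mathcal{S}:\mathfrak{B}^0 \to \mathfrak{B}^2 \hookrightarrow \mathfrak{B}^0$ with the last embedding compact.

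First I would show that the $L^2$- and $\mathfrak{B}^0$-spectra of $\mathcal{S}$ agree away from zero. One inclusion follows from $\mathfrak{B}^0 \hookrightarrow L^2$. For the reverse, suppose for contradiction that some nonzero $L^2$-eigenvalue $\mu = 1/\lambda$ lies in the resolvent set of $\mathcal{S}|_{\mathfrak{B}^0}$. Then $(\mathcal{S} - \mu I)(\mathfrak{B}^0) = \mathfrak{B}^0$, and since the finite sine polynomials belong to $\mathfrak{B}^0$, this image is dense in $L^2$. Consequently $(\mathcal{S} - \mu I)(L^2) \supset \mathfrak{B}^0$ is dense in $L^2$; but self-adjointness of $\mathcal{S}$ on $L^2$ combined with the Fredholm alternative forces this range to be exactly $\bigl(\ker(\mathcal{S} - \mu I)|_{L^2}\bigr)^{\perp}$, a proper closed subspace. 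The contradiction completes the identification.

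Given the matching spectra, for each nonzero eigenvalue $\mu$ pick a small contour $\Gamma_\mu$ enclosing $\mu$ alone in the spectrum, and define the Riesz projector $P_\mu = (2\pi i)^{-1}\oint_{\Gamma_\mu}(zI - \mathcal{S})^{-1}\,dz$. The contour lies in the resolvent set on both spaces, so the integral is well defined on $L^2$ and on $\mathfrak{B}^0$; the two resolvents agree on $\mathfrak{B}^0$ by uniqueness of the equation $(zI - \mathcal{S})w = v$, so $P_\mu(\mathfrak{B}^0) \subset \mathfrak{B}^0$. On the other hand, $P_\mu|_{L^2}: L^2 \to E_\mu$ (the finite-dimensional $L^2$-eigenspace of $\mu$) is continuous and surjective, so $P_\mu(\mathfrak{B}^0)$ is a dense linear subspace of $E_\mu$, and every linear subspace of a finite-dimensional space is closed, so $P_\mu(\mathfrak{B}^0) = E_\mu$. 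Therefore $u \in E_\mu \subset \mathfrak{B}^0$, and one final application of Corollary \ref{corollary: inverse of Schrödinger operator is compact} delivers $u = \lambda\,\mathcal{S}u \in \mathfrak{B}^2(\Omega)$.
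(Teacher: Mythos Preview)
Your argument is correct and takes a genuinely different route from the paper. The paper proceeds by a direct Fourier bootstrap: writing the eigenvalue equation for the odd extension $\psi_o$ on the torus, it shows via Young's inequality that $\tilde{\psi}_o \in \ell^p$ implies $(\pi^2|k|_2^2+1)\tilde{\psi}_o \in \ell^p$, and then iterates H\"older's inequality to drive $p$ from $2$ down to $1$ in roughly $d/4$ steps, finishing with one more application of the claim to land in $\mathfrak{B}^2$. By contrast, you bypass this $d$-dependent iteration entirely by leveraging Corollary~\ref{corollary: inverse of Schrödinger operator is compact} (already available with $s=0$) together with an abstract Riesz-projector argument to identify the $L^2$- and $\mathfrak{B}^0$-eigenspaces. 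The key observation---that the Riesz projector computed in $\mathfrak{B}^0$ coincides with the restriction of the $L^2$-projector, and that its image is a dense hence full subspace of the finite-dimensional $E_\mu$---is a clean way to transfer eigenfunctions across the embedding.

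What each approach buys: the paper's bootstrap is self-contained and does not presuppose Theorem~\ref{Thm: compactness of inverse of Schrödinger Operator}, so Proposition~\ref{Proposition: Regularity of eigenfunctions, case s=0} stands independently as a base case. Your approach is more conceptual and dimension-free, but it shifts the weight onto the solution theory in $\mathfrak{B}^s$; since that theory is established first in the paper's logical order, there is no circularity, and your route in fact shows that the proposition is a corollary of Corollary~\ref{corollary: inverse of Schrödinger operator is compact} via general spectral theory rather than requiring a separate Fourier computation.
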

\begin{proof}
By the definition of $\mathfrak{C}^{s}(\Omega)$ and Lemma \ref{tildeVk represented by checkVk}, $V \in \mathfrak{C}^{0}(\Omega)$ if and only if $\tilde{V}_{e} \in \ell^{1}\left(\mathbb{Z}^{d}\right)$. Specifically,
\begin{equation*} \label{}
\begin{aligned}
\|\tilde{V}_{e}\|_{\ell^{1}\left(\mathbb{Z}^{d}\right)} & 
= \sum_{k \in \mathbb{N}_{0}^{d}}  2^{\sum_{i=1}^{d} \mathbf{1}_{k_{i} \neq 0}} |\tilde{V}_{e}(k)|
= \sum_{k \in \mathbb{N}_{0}^{d}}   2^{\mathbf{1}_{k \neq 0}} |\check{V}(k)|  
\leq 
\|V\|_{\mathfrak{C}^{0}(\Omega)},\\
\|V\|_{\mathfrak{C}^{0}(\Omega)} & 
= \sum_{k \in \mathbb{N}_{0}^{d}}2 \beta_{k}^{-1} |\tilde{V}_{e}(k)| 
\leq 
2\|\tilde{V}_{e}\|_{\ell^{1}\left(\mathbb{Z}^{d}\right)},
\end{aligned} 
\end{equation*}
where we have used $\beta_{k} = 2^{\mathbf{1}_{k \neq 0}-\sum_{i=1}^{d} \mathbf{1}_{k_{i} \neq 0}} \in [2^{1-d}, 1]$.
Since $\tilde{V}_{e} \in \ell^{1}\left(\mathbb{Z}^{d}\right)$, Young's inequality implies that for $p \in [1, 2]$, if   $\left\{a_{k}\right\}_{k \in \mathbb{Z}^{d}} \in \ell^{p}\left(\mathbb{Z}^{d}\right)$, then  $(\tilde{V}_{e} * a) \in \ell^{p}\left(\mathbb{Z}^{d}\right)$  and 
\begin{equation} \label{Young's inequality for convolution}
\begin{aligned}
\|\tilde{V}_{e} * a\|_{\ell^{p}} \leq \|\tilde{V}_{e}\|_{\ell^{1}}\|a\|_{\ell^{p}} .
\end{aligned} 
\end{equation}
For any eigenfunction $\psi \in H^{1}_{0}(\Omega)$ and its eigenvalue $\lambda$ such that $\mathcal{H}\psi = \lambda\psi$, let $\psi_{o}$ be the odd extension of $\psi$ on $[-1,1]^d$, which may be regarded as an eigenfunction of the Schrödinger operator $\widetilde{\mathcal{H}} := -\Delta + V_{e}$ with periodic boundary conditions: $\widetilde{\mathcal{H}}\psi_{o} = -\Delta \psi_{o} + V_{e} \psi_{o} = \lambda \psi_{o}.$

Next, we claim that if $\tilde{\psi}_{o} \in \ell^{p}$ with some $p\in [1, 2]$, then $(\pi^2 |k|_{2}^2 + 1)\tilde{\psi}_{o} \in \ell^{p}$. Notice that 
\begin{equation*} \label{}
\begin{aligned} 
\tilde{\psi}_{o} & = \mathscr{F}\left[ (-\Delta + I)^{-1} (\widetilde{\mathcal{H}} + I - V_{e}) \psi_{o} \right] \\
& =  \left(\pi^2 |k|_{2}^2 + 1\right)^{-1} \left(\lambda + 1\right)\tilde{\psi}_{o} - \left(\pi^2|k|_{2}^{2}+1\right)^{-1}(\tilde{\psi}_{o} * \tilde{V}_{e}),
\end{aligned} 
\end{equation*}
where $\mathscr{F}$  represents the Fourier transform. 
Since  $\tilde{V}_{e} \in \ell^{1}\left(\mathbb{Z}^{d}\right), \tilde{\psi}_{o} \in \ell^{p}\left(\mathbb{Z}^{d}\right)$, we conclude  $(\lambda+1) \tilde{\psi}_{o} \in \ell^{p}\left(\mathbb{Z}^{d}\right)$ and $(\tilde{V}_{e} * \tilde{\psi}_{o}) \in \ell^{p}\left(\mathbb{Z}^{d}\right)$ from (\ref{Young's inequality for convolution}). Hence  $$\left(\lambda + 1\right)\tilde{\psi}_{o} - (\tilde{\psi}_{o} * \tilde{V}_{e}) = \left(\pi^2|k|_{2}^{2}+1\right) \tilde{\psi}_{o} \in l^{p}\left(\mathbb{Z}^{d}\right).$$
This proves the claim.

Now we complete the proof through a bootstrap argument. Since $\psi \in L^2(\Omega)$, then $\psi_{o} \in L^2(\widetilde{\Omega})$ and its  Fourier transform $\tilde{\psi}_{o}  \in \ell^{2}\left(\mathbb{Z}^{d}\right).$ 
It follows from the above claim that $\left(\pi^2|k|_{2}^{2}+1\right) \tilde{\psi}_{o} \in \ell^{2}\left(\mathbb{Z}^{d}\right).$  
For all $r > d/2$, $\left(\pi^2|k|_{2}^{2}+1\right)^{-1} \in \ell^{r}\left(\mathbb{Z}^{d}\right).$ By Hölder's inequality, as long as $q^{-1} < 2/d + 1/2$ and $q \geq 1$, there exists $r > d/2$ such that $q^{-1} = r^{-1} + 2^{-1}$ and $$   \|\tilde{\psi}_{o}\|_{\ell^{q}\left(Z^{d}\right)} \leq \left\|\left(\pi^2|k|_{2}^{2}+1\right)^{-1}\right\|_{\ell^{r}\left(Z^{d}\right)}\left\|\left(\pi^2|k|_{2}^{2}+1\right) \tilde{\psi}_{o}\right\|_{\ell^{2}\left(Z^{d}\right)} < \infty.$$ Thus $\tilde{\psi}_{o} \in \ell^{q}\left(\mathbb{Z}^{d}\right).$ By repeating this argument $j$ times we see that $\tilde{\psi}_{o} \in \ell^{q}\left(\mathbb{Z}^{d}\right)$ as long as $q^{-1} < 2j/d + 1/2$ and $q \geq 1$. 
Choosing $j$ properly, we conclude that $\tilde{\psi}_{o} \in \ell^{1}\left(\mathbb{Z}^{d}\right)$. Repeating the claim again, we have $\left(\pi^2|k|_{2}^{2}+1\right)\tilde{\psi}_{o} \in \ell^{1}\left(\mathbb{Z}^{d}\right).$ 
Using Lemma \ref{tildeuk represented by hatuk}, we get 
\begin{equation*} \label{}
\begin{aligned} 
\|\psi\|_{\mathfrak{B}^{2}(\Omega)} & 
= \sum_{k \in \mathbb{N}_{+}^{d}}\left(1+\pi^{2}|k|_{1}^{2}\right) 2^d |\tilde{\psi}_{o}(k)| 
= \sum_{k \in \mathbb{Z}^{d}}\left(1+\pi^{2}|k|_{1}^{2}\right) |\tilde{\psi}_{o}(k)| .
\end{aligned} 
\end{equation*}
Hence, $\|\psi\|_{\mathfrak{B}^{2}(\Omega)} \leq d \|\left(1 +  \pi^{2}|k|_{2}^{2}\right)\tilde{\psi}_{o}\|_{\ell^{1}\left(Z^{d}\right)}$ and then $\psi \in \mathfrak{B}^{2}(\Omega)$. 
\end{proof}

With the aid of Proposition \ref{Proposition: Regularity of eigenfunctions, case s=0} and Corollary \ref{corollary: inverse of Schrödinger operator is compact}, we are ready to prove 
Theorem \ref{Thm: Regularity of eigenfunctions}.

\begin{proof}[Proof of Theorem \ref{Thm: Regularity of eigenfunctions}]
Note that $\mathfrak{B}^{r}(\Omega) \hookrightarrow \mathfrak{B}^{s}(\Omega)$ and $\mathfrak{C}^{r}(\Omega) \hookrightarrow \mathfrak{C}^{s}(\Omega)$ for $0\leq r \leq s$.  
Take any eigenmode $(\lambda, \psi)$ of Problem (\ref{eq1}) such that $\mathcal{H}\psi = \lambda\psi$. Since $V\in \mathfrak{C}^{s}(\Omega)$ with $s\geq 0$,  $V \in \mathfrak{C}^{0}(\Omega)$ and thus $\psi \in \mathfrak{B}^{2}(\Omega)$ according to Proposition \ref{Proposition: Regularity of eigenfunctions, case s=0}. 
For any $0\leq r \leq s$, $V \in \mathfrak{C}^{r}(\Omega)$, it follows from Corollary \ref{corollary: inverse of Schrödinger operator is compact} that  $\mathcal{S}: \mathfrak{B}^{r}(\Omega) \rightarrow \mathfrak{B}^{r+2}(\Omega)$  is bounded. Notice that $\psi = \lambda \mathcal{S} \psi$. Hence $\psi \in \mathfrak{B}^{2}(\Omega)$ implies $\psi \in \mathfrak{B}^{\min (s+2, 4)}(\Omega)$. By repeating this argument $j$ times we see that $\psi \in \mathfrak{B}^{\min (s+2, 2j+2)}(\Omega)$. When $j$ is large enough so that $2j+2 > s+2$, we obtain $\psi\in\mathfrak{B}^{s+2}(\Omega)$, which completes the proof.
\end{proof}


\acks{The work of Ming was supported by the National Natural Science Foundation of China under the Grant No. 12371438.}


\newpage

\appendix

\section{Stability estimate of the \texorpdfstring{$k$}{}-th eigenfunction} \label{section: Stability estimate of the k-th eigenfunction}
In this part, we prove the stability estimates Proposition \ref{proposition: error between Rayleigh quotient and lambdak bounded by energy excess} and Proposition \ref{Generalized proposition 2.1}.
Define $P$ as the orthogonal projection operator from $L^2(\Omega)$ to $U_{k}$ and $P^{\perp}$ as the orthogonal projection  from $L^2(\Omega)$ to $U_{k}^{\perp}$. 
Recall that $\{\psi_{j}\}_{j = 1}^{k-1}$ are the first $k-1$ normalized orthogonal eigenfunctions.
For $u \in H^{1}(\Omega)$, we write 
\begin{equation*}
u = P u + P^{\perp} u = P u + w + z, 
\end{equation*}
where
\begin{equation*} 
w := \sum_{j=1}^{k-1}\left\langle u, \psi_{j}\right\rangle \psi_{j}, \quad z := P^{\perp} u - w.
\end{equation*}
Note that $w$ is the orthogonal projection of $u$ onto the subspace $W_{k} = \operatorname{span}\left\{\psi_{1}, \psi_{2}, \ldots, \psi_{k-1}\right\}$ and $z$ is the orthogonal projection of $u$ onto the subspace $Z_{k} = W_{k}^{\perp} \cap \operatorname{ker}\left( \mathcal{H} - \lambda_{k} I\right)^{\perp}.$
Recall that $\lambda_{k^{\prime}}$ is the first eigenvalue of $\mathcal{H}$ that is strictly greater than $\lambda_{k}$. 
For any $z \in Z_{k}$,
\begin{equation}\label{ineq for z in Zk}
\langle z, \mathcal{H} z\rangle \geqslant \lambda_{k^{\prime}}\|z\|_{L^{2}(\Omega)}^{2} \geqslant \lambda_{k}\|z\|_{L^{2}(\Omega)}^{2}.
\end{equation}
Notice that $\mathcal{H}$ leaves the three orthogonal subspaces $W_{k},$ $U_{k}$, $Z_{k}$ invariant. Therefore,
\begin{equation} \label{langle u, mathcalH u rangle, orthogonal decomposition}
\begin{aligned}
\langle u, \mathcal{H} u\rangle & =  \langle w, \mathcal{H} w\rangle + \langle P u, \mathcal{H} P u\rangle + \langle z, \mathcal{H} z\rangle  \\
& = \sum_{j=1}^{k-1}  \lambda_{j} \left\langle u, \psi_{j}\right\rangle^{2} + \lambda_{k}\langle P u,  P u\rangle + \langle z, \mathcal{H} z\rangle,
\end{aligned}
\end{equation}
Using the definition (\ref{the first definition of the loss function Lk(u)}), the decomposition  (\ref{langle u, mathcalH u rangle, orthogonal decomposition}) and
\begin{equation} \label{langle u, u rangle, orthogonal decomposition}
\begin{aligned}
\|u\|_{L^{2}(\Omega)}^{2} 
= \langle w, w\rangle + \langle P u, P u\rangle + \langle z, z\rangle,
\end{aligned}
\end{equation}
 we obtain 
\begin{equation} \label{(Lk(u)-lambdak)|u|2, orthogonal decomposition}
\begin{aligned}
\left(L_{k}(u)-\lambda_{k}\right)\|u\|_{L^{2}(\Omega)}^{2} & = \langle u, \mathcal{H} u\rangle + \beta \sum_{j=1}^{k-1}\left\langle u, \psi_{j}\right\rangle^{2} - \lambda_{k} \langle  u,  u\rangle\\
& = \sum_{j=1}^{k-1} \left( \beta + \lambda_{j} - \lambda_{k}\right)\left\langle u, \psi_{j}\right\rangle^{2} + \langle z, (\mathcal{H}-\lambda_{k}) z\rangle.
\end{aligned}
\end{equation}
This identity is key to prove Proposition \ref{proposition: error between Rayleigh quotient and lambdak bounded by energy excess} and Proposition \ref{Generalized proposition 2.1}.

\begin{proof}[Proof of Proposition \ref{proposition: error between Rayleigh quotient and lambdak bounded by energy excess}]
Note that 
$0\leq\lambda_{k} - \lambda_{j} \leq \lambda_{k} - \lambda_{1}$ for each $1\leq j \leq k-1$. 
It follows from  (\ref{langle u, mathcalH u rangle, orthogonal decomposition}), (\ref{langle u, u rangle, orthogonal decomposition}) and (\ref{ineq for z in Zk}) that
\begin{equation*}
\begin{aligned}
\left| \langle u, \mathcal{H} u\rangle -\lambda_{k}\|u\|_{L^{2}(\Omega)}^{2}\right| & = \left| \langle w, (\mathcal{H}-\lambda_{k}) w\rangle + \langle P u, (\mathcal{H}-\lambda_{k}) P u\rangle + \langle z, (\mathcal{H}-\lambda_{k}) z\rangle \right| \\
& \leqslant \sum_{j=1}^{k-1} \left( \lambda_{k} - \lambda_{j}\right)\left\langle u, \psi_{j}\right\rangle^{2} + \langle z, (\mathcal{H}-\lambda_{k}) z\rangle\\
& \leqslant \max\left\{\frac{\lambda_{k} - \lambda_{1}}{\beta+\lambda_{1}-\lambda_{k}}, 1\right\} \left(L_{k}(u)-\lambda_{k}\right)\|u\|_{L^{2}(\Omega)}^{2},
\end{aligned}
\end{equation*}
where we have used
(\ref{(Lk(u)-lambdak)|u|2, orthogonal decomposition}) and $\beta > \lambda_{k} - \lambda_{1}$ in the last line.
This gives Proposition \ref{proposition: error between Rayleigh quotient and lambdak bounded by energy excess}.
\end{proof}

\begin{proof}[Proof of Proposition \ref{Generalized proposition 2.1}]
It follows from  (\ref{ineq for z in Zk}) that  $\langle z, \left(\mathcal{H}-\lambda_{k}\right) z\rangle \geqslant \left(\lambda_{k^{\prime}}-\lambda_{k}\right) \|z\|_{L^{2}(\Omega)}^{2}$.
Using (\ref{(Lk(u)-lambdak)|u|2, orthogonal decomposition}) and the facts $0\leq\lambda_{k} - \lambda_{j} \leq \lambda_{k} - \lambda_{1}$ for all $1\leq j \leq k-1$, we obtain
\begin{equation} \label{lower bound of (Lk(u)-lambdak)|u|2 respect to Pperpu}
\begin{aligned}
\left(L_{k}(u)-\lambda_{k}\right)\|u\|_{L^{2}(\Omega)}^{2} & \geqslant \left( \beta + \lambda_{1} - \lambda_{k}\right) \|w\|_{L^{2}(\Omega)}^{2} + \left(\lambda_{k^{\prime}}-\lambda_{k}\right) \|z\|_{L^{2}(\Omega)}^{2}\\
& \geqslant \min \left\{\beta+\lambda_{1}-\lambda_{k}, \lambda_{k^{\prime}}-\lambda_{k}\right\}\left\|P^{\perp} u\right\|^{2}_{L^{2}(\Omega)},
\end{aligned}
\end{equation}
where we have used $\left\|P^{\perp} u\right\|^{2}_{L^{2}(\Omega)} = \|w\|_{L^{2}(\Omega)}^{2} + \|z\|_{L^{2}(\Omega)}^{2}$. 
Since $\beta+\lambda_{1}-\lambda_{k}$ and $\lambda_{k^{\prime}}-\lambda_{k}$ are both strictly greater than zero, the inequality (\ref{lower bound of (Lk(u)-lambdak)|u|2 respect to Pperpu}) implies the estimate \ref{Eq: stable estimate for u}.

To obtain the bound on $\left\|\nabla P^{\perp} u\right\|^{2}$, notice that
\begin{equation*} \label{}
\begin{aligned}
\left(L_{k}(u)-\lambda_{k}\right)\|u\|_{L^{2}(\Omega)}^{2} & = \langle P u, (\mathcal{H}-\lambda_{k}) P u\rangle + \left\langle P^{\perp} u, (\mathcal{H}-\lambda_{k}) P^{\perp} u\right\rangle + \beta \|w\|_{L^{2}(\Omega)}^{2}\\
& \geqslant \left\langle P^{\perp} u, (\mathcal{H}-\lambda_{k}) P^{\perp} u\right\rangle\\
& = \int_{\Omega}\left|\nabla P^{\perp} u\right|^{2} d x+\int_{\Omega}\left(V-\lambda_{k}\right) \left|P^{\perp} u\right|^{2} d x,
\end{aligned}
\end{equation*}
where we have used $\langle P u, (\mathcal{H}-\lambda_{k}) P u\rangle = 0$.
Rearranging the terms, we arrive at
\begin{equation*}
\begin{aligned}
\left\|\nabla P^{\perp} u\right\|_{L^{2}(\Omega)}^{2} & \leqslant \left(L_{k}(u)-\lambda_{k}\right)\|u\|_{L^{2}(\Omega)}^{2} - \int_{\Omega}\left(V-\lambda_{k}\right) \left|P^{\perp} u\right|^{2} d x \\
& \leqslant \left(L_{k}(u)-\lambda_{k}\right)\|u\|^{2}_{L^{2}(\Omega)} + \left(\lambda_{k}-V_{\min}\right) \left\|P^{\perp} u\right\|^{2}_{L^{2}(\Omega)}.
\end{aligned}
\end{equation*}
Substituting (\ref{Eq: stable estimate for u}) into the above inequality, we finally obtain (\ref{Eq: stable estimate for nabla u}).
\end{proof}

\section{Missing proof in section \ref{section: Approximation theory for sine spectral Barron functions}}

\subsection{Preliminaries} 

\begin{proof}[Proof of Lemma \ref{lemma: embedding results for sine spectral Barron space}] \label{appendix proof of Lemma: embedding results for sine spectral Barron space}
(1) For  $u \in \mathfrak{B}^{0}(\Omega) $, using the fact  $\left\|\Phi_{k}\right\|_{L^{\infty}(\Omega)} \leq 1$,  we have 
$$\|u\|_{L^{\infty}(\Omega)}=\left\|\sum_{k \in \mathbb{N}_{+}^{d}} \hat{u}(k) \Phi_{k}\right\|_{L^{\infty}(\Omega)} \leq \sum_{k \in \mathbb{N}_{+}^{d}}|\hat{u}(k)| = \frac{1}{2}\|u\|_{\mathfrak{B}^{0}(\Omega)}.$$
Moreover, since  $u \in \mathfrak{B}^{s}(\Omega)$ with $s\geq 0$ have summable sine coefficients, the sum of sine expansion converges uniformly, which implies that $u\in C(\overline{\Omega})$ and $u$ vanishes on the boundary of $\Omega$.

(2) If  $u \in \mathfrak{B}^{2}(\Omega)$, then  $\|u\|_{\mathfrak{B}^{2}(\Omega)} = \sum_{k \in \mathbb{N}_{+}^{d}}\left(1+\pi^{2}|k|_{1}^{2}\right)|\hat{u}(k)| <\infty .$ 
This particularly implies  $|\hat{u}(k)| \leq \|u\|_{\mathfrak{B}^{2}(\Omega)}$  for each  $k \in \mathbb{N}_{+}^{d}$.  
From the Cauchy-Schwarz inequality, we have 
\begin{equation*}
\begin{aligned}
\|u\|_{H^{1}(\Omega)}^{2} & = \sum_{k \in \mathbb{N}_{+}^{d}} 2^{-d}\left(1+\pi^{2}|k|^{2}\right)|\hat{u}(k)|^{2} \\
& \leq 2^{-d}\|u\|_{\mathfrak{B}^{2}(\Omega)} \sum_{k \in \mathbb{N}_{+}^{d}}\left(1+\pi^{2} |k|_{1}^{2}\right)|\hat{u}(k)| \\
& \leq 2^{-d}\|u\|_{\mathfrak{B}^{2}(\Omega)}^{2} .
\end{aligned}
\end{equation*}
Hence, $u\in H_{0}^{1}(\Omega)$ follows from that $u$ lies in $H^{1}(\Omega)$ and vanishes on $\partial\Omega$. 
\end{proof}

\subsection{Upper bounds for the cut-off function \texorpdfstring{$\varphi$}{}}
The following lemma gives upper bounds for the cut-off function $\varphi$ and its gradient.
\begin{lemma} \label{preliminary bounds for cutoff function varphi(x)}
For all $x\in\Omega$, it holds that $0 < \varphi(x) < 1/d$ and $|\nabla \varphi(x)| < \pi$.
\end{lemma}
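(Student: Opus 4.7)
The plan is to obtain both bounds directly from the identity $\varphi(x)^{-1}=\sum_{i=1}^{d}\sin(\pi x_i)^{-1}$. Write $s_i:=\sin(\pi x_i)\in(0,1]$ for $x\in\Omega$. Then $s_i^{-1}\ge 1$ for every $i$, so $\varphi(x)^{-1}\ge d$, which gives $\varphi(x)\le 1/d$; positivity is immediate since every summand is positive. (The inequality is strict once at least one $x_i\ne 1/2$; I would state it as $0<\varphi(x)\le 1/d$ and note that this suffices for every subsequent use in the paper.)

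For the gradient bound, I would differentiate the identity $\varphi^{-1}=\sum_i s_i^{-1}$ to get
\[
\partial_j\varphi(x)=\pi\,\varphi(x)^2\,\frac{\cos(\pi x_j)}{\sin^2(\pi x_j)},
\]
and hence
\[
|\partial_j\varphi(x)|^2=\pi^2\cos^2(\pi x_j)\left(\frac{\varphi(x)}{s_j}\right)^{\!4}.
\]
The key observation is that the numbers $\alpha_j:=\varphi(x)/s_j$ satisfy $\alpha_j\in(0,1]$ and $\sum_{j=1}^{d}\alpha_j=\varphi(x)\sum_{j=1}^{d}s_j^{-1}=1$, i.e. they form a partition of unity. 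Using $\alpha_j^4\le\alpha_j^2\le\alpha_j$ together with $\cos^2\le 1$ yields
\[
|\nabla\varphi(x)|^2=\sum_{j=1}^{d}|\partial_j\varphi(x)|^2\le\pi^2\sum_{j=1}^{d}\alpha_j^2\le\pi^2\sum_{j=1}^{d}\alpha_j=\pi^2,
\]
so $|\nabla\varphi(x)|\le\pi$. Strictness follows because $\cos^2(\pi x_j)=1$ forces $x_j\in\{0,1\}$, which is excluded in $\Omega$, so at least one factor $\cos^2(\pi x_j)$ is $<1$.

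I do not expect any real obstacle here: the two inequalities reduce to elementary facts about $(s_i^{-1})_{i=1}^{d}$ and the associated partition of unity $(\varphi/s_j)_{j=1}^{d}$. The only subtlety is keeping the strict-vs.-nonstrict distinction consistent; since the center of the cube saturates the first bound, I would either restate the first inequality as $\le 1/d$ or observe that equality occurs on a measure-zero set, which is harmless for all the $L^2$/$H^1$ arguments in which this lemma is subsequently invoked.
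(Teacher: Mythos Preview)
Your argument is essentially the paper's own proof, phrased more cleanly via the partition of unity $\alpha_j=\varphi(x)/s_j$; the paper writes the same partial derivative $\partial_i\varphi=\pi\cos(\pi x_i)\alpha_i^2$ (in the equivalent form with $\prod_{j\ne i}s_j$) and then bounds $\sum_l\cos^2(\pi x_l)\alpha_l^4<\sum_l\alpha_l^4\le(\sum_l\alpha_l)^4=1$. Your observation about strictness is in fact sharper than the paper's: the paper asserts $0<\sin(\pi x_i)<1$ on $\Omega$, which is false at $x_i=1/2$, so the stated strict bound $\varphi(x)<1/d$ is violated at the center of the cube; your proposed fix ($\le 1/d$, with equality on a null set) is the correct formulation and is, as you note, harmless for all downstream uses.
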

\begin{proof}
For any  $x = (x_1, x_2,\cdots, x_d)^{T}\in\Omega$  and  $1 \leqslant i \leqslant d$, $0 < \sin \left(\pi x_{i}\right) < 1$. It is obvious that $0 < \varphi(x) < 1/d$. 
A direct calculation yields that 
\begin{equation*} \label{|nabla varphi(x)|2}
\begin{aligned}
|\nabla \varphi(x)|^{2} =\pi^{2} \frac{\sum_{l=1}^{d} \cos ^{2}\left(\pi x_{l}\right)\left[\prod_{j \neq l} \sin \left(\pi x_{j}\right)\right]^{4}}{\left[\sum_{l=1}^{d} \prod_{j \neq l} \sin \left(\pi x_{j}\right)\right]^{4}}.
\end{aligned}
\end{equation*}
Since for every $1 \leqslant l \leqslant d$ and  $x \in(0,1)^{d}$, 
$\prod_{j \neq l} \sin \left(\pi x_{j}\right)>0$ and $\cos ^{2}\left(\pi x_{l}\right) \in[0,1),$
we have
\begin{equation*}
\begin{aligned}
\sum_{l=1}^{d} \cos ^{2}\left(\pi x_{l}\right)\left[\prod_{j \neq l} \sin \left(\pi x_{j}\right)\right]^{4}
< \sum_{l=1}^{d}\left[\prod_{j \neq l} \sin \left(\pi x_{j}\right)\right]^{4} \leq \left[\sum_{l=1}^{d} \prod_{j \neq l} \sin \left(\pi x_{j}\right)\right]^{4} ,
\end{aligned}
\end{equation*}
which implies that $|\nabla \varphi(x)|^{2}<\pi^{2}$ for all  $x \in (0,1)^{d}$. 
\end{proof}

\begin{lemma} \label{lemma: Magnification of the H1-norm caused by the action of varphi}
For any $h\in H^{1}(\Omega)$, $\left\|\varphi h\right\|_{H^{1}(\Omega)} \leq \sqrt{21} \left\|h\right\|_{H^{1}(\Omega)}.$ 
Particularly, if $\{h_{j}\}_{j=1}^{\infty}$ converges to $h$ in $H^{1}(\Omega)$, then $\{ \varphi h_{j}\}_{j=1}^{\infty}$  converges to $\varphi h$ in $H^{1}(\Omega)$.
\end{lemma}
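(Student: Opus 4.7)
The plan is to prove the norm bound by estimating the $L^2$-norm and the gradient $L^2$-norm of $\varphi h$ separately using the pointwise bounds from Lemma~\ref{preliminary bounds for cutoff function varphi(x)}, and then to deduce the convergence statement from linearity.

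First, I would use the product rule in the distributional sense to write $\nabla(\varphi h) = \varphi \nabla h + h \nabla \varphi$, which is justified since $\varphi \in C^{\infty}(\Omega)$ with $\varphi$ and $|\nabla \varphi|$ bounded on $\Omega$ by Lemma~\ref{preliminary bounds for cutoff function varphi(x)}. Using the pointwise bounds $0 < \varphi(x) < 1/d$ and $|\nabla \varphi(x)| < \pi$, together with the elementary inequality $(a+b)^{2} \leq 2a^{2}+2b^{2}$, one obtains
\[
\|\varphi h\|_{L^{2}(\Omega)}^{2} \leq \frac{1}{d^{2}}\|h\|_{L^{2}(\Omega)}^{2}
\]
and
\[
\|\nabla(\varphi h)\|_{L^{2}(\Omega)}^{2} \leq \frac{2}{d^{2}}\|\nabla h\|_{L^{2}(\Omega)}^{2} + 2\pi^{2}\|h\|_{L^{2}(\Omega)}^{2}.
\]
Adding these estimates and using $d \geq 1$ gives
\[
\|\varphi h\|_{H^{1}(\Omega)}^{2} \leq (2\pi^{2}+1)\|h\|_{L^{2}(\Omega)}^{2} + 2\|\nabla h\|_{L^{2}(\Omega)}^{2} \leq (2\pi^{2}+1)\|h\|_{H^{1}(\Omega)}^{2}.
\]
Since $2\pi^{2}+1 < 21$, the claimed bound $\|\varphi h\|_{H^{1}(\Omega)} \leq \sqrt{21}\,\|h\|_{H^{1}(\Omega)}$ follows.

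For the convergence statement, I would apply the just-established inequality to $h_{j}-h$, using linearity of multiplication by $\varphi$, to get
\[
\|\varphi h_{j} - \varphi h\|_{H^{1}(\Omega)} = \|\varphi(h_{j}-h)\|_{H^{1}(\Omega)} \leq \sqrt{21}\,\|h_{j}-h\|_{H^{1}(\Omega)} \to 0.
\]

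I do not anticipate any real obstacle here: the only mild point is justifying the Leibniz rule for the weak gradient of $\varphi h$ with $h \in H^{1}(\Omega)$, but this is standard because $\varphi$ is smooth with bounded derivatives on $\Omega$. The constant $\sqrt{21}$ is slightly sub-optimal (one actually gets the smaller $\sqrt{2\pi^{2}+1}$), but is convenient and matches the statement.
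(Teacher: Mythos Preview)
Your proposal is correct and matches the paper's proof essentially line for line: both use Lemma~\ref{preliminary bounds for cutoff function varphi(x)} for the pointwise bounds on $\varphi$ and $|\nabla\varphi|$, apply $(a+b)^2\le 2a^2+2b^2$ to the product-rule expansion of $\nabla(\varphi h)$, combine to get $\|\varphi h\|_{H^1}^2 \le (2\pi^2+1)\|h\|_{H^1}^2 < 21\|h\|_{H^1}^2$, and deduce the convergence statement by applying the bound to $h_j-h$.
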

\begin{proof}
By Lemma \ref{preliminary bounds for cutoff function varphi(x)}, for $h\in H^{1}(\Omega)$, a direct calculation yields $\|\varphi h\|_{L^{2}(\Omega)} \leq \|h\|_{L^{2}(\Omega)}/d$ 
and
\begin{equation*} \label{}
\begin{aligned}
\int_{\Omega}|\nabla(\varphi(x) h(x))|^{2} d x 
\leq  2 \int_{\Omega}\left( h^{2}|\nabla \varphi|^{2}+\varphi^{2}|\nabla h|^{2} \right)d x 
\leq  2 \int_{\Omega}\left( \pi^{2}h^{2} + \frac{1}{d^{2}}|\nabla h|^{2} \right)d x.
\end{aligned}
\end{equation*}
Notice that $2\pi^{2}+1 < 21$ and $d\geq 1$.  Thus, we obtain
\begin{equation*} \label{}
\begin{aligned}
\left\|\varphi h\right\|_{H^{1}(\Omega)}^{2} 
\leq \left(2\pi^{2} + \frac{1}{d^{2}}\right) \left\|h\right\|_{L^{2}(\Omega)}^{2} + \frac{2}{d^{2}} \left\|\nabla h\right\|_{L^{2}(\Omega)}^{2}
\leq 21 \left\|h\right\|_{H^{1}(\Omega)}^{2}.
\end{aligned}
\end{equation*}
Particularly, if $\{h_{j}\}_{j=1}^{\infty}$ converges to $h$ in $H^{1}(\Omega)$, then $\left\|\varphi h_{j} - \varphi h\right\|_{H^{1}(\Omega)}  \leq  \sqrt{21}  \left\| h_{j} - h\right\|_{H^{1}(\Omega)} $ $  \to 0.$
\end{proof}

\subsection{Sine Spectral Barron Space and Neural Network Approximation} \label{appendix subsection: Sine Spectral Barron Space and Neural Network Approximation}

To prove Proposition \ref{proposition: v cos-sin expansion}, we need the following elementary facts. 
\begin{lemma}
\label{Lemma: sin(kx)/sin(x) expansion}
The following expansion holds for $m \in \mathbb{N}_{+}$ and $x \in (0, 1)$
\begin{equation*}
\frac{\sin \left(m \pi x\right)}{\sin (\pi x)} = \left\{
\begin{aligned}
& 1 + \sum_{l=1}^{\left(m-1\right)/2} 2 \cos \left(2 l \pi x\right), &  m \text{ is odd}, \\
& \sum_{l=1}^{m/2} 2 \cos \left((2 l-1) \pi x\right), &  m \text { is even. }
\end{aligned} \right.
\end{equation*}
\end{lemma}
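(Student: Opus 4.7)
The plan is to evaluate the ratio directly via complex exponentials, reduce it to a finite geometric sum, and identify the real parts. Setting $z := e^{i\pi x}$, I would first write
\[
\frac{\sin(m\pi x)}{\sin(\pi x)} = \frac{z^{m}-z^{-m}}{z-z^{-1}} = z^{-(m-1)}\sum_{k=0}^{m-1} z^{2k} = \sum_{k=0}^{m-1} e^{i(2k-m+1)\pi x}.
\]
Since the left-hand side is real, taking real parts gives $\sum_{k=0}^{m-1}\cos\bigl((2k-m+1)\pi x\bigr)$. As $k$ ranges over $\{0,1,\dots,m-1\}$, the exponents $j := 2k-m+1$ form the arithmetic progression $-(m-1), -(m-3), \dots, (m-3), (m-1)$, which is symmetric about $0$ with common difference $2$.

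Next I would split by the parity of $m$ and use the evenness of cosine to pair opposite indices. If $m=2p+1$ is odd, the progression consists of the even integers $\{-2p,-2p+2,\dots,2p\}$, so pairing yields $1 + 2\sum_{l=1}^{p}\cos(2l\pi x)$ with $p=(m-1)/2$, matching the first branch. If $m=2p$ is even, the progression consists of the odd integers $\{-(2p-1),\dots,-1,1,\dots,2p-1\}$ (no term at zero), and pairing gives $2\sum_{l=1}^{p}\cos\bigl((2l-1)\pi x\bigr)$ with $p=m/2$, matching the second branch.

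The only hypothesis needed is $\sin(\pi x)\ne 0$, which holds on $(0,1)$, so the division in the first step is legitimate; there is essentially no obstacle. An alternative by induction on $m$ using the product-to-sum identity $2\cos A \sin B = \sin(A+B) - \sin(A-B)$ would also work, but the complex-exponential route above is shorter and avoids case-by-case index bookkeeping during the inductive step.
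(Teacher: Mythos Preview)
Your proof is correct. The paper omits the proof in the main text (stating it is ``straightforward''), but its commented-out argument proceeds differently: it uses the product-to-sum identity $2\sin\alpha\cos\beta = \sin(\alpha+\beta)-\sin(\beta-\alpha)$ with $\alpha=\pi x$ to telescope $\sin(\pi x)+\sum_j 2\cos(2j\pi x)\sin(\pi x)$ into $\sin((2p+1)\pi x)$ (and similarly in the even case), then divides by $\sin(\pi x)$. Your complex-exponential route is more direct and handles both parities in a single computation, with the parity split appearing only at the final pairing step; the paper's telescoping argument is slightly more elementary but requires writing out the two cases separately from the outset. You even anticipate the paper's approach in your closing remark about the product-to-sum alternative.
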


The proof is straightforward, and we omit the proof.

\begin{proof}[Proof of Proposition \ref{proposition: v cos-sin expansion}]
For $u \in \mathfrak{B}^{s+1}$, dividing both sides of  $u = \sum_{k \in \mathbb{N}_{+}^{d}} \hat{u}(k)\Phi_{k}$  by  $\varphi$  yields
\begin{equation} \label{proof in dD, 1}
\begin{aligned}
\frac{u(x)}{\varphi(x)}=\sum_{k \in \mathbb{N}_{+}^{d}} \hat{u}(k)\left(\sum_{i=1}^{d} \frac{\sin \left(k_i \pi x_{i}\right)}{\sin \left(\pi x_{i}\right)} \prod_{j \neq i} \sin \left(k_{j} \pi x_{j}\right)\right).
\end{aligned}
\end{equation}  
The sum in (\ref{proof in dD, 1}) is absolutely convergent for all $x \in \Omega$
because  $\left|\frac{\sin(k_i \pi x)}{\sin(\pi x)}\right| \leqslant k_i$ and 
\begin{equation*} \label{}
\begin{aligned}
\sum_{k \in \mathbb{N}_{+}^{d}} \left|\hat{u}(k)\right| \left(\sum_{i=1}^{d} \frac{\left|\sin \left(k_i \pi x_{i}\right)\right|}{\sin \left(\pi x_{i}\right)}  \prod_{j \neq i} \left|\sin \left(k_{j} \pi x_{j}\right) \right| \right) 
\leqslant \sum_{k \in \mathbb{N}_{+}^{d}} |\hat{u}(k)| \left(\sum_{i=1}^{d} k_i \right)
\leqslant \|u\|_{\mathfrak{B}^{1}(\Omega)}.
\end{aligned}
\end{equation*} 
Expanding $\frac{\sin(k_i \pi x_i)}{\sin(\pi x_i)}$ with $k_i \in \mathbb{N}_{+}$ by Lemma \ref{Lemma: sin(kx)/sin(x) expansion}
and plugging the expansion into (\ref{proof in dD, 1}) yield that $u/\varphi$  has an expansion of the form (\ref{u(x)/varphi(x) expansion form respect to cos-sin functions})
where an index $(k, i) \in \Gamma$ if and only if  $k \in \mathbb{N}_{0}^{d}$  has at most one zero component at position $k_{i}$.
Moreover, the expansion coefficients  $\hat{v}(k, i)$  may be expressed by $\hat{u}(k)$:
\begin{equation} \label{v(k,i) expressed by u(k)}
\begin{aligned}
& \hat{v}(k, i) = \left(1 + \mathbf{1}_{ \{k_{i} \geqslant 1 \}}\right)\sum_{l=0}^{\infty} \hat{u}\left(k + (2l + 1) e_{i}\right)   \quad \text{ for }  (k, i) \in \Gamma,
\end{aligned}
\end{equation}
where $e_{i}$ is the $i$-th cannonical basis.
In what follows, we prove (\ref{u(x)/varphi(x) expansion, coefficients norm bound}).

By (\ref{v(k,i) expressed by u(k)}) and merging the terms containing $\hat{u}(k)$  for each  $k \in \mathbb{N}_{+}^{d}$  respectively, we obtain 
\begin{equation} \label{proof in dD, 3}
\begin{aligned}
\sum_{(k, i) \in \Gamma}\left(1+\pi^{s}|k|_{1}^{s}\right) |\hat{v}(k, i)| \leqslant \sum_{i=1}^{d} \left( \sum_{k \in \mathbb{N}_{+}^{d}, k_{i}\text{ is odd}} A_{1} |\hat{u}(k)|  
+  \sum_{k \in \mathbb{N}_{+}^{d},  k_{i}\text{ is even}} A_{2} |\hat{u}(k)|   \right),
\end{aligned}
\end{equation}
where
\begin{equation*} 
\begin{aligned}
& A_{1} =  1+\pi^{s}\left|k - k_{i} e_{i}\right|_{1}^{s} + \mathbf{1}_{ \{k_{i} \geqslant 3 \}}\cdot 2 \sum_{l=0}^{(k_{i}-3)/2}\left[1+\pi^{s} \left| k-(2l+1) e_{i}\right|_{1}^{s}\right] ,  \\
& A_{2} =  2 \sum_{l=1}^{k_{i}/2}\left[1+\pi^{s} \left|k-(2l-1) e_{i}\right|_{1}^{s}\right].
\end{aligned}
\end{equation*}
Notice that $s\geq 0$, $t^{s}$ is a nondecreasing function for $t \geq 0$.  When  $k \in \mathbb{N}_{+}^{d}$  and  $k_{i}$ is odd, 
\begin{equation*} \label{proof in dD, 4}
\begin{aligned}
A_{1} & \leqslant  k_{i} + 2 \pi^{s} \sum_{l=0}^{(k_{i}-1)/2} \left(|k|_{1} - 2l-1\right)^{s} \leqslant   k_{i} + 2 \pi^{s} \int_{|k|_{1}-k_{i}}^{|k|_{1}} t^{s} dt .
\end{aligned}
\end{equation*}
Similarly, when  $k \in \mathbb{N}_{+}^{d}$  and  $k_{i}$ is even, 
\begin{equation*} \label{proof in dD, 5}
\begin{aligned}
A_{2}  & =  k_{i} + 2 \pi^{s} \sum_{l=1}^{k_{i}/2}\left(|k|_{1}- 2l +1 \right)^{s}
\leqslant   k_{i} + 2 \pi^{s} \int_{|k|_{1}-k_{i}}^{|k|_{1}} t^{s} d t .
\end{aligned}
\end{equation*}
Hence, we conclude that $A_{1}$, $A_{2}$ are both bounded by $$k_{i} + \frac{2 \pi^{s}}{s+1} \left(|k|_{1}^{s+1}-\left(|k|_{1}-k_{i}\right)^{s+1}\right).$$
Thus, substituting the above bound and exchanging the order of summation, we bound (\ref{proof in dD, 3}) by
\begin{equation} \label{proof in dD, 7}
\begin{aligned}
&  \sum_{i=1}^{d} \left\{ \sum_{k \in \mathbb{N}_{+}^{d}} |\hat{u}(k)| \left[ k_{i} + \frac{2 \pi^{s}}{s+1} \left(|k|_{1}^{s+1}-\left(|k|_{1}-k_{i}\right)^{s+1}\right) \right] \right\}\\
& = \sum_{k \in \mathbb{N}_{+}^{d}} |\hat{u}(k)| \left\{|k|_{1} + \frac{2 \pi^{s}}{s+1} \left[d |k|_{1}^{s+1} - \sum_{i=1}^{d}\left(|k|_{1}-k_{i}\right)^{s+1}\right]\right\}\\
& \leqslant \sum_{k \in \mathbb{N}_{+}^{d}} |\hat{u}(k)| \left\{|k|_{1} + \frac{2 \pi^{s}}{s+1} \left[d |k|_{1}^{s+1} - d\left(\frac{d-1}{d}|k|_{1}\right)^{s+1} \right]\right\}\\
& = \sum_{k \in \mathbb{N}_{+}^{d}} |\hat{u}(k)| \left\{ |k|_{1} + \frac{2 d}{\pi(s+1)}\left[1-\left(\frac{d-1}{d}\right)^{s+1}\right] \pi^{s+1}|k|_{1}^{s+1} \right\},
\end{aligned}
\end{equation}
where we have used Jensen's inequality and the fact that $t^{s+1}$ is convex, 
\begin{equation*} \label{}
\begin{aligned}
\frac{1}{d} \sum_{i=1}^{d}\left(|k|_{1}-k_{i}\right)^{s+1} 
\geqslant \left[\frac{1}{d} \sum_{i=1}^{d}\left(|k|_{1}-k_{i}\right)\right]^{s+1} 
=\left(\frac{d-1}{d}|k|_{1}\right)^{s+1}.
\end{aligned}
\end{equation*}
Notice that the last line of (\ref{proof in dD, 7}) may be bounded by 
\begin{equation*} \label{}
\begin{aligned}
\left(1+\frac{2 d}{\pi(s+1)}\right) \sum_{k \in \mathbb{N}_{+}^{d}} |\hat{u}(k)|   \left(1+\pi^{s+1}|k|_{1}^{s+1}\right) 
= \left(1+\frac{2 d}{\pi(s+1)}\right)\|u\|_{\mathfrak{B}^{s+1}(\Omega)}.
\end{aligned}
\end{equation*}
The estimate (\ref{u(x)/varphi(x) expansion, coefficients norm bound}) follows from  (\ref{proof in dD, 3}), (\ref{proof in dD, 7}) and the above bound.
\end{proof}

The following lemma is useful to show that $u/\varphi$ lies in the convex hull of $\mathcal{F}_{s}(B)$. 
\begin{lemma}{{\citep[Lemma 4.2]{DRMlu2021priori}}}
\label{cos decomposition}
For any  $\theta=\left(\theta_{1}, \theta_{2}, \cdots, \theta_{d}\right)^{T} \in \mathbb{R}^{d}$,
$$\prod_{i=1}^{d} \cos \theta_{i} =\frac{1}{2^{d}} \sum_{\xi \in \{1,-1\}^{d}} \cos (\xi \cdot \theta).$$
\end{lemma}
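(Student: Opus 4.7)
The plan is to prove this identity by using Euler's formula to convert the cosines into complex exponentials, where the product structure becomes much more transparent. First I would write $\cos\theta_i = (e^{i\theta_i} + e^{-i\theta_i})/2$ for each $i$, so that
\[
\prod_{i=1}^{d} \cos\theta_i = \frac{1}{2^d}\prod_{i=1}^{d}\bigl(e^{i\theta_i} + e^{-i\theta_i}\bigr).
\]
Expanding this product by distributing, every term corresponds to a choice of sign $\xi_i \in \{1,-1\}$ for each coordinate, giving
\[
\prod_{i=1}^{d}\bigl(e^{i\theta_i} + e^{-i\theta_i}\bigr) = \sum_{\xi \in \{1,-1\}^d} \prod_{i=1}^{d} e^{i\xi_i \theta_i} = \sum_{\xi \in \{1,-1\}^d} e^{i \xi\cdot\theta}.
\]

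Next I would exploit the symmetry $\xi \mapsto -\xi$, which is an involution on $\{1,-1\}^d$. Pairing $\xi$ with $-\xi$ in the sum yields
\[
\sum_{\xi \in \{1,-1\}^d} e^{i\xi\cdot\theta} = \frac{1}{2}\sum_{\xi \in \{1,-1\}^d}\bigl(e^{i\xi\cdot\theta} + e^{-i\xi\cdot\theta}\bigr) = \sum_{\xi \in \{1,-1\}^d}\cos(\xi\cdot\theta),
\]
which is exactly the desired identity after dividing by $2^d$.

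As a sanity check, one could alternatively proceed by induction on $d$ using the product-to-sum formula $\cos A \cos B = \tfrac{1}{2}(\cos(A-B) + \cos(A+B))$. The base case $d=1$ is trivial, and the inductive step multiplies the $d$-fold identity by $\cos\theta_{d+1}$ and splits each summand via this formula, doubling the index set from $\{1,-1\}^d$ to $\{1,-1\}^{d+1}$. There is no real obstacle here; the only point to be a little careful about is that the sum over $\xi$ genuinely contains each sign pattern exactly once, which is immediate from the distributive expansion. I would present the Euler-formula argument as the main proof since it is a one-line computation.
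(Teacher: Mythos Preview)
Your proof is correct; the Euler-formula expansion is the standard one-line argument, and the inductive alternative you sketch also works. Note that the paper does not actually prove this lemma---it simply quotes it from \cite{DRMlu2021priori}---so there is no in-paper proof to compare against.
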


\begin{proof}[Proof of Proposition \ref{proposition: u H1 approximation by cos-sin}]
\textbf{Step 1:} Show that $u$ lies in  $\overline{\operatorname{conv}(\varphi\mathcal{F}_{s}(B_{u}))}$ with $B_{u}=\left(1+\frac{2 d}{\pi(s+1)}\right) \|u\|_{\mathfrak{B}^{s+1}(\Omega)}$.
By Lemma \ref{cos decomposition} and $\sin \theta_{j} = \cos \left(\theta_{j}-\pi/2\right)$, let $\theta = \left(k_{1} \pi x_{1},\right.$ $ \left. k_{2} \pi x_{2}, \ldots, k_{d} \pi x_{d}\right)^{\top}$, then
\begin{equation} \label{cos(k_i pi x_i) prod sin(k_j pi x_j) expansion}
\begin{aligned}
\cos \left(k_{i} \pi x_{i}\right) \prod_{j \neq i} \sin \left(k_{j} \pi x_{j}\right) = \frac{1}{2^{d}} \sum_{\xi \in\{1,-1\}^d} \cos \left(\pi k_{\xi} \cdot x-\frac{\pi}{2}\sum_{j \neq i} \xi_{j}\right),
\end{aligned}
\end{equation}
where $k_{\xi}=\left(k_{1} \xi_{1}, k_{2} \xi_{2}, \ldots, k_{d} \xi_{d}\right).$
Since $u \in \mathfrak{B}^{s+1}(\Omega)$ with $s\geq 1$, plugging (\ref{cos(k_i pi x_i) prod sin(k_j pi x_j) expansion}) into the expansion of $u/\varphi$ in Proposition \ref{proposition: v cos-sin expansion} yields
\begin{equation*} \label{}
\begin{aligned}
\frac{u(x)}{\varphi(x)} & =\sum_{(k, i) \in \Gamma} \hat{v}(k, i) \cdot \frac{1}{2^{d}} \sum_{\xi \in\{1,-1\}^d} \cos \left(\pi k_{\xi}  \! \cdot  \! x-\frac{\pi}{2}\sum_{j \neq i} \xi_{j}\right) \\
& =\sum_{(k, i) \in \Gamma} \frac{|\hat{v}(k, i)|\left(1 \! + \! \pi^{s}|k|_{1}^{s}\right)}{Z_{v}} 
\frac{Z_{v}}{1 \! + \! \pi^{s}|k|_{1}^{s}} 
\frac{1}{2^{d}} \sum_{\xi \in\{1,-1\}^d} \operatorname{sign}(\hat{v}(k, i)) \cos \left( \! \pi k_{\xi} \!  \cdot  \! x-\frac{\pi}{2}\sum_{j \neq i} \xi_{j}  \! \right),
\end{aligned}
\end{equation*}
where $Z_{v}$ is a constant to be specified.
We define a probability measure on  $\Gamma$  by
$$\mu(d(k, i))=\frac{|\hat{v}(k, i)|\left(1+\pi^{s}|k|_{1}^{s}\right)}{Z_{v}} \delta(d(k, i))$$
with
$$Z_{v}=\sum_{(k, i) \in \Gamma}|\hat{v}(k, i)|\left(1+\pi^{s}|k|_{1}^{s}\right) \leqslant\left(1+\frac{2 d}{\pi (s+1)}\right) \|u\|_{\mathfrak{B}^{s+1}(\Omega)}.$$
Let  $\operatorname{sign}(\hat{v}(k, i))=(-1)^{\theta_{k, i}}$ with $\theta_{k, i}\in\{0, 1\}$, and
\begin{equation*} \label{}
\begin{aligned}
g(x, k, i)=\frac{Z_{v}}{1+\pi^{s}|k|^{s}} \cdot \frac{1}{2^{d}} \sum_{\xi \in\{1,-1\}^d} \cos \left(\pi \left(k_{\xi} \cdot x-\frac{1}{2} \sum_{j \neq i} \xi_{j}+\theta_{k, i}\right)\right) \varphi(x).
\end{aligned}
\end{equation*}
Then, for any  $x \in \Omega,$ 
\begin{equation} \label{u(x) is expressed as the expectation of the function g(x, k, i)}
\begin{aligned}
u(x) = \sum_{(k, i) \in \Gamma}  g(x, k, i) \mu(d(k, i)) =\mathbf{E}_{\mu} g(x, k, i).
\end{aligned}
\end{equation}
For $\xi \in\{1,-1\}^d$, $\sum_{j \neq i} \xi_{j}$ is even when $d$ is odd, and $\sum_{j \neq i} \xi_{j}$ is odd when $d$ is even. 
It is clear that $g(x, k, i)$ is a convex combination of $2^{d}$ elements in $\varphi\mathcal{F}_{s}(B_{u})$ with $B_{u} = \left(1+\frac{2 d}{\pi (s+1)}\right) \|u\|_{\mathfrak{B}^{s+1}(\Omega)}$. 
Moreover, thanks to (\ref{u(x) is expressed as the expectation of the function g(x, k, i)}) and the uniform boundness of $\|g(\cdot, k, i)\|_{H^{1}(\Omega)}$ derived from the following step, $u$ lies in  $\overline{\operatorname{conv}(\varphi\mathcal{F}_{s}(B_{u}))}$. 

\textbf{Step 2:} Check that $\varphi\mathcal{F}_{s}(B)$ with $s\geq1$ is a bounded set in $H^1(\Omega)$. 
By Lemma \ref{preliminary bounds for cutoff function varphi(x)}, whether $f$ is a sine function or a cosine function, 
$$\int_{\Omega} \varphi^{2}(x) f^{2}(\pi(k \cdot x+b)) d x \leqslant \int_{\Omega} \varphi^{2}(x) d x \leqslant 1 / d^{2},$$
and
\begin{equation*} \label{}
\begin{aligned}
\int_{\Omega}|\nabla(\varphi(x) f(\pi(k \cdot x+b)))|^{2} d x
&\leqslant  2 \int_{\Omega}\left[ f^{2}(\pi(k \cdot x+b))|\nabla \varphi|^{2}+\varphi^{2}|\nabla f(\pi(k \cdot x+b))|^{2} \right] d x \\
&\leqslant 2 \int_{\Omega}\left(|\nabla \varphi|^{2}+\pi^{2}|k|^{2} \varphi^{2} \right)d x \leqslant 2 \pi^{2}+2 \pi^{2}|k|^{2} / d^{2}.
\end{aligned}
\end{equation*}
Hence, for any $w \in \varphi\mathcal{F}_{s}(B)$, $w(x)=\frac{\gamma}{1+\pi^{s}|k|^{s}} \varphi(x) f(\pi(k \cdot x+b))$,
\begin{equation*} \label{}
\begin{aligned}
\|w\|_{H^{1}(\Omega)}^{2} \leqslant \frac{B^{2}}{\left(1+\pi^{s}|k|_{1}^{s}\right)^{2}}\left(\frac{1}{d^{2}}+2 \pi^{2}+\frac{2 \pi^{2}|k|^{2}}{d^{2}}\right).
\end{aligned}
\end{equation*}
When  $|k|_{1} \geq 1$, since  $s \geq 1$ and $1/d^{2} + 2 \pi^{2} + 2 \pi^{2}|k|^{2} / d^{2} \leqslant 4+8 \pi|k|_{1}+4 \pi^{2}|k|_{1}^{2} \leqslant 4\left(1+\pi^{s}|k|_{1}^{s}\right)^{2} ,$ 
then $\|w\|_{H^{1}(\Omega)}^{2}\leqslant 4 B^{2}$. 
Since  $k \in \Gamma_{1}$, $k=0$ may only appear in the one dimensional case. At that time,  $w(x) = \gamma \varphi(x) = \gamma \sin(\pi x)$  and  $\|w\|_{H^{1}(\Omega)}^{2}  \leqslant \gamma^{2}\left(1+\pi^{2}\right)/2  \leqslant 6 B^{2}.$ 
Thus, $\|w\|_{H^{1}(\Omega)} \leq \sqrt{6} B$ for any $w \in \varphi\mathcal{F}_{s}(B)$ with $s\geq1.$
 
\textbf{Step 3:} 
Using  Lemma \ref{Lemma: Maurey method, Universal approximation bounds for superpositions of a sigmoidal function} along with  the facts that $u$ lies in $\overline{\operatorname{conv}(\varphi\mathcal{F}_{s}(B_{u}))}$  with  $B_{u} = \left(1+\frac{2 d}{\pi (s+1)}\right) \|u\|_{\mathfrak{B}^{s+1}}$ and $\|w\|_{H^{1}(\Omega)} \leq \sqrt{6} B$ for any $w \in \varphi\mathcal{F}_{s}(B)$,  we obtain Proposition \ref{proposition: u H1 approximation by cos-sin}.
\end{proof}

\subsection{Reduction to ReLU and Softplus Activation Functions} 
\begin{proof}[Proof of Lemma \ref{lemma: ReLU for both sine and cosine case}]
Let  $g_{m}$  be the piecewise linear interpolant of  $g$  with respect to the grid  $\left\{z_{j}\right\}_{j=0}^{2 m} $. 
Let $h = \max(h_{1},h_{2})$. Then, $h \leq (1+\rho)/m \leq 3/(2m)$,  According to \cite{Ascher2011FirstCourseinNM}, 
$\left\|g-g_{m}\right\|_{L^{\infty}([-1,1])} \leq h^{2} \left\|g^{\prime \prime}\right\|_{L^{\infty}([-1,1])} / 8.$
Consider  $z \in\left[z_{j}, z_{j+1}\right]$  for some  $0 \leq j \leq 2 m-1$. By the mean value theorem, there exist  $\xi, \eta \in\left(z_{j}, z_{j+1}\right)$  such that $ \left(g(z_{j+1})-g(z_{j})\right) / (z_{j+1}-z_{j}) = g^{\prime}(\xi)$  and 
\begin{equation*} \label{}
\begin{aligned}
\left|g^{\prime}(z)-\frac{g\left(z_{j+1}\right)-g\left(z_{i}\right)}{z_{j+1}-z_{j}}\right| & =\left|g^{\prime}(z)-g^{\prime}(\xi)\right| 
=\left|g^{\prime \prime}(\eta)\right||z-\xi| ,
\end{aligned}
\end{equation*}
which implies that $\left\|g^{\prime}-g_{m}^{\prime}\right\|_{L^{\infty}([-1,1])} \leq h\left\|g^{\prime \prime}\right\|_{L^{\infty}([-1,1])} .$
Thus, $$\left\|g-g_{m}\right\|_{W^{1, \infty}([-1,1])} \leq \frac{h^{2}}{8}\left\|g^{\prime \prime}\right\|_{L^{\infty}([-1,1])} + h\left\|g^{\prime \prime}\right\|_{L^{\infty}([-1,1])} 
\leq \frac{19}{16}Bh 
\leq \frac{2 B}{m} .$$
This proves (\ref{sin piecewise linear interpolation, W1,inf error}).

Next, it is easy to verify that   $g_{m}$  can be rewritten as a two-layer ReLU neural network 
\begin{equation} \label{gm rewritten as ReLU network}
\begin{aligned}
g_{m}(z)=c+\sum_{i=1}^{m} a_{i} \operatorname{ReLU}\left(z_{i}-z\right)+\sum_{i=m+1}^{2 m} a_{i} \operatorname{ReLU}\left(z-z_{i-1}\right), \quad z \in[-1,1],
\end{aligned}
\end{equation}
where  $c=g\left(z_{m}\right)=g(\rho)$  and the parameters  $a_{i}$  defined by
\begin{equation*}
a_{i}=\left\{\begin{array}{ll}
\left(g\left(z_{m+1}\right)-g\left(z_{m}\right)\right)/h_{2}, & \text { if } i=m+1, \\
\left(g\left(z_{m-1}\right)-g\left(z_{m}\right)\right)/h_{1}, & \text { if } i=m, \\
\left(g\left(z_{i}\right)-2 g\left(z_{i-1}\right)+g\left(z_{i-2}\right)\right)/h_{2}, & \text { if } i>m+1, \\
\left(g\left(z_{i-1}\right)-2 g\left(z_{i}\right)+g\left(z_{i+1}\right)\right)/h_{1}, & \text { if } i<m .
\end{array}\right.
\end{equation*}
Furthermore, by the mean value theorem, there exists  $\xi_{1}, \xi_{2} \in\left(z_{m}, z_{m+1}\right)$  such that  $\left|a_{m+1}\right|=\left|g^{\prime}\left(\xi_{1}\right)\right|=\left|g^{\prime}\left(\xi_{1}\right)-g^{\prime}(\rho)\right|=\left|g^{\prime \prime}\left(\xi_{2}\right) \xi_{1}\right| \leq B h_{2} $. 
In a similar manner we obtain that  $\left|a_{m}\right| \leq B h_{1}$, $\left|a_{i}\right| \leq 2 B h_{2}$  if  $i>m+1$  and  $\left|a_{i}\right| \leq 2 B h_{1}$  if  $i<m$. Thus,  $\sum_{i=1}^{2 m} |a_{i}| \leq m\cdot 2Bh_{1}+ m\cdot 2Bh_{2} = 4B$. 

Finally, by setting  $\epsilon_{i}=-1$, $b_{i}=-z_{i}$  for  $i=1, \cdots, m$  and  $\epsilon_{i}=1,$  $b_{i}=z_{i-1}$  for  $i= m+1, \cdots, 2 m $, we obtain the desired form (\ref{sin piecewise linear interpolation}) of  $g_{m}$. This completes the proof.
\end{proof}

Next, we prove the approximation results 
with the Softplus activation. 
To this end, we recall a lemma from \cite{DRMlu2021priori} which shows that ReLU may be well approximated by $\mathrm{SP}_{\tau}$ for $\tau \gg 1$.

\begin{lemma}{{\citep[Lemma 4.6]{DRMlu2021priori}}}
\label{lemma: ReLU-Softplus difference}
The following inequalities hold:
\begin{subequations}
\begin{align}
& \left|\operatorname{ReLU}(z)-\mathrm{SP}_{\tau}(z)\right| \leq \frac{1}{\tau} e^{-\tau|z|}, \quad\forall z \in[-2,2], \label{ReLU-Softplus difference bound}\\
& \left|\operatorname{ReLU}^{\prime}(z)-\mathrm{SP}_{\tau}^{\prime}(z)\right| \leq e^{-\tau|z|}, \quad \forall z \in[-2,0) \cup(0,2], \label{ReLU-Softplus derivative difference bound}\\
& \left\|\mathrm{SP}_{\tau}\right\|_{W^{1, \infty}([-2,2])} \leq 3+\frac{1}{\tau} . \label{ReLU-Softplus difference 
Wi,inf bound}
\end{align}
\end{subequations}
\end{lemma}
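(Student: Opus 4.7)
The proof is elementary calculus on the Softplus function; all three estimates follow from the explicit formula $\operatorname{SP}_\tau(z)=\tau^{-1}\ln(1+e^{\tau z})$ together with the standard inequality $\ln(1+t)\le t$ for $t\ge 0$. The plan is to address the three inequalities in order and then combine them for the $W^{1,\infty}$ bound.

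For \eqref{ReLU-Softplus difference bound}, I would split into cases according to the sign of $z$. For $z\ge 0$, rewrite
\[
\operatorname{SP}_\tau(z)-\operatorname{ReLU}(z)=\tau^{-1}\ln(1+e^{\tau z})-z=\tau^{-1}\ln(1+e^{-\tau z}),
\]
and bound this by $\tau^{-1}e^{-\tau z}=\tau^{-1}e^{-\tau|z|}$ using $\ln(1+t)\le t$. For $z<0$, $\operatorname{ReLU}(z)=0$, so directly
\[
\operatorname{SP}_\tau(z)=\tau^{-1}\ln(1+e^{\tau z})\le \tau^{-1}e^{\tau z}=\tau^{-1}e^{-\tau|z|}.
\]
In particular $\operatorname{SP}_\tau\ge\operatorname{ReLU}$ pointwise, so no absolute value is lost.

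For \eqref{ReLU-Softplus derivative difference bound}, note that $\operatorname{SP}_\tau'(z)=(1+e^{-\tau z})^{-1}$ is the sigmoid and $\operatorname{ReLU}'(z)=\mathbf{1}_{z>0}$ away from the origin. For $z>0$,
\[
\bigl|\operatorname{SP}_\tau'(z)-1\bigr|=\frac{e^{-\tau z}}{1+e^{-\tau z}}\le e^{-\tau z}=e^{-\tau|z|};
\]
for $z<0$, $\operatorname{SP}_\tau'(z)=e^{\tau z}/(1+e^{\tau z})\le e^{\tau z}=e^{-\tau|z|}$, which gives the claim.

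Finally, for \eqref{ReLU-Softplus difference
Wi,inf bound}, from \eqref{ReLU-Softplus difference bound} and the trivial bound $\operatorname{ReLU}(z)\le|z|\le 2$ on $[-2,2]$,
\[
\|\operatorname{SP}_\tau\|_{L^\infty([-2,2])}\le \|\operatorname{ReLU}\|_{L^\infty([-2,2])}+\tau^{-1}\le 2+\tau^{-1},
\]
while $0<\operatorname{SP}_\tau'<1$ gives $\|\operatorname{SP}_\tau'\|_{L^\infty}\le 1$. Summing yields the constant $3+\tau^{-1}$. No step poses a real obstacle; the only mild care is in writing $\operatorname{SP}_\tau(z)-z=\tau^{-1}\ln(1+e^{-\tau z})$ on the positive side so that one applies $\ln(1+t)\le t$ to a quantity that tends to zero as $\tau z\to+\infty$.
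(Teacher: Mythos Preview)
Your argument is correct. Note, however, that the paper does not supply its own proof of this lemma: it is quoted verbatim as \citep[Lemma 4.6]{DRMlu2021priori} and used as a black box. Your elementary derivation via the identity $\operatorname{SP}_\tau(z)-z=\tau^{-1}\ln(1+e^{-\tau z})$ for $z\ge 0$, the sigmoid expression for $\operatorname{SP}_\tau'$, and the inequality $\ln(1+t)\le t$ is exactly the natural route and matches what one finds in the cited source.
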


\begin{proof}[Proof of Lemma \ref{lemma: Softplus for sine and cosine case, modified}]
Thanks to Lemma \ref{lemma: ReLU for both sine and cosine case}, there exists  $g_{m}$  of the form (\ref{gm rewritten as ReLU network})
such that  $\left\|g-g_{m}\right\|_{W^{1, \infty}([-1,1])} \leq 2 B / m$. 
Moreover, the coefficients  $a_{i}$  satisfies that $\left|a_{m}\right| \leq B h_{1}$, $\left|a_{m+1}\right| \leq B h_{2}$ and $\left|a_{i}\right| \leq 2 B h_{1}$  if  $i<m$,   $\left|a_{i}\right| \leq 2 B h_{2}$  if  $i>m+1$,  so that  $\sum_{i=1}^{2 m} \left|a_{i}\right| \leq 4 B$. Now let  $g_{\tau, m}$  be the function obtained by replacing the activation  $\operatorname{ReLU}$  in  $g_{m}$  by  $\mathrm{SP}_{\tau} $, i.e.
\begin{equation} \label{gtau,m Softplus 2}
\begin{aligned}
g_{\tau, m}(z)=c+\sum_{i=1}^{m} a_{i} 
\mathrm{SP}_{\tau}\left(z_{i}-z\right)+\sum_{i=m+1}^{2 m} a_{i} \mathrm{SP}_{\tau}\left(z-z_{i-1}\right), \quad z \in[-1,1] .
\end{aligned}
\end{equation}
By the bound (\ref{ReLU-Softplus difference bound}) and the upper bounds of $|a_{i}|$, the difference $\left|g_{m}(z)-g_{\tau, m}(z)\right|$ may be bounded by
\begin{equation*} \label{}
\begin{aligned}
& \sum_{i=1}^{m} \left|a_{i}\right|\left|\operatorname{ReLU}\left(z_{i}-z\right)-\mathrm{SP}_{\tau}\left(z_{i}-z\right)\right| + \sum_{i=m+1}^{2 m}\left|a_{i}\right|\left|\operatorname{ReLU}\left(z-z_{i-1}\right)-\mathrm{SP}_{\tau}\left(z-z_{i-1}\right)\right|\\
& \leq \sum_{i=1}^{m-1} \frac{2Bh_{1}}{\tau} e^{-\tau|z-z_{i}|} + \frac{Bh_{1}}{\tau} e^{-\tau|z-z_{m}|} + \frac{Bh_{2}}{\tau} e^{-\tau|z-z_{m}|} + \sum_{i=m+2}^{2m} \frac{2Bh_{2}}{\tau} e^{-\tau|z-z_{i-1}|}\\
& \leq \frac{2B}{m\tau} + \frac{2B}{\tau}\left( \sum_{i=1}^{m-1} h_{1} e^{-\tau|z-z_{i}|} + \sum_{i=m+1}^{2m-1} h_{2} e^{-\tau|z-z_{i}|} \right) =: \frac{2B}{m\tau} + \frac{2B}{\tau}  I,
\end{aligned}
\end{equation*}
where we have used $e^{-\tau|z-z_{m}|}\leq 1$ and $h_{1}+h_{2} = 2/m$ in the last inequality.
Suppose that  $z \in\left(z_{j}, z_{j+1}\right)$  for some fixed  $0\leq j \leq 2m-1$. 
When $j = 0$, the term $i = j+1 = 1$ in the sum $I$ may be bounded by $h = \max(h_{1},h_{2}) \leq 3/(2m)$. The other terms can be bounded by 
\begin{equation*} \label{}
\begin{aligned}
\int_{z_{j+1}}^{z_{2m-1}} e^{-\tau (x - z)} d x
\leq \frac{1}{\tau} \left(1 - e^{-\tau (z_{2m-1} - z_{j+1})} \right)
\leq \frac{1}{\tau} \left(1 - e^{- 2\tau} \right).
\end{aligned}
\end{equation*}
Thus, $I \leq h + (1 - e^{- 2\tau} )/\tau$. Similar bound holds when $j = 2m-1$. When $1\leq j \leq 2m-2$, the term $i = j$ and $i = j+1$ in the sum $I$ is bounded by $h$, respectively. The other terms may be bounded by
\begin{equation*} \label{}
\begin{aligned}
\int_{z_{1}}^{z_{j}} e^{-\tau (z - x)} d x + \int_{z_{j+1}}^{z_{2m-1}} e^{-\tau (x - z)} d x
& \leq \frac{1}{\tau} \left(2 - e^{-\tau (z_{j} - z_{1})} - e^{-\tau (z_{2m-1} - z_{j+1})} \right)\\
& \leq \frac{2}{\tau} \left(1 - e^{- \tau(z_{2m-1} - z_{1})/2} \right) \leq \frac{2}{\tau} \left(1 - e^{- \tau} \right),
\end{aligned}
\end{equation*}
where we have used Jensen's inequality in the second inequality.
Since $2(1 - e^{- \tau}) > 1 - e^{- 2\tau}$ for all $\tau > 0$, we summarize that $I \leq 2h + 2(1 - e^{- \tau})/\tau$ for all $0\leq j \leq 2m-1$. Hence,
$$\left\|g_{m}-g_{\tau, m}\right\|_{L^{\infty}([-1,1])} \leq \frac{2 B}{\tau}\left( \frac{1}{m} + 2h +  \frac{2(1 - e^{- \tau})}{\tau}\right) \leq \frac{4 B}{\tau}\left( \frac{2}{m} + \frac{1 - e^{- \tau}}{\tau}\right).$$
Using (\ref{ReLU-Softplus derivative difference bound}), proceeding along the same line that leads to the above estimate, we obtain
$$\left\|g_{m}^{\prime}-g_{\tau, m}^{\prime}\right\|_{L^{\infty}([-1,1])} \leq 4 B\left( \frac{2}{m} + \frac{1 - e^{- \tau}}{\tau}\right).$$
A combination of the above estimates and  $\left\|g-g_{m}\right\|_{W^{1, \infty}([-1,1])} \leq 2 B / m$  yields 
\begin{equation*} \label{}
\begin{aligned}
\left\|g-g_{\tau, m}\right\|_{W^{1, \infty}([-1,1])} & \leq\left\|g-g_{m}\right\|_{W^{1, \infty}([-1,1])}+\left\|g_{m}-g_{\tau, m}\right\|_{W^{1, \infty}([-1,1])} \\
& \leq \frac{2 B}{m} + 4 B \left(1+\frac{1}{\tau}\right) \left( \frac{2}{m} + \frac{1 - e^{- \tau}}{\tau}\right)\\
& \leq 4 B \left(1+\frac{1}{\tau}\right) \left( \frac{3}{m} + \frac{1 - e^{- \tau}}{\tau}\right).
\end{aligned}
\end{equation*}
Since $\tau>0$, choose $m \in\mathbb{N}_{+}$  such that $m \geq \max\{3\tau e^{\tau}, 2\}$. Then, we obtain (\ref{approximation error by two-layer Softplus networks for functions in mathcalFs(B)}).
Finally, we rewrite (\ref{gtau,m Softplus 2}) in the form (\ref{gtau,m Softplus 1}), which completes the proof.
\end{proof}

\subsection{Bounding the approximation error}
\begin{proof}[Proof of Theorem \ref{Thm: bounding the approximation error Lk(u in F)-lambdak}] \label{Appendix proof of Thm: bounding the approximation error Lk(u in F)-lambdak}
For convenience, we denote $\varphi v_{m}$ by $u_{m}$.
Since $u^{*}\in U_{k}$,   $L_{k}\left(u^{*}\right) =\lambda_{k} $. 
 Now observe that
\begin{equation} \label{tildeum-u* energy excess decomp}
\begin{aligned}
L_{k}\left(u_{m}\right)-\lambda_{k} 
= & \frac{\mathcal{E}_{V}\left(u_{m}\right)-\mathcal{E}_{V}\left(u^{*}\right)}{\mathcal{E}_{2}\left(u_{m}\right)} + \frac{\mathcal{E}_{P}\left(u_{m}\right)-\mathcal{E}_{P}\left(u^{*}\right)}{\mathcal{E}_{2}\left(u_{m}\right)} + \\ 
& \frac{\mathcal{E}_{2}\left(u^{*}\right)-\mathcal{E}_{2}\left(u_{m}\right)}{\mathcal{E}_{2}\left(u_{m}\right)} L_{k}\left(u^{*}\right) .
\end{aligned}
\end{equation}
Since $u^{*}\in \mathfrak{B}^{s}(\Omega)$ for some $s\geq 3$, by Theorem \ref{Thm: u H1 approximation by varphi Softplus networks}, we have $\left\|u^{*}- u_{m}\right\|_{H^{1}(\Omega)} \leq \eta\left(B_{u^{*}}, m\right) \leq 1/2$
with $B_{u^{*}} = \left(1+\frac{2 d}{\pi s}\right)\|u^{*}\|_{\mathfrak{B}^{s}(\Omega)} .$ 
Combining this with $ \left\|u^{*}\right\|_{L^{2}(\Omega)}^{2} = 1 $ yields
\begin{equation*} \label{lower and upper bound for L2 norm of um}
1/2  \leq  1 - \left\|u^{*}- u_{m}\right\|_{H^{1}(\Omega)} \leq \left\|u_{m}\right\|_{L^{2}(\Omega)} \leq 1 + \left\|u^{*}- u_{m}\right\|_{H^{1}(\Omega)} \leq  3/2,
\end{equation*}
and 
\begin{equation*} \label{bound for |mathcalE_2(u_m) - mathcalE_2(u*)|}
\begin{aligned}
\left|\mathcal{E}_{2}\left(u_{m}\right)-\mathcal{E}_{2}\left(u^{*}\right)\right| & =\left(\left\|u^{*}\right\|_{L^{2}(\Omega)}+\left\|u_{m}\right\|_{L^{2}(\Omega)}\right)\left|\left\|u^{*}\right\|_{L^{2}(\Omega)}-\left\|u_{m}\right\|_{L^{2}(\Omega)}\right| \\
& \leq\left(2+\eta\left(B_{u^{*}}, m\right)\right) \eta\left(B_{u^{*}}, m\right) \leq 5 \eta\left(B_{u^{*}}, m\right)/2 .
\end{aligned}
\end{equation*}
Since  $a^{2}-b^{2}=(a-b)^{2}+2 b(a-b) ,$ it follows from  $0\leq V\leq V_{\max} $  that
\begin{equation*} \label{ineq: bound for difference |mathcalEV(u)-mathcalEV(u*)|}
\begin{aligned}
\left|\mathcal{E}_{V}(u)-\mathcal{E}_{V}\left(u^{*}\right)\right| 
& \leq \int_{\Omega}\left(\left|\nabla u- \nabla u^{*}\right|^{2} + 2\left|\nabla u^{*}\right| \left| \nabla u-\nabla u^{*}\right|\right) d x \\
& \quad + \int_{\Omega}\left(V\left|u-u^{*}\right|^{2}+2 V\left|u^{*}\right| \left|u-u^{*}\right| \right) d x \\
& \leq \max \left\{1, V_{\max}\right\} \left\|u-u^{*}\right\|_{H^{1}(\Omega)}^{2}
+ 2\left\| \nabla u^{*}\right\|_{L^{2}(\Omega)} \left\| \nabla u-\nabla u^{*}\right\|_{L^{2}(\Omega)}   \\
& \quad  + 2\left\| \sqrt{V}u^{*}\right\|_{L^{2}(\Omega)}  \left\| \sqrt{V}(u-u^{*})\right\|_{L^{2}(\Omega)} \\
& \leq \max \left\{1, V_{\max }\right\} \left\|u-u^{*}\right\|_{H^{1}(\Omega)}^{2} + 2\left(\mathcal{E}_{V}\left(u^{*}\right) \mathcal{E}_{V}\left(u-u^{*}\right)\right)^{1/2} \\
& \leq \max \left\{1, V_{\max }\right\} \left\|u-u^{*}\right\|_{H^{1}(\Omega)}^{2} + 2 \sqrt{\lambda_{k} \max \left\{1, V_{\max }\right\}} \left\|u-u^{*}\right\|_{H^{1}(\Omega)} ,
\end{aligned}
\end{equation*}
where we have used $\mathcal{E}_{V}\left(u\right) = \left\| \nabla u\right\|^{2}_{L^{2}(\Omega)} + \left\|\sqrt{V} u\right\|^{2}_{L^{2}(\Omega)}$  and  $\mathcal{E}_{V}\left(u^{*}\right) = \lambda_{k}$. Hence,
\begin{equation*} \label{|mathcalE_V(u_m) - mathcalE_V(u*)|}
\begin{aligned}
\left|\mathcal{E}_{V}\left(u_{m}\right)-\mathcal{E}_{V}\left(u^{*}\right)\right| & \leq \left(3\max \left\{1, V_{\max }\right\}/2  + \lambda_{k} \right) \eta\left(B_{u^{*}}, m\right) .
\end{aligned}
\end{equation*}
Moreover, since $\{\psi_{j}\}_{j=1}^{k-1}$ are normalized orthogonal eigenfunctions,
\begin{equation*} \label{bound for |mathcalE_P(u_m) - mathcalE_P(u*)|}
\begin{aligned}
\left|\mathcal{E}_{P}\left(u_{m}\right)-\mathcal{E}_{P}\left(u^{*}\right)\right|
& \leq \beta \left(\sum_{j=1}^{k-1} \langle u_{m}+u^{*}, \psi_{j}\rangle^{2}\right)^{1/2}\left(\sum_{j=1}^{k-1} \langle u_{m}-u^{*}, \psi_{j}\rangle^{2}\right)^{1/2}\\
& \leq \beta \left(\left\|u^{*}\right\|_{L^{2}(\Omega)}+\left\|u_{m}\right\|_{L^{2}(\Omega)}\right) \left\|u^{*}-u_{m}\right\|_{L^{2}(\Omega)}\\
&  \leq 5 \beta \eta\left(B_{u^{*}}, m\right)/2 .
\end{aligned}
\end{equation*}
Substituting all the above estimates into (\ref{tildeum-u* energy excess decomp}), we obtain Theorem \ref{Thm: bounding the approximation error Lk(u in F)-lambdak}.

\end{proof}

\section{Missing proof in section \ref{section: Statistical error}}

\subsection{Bounding the covering numbers}  \label{appendix section: Bounding the covering numbers}

The following proposition gives an upper bound for the covering number  $\mathcal{N}\left(\delta, \Theta, \rho_{\Theta}\right)$.
\begin{proposition}{\citep[Proposition 5.1]{DRMlu2021priori}}\label{Proposition: covering number of parameter space Theta}
Consider the metric space  $\left(\Theta, \rho_{\Theta}\right)$  with  $\rho_{\Theta}$  defined in (\ref{metric rho defined on parameter space Theta}). Then for any  $\delta>0$, the covering number  $\mathcal{N}\left(\delta, \Theta, \rho_{\Theta}\right)$  satisfies that
$$\mathcal{N}\left(\delta, \Theta, \rho_{\Theta}\right) \leq \frac{2 C}{\delta} \left(\frac{3 \Gamma}{\delta}\right)^{m}  \left(\frac{3 W}{\delta}\right)^{d m}  \left(\frac{3 T}{\delta}\right)^{m} .$$
\end{proposition}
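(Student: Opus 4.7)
\textbf{Proof proposal for Proposition \ref{Proposition: covering number of parameter space Theta}.}

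The plan is to exploit the product structure of $\Theta = \Theta_c \times \Theta_\gamma \times \Theta_w \times \Theta_t$ and the fact that the metric $\rho_\Theta$ is the maximum of the corresponding coordinate metrics. A standard observation is that if $N_c, N_\gamma, N_w, N_t$ are $\delta$-nets (in the respective coordinate metrics) for the four factors, then their Cartesian product is a $\delta$-net for $(\Theta,\rho_\Theta)$. Hence
\[
\mathcal{N}(\delta,\Theta,\rho_\Theta) \;\le\; \mathcal{N}(\delta,\Theta_c,|\cdot|)\,\mathcal{N}(\delta,\Theta_\gamma,|\cdot|_1)\,\mathcal{N}(\delta,\Theta_w,\max_i|\cdot|_1)^{?}\,\mathcal{N}(\delta,\Theta_t,\|\cdot\|_\infty),
\]
and the proposition will follow once each of the four factors is bounded appropriately.

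The first step bounds the scalar factor $\Theta_c = [-C,C]$: an evenly spaced grid of $\lceil C/\delta\rceil \le 2C/\delta$ points is a $\delta$-net under $|\cdot|$, which accounts for the leading $2C/\delta$. For the $\ell^1$-ball $\Theta_\gamma = B_1^m(\Gamma)$, I would invoke the classical volume-comparison bound $\mathcal{N}(\delta,B_{\|\cdot\|}(r),\|\cdot\|)\le (3r/\delta)^n$ valid in any normed space of dimension $n$, yielding $(3\Gamma/\delta)^m$. The weight factor $\Theta_w = (B_1^d(W))^m$ is equipped with the max of the per-neuron $\ell^1$ metrics, so its covering number is the $m$-th power of $\mathcal{N}(\delta,B_1^d(W),|\cdot|_1)\le (3W/\delta)^d$, giving $(3W/\delta)^{dm}$. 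Finally $\Theta_t = [-T,T]^m$ with $\|\cdot\|_\infty$ is covered by $\lceil T/\delta\rceil^m \le (3T/\delta)^m$ points (an even grid in each coordinate, again a trivial volume count).

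Multiplying the four bounds produces exactly the stated right-hand side, so no optimization between factors is needed. The only genuinely non-trivial ingredient is the $(3r/\delta)^n$ bound for a normed ball, which has a one-line proof: pick a maximal $\delta$-separated set in $B_{\|\cdot\|}(r)$, note that the $\delta/2$-balls around its points are disjoint and contained in $B_{\|\cdot\|}(r+\delta/2)$, and compare volumes. I expect no obstacle in the argument; the main thing to be careful about is the bookkeeping of constants (verifying $\lceil C/\delta\rceil \le 2C/\delta$ under the implicit assumption $C\ge \delta$, and similarly for $T$) and the observation that a product of nets is a net in the max metric, which is immediate from $\rho_\Theta(\theta,\theta')=\max\{\cdots\}$.
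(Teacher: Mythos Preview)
Your proposal is correct and is exactly the standard product/volume argument; the paper does not give its own proof but simply cites \citep[Proposition 5.1]{DRMlu2021priori}, where precisely this argument appears. The only cosmetic caveat is that the inequality as stated (``for any $\delta>0$'') implicitly assumes $\delta$ is small enough that each factor on the right is at least $1$, which you already flagged in your bookkeeping remark.
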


\begin{proof}[Proof of Proposition \ref{proposition: covering number for Gm1, Gm2 and Gm3}] \label{Appendix proof of Proposition: covering number for Gm1, Gm2 and Gm3}
\textbf{Bounding  $\mathcal{N}\left(\delta, \mathcal{G}_{m}^{1}/M_{1},\|\cdot\|_{L^{2}(Q)}\right)$.} 
For  $\theta, \theta^{\prime} \in \Theta $, by adding and subtracting terms, we may bound $\left|v_{\theta}(x)-v_{\theta^{\prime}}(x)\right|$ by
\begin{equation} \label{decompose utheta-uthetaprime}
\begin{aligned}
& \left|c-c^{\prime}\right|+\left|\sum_{i=1}^{m}\gamma_{i} \phi\left(w_{i} \cdot x-t_{i}\right)-\sum_{i=1}^{m} \gamma_{i}^{\prime} \phi\left(w_{i}^{\prime} \cdot x-t_{i}^{\prime}\right)\right| \\
& \leq\left|c-c^{\prime}\right|+\sum_{i=1}^{m}\left|\gamma_{i}-\gamma_{i}^{\prime}\right| \left|\phi\left(w_{i} \cdot x-t_{i}\right)\right| + \sum_{i=1}^{m}\left|\gamma_{i}^{\prime}\right|\left|\phi\left(w_{i} \cdot x-t_{i}\right)-\phi\left(w_{i}^{\prime} \cdot x-t_{i}^{\prime}\right)\right| .
\end{aligned}
\end{equation}
Since  $\phi$  satisfies Assumption \ref{assumption for activation function}, 
$\left|\phi\left(w_{i} \cdot x-t_{i}\right)-\phi\left(w_{i}^{\prime} \cdot x-t_{i}^{\prime}\right)\right| \leq L\left(\left|w_{i}-w_{i}^{\prime}\right|_{1}+\left|t_{i}-t_{i}^{\prime}\right|\right) .
$
Therefore, it follows from (\ref{decompose utheta-uthetaprime}) that
\begin{equation} \label{utheta(x)-uthetaprime(x) bounded by rho(theta, thetaprime)}
\begin{aligned}
\left|v_{\theta}(x)-v_{\theta^{\prime}}(x)\right| & \leq\left|c-c^{\prime}\right|+\phi_{\max }\left|\gamma-\gamma^{\prime}\right|_{1} + L \Gamma\left(\max _{i}\left|w_{i}-w_{i}^{\prime}\right|_{1}+\left|t-t^{\prime}\right|_{\infty}\right) \\
& \leq\left(1+\phi_{\max }+2 L \Gamma\right) \rho_{\Theta}\left(\theta, \theta^{\prime}\right).
\end{aligned}
\end{equation}
Consequently, using \ref{maximum of Linf norm for vtheta} and \ref{utheta(x)-uthetaprime(x) bounded by rho(theta, thetaprime)}, we obtain
\begin{equation*} \label{}
\begin{aligned}
\left\| \varphi^{2} v_{\theta}^{2} - \varphi^{2} v_{\theta^{\prime}}^{2}\right\|_{L^{2}(Q)} & \leq \left\|\varphi^{2}\right\|_{L^{2}(Q)} \left(\left\|v_{\theta}\right\|_{*} + \left\|v_{\theta}^{\prime}\right\|_{*} \right) \left\|v_{\theta} - v_{\theta}^{\prime}\right\|_{*}  \\
& \leq 2\left(C+\Gamma \phi_{\max }\right)\left(1+\phi_{\max }+2 L \Gamma\right)  \rho_{\Theta}\left(\theta, \theta^{\prime}\right) /d^{2}. 
\end{aligned}
\end{equation*}
Thus, for any $g_{\theta}, g_{\theta^{\prime}} \in \mathcal{G}_{m}^{1},$ $\left\| g_{\theta}/M_{1} - g_{\theta^{\prime}}/M_{1}\right\|_{L^{2}(Q)} \leq \Lambda_{1} \rho_{\Theta}\left(\theta, \theta^{\prime}\right) / M_{1} $ and then
$$\mathcal{N}\left(\delta, \mathcal{G}_{m}^{1}/M_{1},\|\cdot\|_{L^{2}(Q)}\right) \leq \mathcal{N}\left(M_{1} \delta/\Lambda_{1}, \Theta, \rho_{\Theta}\right) 
\leq \mathcal{M}\left(\delta, \Lambda_{1}/M_{1}, m, d\right),$$ 
where the second inequality follows from Proposition \ref{Proposition: covering number of parameter space Theta} with  $\delta$  replaced by  $M_{1} \delta/\Lambda_{1}$.

\textbf{Bounding  $\mathcal{N}\left(\delta, \mathcal{G}_{m}^{2}/M_{2},\|\cdot\|_{L^{2}(Q)}\right)$.} 
As in (\ref{decompose utheta-uthetaprime}), by adding and subtracting terms, 
\begin{equation*} \label{nabla utheta(x)-uthetaprime(x) bounded by rho(theta, thetaprime)}
\begin{aligned}
\left|\nabla v_{\theta}(x) \! - \! \nabla v_{\theta^{\prime}}(x)\right| & \leq \sum_{i=1}^{m}\left|\gamma_{i}-\gamma_{i}^{\prime}\right|\left|w_{i}\right|_{1}\left|\phi^{\prime}\left(w_{i}  \! \cdot \!  x \! + \! t_{i}\right)\right| +\sum_{i=1}^{m}\left|\gamma_{i}^{\prime}\right|\left|w_{i}-w_{i}^{\prime}\right|_{1}\left|\phi^{\prime}\left(w_{i}  \! \cdot \!  x \! + \! t_{i}\right)\right| \\
& \quad +\sum_{i=1}^{m}\left|\gamma_{i}^{\prime}\right|\left|w_{i}^{\prime}\right|_{1}\left|\phi^{\prime}\left(w_{i}  \! \cdot \!  x \! + \! t_{i}\right)-\phi^{\prime}\left(w_{i}^{\prime}  \! \cdot x \! + \! t_{i}^{\prime}\right)\right| \\
& \leq W \phi_{\max }^{\prime}\left|\gamma-\gamma^{\prime}\right|_{1}+\Gamma \phi_{\max }^{\prime} \max _{i}\left|w_{i}-w_{i}^{\prime}\right|_{1}  + \Gamma W L^{\prime}\cdot  \\
& \quad \left(\max _{i}\left|w_{i}-w_{i}^{\prime}\right|_{1} + \left|t-t^{\prime}\right|_{\infty}\right) \\
& \leq\left((W+\Gamma) \phi_{\max }^{\prime}+2 \Gamma W L^{\prime}\right) \rho_{\Theta}\left(\theta, \theta^{\prime}\right) .
\end{aligned}
\end{equation*}
Combining (\ref{utheta(x)-uthetaprime(x) bounded by rho(theta, thetaprime)}) and the above estimate, we obtain
\begin{equation*} \label{}
\begin{aligned}
\left\|\left|\nabla \!  \left(\varphi v_{\theta}\right) \!- \!  \nabla  \! \left(\varphi v_{\theta^{\prime}} \right) \right|\right\|_{L^{2}(Q)} & \leq \left\|\varphi\right\|_{L^{2}(Q)} \left\|\nabla v_{\theta}-\nabla v_{\theta^{\prime}}\right\|_{*} + \left\| \nabla\varphi \right\|_{L^{2}(Q)} \left\|v_{\theta}-v_{\theta^{\prime}}\right\|_{*} \\
& \leq \left[ \left((W \! + \! \Gamma) \phi_{\max }^{\prime}  +  2 \Gamma W L^{\prime}\right)  /   d + \pi\left(1 \! + \! \phi_{\max } \! + \! 2 L \Gamma\right) \right]\rho_{\Theta}\left(\theta, \theta^{\prime}\right) .
\end{aligned}
\end{equation*}
Using the fact
$\max_{\theta \in \Theta} \left\|\left|\nabla \left(\varphi v_{\theta}\right)\right|\right\|_{*} \leq \Gamma W \phi_{\max }^{\prime} /d + \pi\left(C+\Gamma \phi_{\max }\right) ,$ we have
\begin{equation*} \label{}
\begin{aligned}
\left\| \left|\nabla\left(\varphi v_{\theta}\right)\right|^{2}-\left|\nabla\left(\varphi v_{\theta^{\prime}}\right)\right|^{2} \right\|_{L^{2}(Q)}
& \leq \left\| \left|\nabla\left(\varphi v_{\theta}\right)\right|+\left|\nabla\left(\varphi v_{\theta^{\prime}}\right)\right| \right\|_{*} \left\|\nabla \left(\varphi v_{\theta}\right) - \nabla \left(\varphi v_{\theta^{\prime}}\right) \right\|_{L^{2}(Q)} \\
& \leq \Lambda_{21} \rho_{\Theta}\left(\theta, \theta^{\prime}\right),
\end{aligned}
\end{equation*}
where  $\Lambda_{21} := 2\left[\Gamma W \phi_{\max }^{\prime}/d + \pi \left(C + \Gamma\phi_{\max }\right)\right] \left[\big((W+\Gamma) \phi_{\max }^{\prime}+2 \Gamma W L^{\prime}\big)/d + \pi\left(1+\phi_{\max }+  \right.\right.$ $\left.\left. 2 L \Gamma\right) \right].$
It follows from $0 \leq V \leq V_{\max}$ and (\ref{utheta(x)-uthetaprime(x) bounded by rho(theta, thetaprime)}) that
\begin{equation*} \label{}
\begin{aligned}
\left\| V \varphi^{2} v_{\theta}^{2} - V \varphi^{2} v_{\theta^{\prime}}^{2}\right\|_{L^{2}(Q)} & \leq V_{\max} \left\|\varphi\right\|_{*}^{2} \left(\left\|v_{\theta}\right\|_{*} + \left\|v_{\theta^{\prime}}\right\|_{*} \right) \left\|v_{\theta} - v_{\theta^{\prime}}\right\|_{*}  \\
& \leq 2V_{\max}\left(C+\Gamma \phi_{\max }\right)\left(1+\phi_{\max }+2 L \Gamma\right) \rho_{\Theta}\left(\theta, \theta^{\prime}\right) /d^2 \\
& =: \Lambda_{22} \rho_{\Theta}\left(\theta, \theta^{\prime}\right) .
\end{aligned}
\end{equation*}
Hence, combining the last two estimates, for any $g_{\theta}, g_{\theta^{\prime}} \in \mathcal{G}_{m}^{2},$
\begin{equation*} \label{}
\begin{aligned}
\left\| g_{\theta}-g_{\theta^{\prime}} \right\|_{L^{2}(Q)}  & \leq \left\| \left|\nabla\left(\varphi v_{\theta}\right)\right|^{2}-\left|\nabla\left(\varphi v_{\theta^{\prime}}\right)\right|^{2} \right\|_{L^{2}(Q)} + \left\| V \varphi^{2} v_{\theta}^{2} - V \varphi^{2} v_{\theta^{\prime}}^{2}\right\|_{L^{2}(Q)}   \\
& \leq \left(\Lambda_{21} + \Lambda_{22}\right) \rho_{\Theta}\left(\theta, \theta^{\prime}\right) = \Lambda_{2} \rho_{\Theta}\left(\theta, \theta^{\prime}\right) .
\end{aligned}
\end{equation*}
Dividing both sides of the above inequality by $M_{2}$, we obtain
$$\mathcal{N}\left(\delta, \mathcal{G}_{m}^{2}/M_{2},\|\cdot\|_{L^{2}(Q)}\right) \leq \mathcal{N}\left(M_{2}\delta/\Lambda_{2}, \Theta, \rho_{\Theta}\right) 
\leq \mathcal{M}\left(\delta, \Lambda_{1}/M_{1}, m, d\right), $$ 
where the second inequality follows from Proposition \ref{Proposition: covering number of parameter space Theta} with  $\delta$  replaced by  $M_{2}\delta/\Lambda_{2}$.

\textbf{Bounding  $\mathcal{N}\left(\delta, \mathcal{G}_{m}^{3}/M_{3},\|\cdot\|_{L^{2}(Q)}\right)$.} 
It follows from (\ref{utheta(x)-uthetaprime(x) bounded by rho(theta, thetaprime)}) that
\begin{equation*} \label{}
\begin{aligned}
\left\|\varphi \psi_{j} v_{\theta} - \varphi \psi_{j} v_{\theta^{\prime}}\right\|_{L^{2}(Q)} & \leq  \left\|\varphi\right\|_{*} \left\|\psi_{j}\right\|_{L^{2}(Q)} \left\|v_{\theta} - v_{\theta^{\prime}}\right\|_{*}  \\
& \leq \left\|\psi_{j}\right\|_{L^{2}(Q)}\left(1+\phi_{\max }+2 L \Gamma\right) \rho_{\Theta}\left(\theta, \theta^{\prime}\right) /d.
\end{aligned}
\end{equation*}
The bound for $\mathcal{N}\left(\delta, \mathcal{G}_{m}^{3}/M_{3},\|\cdot\|_{L^{2}(Q)}\right)$ follows from a similar argument that leads to the bound of $\mathcal{N}\left(\delta, \mathcal{G}_{m}^{2}/M_{2},\|\cdot\|_{L^{2}(Q)}\right)$.
\end{proof}

\subsection{Estimates of the expectation of suprema of empirical processes} \label{appendix section: Estimates of the expectation of suprema of empirical processes}

\begin{proof}[Proof of Lemma \ref{Prop 2.1 from Gin2001OnCO, U=2, centered}] \label{appendix proof of Lemma: Prop 2.1 from Gin2001OnCO, U=2, centered}
By standard symmetrization, 
$$\mathbf{E}\left\| \sum_{i=1}^{n} \left( f\left(X_{i}\right)-\mathcal{P}f\right) \right\|_{\mathscr{F}} \leq 2 \mathbf{E}\left\| \sum_{i=1}^{n} \epsilon_i \left( f\left(X_{i}\right)-\mathcal{P}f\right)\right\|_{\mathscr{F}}.$$
Next, we shall apply Lemma \ref{Proposition 2.1 in cite Gin2001OnCO, original version} to the centered class  $\bar{\mathscr{F}} := \left\{f - \mathcal{P} f: f \in \mathscr{F}\right\} .$
To this end, we verify the assumptions in Lemma \ref{Proposition 2.1 in cite Gin2001OnCO, original version} for $\bar{\mathscr{F}}$.
If functions in $\mathscr{F}$ take values in  $[-1, 1]$, then functions in $\bar{\mathscr{F}}$ take values in  $[-2, 2]$.
If  $F$  is a measurable envelope of $\mathscr{F}$, then $F + \mathcal{P}F$ is a measurable envelope of $\bar{\mathscr{F}}$.
Since for any $f \in \mathscr{F}$ and $0 < \tau < 1$, there exists $f_{i}$ with $\left\| f -f_{i}\right\|_{L^{2}(Q)} \leq  \tau\|F\|_{L^{2}(Q)}$ for all probability measures $Q$,
then 
$\left|\mathcal{P}\left(f -f_{i}\right)\right| \leq \mathcal{P}\left|f -f_{i}\right| \leq \tau \mathcal{P} F .$
Hence, 
\begin{equation*}
\begin{aligned}
\left\| \left(f - \mathcal{P} f\right)-\left(f_{i}-\mathcal{P} f_{i}\right)\right\|_{L^{2}(Q)} & \leq \left\|f - f_{i}\right\|_{L^{2}(Q)} + \left|\mathcal{P}\left(f - f_{i}\right)\right| 
 \\
& \leq \tau\left(\|F\|_{L^{2}(Q)}+\mathcal{P} F\right) \leq \sqrt{2} \tau \|F+\mathcal{P} F\|_{L^{2}(Q)},
\end{aligned}
\end{equation*}
which means that $\left\{f_{i} - \mathcal{P}f_{i}\right\}_{i = 1}^{M}$ is a $\sqrt{2}\tau\|F+\mathcal{P} F\|_{L^{2}(Q)}$-net for $\bar{\mathscr{F}}$.
Therefore, $\bar{\mathscr{F}}$ is a VC class and
\begin{equation*} \label{covering number for centered VC class mathcalF}
\begin{aligned} 
\mathcal{N}\left(\tau\|F+\mathcal{P} F\|_{L^{2}(Q)}, \bar{\mathscr{F}}, \|\cdot\|_{L^{2}(Q)}\right) \leq \left(\sqrt{2} A / \tau \right)^{v}
\end{aligned}
\end{equation*}
for all probability measures $Q$ and  $0 < \tau < 1$. 
The proof is completed by applying Lemma \ref{Proposition 2.1 in cite Gin2001OnCO, original version} to $\bar{\mathscr{F}}$.
\end{proof}

\begin{proof}[Proof of Corollary \ref{estimates of En,q(r, s], U = 1}] \label{appendix proof of Corollary: estimates of En,q(r, s], U = 1}
Applying Lemma \ref{Prop 2.1 from Gin2001OnCO, U=2, centered} to $\mathscr{F}\left(\rho_{j}\right)$ with $\rho_{j} = 2^{j/2}r$, we have
\begin{equation*} \label{}
\begin{aligned} 
\mathcal{K}_{n}(\mathscr{F}, r) &  \leq \max_{1 \leq j \leq l} \frac{\mathbf{E}\left\|\mathcal{P}_{n}-\mathcal{P}\right\|_{\mathscr{F}\left(\rho_{j}\right)}}{\rho_{j-1}^{2}}
\leq C \max_{1 \leq j \leq l} \left( \frac{v}{n \rho_{j-1}^{2}} \ln \frac{A}{\rho_{j}} +   \sqrt{\frac{2v}{n\rho_{j-1}^{2}} \ln \frac{A}{\rho_{j}}}\right).
\end{aligned}
\end{equation*}
Notice that the quantity in parenthesis decreases as $j$ increases and the maximum value reaches at $j=1$, which completes the proof.
\end{proof}

\subsection{Bounding \texorpdfstring{$\mathcal{K}_{n}$}{} in the statistical error}
\begin{proof}[Proof of Theorem \ref{Thm:  bounds for all spaces needed in oracle inequality for the generalization error}] \label{Appendix proof of Thm:  bounds for all spaces needed in oracle inequality for the generalization error}
With (\ref{constants in assumption for Softplus activation function}) and (\ref{concrete representation of constants C Gamma W T}), a direct calculation yields 
\begin{equation} \label{upper bound of Lambda for covering with Softplus activation}
\begin{aligned}
\Lambda_{1} & \leq  19B \left(4+8B\right)/d^2 , \quad \\
\Lambda_{2} & \leq 68B\left(11 + 30B + 17B\sqrt{m}/d\right) + 19 V_{\max} B \left(4+8B\right)/d^2, \\
\Lambda_{3} & \leq \|\psi\|_{L^{2}(Q)}\left(4+8B\right) / d ,
\end{aligned}
\end{equation}
and for $B \geq 1$, 
\begin{equation} \label{upper bound of mathcalM with Softplus activation}
\begin{aligned} 
\mathcal{M}(\delta, \Lambda, m, d) 
& = 2^{2m+1} 3^{(d+2)m} B^{m+1} \left(\Lambda/\delta\right)^{(d+2)m+1}\\
& \leq \left( 2^{\frac{3}{d+3}} B^{\frac{2}{d+3}} 3 \Lambda/\delta\right)^{(d+2)m+1} .\\
\end{aligned}
\end{equation}

For $\mathcal{G}_{1}/M_{\mathcal{F}}^{2}$, it has a measurable envelope $F \equiv 1$.
By Lemma \ref{covering number for Gm1}, (\ref{upper bound of maximum value of hypothesis class with Softplus activation}), (\ref{upper bound of Lambda for covering with Softplus activation}) and (\ref{upper bound of mathcalM with Softplus activation}),
\begin{equation*} \label{}
\begin{aligned}
\mathcal{N}\left(\tau \|F\|_{L^{2}(Q)}, \mathcal{G}_{1}/M_{\mathcal{F}}^{2},\|\cdot\|_{L^{2}(Q)}\right) & \leq \mathcal{M}\left(\tau, \Lambda_{1}/M_{\mathcal{F}}^{2}, m, d\right) \\
& \leq \left( 2^{\frac{3}{d+3}} B^{\frac{2}{d+3}} 6B \left(4+8B\right)/\left(9.5B^{2}\tau\right)\right)^{(d+2)m+1}\\
& \leq \left( 2^{\frac{3}{d+3} + 3} B^{\frac{2}{d+3}} /\tau\right)^{(d+2)m+1}.
\end{aligned}
\end{equation*}
Thus, we may take $A = 2^{\frac{3}{d+3}+3} B^{\frac{2}{d+3}}$, $v = (d+2)m+1$, $\hat{r} = r/M_{\mathcal{F}}$ and then
\begin{equation*} \label{}
\begin{aligned}
\frac{v}{n \hat{r}^{2}} \ln \frac{A }{\hat{r}} & 
\leq C_{1} \frac{mB^{2}}{n d r^{2}} \ln \frac{ B}{rd} =: I_{1},
\end{aligned}
\end{equation*}
where $C_{1}$ is an absolute constant.
If  $I_{1} \leq 1$,  then (\ref{bound for mathcalK_n(mathcalG_1/M_mathcalF^2)}) follows from applying Corollary \ref{estimates of En,q(r, s], U = 1} to $\mathcal{G}_{1}/M_{\mathcal{F}}^{2}$. 

For $\mathcal{G}_{2}/M_{\mathcal{G}_{2}}$, it has a measurable envelope $F \equiv 1$. By Lemma \ref{covering number for Gm2}, (\ref{upper bound of maximum value of hypothesis class with Softplus activation}), (\ref{upper bound of Lambda for covering with Softplus activation}) and (\ref{upper bound of mathcalM with Softplus activation}),
\begin{equation*} \label{}
\begin{aligned}
\mathcal{N}\left(\tau \|F\|_{L^{2}(Q)}, \mathcal{G}_{2}/M_{\mathcal{G}_{2}},\|\cdot\|_{L^{2}(Q)}\right) &  \leq \mathcal{M}\left(\tau, \Lambda_{2}/M_{\mathcal{G}_{2}}, m, d\right) \\
& \leq \left( 2^{\frac{3}{d+3}} B^{\frac{2}{d+3}} \left(8 + 3\sqrt{m}/d\right)/\tau\right)^{(d+2)m+1}.
\end{aligned}
\end{equation*}
Thus, we may take $A = 2^{\frac{3}{d+3}} B^{\frac{2}{d+3}} \left(8 + 3\sqrt{m}/d\right)$, $v = (d+2)m+1$, $\hat{r} = r\sqrt{\lambda_{1}/M_{\mathcal{G}_{2}}}$ and thus
\begin{equation*} \label{}
\begin{aligned}
\frac{v}{n \hat{r}^{2}} \ln \frac{A}{\hat{r}} & \leq  \tilde{C}_{2} \frac{d m \left(B^{2} + V_{\max } \left(B/d\right)^{2}\right)}{n \lambda_{1} r^{2}} \ln \frac{ B^{\frac{2}{d+3}} \left(1 + \sqrt{m}/d\right) \left(B^{2} + V_{\max } \left(B/d\right)^{2}\right)}{r \sqrt{\lambda_{1}} }\\
& \leq C_{2} \frac{m B^{2} \left(1 + V_{\max } \right)}{n r^{2}} \ln \frac{ B \left(1 + \sqrt{m}/d\right) \left(1 + V_{\max}\right)}{r d } =: I_{2},
\end{aligned}
\end{equation*}
where we have used $\lambda_{1}\geq d\pi^{2}$ in the second inequality.
If  $I_{2} \leq 1$,  (\ref{bound for mathcalK_n(mathcalG_2/M_mathcalG_2)}) follows from applying Corollary \ref{estimates of En,q(r, s], U = 1} to $\mathcal{G}_{2}/M_{\mathcal{G}_{2}}$.
Let  $C_{0} = \max \{C_{1}, C_{2}\}$, and then (\ref{Global simplifying condition for statistical error}) ensures both $I_{1} \leq 1$ and $I_{2} \leq 1$.

To bound $\mathcal{K}_{n}(\mathcal{F}_{j}/\left(2\mu_{j} M_{\mathcal{F}}\right) + 1/2, \sqrt{\hat{r}/4 \mu_{j} M_{\mathcal{F}}})$, 
recall (\ref{def sigma_P^2(f) for mathcalF_j/(2mu_j M_mathcalF) + 1/2}) for the choice of $\sigma_{\mathcal{P}}(f)$,  
and note that 
\begin{equation} \label{rewrite mathcalK_n(mathcalF_j) in the proof of statistical bounds}
\begin{aligned}
\mathcal{K}_{n}\left(\frac{\mathcal{F}_{j}}{2\mu_{j} M_{\mathcal{F}}} \! + \! \frac{1}{2},  \sqrt{\frac{r}{4 \mu_{j} M_{\mathcal{F}}}}\right)  
& = \frac{1}{2} \max_{1 \leq i \leq l} \frac{1}{\rho_{i-1}^{2}} \mathbf{E} \sup _{ \|u\|_{L^{2}}/(\mu_{j} M_{\mathcal{F}}) \in \left(4\rho^{2}_{i-1}, 4\rho^{2}_{i}\right]} \frac{\left|\mathcal{P}_{n}\left(u \psi_{j}\right)  \! - \!  \mathcal{P}\left(u \psi_{j}\right) \right|}{\mu_{j} M_{\mathcal{F}}} .
\end{aligned}
\end{equation}
To estimate the expectation, we may apply Lemma \ref{Prop 2.1 from Gin2001OnCO, U=2, centered} to the set $$\left\{\left.\frac{u \psi_{j}}{M_{\mathcal{F}} \mu_{j}} \right\rvert\, u \in \mathcal{F}, \frac{\|u\|_{L^{2}}}{ \mu_{j} M_{\mathcal{F}}} \in\left(4\rho^{2}_{i-1}, 4\rho^{2}_{i}\right] \right\},$$
which has a measurable envelope $F_{j} = \psi_{j}/\mu_{j}$. 
Taking $\psi = \psi_{j}$, by Lemma \ref{covering number for Gm3}, (\ref{upper bound of maximum value of hypothesis class with Softplus activation}), (\ref{upper bound of Lambda for covering with Softplus activation}) and (\ref{upper bound of mathcalM with Softplus activation}),
\begin{equation*} \label{}
\begin{aligned}
\mathcal{N}\left(\tau \|F_{j}\|_{L^{2}(Q)}, \mathcal{F}_{j}/(\mu_{j} M_{\mathcal{F}}),\|\cdot\|_{L^{2}(Q)}\right) & \leq \mathcal{M}\left(\tau \|\psi_{j}\|_{L^{2}(Q)}/\mu_{j}, \Lambda_{3,j}/(\mu_{j} M_{\mathcal{F}}), m, d\right)  \\
& \leq \left(8 B^{\frac{2}{d+3}} / \tau\right)^{(d+2)m+1}.
\end{aligned}
\end{equation*}
Taking $A = 8 B^{\frac{2}{d+3}}$, $v = (d+2)m+1$, $\sigma = 2\rho_{i}$ in Lemma \ref{Prop 2.1 from Gin2001OnCO, U=2, centered} yields that for all  $n \in \mathbb{N}$,
\begin{equation*} \label{}
\begin{aligned}
\mathbf{E} \sup _{\|u\|_{L^{2}}/(\mu_{j} M_{\mathcal{F}}) \in\left(4\rho^{2}_{i-1}, 4\rho^{2}_{i}\right]} \frac{\left|\mathcal{P}_{n}\left(u \psi\right) - \mathcal{P}\left(u \psi\right) \right|}{\mu M_{\mathcal{F}}} & \leq  C\left( \frac{dm}{n} \ln \frac{B}{\rho_{j}} +  \rho_{j} \sqrt{\frac{dm}{n} \ln \frac{B}{\rho_{j}}}\right),
\end{aligned}
\end{equation*}
where  $C$ is an absolute constant.
Therefore, substituting the above bound into (\ref{rewrite mathcalK_n(mathcalF_j) in the proof of statistical bounds})
and noting that $\rho_{i} = 2^{i/2} \sqrt{r/(4\mu_{j} M_{\mathcal{F}})}  = 2^{i/2} \sqrt{r d/(38 \mu_{j} B)}$, we obtain (\ref{bound for mathcalK_n(mathcalF_j/2mu_j M_mathcalF + 1/2)}).
\end{proof}

\section{Missing proof in section \ref{Section: Solution theory in spectral Barron Spaces}} \label{Appendix Section: Solution theory in spectral Barron Spaces}
\subsection{Some useful facts on sine and cosine series} \label{section: Some useful facts on sine and cosine series}
Assume that  $u \in L^{1}(\Omega)$  admits the sine series expansion $u(x)=\sum_{k \in \mathbb{N}_{+}^{d}} \hat{u}(k) \Phi_{k}(x),$ 
where 
$\hat{u}(k)$  are the sine expansion coefficients, i.e.,
\begin{equation} \label{sine expansion coefficients, integral representation}
\begin{aligned}
\hat{u}(k) =\frac{\int_{\Omega} u(x) \Phi_{k}(x) d x}{\int_{\Omega} \Phi_{k}^{2}(x) d x} = 2^{d}\int_{\Omega} u(x) \Phi_{k}(x) d x. 
\end{aligned}
\end{equation}
Let  $\widetilde{\Omega}:=[-1,1]^{d}$  and define the odd extension  $u_{o}$  of a function  $u$  by
\begin{equation*} \label{}
\begin{aligned}
u_{o}(x) = u_{o}\left(x_{1}, x_{2}, \ldots, x_{d}\right) 
= \operatorname{sign}\left(x_{1} x_{2} \cdots x_{d}\right) u\left(\left|x_{1}\right|,\left|x_{2}\right|, \ldots,\left|x_{d}\right|\right),\quad x \in \widetilde{\Omega} ,
\end{aligned}
\end{equation*}
where $\operatorname{sign}\left(y\right) = \mathbf{1}_{\{y > 0\}} - \mathbf{1}_{\{y < 0\}}.$ 
Let  $\tilde{u}_{o}(k)$  be the Fourier coefficients of  $u_{o}$, i.e., 
$u_{o}(x) = \sum_{k \in \mathbb{Z}^{d}} \tilde{u}_{o}(k) e^{i \pi k \cdot x},$
where $ \tilde{u}_{o}(k) = 2^{-d} \int_{\widetilde{\Omega}} u_{o}(x) e^{-i \pi k \cdot x} d x .$  
By abuse of notation, we use  $|k|$  to stand for the vector  $\left(\left|k_{1}\right|, \left|k_{2},\right|,  \ldots , \left|k_{d}\right|\right)$ in this section and 
denote by $|k|_{2}$ the Euclid norm of the vector $k$.
Since  $u_{o}$  is real and odd,  $\tilde{u}_{o}(k) = \operatorname{sign}\left(k_{1} k_{2} \cdots k_{d}\right) \tilde{u}_{o}(|k|).$
Particularly, when  $d$  is odd,  $u_{o}(x) = -u_{o}(-x) ,$  $\tilde{u}_{o}(k)  = - \tilde{u}_{o}(-k)$  and 
\begin{equation} \label{d is odd, Fourier sine expan}
\begin{aligned}
u_{o}(x) = \sum_{k \in \mathbb{Z}^{d}} \tilde{u}_{o}(k) i \sin (\pi k \cdot x) \quad \text{with} \quad
i \tilde{u}_{o}(k) = 2^{-d} \int_{\widetilde{\Omega}} u_{o}(x) \sin (\pi k \cdot x) d x.
\end{aligned}
\end{equation}
where $i = \sqrt{-1}.$
When  $d$  is even,  $u_{o}(x)=u_{o}(-x) ,$  $\tilde{u}_{o}(k)  = \tilde{u}_{o}(-k)$  and 
\begin{equation} \label{d is even, Fourier cosine expan}
\begin{aligned}
u_{o}(x) = \sum_{k \in \mathbb{Z}^{d}} \tilde{u}_{o}(k) \cos (\pi k \cdot x) \quad \text{with} \quad 
\tilde{u}_{o}(k) = 2^{-d} \int_{\widetilde{\Omega}} u_{o}(x) \cos (\pi k \cdot x) d x .
\end{aligned}
\end{equation}

We may extend $\{\hat{u}(k)\}$ from a sequence on $\mathbb{N}_{+}^{d}$ to a sequence on $\mathbb{N}_{0}^{d}$ by letting $\hat{u}(k) = 0$ if $k \in \mathbb{N}_{0}^d\setminus \mathbb{N}_{+}^d$. 
The relation between $\tilde{u}_{o}(k)$ and $\hat{u}(k)$ is established in the following lemma. 
\begin{lemma} \label{tildeuk represented by hatuk}
For every  $k \in \mathbb{Z}^{d}$, it holds 
\begin{equation*} \label{}
\begin{aligned}
& i \tilde{u}_{o}(k) =  2^{-d} (-1)^{\frac{d-1}{2}} \operatorname{sign}\left(k_{1} k_{2} \cdots k_{d}\right) \hat{u}(|k|), & \quad \text{ if $d$ is odd}; \\
& \tilde{u}_{o}(k) = 2^{-d}(-1)^{\frac{d}{2}} \operatorname{sign}\left(k_{1} k_{2} \cdots k_{d}\right) \hat{u}(|k|), & \quad \text{ if $d$ is even}.
\end{aligned}
\end{equation*}
\end{lemma}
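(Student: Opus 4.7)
The plan is to expand the exponential $e^{i\pi k\cdot x}=\prod_{j=1}^d(\cos(\pi k_j x_j)+i\sin(\pi k_j x_j))$ into the $2^d$ tensor terms, then use the fact that $u_o$ is odd in each coordinate to kill most of them in the integral over $\widetilde\Omega$, and finally convert the single surviving integral to $\hat u(|k|)$ via (\ref{sine expansion coefficients, integral representation}).

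First I would treat the case where some $k_j=0$, in which the claim reduces to $0=0$: the left-hand side vanishes because $u_o$ is odd in $x_j$ and $\cos(\pi k_j x_j)$ (which is what the corresponding factor becomes) is even, while the right-hand side vanishes because $\operatorname{sign}(k_1\cdots k_d)=0$ (and also $\hat u(|k|)=0$ since $|k|\notin\mathbb N_+^d$). So I may henceforth assume $k\in(\mathbb Z\setminus\{0\})^d$.

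Next, starting from (\ref{d is odd, Fourier sine expan}) when $d$ is odd, write $\sin(\pi k\cdot x)=\operatorname{Im} e^{i\pi k\cdot x}$ and expand
\begin{equation*}
\sin(\pi k\cdot x)=\sum_{\substack{S\subseteq\{1,\dots,d\}\\|S|\text{ odd}}}\operatorname{Im}(i^{|S|})\prod_{j\in S}\sin(\pi k_j x_j)\prod_{j\notin S}\cos(\pi k_j x_j).
\end{equation*}
For every $j\notin S$, the factor $\cos(\pi k_j x_j)$ is even in $x_j$ while $u_o$ is odd in $x_j$; hence the corresponding integral over $[-1,1]$ in $x_j$ vanishes, and only the term $S=\{1,\dots,d\}$ survives. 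Since $|S|=d$ is odd, $\operatorname{Im}(i^d)=(-1)^{(d-1)/2}$, and using $\sin(\pi k_j x_j)=\operatorname{sign}(k_j)\sin(\pi|k_j|x_j)$ together with the fact that both $u_o(x)$ and $\prod_j\sin(\pi|k_j|x_j)$ are odd in each variable (so their product is even in each variable, yielding a factor $2^d$ when passing from $\widetilde\Omega$ to $\Omega$), I obtain
\begin{equation*}
i\tilde u_o(k)=2^{-d}(-1)^{(d-1)/2}\operatorname{sign}(k_1\cdots k_d)\cdot 2^d\int_\Omega u(x)\Phi_{|k|}(x)\,dx,
\end{equation*}
and then (\ref{sine expansion coefficients, integral representation}) delivers the claimed identity.

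The $d$-even case proceeds in complete parallel, starting from (\ref{d is even, Fourier cosine expan}) with $\cos(\pi k\cdot x)=\operatorname{Re} e^{i\pi k\cdot x}$; the real part keeps only subsets $S$ of even cardinality, the oddness of $u_o$ again selects $S=\{1,\dots,d\}$, and $\operatorname{Re}(i^d)=(-1)^{d/2}$ supplies the correct sign. There is no genuine obstacle here: the only thing to watch is the bookkeeping of the factor $i^{|S|}$ against the parity of $d$ and the sign arising from $\sin(\pi k_j x_j)=\operatorname{sign}(k_j)\sin(\pi|k_j|x_j)$, which is precisely where the factors $(-1)^{(d-1)/2}$ or $(-1)^{d/2}$ and $\operatorname{sign}(k_1\cdots k_d)$ in the statement come from.
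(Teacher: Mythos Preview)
Your proof is correct and takes essentially the same approach as the paper: both expand $\sin(\pi k\cdot x)$ (resp.\ $\cos(\pi k\cdot x)$) into a sum of tensor products of individual sines and cosines, use the coordinate-wise oddness of $u_o$ to kill every term containing a cosine factor, and then reduce the surviving pure-sine integral to $\hat u(|k|)$ via (\ref{sine expansion coefficients, integral representation}). The only cosmetic difference is that the paper peels off one variable at a time with the angle-addition formula, whereas you do the full expansion at once through $e^{i\pi k\cdot x}=\prod_j(\cos+i\sin)$ and real/imaginary parts; the bookkeeping and the key cancellation are identical.
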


\begin{proof}
When  $d$  is odd, by oddness of $u_{o}(x)$, 
\begin{equation*} \label{}
\begin{aligned}
\int_{\widetilde{\Omega}} u_{o}(x) \sin (\pi k \cdot x) d x 
& =\underbrace{\int_{\widetilde{\Omega}} u_{o}(x) \sin \left(\pi\left(\sum_{i=1}^{d-1} k_{i} x_{i}\right)\right) \cos \left(\pi k_{d} x_{d}\right) d x}_{= 0} \\
& \ \ +\int_{\widetilde{\Omega}} u_{o}(x) \cos \left(\pi\left(\sum_{i=1}^{d-1} k_{i} x_{i}\right)\right) \sin \left(\pi k_{d} x_{d}\right) d x \\
& = \underbrace{\int_{\widetilde{\Omega}} u_{o}(x) \cos \left(\pi\left(\sum_{i=1}^{d-2} k_{i} x_{i}\right)\right) \cos \left(\pi k_{d-1} x_{d-1}\right) \sin \left(\pi k_{d} x_{d}\right) d x}_{= 0} \\
&  \ \ - \int_{\widetilde{\Omega}} u_{o}(x) \sin \left(\pi\left(\sum_{i=1}^{d-2} k_{i} x_{i}\right)\right) \sin \left(\pi k_{d-1} x_{d-1}\right) \sin \left(\pi k_{d} x_{d}\right) d x \\
& =(-1)^{\frac{d-1}{2}} \int_{\widetilde{\Omega}} u_{o}(x) \prod_{i=1}^{d} \sin \left(\pi k_{i} x_{i}\right) d x \\
& =(-1)^{\frac{d-1}{2}} 2^{d} \operatorname{sign}\left(k_{1} k_{2} \cdots k_{d}\right) \int_{\Omega} u_{o}(x) \Phi_{|k|}(x) d x.
\end{aligned}
\end{equation*}
According to (\ref{d is odd, Fourier sine expan}) and (\ref{sine expansion coefficients, integral representation}), for every  $k \in \mathbb{Z}^{d}$, we have
\begin{equation*} \label{}
\begin{aligned}
i \tilde{u}_{o}(k)  = 2^{-d} \int_{\widetilde{\Omega}} u_{o}(x) \sin (\pi k \cdot x) d x   =  2^{-d}(-1)^{\frac{d-1}{2}} \operatorname{sign}\left(k_{1} k_{2} \cdots k_{d}\right) \hat{u}(|k|).
\end{aligned}
\end{equation*}

When  $d$  is even, similarly, by oddness of $u_{o}(x)$, 
\begin{equation*} \label{}
\begin{aligned}
\int_{\widetilde{\Omega}} u_{o}(x) \cos (\pi k \cdot x) d x & = \underbrace{\int_{\widetilde{\Omega}} u_{o}(x) \cos \left(\pi\left(\sum_{i=1}^{d-1} k_{i} x_{i}\right)\right) \cos \left(\pi k_{d} x_{d}\right) d x}_{= 0} \\
& \ \ -\int_{\widetilde{\Omega}} u_{o}(x) \sin \left(\pi\left(\sum_{i=1}^{d-1} k_{i} x_{i}\right)\right) \sin \left(\pi k_{d} x_{d}\right) d x \\
& = -\underbrace{\int_{\widetilde{\Omega}} u_{o}(x) \sin \left(\pi\left(\sum_{i=1}^{d-2} k_{i} x_{i}\right)\right) \cos \left(\pi k_{d-1} x_{d-1}\right) \sin \left(\pi k_{d} x_{d}\right) d x}_{= 0} \\
&  \ \ - \int_{\widetilde{\Omega}} u_{o}(x) \sin \left(\pi\left(\sum_{i=1}^{d-2} k_{i} x_{i}\right)\right) \sin \left(\pi k_{d-1} x_{d-1}\right) \sin \left(\pi k_{d} x_{d}\right) d x \\
& =(-1)^{\frac{d}{2}} \int_{\widetilde{\Omega}} u_{o}(x) \prod_{i=1}^{d} \sin \left(\pi k_{i} x_{i}\right) d x \\
& =(-1)^{\frac{d}{2}} 2^{d} \operatorname{sign}\left(k_{1} k_{2} \cdots k_{d}\right) \int_{\Omega} u_{o}(x) \Phi_{|k|}(x) d x.
\end{aligned}
\end{equation*}
According to (\ref{d is even, Fourier cosine expan}) and (\ref{sine expansion coefficients, integral representation}), for every  $k \in \mathbb{Z}^{d}$, we have
\begin{equation*} \label{}
\begin{aligned}
\tilde{u}_{o}(k) & = 2^{-d} \int_{\widetilde{\Omega}} u_{o}(x) \cos (\pi k \cdot x) d x 
=  2^{-d}(-1)^{\frac{d}{2}} \operatorname{sign}\left(k_{1} k_{2} \cdots k_{d}\right) \hat{u}(|k|).
\end{aligned}
\end{equation*}
The lemma is proved.
\end{proof}

Assume that $V(x) \in L^{1}(\Omega)$  admits the cosine series expansion $V(x) = \sum_{k \in \mathbb{N}_{0}^{d}} \check{V}(k) \Psi_{k}, $
where  $\left\{\check{V}(k)\right\}_{k \in \mathbb{N}_{0}^{d}}$  are the cosine expansion coefficients, i.e.,
\begin{equation*} \label{}
\begin{aligned}
\check{V}(k) = 2^{\sum_{i=1}^{d} \mathbf{1}_{k_{i} \neq 0}} \int_{\Omega} V(x) \left(\prod_{i=1}^{d} \cos \left(k_{i} \pi x_{i}\right)\right) d x .
\end{aligned}
\end{equation*}
Define the even extension  $V_{e}$  of the function  $V$  by
\begin{equation*} \label{}
\begin{aligned}
V_{e}(x) = V_{e}\left(x_{1}, \cdots, x_{d}\right)=V\left(\left|x_{1}\right|, \cdots,\left|x_{d}\right|\right),\quad x \in \widetilde{\Omega} .
\end{aligned}
\end{equation*}
Let  $\tilde{V}_{e}(k)$  be the Fourier coefficients of  $V_{e}$. Since  $V_{e}$  is real and even, 
$V_{e}(x) = \sum_{k \in \mathbb{Z}^{d}} \tilde{V}_{e}(k)\cdot$ $ \cos (\pi k \cdot x),$
where
\begin{equation*}  \label{}
\begin{aligned}
\tilde{V}_{e}(k) = \frac{\int_{\widetilde{\Omega}} V_{e}(x) \cos (\pi k \cdot x) d x}{\int_{\widetilde{\Omega}} \cos ^{2}(\pi k \cdot x) d x} = 2^{-d+\mathbf{1}_{k \neq 0}} \int_{\widetilde{\Omega}} V_{e}(x) \cos (\pi k \cdot x) d x .
\end{aligned}
\end{equation*}

From \citep[Lemma B.1]{DRMlu2021priori}, the relation between $\tilde{V}_{e}(k)$ and $\check{V}(k)$ is clear:
\begin{lemma}{{\citep[Lemma B.1]{DRMlu2021priori}}} \label{tildeVk represented by checkVk}
For every  $k \in \mathbb{Z}^{d} $, $\tilde{V}_{e}(k) = \beta_{k} \check{V}(|k|)$  where  $\beta_{k} = 2^{\mathbf{1}_{k \neq 0}-\sum_{i=1}^{d} \mathbf{1}_{k_{i} \neq 0}} $.
\end{lemma}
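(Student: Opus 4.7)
The plan is to reduce $\tilde{V}_e(k)$ to $\check{V}(|k|)$ by exploiting the even symmetry of $V_e$ on $\widetilde{\Omega}=[-1,1]^d$ together with an orthant decomposition. First, I would split the hypercube $\widetilde{\Omega}$ into $2^d$ orthants indexed by $\varepsilon=(\varepsilon_1,\ldots,\varepsilon_d)\in\{-1,+1\}^d$ and change variables $x_i\mapsto \varepsilon_i x_i$ in each orthant. Since $V_e(x_1,\ldots,x_d)=V(|x_1|,\ldots,|x_d|)$, the integrand $V_e$ becomes $V$ on $\Omega$ in every orthant, while the cosine factor becomes $\cos(\pi \sum_i k_i\varepsilon_i x_i)$. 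Therefore
\[
\int_{\widetilde{\Omega}} V_e(x)\cos(\pi k\cdot x)\,dx
=\sum_{\varepsilon\in\{-1,+1\}^d}\int_{\Omega} V(x)\cos\!\Bigl(\pi\sum_{i=1}^d k_i\varepsilon_i x_i\Bigr)dx.
\]

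Next, I would collapse the inner sum using the exponential form and the identity $\sum_{\varepsilon_i\in\{-1,+1\}} e^{i\pi k_i\varepsilon_i x_i}=2\cos(\pi k_i x_i)$, which gives
\[
\sum_{\varepsilon\in\{-1,+1\}^d}\cos\!\Bigl(\pi\sum_{i=1}^d k_i\varepsilon_i x_i\Bigr)
=\operatorname{Re}\prod_{i=1}^d\Bigl(\sum_{\varepsilon_i}e^{i\pi k_i\varepsilon_i x_i}\Bigr)
=2^d\prod_{i=1}^d\cos(\pi |k_i| x_i),
\]
where the absolute values come from the evenness of cosine. Substituting back yields
\[
\int_{\widetilde{\Omega}} V_e(x)\cos(\pi k\cdot x)\,dx=2^d\int_{\Omega}V(x)\prod_{i=1}^d\cos(\pi|k_i|x_i)\,dx.
\]

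Finally, I would plug this into the definitions of $\tilde{V}_e(k)$ and $\check{V}(|k|)$ and compare prefactors. By definition,
\[
\tilde{V}_e(k)=2^{-d+\mathbf{1}_{k\neq 0}}\int_{\widetilde{\Omega}}V_e(x)\cos(\pi k\cdot x)\,dx=2^{\mathbf{1}_{k\neq 0}}\int_{\Omega}V(x)\prod_{i}\cos(\pi |k_i|x_i)\,dx,
\]
while
\[
\check{V}(|k|)=2^{\sum_i\mathbf{1}_{k_i\neq 0}}\int_{\Omega}V(x)\prod_i\cos(\pi|k_i|x_i)\,dx.
\]
Dividing gives $\tilde{V}_e(k)=2^{\mathbf{1}_{k\neq 0}-\sum_i\mathbf{1}_{k_i\neq 0}}\check{V}(|k|)=\beta_k\check{V}(|k|)$, as claimed.

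The computation is essentially routine; there is no hard analytic step, only bookkeeping. The only real obstacle is aligning the two different normalization conventions (one weight $2^{\mathbf{1}_{k\neq 0}}$ coming from the $L^2(\widetilde{\Omega})$ normalization of $\cos(\pi k\cdot x)$, and the other weight $2^{\sum_i\mathbf{1}_{k_i\neq 0}}$ coming from the tensor-product $L^2(\Omega)$ normalization of $\prod_i\cos(\pi k_i x_i)$), which is exactly what produces the asymmetric factor $\beta_k$. The orthant symmetrization trick, together with a careful handling of $k=0$ coordinates, is what cleanly separates these two conventions.
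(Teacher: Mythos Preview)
Your proof is correct. Note that the paper does not actually prove this lemma; it is quoted verbatim from \cite[Lemma B.1]{DRMlu2021priori} and used without further justification, so there is no in-paper argument to compare against. Your orthant decomposition followed by the product-of-cosines identity is the natural way to establish the formula, and the bookkeeping of the two normalization prefactors is handled correctly.
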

Let $w(x) = u(x) V(x)$ in $\Omega$ and its odd extension  $w_{o}(x) = u_{o}(x) V_{e}(x)$ in $\widetilde{\Omega}$ with Fourier coefficients $\tilde{w}_{o}(k)$. By the oddness of $w_{o}(x)$, we have
$w_{o}(x) 
= \sum_{k \in \mathbb{Z}^{d}} i \tilde{w}_{o}(k)  \sin (\pi k \cdot x),$
where $\tilde{w}_{o}(k) = \operatorname{sign}\left(k_{1} k_{2} \cdots k_{d}\right) \tilde{w}_{o}(|k|).$
By the properties of Fourier transform,
\begin{equation} \label{convolution of Fourier coefficients}
\begin{aligned}
\tilde{w}_{o}(k) = \sum_{m \in \mathbb{Z}^{d}} \tilde{u}_{o}(m) \tilde{V}_{e}(k-m).
\end{aligned}
\end{equation}
Similar to $u_{o}(x)$, $w_{o}(x)$ admits the sine series expansion $w_{o}(x)=\sum_{k \in \mathbb{N}_{+}^{d}} \hat{w}(k) \Phi_{k}(x)$ on $\Omega$.

The following proposition gives a representation of $\hat{w}(k)$ with respect to $\hat{u}(k)$ and $\check{V}(k)$. 
\begin{proposition} \label{proposition: hatwk represented by hatuk and checkVk}
Let $\beta_{k} = 2^{\mathbf{1}_{k \neq 0}-\sum_{i=1}^{d} \mathbf{1}_{k_{i} \neq 0}} $. 
Then, for any $k \in \mathbb{N}_{+}^{d},$ 
$$\hat{w}(k) = \widehat{(uV)}(k) = \sum_{m \in \mathbb{Z}^{d}} \operatorname{sign}\left(m_{1} m_{2} \cdots m_{d}\right) \beta_{|m-k|}  \hat{u}(|m|)  \check{V}(|k-m|). $$
\end{proposition}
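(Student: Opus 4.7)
My plan is to derive the formula directly from the convolution identity (\ref{convolution of Fourier coefficients}) combined with Lemma \ref{tildeuk represented by hatuk} (applied once to $u$ and once to $w = uV$) and Lemma \ref{tildeVk represented by checkVk} for $V$. The key observation is that although the formulas in Lemma \ref{tildeuk represented by hatuk} have a parity-dependent prefactor $(-1)^{(d-1)/2}$ or $(-1)^{d/2}$ (and an $i$ for $d$ odd), this prefactor appears on both sides once we plug the representations of $\tilde{u}_o$ and $\tilde{w}_o$ into (\ref{convolution of Fourier coefficients}), so it cancels and the final formula is parity-free.

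Concretely, I would proceed as follows. First, check that $w_o = u_o V_e$: this is immediate from the definitions of the odd extension of $u$ and the even extension of $V$, since a product of an odd function and an even function (in each coordinate) is odd. Hence the Fourier coefficients satisfy the convolution identity $\tilde{w}_o(k) = \sum_{m\in\mathbb{Z}^d} \tilde{u}_o(m)\tilde{V}_e(k-m)$. Second, apply Lemma \ref{tildeuk represented by hatuk} to $w$ to express $\tilde{w}_o(k)$ in terms of $\hat{w}(|k|)$, apply the same lemma to $u$ on the right-hand side to replace $\tilde{u}_o(m)$ by $\hat{u}(|m|)$, and use Lemma \ref{tildeVk represented by checkVk} to replace $\tilde{V}_e(k-m)$ by $\beta_{k-m}\check{V}(|k-m|)$. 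Third, observe that $\beta_{k-m}$ depends only on which components of $k-m$ are zero, so $\beta_{k-m} = \beta_{|k-m|}$, matching the quantity in the statement.

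Finally, specialize to $k \in \mathbb{N}_+^d$ so that $\operatorname{sign}(k_1\cdots k_d) = 1$, and cancel the common scalar factor $2^{-d}(-1)^{(d-1)/2}$ (resp.\ $2^{-d}(-1)^{d/2}$ if $d$ is even) from both sides; in the odd case one also cancels the $i$'s that appear on both sides. What remains is exactly $\hat{w}(k) = \sum_{m\in\mathbb{Z}^d} \operatorname{sign}(m_1\cdots m_d)\beta_{|m-k|}\hat{u}(|m|)\check{V}(|k-m|)$, as claimed. Absolute convergence of the sum (so that rearrangement is legitimate) follows from the $\ell^1$-type summability assumed implicitly through the spectral Barron norms, which I will invoke when we apply the proposition in the proof of Theorem \ref{Thm: compactness of inverse of Schrödinger Operator}.

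I do not anticipate a serious obstacle here; the entire content of the proposition is a careful bookkeeping of the parity, sign, and $\beta_k$ factors that relate the sine/cosine coefficients on $\Omega$ to the complex Fourier coefficients of the odd/even extensions on $\widetilde{\Omega}$. The only mildly delicate point is confirming that the parity prefactors cancel uniformly in $k$, but this is automatic once we observe that Lemma \ref{tildeuk represented by hatuk} produces the same prefactor on both sides of the convolution identity.
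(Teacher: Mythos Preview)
Your proposal is correct and follows essentially the same route as the paper: invert Lemma \ref{tildeuk represented by hatuk} for $w$, substitute the convolution identity (\ref{convolution of Fourier coefficients}), and replace $\tilde{u}_o(m)$ and $\tilde{V}_e(k-m)$ via Lemmas \ref{tildeuk represented by hatuk} and \ref{tildeVk represented by checkVk}, observing that the parity-dependent prefactors (and the $i$'s when $d$ is odd) cancel from both sides. The paper writes out the odd and even cases separately rather than arguing the cancellation abstractly, but the content is identical; your remark that $\beta_{k-m}=\beta_{|k-m|}$ is a helpful clarification the paper leaves implicit.
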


\begin{proof}
Thanks to Lemma \ref{tildeuk represented by hatuk}, Lemma \ref{tildeVk represented by checkVk} and relation (\ref{convolution of Fourier coefficients}), when $d$ is odd, for each $k \in \mathbb{N}_{+}^{d},$
\begin{equation*} \label{}
\begin{aligned}
\hat{w}(k) & = 2^{d} (-1)^{\frac{d-1}{2}}  i \tilde{w}_{o}(k)  =  2^{d}(-1)^{\frac{d-1}{2}} \sum_{m \in \mathbb{Z}^{d}} i \tilde{u}_{o}(m) \tilde{V}_{e}(k-m)\\
& = 2^{d}(-1)^{\frac{d-1}{2}} \sum_{m \in \mathbb{Z}^{d}} 2^{-d}(-1)^{\frac{d-1}{2}} \operatorname{sign}\left(m_{1} m_{2} \cdots m_{d}\right) \hat{u}(|m|) \cdot \beta_{|m-k|} \check{V}(|k-m|) \\
& = \sum_{m \in \mathbb{Z}^{d}} \operatorname{sign}\left(m_{1} m_{2} \cdots m_{d}\right) \beta_{|m-k|}  \hat{u}(|m|)  \check{V}(|k-m|) ,
\end{aligned}
\end{equation*}
where  $\beta_{|m-k|}=2^{\mathbf{1}_{k \neq m}-\sum_{i=1}^{d} \mathbf{1}_{k_{i} \neq m_{i}}} $. Similarly, when $d$ is even, for each $k \in \mathbb{N}_{+}^{d},$
\begin{equation*} \label{}
\begin{aligned}
\hat{w}(k) & = 2^{d} (-1)^{\frac{d}{2}}   \tilde{w}_{o}(k)   =  2^{d}(-1)^{\frac{d}{2}} \sum_{m \in \mathbb{Z}^{d}}  \tilde{u}_{o}(m) \tilde{V}_{e}(k-m)\\
& = 2^{d}(-1)^{\frac{d}{2}} \sum_{m \in \mathbb{Z}^{d}} 2^{-d}(-1)^{\frac{d}{2}} \operatorname{sign}\left(m_{1} m_{2} \cdots m_{d}\right) \hat{u}(|m|) \cdot \beta_{|m-k|} \check{V}(|k-m|) \\
& = \sum_{m \in \mathbb{Z}^{d}} \operatorname{sign}\left(m_{1} m_{2} \cdots m_{d}\right) \beta_{|m-k|}   \hat{u}(|m|)  \check{V}(|k-m|)  ,
\end{aligned}
\end{equation*}
where  $\beta_{|m-k|}=2^{\mathbf{1}_{k \neq m}-\sum_{i=1}^{d} \mathbf{1}_{k_{i} \neq m_{i}}} $.
\end{proof}

\subsection{Boundedness of \texorpdfstring{$\mathcal{H}^{-1}: \mathfrak{B}^{s}(\Omega) \rightarrow \mathfrak{B}^{s+2}(\Omega)$}{}} \label{appendix subsection: Boundness of inverse of Schrödinger operator in spectral Barron spaces}

\begin{proof}[Proof of Theorem \ref{Thm: compactness of inverse of Schrödinger Operator}] \label{Appendix proof of Thm: compactness of inverse of Schrödinger Operator}
It is clear that there exists a unique solution  $u \in H_{0}^{1}(\Omega)$ such that  
\begin{equation*}\label{stability estimate for static Schrödinger equation}
\begin{aligned}
\|\nabla u\|_{L^{2}(\Omega)}^{2} \leq \|f\|_{L^{2}(\Omega)}\|u\|_{L^{2}(\Omega)} .
\end{aligned} 
\end{equation*}
To show  $u \in \mathfrak{B}^{s+2}(\Omega) $, we firstly derive an operator equation that is equivalent to the static Schrödinger equation (\ref{static Schrödinger equation}). 
Multiplying  $\Phi_{k}$  on both sides of (\ref{static Schrödinger equation}) and then integrating yields 
\begin{equation}\label{operator eq that is equivalent to static Schrödinger eq}
\begin{aligned}
\pi^{2}|k|_{2}^{2} \hat{u}(k) + \widehat{(V u)}(k) = \hat{f}(k), \quad k \in \mathbb{N}_{+}^{d}.
\end{aligned} 
\end{equation}
Using Proposition \ref{proposition: hatwk represented by hatuk and checkVk}, we rewrite (\ref{operator eq that is equivalent to static Schrödinger eq}) as
\begin{equation*}\label{}
\begin{aligned}
\pi^{2}|k|_{2}^{2} \hat{u}(k) + \sum_{m \in \mathbb{Z}^{d}} \operatorname{sign}\left(m_{1} m_{2} \cdots m_{d}\right) \beta_{|m-k|}  \hat{u}(|m|)  \check{V}(|k-m|) = \hat{f}(k), \quad k \in \mathbb{N}_{+}^{d} ,
\end{aligned} 
\end{equation*}
where $\beta_{k} = 2^{\mathbf{1}_{k \neq 0}-\sum_{i=1}^{d} \mathbf{1}_{k_{i} \neq 0}}$.
Define the operator  $\mathbb{M}: \hat{u} \mapsto \mathbb{M} \hat{u}$  by
$$(\mathbb{M} \hat{u})(k)=\pi^{2}|k|_{2}^{2} \hat{u}(k),  \quad k \in \mathbb{N}_{+}^{d} .$$
We may extend $\hat{u}(k)$ as 0 when $k \in \mathbb{N}_{0}^{d} \setminus \mathbb{N}_{+}^{d}$. 
Define the operator  $\mathbb{V}: \hat{u} \mapsto \mathbb{V} \hat{u}$  by   
$$(\mathbb{V} \hat{u})(k) = \sum_{m \in \mathbb{Z}^{d}} \operatorname{sign}\left(m_{1} m_{2} \cdots m_{d}\right) \beta_{|m-k|}  \hat{u}(|m|)  \check{V}(|k-m|),\quad k \in \mathbb{N}_{+}^{d}  .$$
We rewrite (\ref{operator eq that is equivalent to static Schrödinger eq}) as
\begin{equation}  \label{operator eq that is equivalent to static Schrödinger eq, abbreviated version}
\begin{aligned}
(\mathbb{M}+\mathbb{V}) \hat{u} = \hat{f}.
\end{aligned} 
\end{equation}
Since  the diagonal operator  $\mathbb{M}$  is invertible, the operator equation (\ref{operator eq that is equivalent to static Schrödinger eq, abbreviated version}) is equivalent to
\begin{equation}\label{operator eq that is equivalent to static Schrödinger eq,  Fredholm format}
\begin{aligned}
\left(\mathbb{I}+\mathbb{M}^{-1} \mathbb{V}\right) \hat{u} = \mathbb{M}^{-1} \hat{f} .
\end{aligned} 
\end{equation}

Next, we claim that equation (\ref{operator eq that is equivalent to static Schrödinger eq,  Fredholm format}) has a unique solution  $\hat{u} \in \ell_{W_{s}}^{1}\left(\mathbb{N}_{+}^{d}\right)$  and there exists a constant  $C_{1}$  depending on  $V$  and  $d$  such that
\begin{equation} \label{uellWs1 bounded by fellWs1}
\begin{aligned}
\|\hat{u}\|_{\ell_{W_{s}}^{1}\left(\mathbb{N}_{+}^{d}\right)} \leq C_{1}(V, d) \|\hat{f}\|_{\ell_{W_{s}}^{1}\left(\mathbb{N}_{+}^{d}\right)}.
\end{aligned} 
\end{equation}
It follows from the compactness of  $\mathbb{M}^{-1} \mathbb{V}$  as shown in Lemma \ref{lemma: operator mathbbV is bounded} that $\mathbb{I}+\mathbb{M}^{-1} \mathbb{V}$  is a Fredholm operator on  $\ell_{W_{s}}^{1}\left(\mathbb{N}_{+}^{d}\right) $. 
By the celebrated Fredholm alternative theorem, the operator  $\mathbb{I}+\mathbb{M}^{-1} \mathbb{V}$  has a bounded inverse  $\left(\mathbb{I}+\mathbb{M}^{-1} \mathbb{V}\right)^{-1}$  if and only if  $\left(\mathbb{I}+\mathbb{M}^{-1} \mathbb{V}\right) \hat{u} = 0$  has a trivial solution. 
By the equivalence between equation (\ref{static Schrödinger equation}) and (\ref{operator eq that is equivalent to static Schrödinger eq,  Fredholm format}), we only need to show that the only solution of (\ref{static Schrödinger equation}) is zero when $f = 0$, which is a direct consequence of (\ref{stability estimate for static Schrödinger equation}) and the Poincaré's inequality.

Since  $\hat{u} \in \ell_{W_{s}}^{1}\left(\mathbb{N}_{+}^{d}\right) $,   it follows from (\ref{operator eq that is equivalent to static Schrödinger eq, abbreviated version}) and the boundedness of  $\mathbb{V}$  on  $\ell_{W_{s}}^{1}\left(\mathbb{N}_{+}^{d}\right.  )$ proved in Lemma \ref{lemma: operator mathbbV is bounded} that
\begin{equation} \label{norm of mathbbMhatu bounded by norm of hatf}
\begin{aligned}
\|\mathbb{M} \hat{u}\|_{\ell_{W_{s}}^{1}\left(\mathbb{N}_{+}^{d}\right)} & \leq\|\mathbb{V} \hat{u}\|_{\ell_{W_{s}}^{1}\left(\mathbb{N}_{+}^{d}\right)}+\|\hat{f}\|_{\ell_{W_{s}}^{1}\left(\mathbb{N}_{+}^{d}\right)} \\
& \leq C_{2}(V, d) \|\hat{u}\|_{\ell_{W_{s}}^{1}\left(\mathbb{N}_{+}^{d}\right)}+\|\hat{f}\|_{\ell_{W_{s}}^{1}\left(\mathbb{N}_{+}^{d}\right)} \\
& \leq C_{3}(V, d) \|\hat{f}\|_{\ell_{W_{s}}^{1}\left(\mathbb{N}_{+}^{d}\right)} ,
\end{aligned}
\end{equation}
where we have used (\ref{uellWs1 bounded by fellWs1}). 
Then, the estimate (\ref{norm of mathbbMhatu bounded by norm of hatf}) implies 
\begin{equation*} \label{}
\begin{aligned}
\|u\|_{\mathcal{B}^{s+2}(\Omega)} & =\sum_{k \in \mathbb{N}_{+}^{d}}\left(1+\pi^{s+2}|k|_{1}^{s+2}\right)\left|\hat{u}(k)\right| 
= \sum_{k \in \mathbb{N}_{+}^{d}} \frac{1+\pi^{s+2}|k|_{1}^{s+2}}{\pi^{2}|k|_{2}^{2}} \cdot \pi^{2}|k|_{2}^{2}\left|\hat{u}(k)\right| \\
& \leq \left(\pi^{-2}+d\right)\|\mathbb{M} \hat{u}\|_{\ell_{W_{s}}^{1}\left(\mathbb{N}_{+}^{d}\right)} 
\leq C(d, V)\|\hat{f}\|_{\ell_{W_{s}}^{1}\left(\mathbb{N}_{+}^{d}\right)},
\end{aligned}
\end{equation*}
which completes the proof.
\end{proof}

Proceeding along the same line as in  \citep[Lemma 7.2]{DRMlu2021priori}, we obtain
\begin{lemma} \label{compactness of a multiplication operator}
Suppose that  $\mathbb{T}$  is a multiplication operator on  $\ell_{W_{s}}^{1}\left(\mathbb{N}_{+}^{d}\right)$  defined by for each $a =   \left(a(k)\right)_{k \in \mathbb{N}_{+}^{d}}$  that  $(\mathbb{T} a)_{k} = \lambda_{k} a_{k}$  with  $\lambda_{k} \rightarrow 0$  as  $|k|_{2} \rightarrow \infty $. Then  $\mathbb{T}: \ell_{W_{s}}^{1}\left(\mathbb{N}_{+}^{d}\right) \rightarrow \ell_{W_{s}}^{1}\left(\mathbb{N}_{+}^{d}\right)$  is compact.
\end{lemma}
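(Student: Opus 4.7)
The plan is to realize $\mathbb{T}$ as a norm limit of finite-rank (hence compact) operators on $\ell_{W_s}^1(\mathbb{N}_+^d)$, invoking the standard fact that the set of compact operators on a Banach space is closed in the operator-norm topology.

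Concretely, I would fix an exhaustion of $\mathbb{N}_+^d$ by the finite sets $E_N := \{k \in \mathbb{N}_+^d : |k|_2 \leq N\}$ and define the truncated diagonal operators $\mathbb{T}_N$ by $(\mathbb{T}_N a)(k) = \lambda_k a(k) \mathbf{1}_{\{k \in E_N\}}$. Since the range of $\mathbb{T}_N$ is contained in the finite-dimensional coordinate subspace indexed by $E_N$, each $\mathbb{T}_N$ is a finite-rank bounded operator on $\ell_{W_s}^1(\mathbb{N}_+^d)$, and in particular compact.

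The key estimate is then a direct computation: for any $a \in \ell_{W_s}^1(\mathbb{N}_+^d)$,
\begin{equation*}
\|(\mathbb{T}-\mathbb{T}_N)a\|_{\ell_{W_s}^1(\mathbb{N}_+^d)} = \sum_{|k|_2 > N}\left(1+\pi^s|k|_1^s\right) |\lambda_k|\,|a(k)| \leq \Big(\sup_{|k|_2 > N} |\lambda_k|\Big) \|a\|_{\ell_{W_s}^1(\mathbb{N}_+^d)},
\end{equation*}
so that $\|\mathbb{T}-\mathbb{T}_N\|_{\mathrm{op}} \leq \sup_{|k|_2 > N}|\lambda_k|$. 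By hypothesis $\lambda_k \to 0$ as $|k|_2 \to \infty$, so the right-hand side vanishes as $N \to \infty$. Thus $\mathbb{T}_N \to \mathbb{T}$ in operator norm, and since each $\mathbb{T}_N$ is compact, $\mathbb{T}$ is compact.

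There is no genuine obstacle here; the only points to be careful about are that $W_s(k)$ is truly absorbed into the $\ell_{W_s}^1$ norm (so the weight plays no role beyond defining the norm), and that the weak decay assumption $\lambda_k \to 0$ along $|k|_2 \to \infty$ is enough to make the tail supremum vanish uniformly in $a$. Both follow from the definitions, so the proof is essentially a verbatim adaptation of \citep[Lemma 7.2]{DRMlu2021priori}, just in our sine-basis weighted $\ell^1$ setting.
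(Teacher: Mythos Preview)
Your proposal is correct and matches the paper's approach: the paper does not give a detailed argument but simply notes that the proof proceeds along the same line as \citep[Lemma 7.2]{DRMlu2021priori}, which is exactly the finite-rank truncation argument you wrote out. There is nothing to add.
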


The following lemma shows that the operator $\mathbb{V}$ is bounded on $\ell_{W_{s}}^{1}\left(\mathbb{N}_{+}^{d}\right)$.
\begin{lemma} \label{lemma: operator mathbbV is bounded}
Assume that  $V \in \mathfrak{C}^{s}(\Omega)$. Then the operator $\mathbb{V}$ is bounded on $\ell_{W_{s}}^{1}\left(\mathbb{N}_{+}^{d}\right)$ and the operator  $\mathbb{M}^{-1} \mathbb{V}$  is compact on  $\ell_{W_{s}}^{1}\left(\mathbb{N}_{+}^{d}\right)$.
\end{lemma}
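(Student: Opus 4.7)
The plan is to prove the boundedness of $\mathbb{V}$ directly via a weighted convolution estimate, and then derive the compactness of $\mathbb{M}^{-1}\mathbb{V}$ by appealing to Lemma \ref{compactness of a multiplication operator} for $\mathbb{M}^{-1}$ and composing with the bounded operator $\mathbb{V}$.

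\textbf{Step 1: Boundedness of $\mathbb{V}$.} Since $|\operatorname{sign}(m_1\cdots m_d)| \leq 1$ and $\beta_{|m-k|} \leq 1$, I first apply the triangle inequality and extend $\hat u(k) = 0$ for $k \in \mathbb{N}_0^d \setminus \mathbb{N}_+^d$ to obtain
\[
|(\mathbb{V}\hat u)(k)| \leq \sum_{m \in \mathbb{Z}^d} |\hat u(|m|)|\,|\check V(|k-m|)|.
\]
The key ingredient is the sub-multiplicativity estimate for the weight: using $|k|_1 \leq |m|_1 + |k-m|_1$ and the elementary inequality $(a+b)^s \leq 2^{\max(s-1,0)}(a^s+b^s)$, one derives
\[
W_s(k) = 1 + \pi^s|k|_1^s \leq C_s\, W_s(|m|)\,W_s(|k-m|)
\]
for some constant $C_s$ depending only on $s$. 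Substituting this, Fubini's theorem (with the substitution $n = k-m$), and the observation that for each fixed $n$ the map $k \mapsto |k-n|$ is at most $2^d$-to-one on $\mathbb{N}_+^d$, yields
\[
\|\mathbb{V}\hat u\|_{\ell^1_{W_s}} \leq C_s\, 2^d \|\hat u\|_{\ell^1_{W_s}} \sum_{n\in\mathbb{Z}^d} W_s(|n|)|\check V(|n|)|.
\]
A similar $2^d$ multiplicity count bounds the last sum by $2^d\|V\|_{\mathfrak{C}^s(\Omega)}$, giving
\[
\|\mathbb{V}\hat u\|_{\ell^1_{W_s}(\mathbb{N}_+^d)} \leq C_s\, 4^d\, \|V\|_{\mathfrak{C}^s(\Omega)}\, \|\hat u\|_{\ell^1_{W_s}(\mathbb{N}_+^d)}.
\]

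\textbf{Step 2: Compactness of $\mathbb{M}^{-1}\mathbb{V}$.} The operator $\mathbb{M}^{-1}$ is exactly the multiplication operator with diagonal $\lambda_k = (\pi^2 |k|_2^2)^{-1}$, and $\lambda_k \to 0$ as $|k|_2 \to \infty$. Lemma \ref{compactness of a multiplication operator} therefore implies $\mathbb{M}^{-1}$ is compact on $\ell^1_{W_s}(\mathbb{N}_+^d)$. Since $\mathbb{V}$ is bounded by Step 1, the composition $\mathbb{M}^{-1}\mathbb{V}$ is compact as the composition of a compact and a bounded operator.

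\textbf{Main obstacle.} The only real technical step is Step 1, specifically managing the weight inside the convolution: the sub-multiplicative bound for $W_s$ must be applied carefully to separate the $\hat u$ and $\check V$ factors, and the passage from sums over $\mathbb{Z}^d$ to sums over $\mathbb{N}_0^d$ through the absolute-value map requires the $2^d$ multiplicity bookkeeping. Once these are in place, Step 2 is immediate from the cited compactness lemma.
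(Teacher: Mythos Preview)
Your proposal is correct and follows essentially the same route as the paper: both prove boundedness of $\mathbb{V}$ via the elementary inequality $(a+b)^s \leq \max(2^{s-1},1)(a^s+b^s)$ combined with a $2^d$ multiplicity count for the map $m\mapsto|m|$, and both deduce compactness of $\mathbb{M}^{-1}\mathbb{V}$ from Lemma~\ref{compactness of a multiplication operator}. The only cosmetic difference is that you package the weight estimate as sub-multiplicativity $W_s(k)\leq C_s W_s(|m|)W_s(|k-m|)$, whereas the paper uses the additive splitting $1+\pi^s|k|_1^s \leq \max(2^{s-1},1)\bigl(1+\pi^s|m|_1^s+\pi^s|k-m|_1^s\bigr)$ directly; this yields a slightly better constant ($2^{d+1}$ versus your $4^d$) but is otherwise the same argument.
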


\begin{proof}[Proof of Lemma \ref{lemma: operator mathbbV is bounded}]
Since  $\mathbb{M}^{-1}$  is a multiplication operator on  $\ell_{W_{s}}^{1}\left(\mathbb{N}_{+}^{d}\right)$  with the diagonal entries converging to zero, it follows from Lemma \ref{compactness of a multiplication operator} that  $\mathbb{M}^{-1}$  is compact on  $\ell_{W_{s}}^{1}\left(\mathbb{N}_{+}^{d}\right)$. 
Therefore to show the compactness of  $\mathbb{M}^{-1} \mathbb{V}$, it is sufficient to show that the operator  $\mathbb{V}$  is bounded on  $\ell_{W_{s}}^{1}\left(\mathbb{N}_{+}^{d}\right) $. 
Since $\beta_{k}=2^{\mathbf{1}_{k \neq 0}-\sum_{i=1}^{d} \mathbf{1}_{k_{i} \neq 0}} \in [2^{1-d}, 1]$ and $V \in \mathfrak{C}^{s}(\Omega)$, 
using Proposition \ref{proposition: hatwk represented by hatuk and checkVk}, one has that for any $\hat{u} \in \ell_{W_{s}}^{1}\left(\mathbb{N}_{+}^{d}\right)$ with $\hat{u}(k) = 0$ when $k \in \mathbb{N}_{0}^{d} \setminus \mathbb{N}_{+}^{d}$,
\begin{equation*} \label{}
\begin{aligned}
\|\mathbb{V} \hat{u}\|_{\ell_{W_{s}}^{1}\left(\mathbb{N}_{+}^{d}\right)} & = \sum_{k \in \mathbb{N}_{+}^{d}} \left(1+\pi^{s}|k|_{1}^{s}\right) \left|\sum_{m \in \mathbb{Z}^{d}} \operatorname{sign}\left(m_{1} m_{2} \cdots m_{d}\right) \beta_{|m-k|}  \hat{u}(|m|)  \check{V}(|k-m|)\right| \\
& \leq \sum_{m \in \mathbb{Z}^{d}} \sum_{k \in \mathbb{N}_{+}^{d}} \left(1+\pi^{s} \max\left(2^{s-1}, 1\right)\left(|m-k|_{1}^{s}+|m|_{1}^{s}\right)\right) \left|\hat{u}(|m|)\right| \left| \check{V}(|k-m|)\right| \\
& \leq 2^{d} \max\left(2^{s-1}, 1\right)  \left( \|\hat{u}\|_{\ell^{1}\left(\mathbb{N}_{+}^{d}\right)}\|\check{V}\|_{\ell_{W_{s}}^{1}\left(\mathbb{N}_{0}^{d}\right)}+\|\hat{u}\|_{\ell_{W_{s}}^{1}\left(\mathbb{N}_{+}^{d}\right)}\|\check{V}\|_{\ell^{1}\left(\mathbb{N}_{0}^{d}\right)} \right)\\
& \leq 2^{d+1} \max\left(2^{s-1}, 1\right) \|V\|_{\mathfrak{C}^{s}(\Omega)} \|\hat{u}\|_{\ell_{W_{s}}^{1}\left(\mathbb{N}_{+}^{d}\right)},
\end{aligned} 
\end{equation*}
where we have used the elementary inequality  $|a+b|^{s} \leq \max\left(2^{s-1}, 1\right) \left(|a|^{s}+|b|^{s}\right)$ in the first inequality and the fact   $\sum_{m \in \mathbb{Z}^{d}}\left|\hat{u}(|m|)\right| \leq   2^{d}\|\hat{u}\|_{\ell^{1}\left(\mathbb{N}_{+}^{d}\right)} \leq 2^{d}\|\hat{u}\|_{\ell_{W_{s}}^{1}\left(\mathbb{N}_{+}^{d}\right)} $ in the second inequality.
\end{proof}

\begin{proof}[Proof of Corollary \ref{corollary: inverse of Schrödinger operator is compact}]
According to Theorem \ref{Thm: compactness of inverse of Schrödinger Operator}, we have proved that the operator $\mathcal{S}: \mathfrak{B}^{s}(\Omega) \rightarrow \mathfrak{B}^{s+2}(\Omega)$  is bounded. 

Note that the inclusion  $\mathcal{J}: \mathfrak{B}^{s+2}(\Omega) \hookrightarrow \mathfrak{B}^{s}(\Omega)$  is compact. In fact, by definition, the space  $\mathfrak{B}^{s}(\Omega)$  may be viewed as a weighted  $\ell^{1}$  space  $\ell_{W_{s}}^{1}\left(\mathbb{N}_{0}^{d}\right)$  of the sine coefficients defined on the lattice  $\mathbb{N}_{+}^{d}$  with the weight  $W_{s}(k)=\left(1+\pi^{s}|k|_{1}^{s}\right)$. Therefore the inclusion satisfies 
$$\|\mathcal{J} u\|_{\mathfrak{B}^{s}(\Omega)}=\sum_{k \in \mathbb{N}_{+}^{d}} W_{s}(k)|\hat{u}(k)|=\sum_{k \in \mathbb{N}_{+}^{d}} \frac{W_{s}(k)}{W_{s+2}(k)} W_{s+2}(k)|\hat{u}(k)|.$$
Since  $\frac{W_{s}(k)}{W_{s+2}(k)} \rightarrow 0$  as  $|k|_{2} \rightarrow \infty$, by a similar argument as the proof of Lemma \ref{compactness of a multiplication operator}, one can conclude that  $\mathcal{J}$  is compact from  $\ell_{W_{s+2}}^{1}\left(\mathbb{N}_{+}^{d}\right)$  to  $\ell_{W_{s}}^{1}\left(\mathbb{N}_{+}^{d}\right)$  and hence from  $\mathcal{B}^{s+2}(\Omega)$  to  $\mathcal{B}^{s}(\Omega)$. 
Corollary \ref{corollary: inverse of Schrödinger operator is compact} is then a direct consequence of the boundness of $\mathcal{S}: \mathfrak{B}^{s}(\Omega) \rightarrow \mathfrak{B}^{s+2}(\Omega)$ and the compactness of the inclusion  $\mathcal{J}$  from  $\mathfrak{B}^{s+2}(\Omega)$  to  $\mathfrak{B}^{s}(\Omega)$.
\end{proof}

\section{About the penalty method} \label{appendix section: About the penalty method}

Firstly, we prove that when 
$\gamma$ is chosen properly large, $\|\mathscr{u}_{n}\|_{L^{2}(\Omega)} \geq 1/2$ with high probability. 
To this end, we decompose $\mathcal{E}_{2}(\mathscr{u}_{n})-1$ as 
\begin{equation*}
\begin{aligned}
\mathcal{E}_{2}(\mathscr{u}_{n})-1 = \mathcal{E}_{n, 2}(\mathscr{u}_{n})-1 - \big(\mathcal{E}_{n, 2}(\mathscr{u}_{n})-\mathcal{E}_{2}(\mathscr{u}_{n})\big) =: \mathcal{E}_{n, 2}(\mathscr{u}_{n})-1 - R_{1}
\end{aligned}
\end{equation*}
and for any $u_{\mathcal{F}} \in \mathcal{F}$,
\begin{equation} \label{error decomposition for gamma(mathcalE_n, 2(mathscru_n)-1)^2 in penalty method}
\begin{aligned}
\gamma\left(\mathcal{E}_{n, 2}\left(\mathscr{u}_{n}\right)-1\right)^{2} & \leq \mathscr{L}_{k, n}\left(\mathscr{u}_{n}\right) \leq \mathscr{L}_{k, n}\left(u_{\mathcal{F}}\right) \\
& = \Big(L_{k, n}(u_{\mathcal{F}})-L_{k}(u_{\mathcal{F}})\Big) + \Big(L_{k}(u_{\mathcal{F}})-\lambda_{k}\Big) + \lambda_{k} \\
& \quad + \gamma \left[ \Big(\mathcal{E}_{n,2}(u_{\mathcal{F}})-\mathcal{E}_{2}(u_{\mathcal{F}})\Big) + \Big(\mathcal{E}_{2}(u_{\mathcal{F}})-1\Big) \right]^{2} \\
& =: R_{2} + R_{3} + \lambda_{k} + \gamma \left( R_{4} + R_{5} \right)^{2},
\end{aligned}
\end{equation}
where the second inequality follows from the fact that $\mathscr{u}_{n}$ is a minimizer of $\mathscr{L}_{k, n}(u)$. Thus,
\begin{equation}  \label{bound for |mathcalE_2(mathscru_n)-1|}
\begin{aligned}
\left|\mathcal{E}_{2}(\mathscr{u}_{n})-1\right| \leq  \left|R_{1}\right| + \left[ \frac{\lambda_{k}}{\gamma} + \frac{ R_{2}+R_{3}}{\gamma}+\left(R_{4}+R_{5}\right)^{2} \right]^{1/2}.
\end{aligned}
\end{equation}
Note that  $R_{1}$ may be regarded as the statistical error term,   $R_{2}$, $R_{4}$ may be regarded as the Monte Carlo error terms, and  $R_{3}$, $R_{5}$ may be regarded as the approximation error terms. 

\textbf{Bounding $R_{1}$.} 
To control the statistical error term $R_{1}$, we employ the well-known tool of Rademacher complexity. We recall the definition firstly. 

\begin{definition}
Define for a set of random variables  $\left\{Z_{j}\right\}_{j=1}^{n}$  independently distributed according to  $P$  and a function class  $\mathcal{G}$  the empirical Rademacher complexity 
$$\widehat{\mathscr{R}}_{n}(\mathcal{G}):=\mathbf{E}_{\sigma}\left[\sup _{g \in \mathcal{G}}\left|\frac{1}{n} \sum_{j=1}^{n} \sigma_{j} g\left(Z_{j}\right)\right| \mid Z_{1}, \cdots, Z_{n}\right],$$
where the expectation  $\mathbf{E}_{\sigma}$  is taken with respect to the independent uniform Bernoulli sequence  $\left\{\sigma_{j}\right\}_{j=1}^{n}$  with  $\sigma_{j} \in\{ \pm 1\}$. The Rademacher complexity of  $\mathcal{G}$  is defined by  $$\mathscr{R}_{n}(\mathcal{G}) = \mathbf{E}_{P^{n}}\left[\widehat{\mathscr{R}}_{n}(\mathcal{G})\right].$$
\end{definition}

We introduce the following generalization bound via the Rademacher complexity.
\begin{lemma}{{\citep[Theorem 4.10]{HDSwainwright_2019}}}
\label{A uniform law of large numbers via Rademacher complexity} 
Let  $\mathcal{G}$  be a class of integrable real valued functions such that  $\sup _{g \in \mathcal{G}} \|g\|_{L^{\infty}(\Omega)} \leq M_{\mathcal{G}}$. Let  $Z_{1}, Z_{2}, \cdots, Z_{n}$  be $i.i.d.$ random samples from some distribution $P$ over $\Omega$. Then for any positive integer $n \geq 1$ and any scalar $\delta \geq 0$, with probability at least  $1 - \delta$,
$$\sup _{g \in \mathcal{G}}\left|\frac{1}{n} \sum_{i=1}^{n} g\left(Z_{i}\right)-\mathbf{E} g(Z)\right| \leq 2 \mathscr{R}_{n}(\mathcal{G}) + M_{\mathcal{G}} \sqrt{\frac{2 \ln (1 / \delta)}{n}} .$$
\end{lemma}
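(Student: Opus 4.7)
The statement is the standard uniform concentration inequality for suprema of empirical processes controlled by Rademacher complexity (it is attributed to Wainwright). My plan is to follow the two-ingredient recipe: a concentration step that controls the supremum around its expectation, and a symmetrization step that bounds the expectation by the Rademacher complexity.

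Define the centered sup-deviation
\[
\Phi(Z_1,\dots,Z_n) := \sup_{g \in \mathcal{G}} \Bigl| \frac{1}{n}\sum_{i=1}^{n} g(Z_i) - \mathbf{E} g(Z) \Bigr|.
\]
The first step is to verify the bounded differences property: if we replace $Z_i$ by an independent copy $Z_i'$, then each term $\frac{1}{n}(g(Z_i)-g(Z_i'))$ changes $\Phi$ by at most $\frac{2 M_{\mathcal{G}}}{n}$, since $\|g\|_{L^{\infty}} \le M_{\mathcal{G}}$. Applying McDiarmid's (bounded-differences) inequality then yields
\[
\mathbf{P}\bigl( \Phi - \mathbf{E}\Phi \ge t \bigr) \le \exp\!\Bigl(-\frac{n t^{2}}{2 M_{\mathcal{G}}^{2}}\Bigr),
\]
and choosing $t = M_{\mathcal{G}}\sqrt{2\ln(1/\delta)/n}$ gives the Gaussian tail term.

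The second step is the classical symmetrization argument. Introducing a ghost sample $\{Z_i'\}_{i=1}^{n}$ independent of $\{Z_i\}$ and Jensen's inequality,
\[
\mathbf{E}\Phi \;\le\; \mathbf{E}_{Z,Z'} \sup_{g}\Bigl| \tfrac{1}{n}\sum_{i}(g(Z_i)-g(Z_i')) \Bigr|.
\]
Since the distribution of the inner sum is invariant under swapping $Z_i \leftrightarrow Z_i'$, we may insert independent Rademacher signs $\sigma_i$ in front of each summand without changing the expectation; splitting and using the triangle inequality then yields
\[
\mathbf{E}\Phi \;\le\; 2\,\mathbf{E}_{Z,\sigma}\sup_{g}\Bigl|\tfrac{1}{n}\sum_{i}\sigma_i g(Z_i)\Bigr| \;=\; 2\,\mathscr{R}_n(\mathcal{G}).
\]
Combining the two steps delivers the stated bound.

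There is no genuine obstacle: both ingredients are textbook. The only points requiring care are (i) checking measurability of $\Phi$, which the paper handles via the standing convention that $\mathcal{G}$ is treated as countable (so the supremum is measurable), and (ii) the bounded-differences constant, where the factor $2$ in $2 M_{\mathcal{G}}/n$ is essential to obtain the $\sqrt{2}$ in the final tail term. Because the result is cited verbatim from Wainwright, the proof should be presented as a brief indication of the symmetrization plus McDiarmid argument rather than a worked-out derivation.
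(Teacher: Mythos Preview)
Your proposal is correct and follows the standard textbook argument (bounded differences plus symmetrization). Note, however, that the paper does not prove this lemma at all: it is simply quoted from \citep[Theorem 4.10]{HDSwainwright_2019} and used as a black box, so there is no paper-proof to compare against.
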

Recall the function class $\mathcal{G}_{1}$ defined in (\ref{function classes defined for bounding generalization error}) 
and $\sup _{u \in \mathcal{F}}\|u\|_{L^{\infty}(\Omega)} \leq M_{\mathcal{F}}$  so that  
$\sup _{g \in \mathcal{G}_{1}} $ $ \|g\|_{L^{\infty}(\Omega)} \leq   M_{\mathcal{F}}^{2}$. 
Define for  $n \in \mathbb{N}$  and  $\delta>0$  the constant
\begin{equation*} \label{xi4(n, delta)}
\xi_{4}(n, \delta) := 2 \mathscr{R}_{n}\left(\mathcal{G}_{1}\right) +  M_{\mathcal{F}}^{2}  \sqrt{\frac{2 \ln (1 / \delta)}{n}},
\end{equation*}
and the event $A_{4}(n, \delta) := \left\{\left|\mathcal{E}_{n, 2}\left(\mathscr{u}_{n}\right)-\mathcal{E}_{2}\left(\mathscr{u}_{n}\right)\right| \leq \xi_{4}(n, \delta)\right\} .$
Then applying Lemma \ref{A uniform law of large numbers via Rademacher complexity} to  $\mathcal{G}_{1}$  we have 
\begin{equation} \label{bound A4(n, delta)}
\mathbf{P}\left[A_{4}(n, \delta)\right] \geq 1-\delta.
\end{equation}

Next, the celebrated Dudley's theorem will be used to bound the Rademacher complexity in terms of the metric entropy.
\begin{theorem}{\cite{Dudley1967290, Liao2020NotesOR}} \label{Dudley's Entropy Integral Bound for empirical Rademacher complexity}
Let  $\mathscr{F}$  be a class of real functions,  $\left\{Z_{i}\right\}_{i = 1}^{n}$  be random $i.i.d.$ samples and the empirical measure $P_{n} = n^{-1} \sum_{i=1}^{n} \delta_{Z_{i}} $.  Assuming
$$\sup _{f \in \mathscr{F}} \|f\|_{L^{2}(P_{n})} := \sup _{f \in \mathscr{F}}\left(\frac{1}{n} \sum_{i=1}^{n} f^{2}\left(Z_{i}\right)\right)^{1/2} \leq c,$$
then 
$$\widehat{\mathscr{R}}_{n}(\mathscr{F}) \leq \inf _{\epsilon \in[0, c / 2]}\left(4 \epsilon + \frac{12}{\sqrt{n}} \int_{\epsilon}^{c / 2} \sqrt{\ln \mathcal{N}\left(\delta, \mathscr{F}, \|\cdot\|_{L^{2}(P_{n})}\right)}  d \delta\right).$$    
\end{theorem}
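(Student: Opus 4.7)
The plan is to prove this Dudley-type bound by the standard chaining argument. I would fix a dyadic sequence of scales $\epsilon_j := 2^{-j}(c/2)$ for $j=0,1,2,\dots$, and for each $j$ let $\mathcal{N}_j$ be a minimal $\epsilon_j$-net of $\mathscr{F}$ in the $\|\cdot\|_{L^{2}(P_n)}$ pseudo-metric; denote by $\pi_j(f) \in \mathcal{N}_j$ the nearest element of the net to $f$. Since $\sup_{f\in\mathscr{F}}\|f\|_{L^{2}(P_n)} \le c$, I may take $\pi_0(f) \equiv 0$ (the singleton $\{0\}$ is already a $(c/2)$-net up to a factor that can be absorbed). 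For any target level $N$, use the telescoping identity
\[
f = \sum_{j=0}^{N-1}\bigl(\pi_{j+1}(f)-\pi_j(f)\bigr) + \bigl(f-\pi_N(f)\bigr),
\]
which expresses the Rademacher process indexed by $f$ as a sum of Rademacher processes indexed by a finite family of increments, plus a controllable residual.

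Next I would bound each chaining layer by the sub-Gaussian maximal inequality: for any finite $S \subset \mathbb{R}^n$,
\[
\mathbf{E}_\sigma \sup_{s\in S}\Bigl|\tfrac{1}{n}\sum_{i=1}^n \sigma_i s_i\Bigr| \le \sqrt{\frac{2\ln |S|}{n}}\,\max_{s\in S}\|s\|_{L^2(P_n)}.
\]
The increment $\pi_{j+1}(f)-\pi_j(f)$ lies in a set of cardinality at most $|\mathcal{N}_{j+1}|\cdot|\mathcal{N}_j|$, and by the triangle inequality $\|\pi_{j+1}(f)-\pi_j(f)\|_{L^2(P_n)} \le \epsilon_j+\epsilon_{j+1} \le 3\epsilon_{j+1}$. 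This yields a per-layer contribution proportional to $\epsilon_{j+1}\sqrt{\ln \mathcal{N}(\epsilon_{j+1},\mathscr{F},\|\cdot\|_{L^2(P_n)})/n}$. For the residual, $\|f-\pi_N(f)\|_{L^2(P_n)} \le \epsilon_N$, and a trivial bound on the Rademacher sum yields a tail contribution of order $\epsilon_N$. Summing the layers and comparing the dyadic sum to an integral by the monotonicity of $\delta \mapsto \mathcal{N}(\delta,\mathscr{F},\|\cdot\|_{L^2(P_n)})$ gives
\[
\widehat{\mathscr{R}}_n(\mathscr{F}) \le 4\epsilon_N + \frac{12}{\sqrt{n}}\int_{\epsilon_N}^{c/2}\sqrt{\ln \mathcal{N}\bigl(\delta,\mathscr{F},\|\cdot\|_{L^2(P_n)}\bigr)}\,d\delta.
\]
Taking the infimum over $N$, and extending by continuity to arbitrary $\epsilon \in [0,c/2]$ (choosing $N$ so that $\epsilon_N$ is the largest dyadic point not exceeding $\epsilon$), yields the claimed inequality.

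Main obstacle: the hard part will be obtaining the sharp numerical constants $4$ and $12$. A crude chaining argument delivers an inequality of the right form with some unspecified absolute constant, but sharpening to $(4,12)$ requires careful choice of the dyadic ratio (the value $1/2$ versus a tunable $\rho$), precise use of the sub-Gaussian bound with its optimal constant $\sqrt{2}$, and a clean accounting of the truncation error $f - \pi_N(f)$ via Cauchy--Schwarz to produce the $4\epsilon$ prefactor rather than a larger multiple. Handling the fact that $\epsilon \in [0, c/2]$ need not coincide with a dyadic scale $\epsilon_N$ contributes a further small constant loss that must be folded into the integral bound, and this bookkeeping — more than any conceptual difficulty — is the most delicate piece of the proof.
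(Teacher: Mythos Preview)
The paper does not prove this theorem: it is quoted with citations to \cite{Dudley1967290, Liao2020NotesOR} and used as a black box to bound $\widehat{\mathscr{R}}_n(\mathcal{G}_1)$. Your chaining argument is the standard proof found in those references, so there is nothing to compare against in the paper itself; your outline is correct in spirit, and your candid remark that the precise constants $(4,12)$ require careful bookkeeping rather than any new idea is accurate.
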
 
By Lemma \ref{covering number for Gm1}, (\ref{upper bound of maximum value of hypothesis class with Softplus activation}), (\ref{upper bound of Lambda for covering with Softplus activation}) and (\ref{upper bound of mathcalM with Softplus activation}), the covering number
\begin{equation*} \label{}
\begin{aligned}
\mathcal{N}\left(\delta, \mathcal{G}_{1},\|\cdot\|_{L^{2}(P_{n})}\right) 
& \leq \left( 2^{\frac{3}{d+3}} B^{\frac{2}{d+3}} 3\cdot19B(4+8B)/(d^{2}\delta)\right)^{(d+2)m+1}\\
& \leq \Big( 1368 B^{2.5} /(d^{2}\delta)\Big)^{(d+2)m+1}.
\end{aligned}
\end{equation*}
Thus, it follows from Theorem \ref{Dudley's Entropy Integral Bound for empirical Rademacher complexity} and (\ref{upper bound of maximum value of hypothesis class with Softplus activation}) that there exist absolute constants $C$ such that
\begin{equation} \label{bound Rademacher complexity of mathcalG_1}
\begin{aligned}
\widehat{\mathscr{R}}_{n}(\mathcal{G}_{1}) 
& \leq 12\sqrt{\frac{(d+2)m+1}{n}} \int_{0}^{M_{\mathcal{F}}^{2} / 2} \sqrt{\ln \left(1368 B^{2.5} /(d^{2}\delta) \right)}  d \delta  \\
& \leq C\sqrt{\frac{dm}{n}} \left( M_{\mathcal{F}}^{2}\sqrt{1 + \ln \left(B/d\right)} + \int_{0}^{M_{\mathcal{F}}^{2} / 2} \sqrt{\left(\ln (1/\delta) \right)_{+}}  d \delta \right)  \\
& \leq  \frac{C B}{d}\left(\frac{B}{d}+1\right) \sqrt{ \frac{d \left(1+\ln B\right) m}{n}},
\end{aligned}
\end{equation}
where in the last inequality we have used 
\begin{equation*} 
\begin{aligned} 
& \int_{0}^{M_{\mathcal{F}}^{2} / 2} \sqrt{\left(\ln (1/\delta) \right)_{+}}  d \delta \leq \int_{0}^{\min(1,M_{\mathcal{F}}^{2} / 2)} \sqrt{1 / \delta} d \delta \leq \min(2,\sqrt{2} M_{\mathcal{F}}).
\end{aligned}
\end{equation*}
By (\ref{bound Rademacher complexity of mathcalG_1}) and (\ref{upper bound of maximum value of hypothesis class with Softplus activation}), on the event $A_{4}(n, \delta)$,
\begin{equation} \label{bound R1}
\begin{aligned}
R_{1} \leq \xi_{4}(n, \delta) 
& \leq \frac{C B}{d}\left(\frac{B}{d}+1\right) \sqrt{ \frac{d \left(1+\ln B\right) m +  \ln (1 / \delta)}{n}} .
\end{aligned}
\end{equation}

\textbf{Bounding $R_{3}$ and $R_{5}$.} 
Let $\mathcal{F} = \varphi \mathcal{F}_{\mathrm{SP}_{\tau}, m}(B)$  with $B = \left(1+\frac{2 d}{\pi s}\right) \|u^{*}\|_{\mathfrak{B}^{s}(\Omega)}$  and  $\tau=9\sqrt{m}$.
By Theorem \ref{Thm: u H1 approximation by varphi Softplus networks}, there exists $u_{\mathcal{F}} \in \varphi \mathcal{F}_{\mathrm{SP}_{\tau}, m}(B)$ such that 
$\left\|u^{*}-u_{\mathcal{F}}\right\|_{H^{1}(\Omega)} \leq 64B/\sqrt{m} \leq 1/2.$
By Theorem \ref{Thm: bounding the approximation error Lk(u in F)-lambdak}, 
\begin{equation} \label{bound R3}
\begin{aligned}
R_{3} = L_{k}\left(u_{\mathcal{F}}\right)-\lambda_{k} \leq 64 \left(3\max \left\{1, V_{\max }\right\}  + 7\lambda_{k} + 5 \beta\right) B /\sqrt{m}.
\end{aligned}
\end{equation}
Similar as in (\ref{lower and upper bound for L2 norm of um}) and (\ref{bound for |mathcalE_2(u_m) - mathcalE_2(u*)|}),
we have $1/2 \leq \|u_{\mathcal{F}}\|_{L^{2}(\Omega)} \leq 3/2$ and
\begin{equation} \label{bound R5}
\begin{aligned}
R_{5} & \leq \left(\|u_{\mathcal{F}}\|_{L^{2}(\Omega)} + \|u^{*}\|_{L^{2}(\Omega)}\right) \left|\|u_{\mathcal{F}}\|_{L^{2}(\Omega)} - \|u^{*}\|_{L^{2}(\Omega)}\right| 
\leq 160B/\sqrt{m}.
\end{aligned}
\end{equation}

\textbf{Bounding $R_{2}$ and $R_{4}$.}
As a preparation, we introduce Hoeffding's inequality to control the Monte Carlo error terms. 
\begin{lemma}[Hoeffding's inequality] 
{\citep[Theorem 2.2.6]{HDPVershynin_2018}}
\label{Lemma: Hoeffding's inequality}
Let  $Z_{1}$, $Z_{2}$, $\cdots$, $Z_{n}$  be independent random variables. Assume that  $Z_{i} \in\left[m_{i}, M_{i}\right]$  for every  $i$. Then, for any  $t>0$, 
$$\mathbf{P}\left(\sum_{i=1}^{n}\left(Z_{i}-\mathbf{E} Z_{i}\right) \geq t\right) \leq \exp \left(-\frac{2 t^{2}}{\sum_{i=1}^{n}\left(M_{i}-m_{i}\right)^{2}}\right).$$
In particular, if $Z_{1}, Z_{2}, \cdots, Z_{n}$ are identically distributed with $|Z_{i}| \leq M$, then for any  $t>0$,
$$\mathbf{P}\left(\left|\frac{\sum_{i=1}^{n} Z_{i}}{n}-\mathbf{E} Z_{1}\right| \geq t\right) \leq 2 \exp \left(-\frac{n t^{2}}{2M^{2}}\right).$$
\end{lemma}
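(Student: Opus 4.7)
The plan is to prove Hoeffding's inequality via the classical Cramér--Chernoff exponential moment method. Fix $\lambda>0$ and let $S_n=\sum_{i=1}^n(Z_i-\mathbf{E}Z_i)$. The first step is to apply Markov's inequality to the exponentiated tail: since $\{S_n\geq t\}=\{e^{\lambda S_n}\geq e^{\lambda t}\}$,
\[
\mathbf{P}(S_n\geq t)\leq e^{-\lambda t}\,\mathbf{E}\!\left[e^{\lambda S_n}\right]=e^{-\lambda t}\prod_{i=1}^n\mathbf{E}\!\left[e^{\lambda(Z_i-\mathbf{E}Z_i)}\right],
\]
where the product decomposition uses independence of the $Z_i$.

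The core estimate is then Hoeffding's lemma: if $X$ is a random variable with $\mathbf{E}X=0$ and $X\in[a,b]$ almost surely, then $\mathbf{E}[e^{\lambda X}]\leq \exp(\lambda^2(b-a)^2/8)$. I would prove this by writing $X=\theta b+(1-\theta)a$ with $\theta=(X-a)/(b-a)\in[0,1]$, using convexity of $t\mapsto e^{\lambda t}$ to upper bound $e^{\lambda X}$ by a linear combination of $e^{\lambda a}$ and $e^{\lambda b}$, and then taking expectation to reduce the problem to bounding $\varphi(u):=-\alpha u+\ln(1-\alpha+\alpha e^{u})$ with $u=\lambda(b-a)$ and $\alpha=-a/(b-a)$. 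A direct computation gives $\varphi(0)=\varphi'(0)=0$ and $\varphi''(u)\leq 1/4$ for all $u$, so Taylor's theorem yields $\varphi(u)\leq u^2/8$, which is the desired bound.

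Combining the two steps with $X_i=Z_i-\mathbf{E}Z_i\in[m_i-\mathbf{E}Z_i,\,M_i-\mathbf{E}Z_i]$ (an interval of length $M_i-m_i$) gives
\[
\mathbf{P}(S_n\geq t)\leq\exp\!\left(-\lambda t+\tfrac{\lambda^2}{8}\sum_{i=1}^n(M_i-m_i)^2\right).
\]
Minimizing the right-hand side in $\lambda>0$ by setting $\lambda=4t/\sum_{i=1}^n(M_i-m_i)^2$ produces the stated one-sided bound $\exp\!\left(-2t^2/\sum_i(M_i-m_i)^2\right)$. For the i.i.d.\ case with $|Z_i|\leq M$, I would take $m_i=-M$, $M_i=M$ so that $(M_i-m_i)^2=4M^2$, apply the bound to $S_n$ and to $-S_n$, and union-bound the two tails with $t$ rescaled to $nt$, yielding the factor $2\exp(-nt^2/(2M^2))$.

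The only genuinely nontrivial ingredient is Hoeffding's lemma, and the only mild obstacle is the calculus bound $\varphi''(u)\leq 1/4$, which follows because $\varphi''(u)=\alpha(1-\alpha)e^{u}(1-\alpha+\alpha e^{u})^{-2}$ is the product $p(1-p)\leq 1/4$ for $p=\alpha e^{u}/(1-\alpha+\alpha e^{u})\in[0,1]$. Everything else is routine.
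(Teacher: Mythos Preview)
Your proof is correct and follows the classical Cram\'er--Chernoff approach with Hoeffding's lemma, which is exactly the standard textbook argument. However, note that the paper does not actually prove this statement: it is cited directly from \citep[Theorem 2.2.6]{HDPVershynin_2018} and invoked as a black-box tool to bound the Monte Carlo error terms $R_2$ and $R_4$ in Appendix~\ref{appendix section: About the penalty method}. There is therefore nothing in the paper to compare your argument against; your proposal simply supplies the omitted standard proof.
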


Consider $u_{\mathcal{F}} \in \varphi \mathcal{F}_{\mathrm{SP}_{\tau}, m}(B)$ given by Theorem \ref{Thm: u H1 approximation by varphi Softplus networks}. 
To bound  $R_{4}$, we define the constant $\xi_{5}(n, \delta) := M_{\mathcal{F}}^{2}  \sqrt{\ln (2 / \delta)/(2 n)}$
and the event $A_{5}(n, \delta) := \left\{\left|\mathcal{E}_{n, 2}\left(u_{\mathcal{F}}\right)-\mathcal{E}_{2}\left(u_{\mathcal{F}}\right)\right| \leq \xi_{5}(n, \delta)\right\}.$
On the event $A_{5}(n, \delta),$ 
\begin{equation} \label{bound R4}
R_{4} \leq \xi_{5}(n, \delta).
\end{equation} 
Since $\|u_{\mathcal{F}}^{2}\|_{L^{\infty}(\Omega)} \leq   M_{\mathcal{F}}^{2}$, applying Lemma \ref{Lemma: Hoeffding's inequality} yields that $\mathbf{P}\left[A_{5}(n, \delta)\right] \geq 1-\delta.$ 
To bound  $R_{2}$, we decompose
\begin{equation*}
\begin{aligned}
R_{2} 
& \leq \frac{\left|\mathcal{E}_{n, V}\left(u_{\mathcal{F}}\right) \! - \! \mathcal{E}_{V}\left(u_{\mathcal{F}}\right)\right|}{\mathcal{E}_{n, 2}\left(u_{\mathcal{F}}\right)} + \frac{\mathcal{E}_{V}\left(u_{\mathcal{F}}\right) \! + \! \mathcal{E}_{P}\left(u_{\mathcal{F}}\right)}{\mathcal{E}_{2}\left(u_{\mathcal{F}}\right) \mathcal{E}_{n, 2}\left(u_{\mathcal{F}}\right)}\left|\mathcal{E}_{2}\left(u_{\mathcal{F}}\right) \! - \! \mathcal{E}_{n, 2}\left(u_{\mathcal{F}}\right)\right| + \frac{\left|\mathcal{E}_{n, P}\left(u_{\mathcal{F}}\right) \! - \! \mathcal{E}_{P}\left(u_{\mathcal{F}}\right)\right|}{\mathcal{E}_{n, 2}\left(u_{\mathcal{F}}\right)}\\
& =: R_{21}+R_{22}+R_{23}.
\end{aligned}
\end{equation*}
Recall the function classes $\mathcal{G}_{2}$, $\mathcal{F}_{j}$ 
defined in \ref{function classes defined for bounding generalization error} and that
we assume $\sup _{g \in \mathcal{G}_{2}} \|g\|_{L^{\infty}(\Omega)} \leq M_{\mathcal{G}_{2}}$. Since $\{\psi_{j}\}_{j = 1}^{k-1}$ are normalized orthogonal eigenfunctions and $\|u_{\mathcal{F}}\|_{L^{2}(\Omega)} \leq 3/2$,
\begin{equation*} 
\mathcal{E}_{P}\left(u_{\mathcal{F}}\right) = \beta \sum_{j=1}^{k-1}\left\langle u_{\mathcal{F}}, \psi_{j}\right\rangle^{2} \leq \beta \|u_{\mathcal{F}}\|_{L^{2}(\Omega)}^{2} \leq 9\beta/4 .
\end{equation*}
By (\ref{ineq: bound for difference |mathcalEV(u)-mathcalEV(u*)|}) and $\mathcal{E}_{V}\left(u^{*}\right) = \lambda_{k}$,
\begin{equation*} \label{}
\begin{aligned}
\mathcal{E}_{V}(u_{\mathcal{F}})
& \leq \max \left\{1, V_{\max }\right\} \left\|u-u^{*}\right\|_{H^{1}(\Omega)}^{2} + 2 \sqrt{\lambda_{k} \max \left\{1, V_{\max }\right\}} \left\|u-u^{*}\right\|_{H^{1}(\Omega)} + \mathcal{E}_{V}\left(u^{*}\right) \\
& \leq \left(\max \left\{1, \sqrt{V_{\max }}\right\}/2 + \sqrt{\lambda_{k}}\right)^{2}.
\end{aligned}
\end{equation*}
Therefore if  $\xi_{5}(n, \delta) < 1/2$, on the event $A_{5}(n, \delta),$
\begin{equation} \label{bound R22}
R_{22} \leq \frac{\left(\left(\max \left\{1, \sqrt{V_{\max }}\right\} + 2\sqrt{\lambda_{k}}\right)^{2}+9\beta \right) \xi_{5}(n, \delta)}{1-2\xi_{5}(n, \delta)}.
\end{equation}

To bound  $R_{21}$, we define the constant $\xi_{6}(n, \delta) :=   M_{\mathcal{G}_{2}}  \sqrt{\ln (2 / \delta) / (2n)}$
and the event 
\begin{equation*} 
A_{6}(n, \delta):=\left\{ \left|\mathcal{E}_{n, V}(u_{\mathcal{F}})-\mathcal{E}_{V}(u_{\mathcal{F}})\right| \leq \xi_{6}(n, \delta)\right\} .
\end{equation*}
Then applying Lemma \ref{Lemma: Hoeffding's inequality} leads to $\mathbf{P}\left[A_{6}(n, \delta)\right] \geq 1-\delta .$  
Hence, if  $\xi_{5}(n, \delta)<1/2$, within event $A_{5}(n, \delta) \cap A_{6}(n, \delta)$,
\begin{equation} \label{bound R21}
R_{21} \leq \frac{2\xi_{6}(n, \delta)}{1-2\xi_{5}(n, \delta)}.
\end{equation}

To bound  $R_{23}$, recall that $\bar{\mu}_{k} = \max_{1 \leq j \leq k-1} \|\psi_{j}\|_{L^{\infty}(\Omega)}$.
We define the constant 
\begin{equation*} \label{xi7(n, delta)}
\xi_{7}(n, \delta) := \bar{\mu}_{k} M_{\mathcal{F}}  \sqrt{\frac{\ln (2k / \delta)}{2 n}},
\end{equation*}
and the events
\begin{equation*} 
\begin{gathered}
A_{7, j}(n, \delta) := \left\{ \left| \frac{1}{n} \sum_{i=1}^{n} u_{\mathcal{F}}\left(X_{i}\right) \psi_{j}\left(X_{i}\right) - \left\langle u_{\mathcal{F}}, \psi_{j}\right\rangle \right| \leq \xi_{7}(n, \delta)\right\}, \quad \text{for each } 1 \leq j \leq k-1.  
\end{gathered}
\end{equation*}
Let $A_{7}(n, \delta) := \bigcap_{j=1}^{k-1} A_{7, j}(n, \delta)$.
Then by Lemma \ref{Lemma: Hoeffding's inequality}, $\mathbf{P}\left[A_{7, j}(n, \delta)\right] \geq 1-\delta/k$ 
and so $\mathbf{P}\left[A_{7}(n, \delta)\right] \geqslant 1-\delta.$
On event $A_{7}(n, \delta)$, it follows from the fact $a^{2} - b^{2} = (a-b)^{2} + 2b(a-b)$ that
\begin{equation*} \label{}
\begin{aligned}
\beta^{-1} \left|\mathcal{E}_{n, P}\left(u_{\mathcal{F}}\right)-\mathcal{E}_{P}\left(u_{\mathcal{F}}\right)\right| & \leq \sum_{j=1}^{k-1} \left[ \xi_{7}^{2}(n, \delta) + 2 \left| \left\langle u_{\mathcal{F}}, \psi_{j}\right\rangle \right|\xi_{7}(n, \delta) \right] \\ 
& \leq  2 \left(\sum_{j=1}^{k-1}\left\langle u_{\mathcal{F}}, \psi_{j}\right\rangle^{2}\right)^{1/2}\left(\sum_{j=1}^{k-1} \xi_{7}^{2}(n, \delta)\right)^{1/2} + (k-1) \xi_{7}^{2}(n, \delta)\\
& \leq 3 \sqrt{k} \xi_{7}(n, \delta) + k \xi_{7}^{2}(n, \delta),
\end{aligned}
\end{equation*}
where we have used $\|u_{\mathcal{F}}\|_{L^{2}(\Omega)} \leq 3/2$ in the last inequality. Hence, on the event $A_{5}(n, \delta) \cap A_{7}(n, \delta)$,
\begin{equation} \label{bound R23}
R_{23} \leq \frac{2\beta \sqrt{k} \xi_{7}(n, \delta)\left(3 + \sqrt{k} \xi_{7}(n, \delta) \right) }{1-2\xi_{5}(n, \delta)}.
\end{equation}
Thus, it may be concluded from (\ref{bound R22}), (\ref{bound R21}) and (\ref{bound R23}) that if  $\xi_{5}(n, \delta)<1/2$, within event $\bigcap_{i = 5}^{7} A_{i}(n, \delta) $,
\begin{equation*} \label{bound R2, former}
\begin{aligned}
R_{2} \leq \frac{\left(\left(\max \! \left\{1, \!  \sqrt{V_{\max }}\right\}  \! + \!  2\sqrt{\lambda_{k}}\right)^{2} \! + \! 9\beta \right) \xi_{5}(n, \delta) + 2\xi_{6}(n, \delta) + 2\beta \sqrt{k} \xi_{7}(n, \delta) \! \left(3  \! + \!  \sqrt{k} \xi_{7}(n, \delta) \right) }{1-2\xi_{5}(n, \delta)}.
\end{aligned}
\end{equation*}
By (\ref{upper bound of maximum value of hypothesis class with Softplus activation}) and direct calculations, there exist certain absolute constants $C$ such that
\begin{equation*} \label{}
\begin{aligned}
& \xi_{5}(n, \delta) \leq C \left(\frac{B}{d}\right)^{2}  \sqrt{\frac{\ln (1 / \delta)}{n}} , \quad  
\xi_{6}(n, \delta) \leq C B^{2} \left(1 + V_{\max }\right)   \sqrt{\frac{ \ln (1 / \delta)}{n}}, \\ 
& \xi_{7}(n, \delta) \leq  \frac{C \bar{\mu}_{k} B}{d}  \sqrt{\frac{\ln (k / \delta)}{ n}},
\end{aligned}
\end{equation*}
and then if $\xi_{5}(n, \delta) < 1/4$, 
\begin{equation} \label{bound R2}
\begin{aligned}
R_{2} 
& \leq C \left(V_{\max } +\lambda_{k}+\beta \right) B^{2}\sqrt{\frac{\ln (1 / \delta)}{n}} + C\beta \left(\bar{\mu}_{k} B/d\right)  \sqrt{\frac{k\ln (k / \delta)}{n}}  .
\end{aligned}
\end{equation}
Note that the bound for $\xi_{5}$ is smaller than that for $R_{1}$ and $R_{2}$ up to an absolute constant. The bound for $R_{5}$ is smaller than that for $R_{3}$ up to an absolute constant.
By the estimates (\ref{bound R1}), (\ref{bound R3}), (\ref{bound R5}),   (\ref{bound R4}) and (\ref{bound R2}) with the choice $\gamma \geq 4 \lambda_{k}$,  
(\ref{ineqs: conditions for theorem: solutions obtained by penalty method are away from 0})
may guarantee that $\xi_{5}(n, \delta) < 1/4$ and $\left| R_{1}\right|$, $R_{2}/\gamma$, $R_{3}/\gamma$, $\left(R_{4}+R_{5}\right)^{2}$ are all bounded by $1/16$ on event $\bigcap_{i = 4}^{7} A_{i}(n, \delta) $. 
Then, for $0<\delta<1/4$, 
it follows from (\ref{bound for |mathcalE_2(mathscru_n)-1|}), (\ref{bound A4(n, delta)}) amd $\mathbf{P}\left[A_{i}(n, \delta)\right] \geq 1-\delta$ for $i = 5, 6, 7$ that 
\begin{equation} \label{bound cap A4567(n, delta)}
\mathbf{P}\left(\mathcal{E}_{2}(\mathscr{u}_{n}) \geq 1/2 \right) \geq \mathbf{P}\left(\bigcap_{i = 4}^{7} A_{i}(n, \delta) \right) \geq 1 - 4\delta,
\end{equation}
which completes the proof of Theorem \ref{theorem: solutions obtained by penalty method are away from 0}.

Next, we analyze the generalization error of the penalty method.
To this end, we decompose
\begin{equation} \label{decomposition of generalization error L_k(mathscru_n)-lambda_k in penalty method}
\begin{aligned}
L_{k}(\mathscr{u}_{n})-\lambda_{k} & \leq 
\big[\mathscr{L}_{k}(\mathscr{u}_{n})-\mathscr{L}_{k, n}(\mathscr{u}_{n})\big] + \mathscr{L}_{k, n}(\mathscr{u}_{n}) - \lambda_{k} \\
& =: R_{6} + R_{2} + R_{3} + \gamma \left( R_{4} + R_{5} \right)^{2},
\end{aligned}
\end{equation}
where we have used (\ref{error decomposition for gamma(mathcalE_n, 2(mathscru_n)-1)^2 in penalty method}).
Further, we decompose $R_{6}$ as follows
\begin{equation} \label{decomposition of R_6  in penalty method}
\begin{aligned}
R_{6} & = L_{k}(\mathscr{u}_{n}) - L_{k, n}(\mathscr{u}_{n}) 
+\gamma\left(\mathcal{E}_{2}(\mathscr{u}_{n}) - 1\right)^{2}-\gamma\left(\mathcal{E}_{n, 2}(\mathscr{u}_{n})-1\right)^{2} \\
& \leq L_{k}(\mathscr{u}_{n}) - L_{k, n}(\mathscr{u}_{n}) + \gamma\left[\left(\mathcal{E}_{2}(\mathscr{u}_{n})-\mathcal{E}_{n, 2}(\mathscr{u}_{n})\right)^{2} + 2\left|\mathcal{E}_{2}(\mathscr{u}_{n})-1\right| \left|\mathcal{E}_{2}(\mathscr{u}_{n})-\mathcal{E}_{n, 2}(\mathscr{u}_{n})\right|\right] \\
& =: R_{61} + \gamma\left[R_{1}^{2} + 2\left|\mathcal{E}_{2}(\mathscr{u}_{n})-1\right| \left|R_{1}\right|\right].
\end{aligned}
\end{equation}
Notice that under the assumptions of Theorem \ref{theorem: solutions obtained by penalty method are away from 0}, within event $\bigcap_{i = 4}^{7} A_{i}(n, \delta) $, 
$\mathcal{E}_{2}(\mathscr{u}_{n}) \geq 1/2$.
Thus, the analysis in \S \ref{section: Oracle inequality for the generalization error} is fully applicable to $\mathscr{u}_{n}$ and $u_{\mathcal{F}}$. 
Proceeding along the same line as in \S \ref{section: Oracle inequality for the generalization error} to obtain (\ref{bound T1}), we have that if  $\xi_{1}(n, r, \delta)<1$, within event $\bigcap_{i=1}^{3} A_{i}(n, r, \delta)$,
\begin{equation} \label{bound R61}
\begin{aligned}
R_{61} 
& \leq 
\frac{\xi_{1}+\xi_{2}}{1-\xi_{1}} \left(L_{k}(\mathscr{u}_{n})-\lambda_{k}\right) + \lambda_{k}\frac{\xi_{1}+\xi_{2}}{1-\xi_{1}} +\frac{\beta}{1-\xi_{1}}\left(\frac{k}{4} \xi_{3}^{2}+\sqrt{k} \xi_{3}\right) .
\end{aligned}
\end{equation}
Notice that $R_{2}$ coincides with $T_{2}$. 
Thus, by (\ref{bound T2}),
\begin{equation*}
\begin{aligned}
R_{2} & \leq  \frac{\xi_{1}+\xi_{2}}{1-\xi_{1}} \left(L_{k}(u_{\mathcal{F}})-\lambda_{k}\right) + \lambda_{k}\frac{\xi_{1}+\xi_{2}}{1-\xi_{1}} +\frac{\beta}{1-\xi_{1}}\left(\frac{k}{4} \xi_{3}^{2}+\sqrt{k} \xi_{3}\right) .
\end{aligned}
\end{equation*}
Combining this with (\ref{decomposition of generalization error L_k(mathscru_n)-lambda_k in penalty method}), (\ref{decomposition of R_6  in penalty method}) and (\ref{bound R61}) yields that if $2\xi_{1}+\xi_{2}\leq 1/2$,
\begin{equation*}
\begin{aligned}
L_{k}(\mathscr{u}_{n})-\lambda_{k} 
& \leq \frac{\xi_{1}+\xi_{2}}{1-\xi_{1}} \left(L_{k}(\mathscr{u}_{n})-\lambda_{k}\right) + 2\lambda_{k}\frac{\xi_{1}+\xi_{2}}{1-\xi_{1}} +\frac{2\beta}{1-\xi_{1}}\left(\frac{k}{4} \xi_{3}^{2}+\sqrt{k} \xi_{3}\right)  \\
& \quad + \frac{\xi_{1}+\xi_{2}}{1-\xi_{1}} \left(L_{k}(u_{\mathcal{F}})-\lambda_{k}\right) + \gamma\left[ \left( R_{4} + R_{5} \right)^{2} + R_{1}^{2} +  \left|R_{1}\right|\right] + R_{3},
\end{aligned}
\end{equation*}
and so
\begin{equation} \label{ineq: generalization error bound for L_k(mathscru_n)-lambda_k to be substituted}
\begin{aligned}
L_{k}(\mathscr{u}_{n})-\lambda_{k} 
& \leq  4\lambda_{k}\left(\xi_{1}+\xi_{2}\right) +\beta\left(k \xi_{3}^{2}+ 4 \sqrt{k} \xi_{3}\right)  + 2 \left(L_{k}(u_{\mathcal{F}})-\lambda_{k}\right)  \\
& \quad + 2 \gamma\left( 2 R_{4}^{2} + 2 R_{5}^{2} + R_{1}^{2} +  \left|R_{1}\right|\right) + 2 R_{3} .
\end{aligned}
\end{equation}
Note that under the assumptions of Theorem \ref{theorem: solutions obtained by penalty method are away from 0}, $\xi_{5}(n, \delta)$, $\left|R_{1}\right|$,  $R_{4}+R_{5}$ are all bounded by $1/4$. 
When (\ref{ineq: assumption in Main generalization theorem to ensure xi1+xi2<1/2}) and (\ref{Simplifying condition for xi3 term}) with $r = 0.49$ hold,  $2\xi_{1}+\xi_{2}\leq 1/2$.
Here, without loss of generality, we may take $r = 0.49$ and $\delta\in (0, 1/7)$.
Substituting the bounds (\ref{bound R1}), (\ref{bound R3}), (\ref{bound R4}), (\ref{bound R5}) derived for $R_{1}$, $R_{3}$, $R_{4}$, $R_{5}$ earlier in this section, and the bounds (\ref{statistical error bound for xi1, xi2 and xi3}) for $\{\xi_{i}\}_{i = 1}^{3}$ into (\ref{ineq: generalization error bound for L_k(mathscru_n)-lambda_k to be substituted}),  we obtain the error bound (\ref{ineq: error bound in corollary: Main generalization theorem for the penalty method}) on the event $A_{in} = \left(\bigcap_{i=1}^{3} A_{i}(n, r, \delta)\right) \bigcap \left(\bigcap_{i = 4}^{7} A_{i}(n, \delta)\right)$ with
$\mathbf{P}\left(A_{in}\right) \geq 1-7\delta$ followed from (\ref{bound cap A4567(n, delta)}) and (\ref{bound probability of intersection of A(n, r, delta)}),
which completes the proof of Corollary \ref{corollary: Main generalization theorem for the penalty method}.

\section{Analysis of the error accumulation} \label{appendix section: About the cumulative error}
In this section, we aim to prove Theorem \ref{Main generalization theorem when cumulative error is added} and Proposition \ref{proposition: Quadratic growth of cumulative error}.
Recall the loss $L_{k}(u)$ defined in (\ref{population loss}) and here we take $\psi_{j}$ to be the normalization of the orthogonal projection of $\mathfrak{u}_{\theta j}$ to subspace $U_{j}$, i.e.,  $\psi_{j} = P_{j} \mathfrak{u}_{\theta j} / \|P_{j} \mathfrak{u}_{\theta j}\|_{L^{2}(\Omega)}.$
Since $\mathfrak{u}_{\theta k}$ is a minimizer of $\widetilde{L}_{k, n}(u)$ within $\mathcal{F}_{>r}$,
$\widetilde{L}_{k, n}(\mathfrak{u}_{n}) - \widetilde{L}_{k, n}(u_{\mathcal{F}}) \leq 0$ for any $u_{\mathcal{F}} \in \mathcal{F}_{>r}$.
Similar as in (\ref{ineq: decomposition of L_k(u_n)-lambda_k}), we decompose the generalization error 
\begin{equation} \label{decomposition of L_k(mathfraku_theta k) - lambda_k in cumulative error}
\begin{aligned}
L_{k}(\mathfrak{u}_{\theta k}) - \lambda_{k} 
\leq & \left(L_{k}(\mathfrak{u}_{\theta k})-\widetilde{L}_{k}(\mathfrak{u}_{\theta k})\right) + \left(\widetilde{L}_{k}(\mathfrak{u}_{\theta k})-\widetilde{L}_{k, n}(\mathfrak{u}_{\theta k})\right)  
+ \left(\widetilde{L}_{k, n}(u_{\mathcal{F}})-\widetilde{L}_{k}(u_{\mathcal{F}})\right) \\
&   + \left(\widetilde{L}_{k}(u_{\mathcal{F}})-L_{k}(u_{\mathcal{F}})\right) + \Big(L_{k}(u_{\mathcal{F}})-\lambda_{k}\Big) \\
=: & \ S_{1}+S_{2}+S_{3}+S_{4}+S_{5} ,
\end{aligned}
\end{equation}
for any $u_{\mathcal{F}} \in \mathcal{F}_{>r}$.
Note that  $S_{1}$, $S_{4}$  are the cumulative errors due to the use of the eigenfunctions approximated by neural networks in the penalty term,  
$S_{2}$ is the statistical error, $S_{3}$ is the Monte Carlo error and  $S_{5}$  is the approximation error.

Firstly, we give a uniform bound on the cumulative error $\widetilde{L}_{k}(u) - L_{k}(u)$.
\begin{proposition} \label{proposition: uniform bound on the cumulative error widetildeLk(u) - Lk(u)}
Assume that for each $j\in \mathbb{N}_{+}$, $\psi_{j}$ is the normalization of the orthogonal projection of $u_{\theta j}$ to subspace $U_{j}$. Set $\beta = \beta_{k}$ in $L_{k}(u)$.
Then for any $u \in L^{2}(\Omega)$,
\begin{equation*} 
\begin{aligned}
\left| \widetilde{L}_{k}(u) - L_{k}(u) \right| & \leq 2\beta_{k} \sum_{j=1}^{k-1} \sqrt{\frac{L_{j}(u_{\theta j})-\lambda_{j}}{\min \left\{\beta_{j}+\lambda_{1}-\lambda_{j}, \lambda_{j^{\prime}}-\lambda_{j}\right\}} }.
\end{aligned}
\end{equation*}
\end{proposition}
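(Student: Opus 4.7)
The plan is to reduce the difference $\widetilde{L}_k(u)-L_k(u)$ to a sum of squared inner products between unit vectors, and then exploit a parallelogram-identity trick to get the tight constant $2$.

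First I would observe that the kinetic/potential term $\mathcal{E}_V(u)/\mathcal{E}_2(u)$ is identical in both $\widetilde{L}_k$ and $L_k$ (with $\beta=\beta_k$), so the two losses differ only through their penalty terms. Setting $v:=u/\|u\|_{L^2(\Omega)}$ and $\hat{u}_{\theta j}:=\mathfrak{u}_{\theta j}/\|\mathfrak{u}_{\theta j}\|_{L^2(\Omega)}$, one gets
\[
\widetilde{L}_k(u)-L_k(u)=\beta_k\sum_{j=1}^{k-1}\Bigl(\langle v,\hat{u}_{\theta j}\rangle^{2}-\langle v,\psi_j\rangle^{2}\Bigr).
\]
I would then factor the difference of squares as $\langle v,\hat u_{\theta j}-\psi_j\rangle\cdot\langle v,\hat u_{\theta j}+\psi_j\rangle$, apply Cauchy--Schwarz to each factor, and multiply the bounds to obtain
\[
\bigl|\langle v,\hat u_{\theta j}\rangle^{2}-\langle v,\psi_j\rangle^{2}\bigr|\le \|v\|^{2}\,\|\hat u_{\theta j}-\psi_j\|_{L^2}\,\|\hat u_{\theta j}+\psi_j\|_{L^2}.
\]
This product-form (rather than bounding each factor separately by $\sqrt 2\,\epsilon_j$ and $2$) is the source of the sharp constant $2$, because the parallelogram identity gives $\|\hat u_{\theta j}-\psi_j\|\,\|\hat u_{\theta j}+\psi_j\|=\sqrt{(2-2c)(2+2c)}=2\sqrt{1-c^{2}}$ with $c:=\langle\hat u_{\theta j},\psi_j\rangle\in[0,1]$.

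Next, since $\psi_j$ is the normalized projection $P_j\mathfrak{u}_{\theta j}/\|P_j\mathfrak{u}_{\theta j}\|$, a direct computation identifies $c=\|P_j\mathfrak{u}_{\theta j}\|/\|\mathfrak{u}_{\theta j}\|$ and hence $\sqrt{1-c^{2}}=\|P_j^{\perp}\mathfrak{u}_{\theta j}\|/\|\mathfrak{u}_{\theta j}\|$. To control this ratio I invoke Proposition~\ref{Generalized proposition 2.1} applied at the $j$-th eigenvalue problem with $u=\mathfrak{u}_{\theta j}$ and $\beta=\beta_j$, which yields
\[
\frac{\|P_j^{\perp}\mathfrak{u}_{\theta j}\|_{L^2}^{2}}{\|\mathfrak{u}_{\theta j}\|_{L^2}^{2}}\le\frac{L_j(\mathfrak{u}_{\theta j})-\lambda_j}{\min\{\beta_j+\lambda_1-\lambda_j,\;\lambda_{j'}-\lambda_j\}}.
\]
Combining the two previous inequalities gives $\bigl|\langle v,\hat u_{\theta j}\rangle^{2}-\langle v,\psi_j\rangle^{2}\bigr|\le 2\sqrt{(L_j(\mathfrak{u}_{\theta j})-\lambda_j)/\min\{\beta_j+\lambda_1-\lambda_j,\lambda_{j'}-\lambda_j\}}$, and summing over $j$ and multiplying by $\beta_k$ produces exactly the claimed bound. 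The main (and essentially only) subtlety is recognizing that one must pair the two Cauchy--Schwarz factors through the product of norms, rather than bound each by Cauchy--Schwarz separately, in order to obtain the sine-of-angle identity and thus the sharp constant.
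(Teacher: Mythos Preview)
Your proposal is correct and follows essentially the same route as the paper's proof: factor the difference of squares, apply Cauchy--Schwarz to each factor, use the parallelogram identity on the product of norms to obtain $2\sqrt{1-c^2}$, identify $\sqrt{1-c^2}=\|P_j^\perp\mathfrak{u}_{\theta j}\|/\|\mathfrak{u}_{\theta j}\|$, and finish with Proposition~\ref{Generalized proposition 2.1}. The paper's argument is line-for-line the same, including the observation you flag as the ``main subtlety.''
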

\begin{proof}
Note that the only difference between $\widetilde{L}_{k}(u)$ and $L_{k}(u)$ is the penalty term. Let 
$\bar{u} = u / \|u\|_{L^{2}(\Omega)}$ and $\bar{u}_{\theta j} = u_{\theta j} / \|u_{\theta j}\|_{L^{2}(\Omega)}.$
Then, for any $u \in L^{2}(\Omega)$, by triangle inequality and Cauchy's inequality, we have
\begin{equation} \label{ineq in pf of proposition: uniform bound on the cumulative error widetildeLk(u) - Lk(u) (1)}
\begin{aligned}
\frac{1}{\beta_{k}}\left| \widetilde{L}_{k}(u) - L_{k}(u) \right| &
\leq  \sum_{j=1}^{k-1}\left|\langle \bar{u}, \bar{u}_{\theta j} + \psi_{j}\rangle\right| \left|\langle \bar{u}, \bar{u}_{\theta j} - \psi_{j}\rangle\right| \\
& \leq \sum_{j=1}^{k-1} \|\bar{u}\|_{L^{2}}^{2} \|\bar{u}_{\theta j} + \psi_{j}\|_{L^{2}} \|\bar{u}_{\theta j} - \psi_{j}\|_{L^{2}}.
\end{aligned}
\end{equation}
Since $\bar{u}_{\theta j}$ and $\psi_{j}$ are both normalized, 
\begin{equation} \label{ineq in pf of proposition: uniform bound on the cumulative error widetildeLk(u) - Lk(u) (2)}
\begin{aligned}
\|\bar{u}_{\theta j} + \psi_{j}\|_{L^{2}} \|\bar{u}_{\theta j} - \psi_{j}\|_{L^{2}} & = 2 \sqrt{1 - \langle \bar{u}_{\theta j}, \psi_{j}\rangle^{2}} .
\end{aligned}
\end{equation}
By the choice of $\psi_{j}$ and the stability estimates stated in Proposition \ref{Generalized proposition 2.1},
\begin{equation} \label{ineq in pf of proposition: uniform bound on the cumulative error widetildeLk(u) - Lk(u) (3)}
\begin{aligned}
\sqrt{1 - \langle \bar{u}_{\theta j}, \psi_{j}\rangle^{2}} & = \frac{\|P_{j}^{\perp} u_{\theta j}\|_{L^{2}}}{\|u_{\theta j}\|_{L^{2}}} \leq \sqrt{\frac{L_{j}(u_{\theta j})-\lambda_{j}}{\min \left\{\beta_{j}+\lambda_{1}-\lambda_{j}, \lambda_{j^{\prime}}-\lambda_{j}\right\}} }.
\end{aligned}
\end{equation}
The proof completes by combining (\ref{ineq in pf of proposition: uniform bound on the cumulative error widetildeLk(u) - Lk(u) (1)}), (\ref{ineq in pf of proposition: uniform bound on the cumulative error widetildeLk(u) - Lk(u) (2)}) and (\ref{ineq in pf of proposition: uniform bound on the cumulative error widetildeLk(u) - Lk(u) (3)}).
\end{proof}

Recall the constants $\{\xi_{i}(n, r, \delta)\}_{i=1}^{3}$ and the events $\{A_{i}(n, r, \delta)\}_{i=1}^{3}$ defined in \S \ref{section: Oracle inequality for the generalization error}. For statistical errors, we decompose
\begin{equation*}
\begin{aligned}
S_{2} & \leq \left|\frac{\mathcal{E}_{n,V}(\mathfrak{u}_{\theta k})}{\mathcal{E}_{n, 2}(\mathfrak{u}_{\theta k})}-\frac{\mathcal{E}_{V}(\mathfrak{u}_{\theta k})}{\mathcal{E}_{2}(\mathfrak{u}_{\theta k})}\right|  +  \beta_{k} \sum_{j=1}^{k-1}\left|\frac{\mathscr{P}_{n, j}(\mathfrak{u}_{\theta k})}{\mathcal{E}_{n, 2}(\mathfrak{u}_{\theta k}) \mathcal{E}_{n, 2}(u_{\theta j})} - \frac{\mathscr{P}_{j}(\mathfrak{u}_{\theta k})}{\mathcal{E}_{2}(\mathfrak{u}_{\theta k}) \mathcal{E}_{2}(u_{\theta j})}\right| \\
& =: S_{21} + S_{22}.
\end{aligned}
\end{equation*}
Proceeding along the same line as to obtain \ref{bound T11}, we get that if $\xi_{1}(n, r, \delta) < 1$, on the event $\bigcap_{i=1}^{2} A_{i}(n, r, \delta)$, 
\begin{equation} \label{bound S21}
\begin{aligned}
S_{21} \leq  \frac{\xi_{1}(n, r, \delta)+\xi_{2}(n, r, \delta)}{1-\xi_{1}(n, r, \delta)} \cdot \frac{\mathcal{E}_{V}\left(\mathfrak{u}_{\theta k}\right)}{\mathcal{E}_{2}\left(\mathfrak{u}_{\theta k}\right)}.
\end{aligned}
\end{equation}
For $S_{22}$, notice that
\begin{equation*}
\begin{aligned}
\frac{S_{22}}{\beta_{k}} & \leq \frac{\left|\mathscr{P}_{n, j}(\mathfrak{u}_{\theta k}) - \mathscr{P}_{j}(\mathfrak{u}_{\theta k})\right|}{\mathcal{E}_{2}(\mathfrak{u}_{\theta k}) \mathcal{E}_{2}(u_{\theta j})} \frac{\mathcal{E}_{2}(\mathfrak{u}_{\theta k}) \mathcal{E}_{2}(u_{\theta j})}{\mathcal{E}_{n, 2}(\mathfrak{u}_{\theta k}) \mathcal{E}_{n, 2}(u_{\theta j})}  +  \frac{\mathscr{P}_{j}(\mathfrak{u}_{\theta k})}{\mathcal{E}_{2}(\mathfrak{u}_{\theta k}) \mathcal{E}_{2}(u_{\theta j})} \left| \frac{\mathcal{E}_{2}(\mathfrak{u}_{\theta k}) \mathcal{E}_{2}(u_{\theta j})}{\mathcal{E}_{n, 2}(\mathfrak{u}_{\theta k}) \mathcal{E}_{n, 2}(u_{\theta j})}-1\right| .
\end{aligned}
\end{equation*}
Recall that to bound the statistical error, we only use the normalization property and $L^{\infty}(\Omega)$ boundedness of $\psi_{j}$.
We may define $\xi_{\theta 3}(n, r, \delta)$, $A_{\theta 3}(n, r, \delta)$ in the same way as $\xi_{3}(n, r, \delta)$, $A_{3}(n, r, \delta)$ by only replacing  $\psi_{j}$ with $\bar{u}_{\theta j}$ and get bounds for them by accordingly replacing $\|\psi_{j}\|_{L^{\infty}(\Omega)}$ with $\|\bar{u}_{\theta j}\|_{L^{\infty}(\Omega)}$.
Then, similar as in (\ref{ineq: bound |mathcalE_n, P(u_n) -mathcalE_P(u_n)|/beta mathcalE_2(u_n), references for similar formulas}), we obtain
\begin{equation*}
\begin{aligned}
\sum_{j=1}^{k-1}\frac{\left|\mathscr{P}_{n, j}(\mathfrak{u}_{\theta k}) - \mathscr{P}_{j}(\mathfrak{u}_{\theta k})\right|}{\mathcal{E}_{2}(\mathfrak{u}_{\theta k}) \mathcal{E}_{2}(u_{\theta j})} 
& \leqslant \frac{k}{4} \xi_{\theta 3}(n, r, \delta)^{2} + \sqrt{k} \xi_{\theta 3}(n, r, \delta) .
\end{aligned}
\end{equation*}
If $\xi_{1}(n, r, \delta) < 1$, on the event $A_{1}(n, r, \delta)$, 
\begin{equation*}
\begin{aligned}
\left(1+\xi_{1}\right)^{-2} < \frac{\mathcal{E}_{2}(\mathfrak{u}_{\theta k}) \mathcal{E}_{2}(u_{\theta j})}{\mathcal{E}_{n, 2}(\mathfrak{u}_{\theta k}) \mathcal{E}_{n, 2}(u_{\theta j})} < \left(1-\xi_{1}\right)^{-2} .
\end{aligned}
\end{equation*}
Hence,
\begin{equation*}
\begin{aligned}
\left| \frac{\mathcal{E}_{2}(u) \mathcal{E}_{2}(u_{\theta j})}{\mathcal{E}_{n, 2}(u) \mathcal{E}_{n, 2}(u_{\theta j})}-1\right| < \frac{\xi_{1} \left(2-\xi_{1}\right)}{\left(1-\xi_{1}\right)^{2}}  .
\end{aligned}
\end{equation*}
Thus, on event $A_{1}(n, r, \delta) \cap A_{\theta 3}(n, r, \delta)$, if  $\xi_{1}(n, r, \delta) < 1$,
\begin{equation} \label{bound S22}
S_{22} \leq \frac{\beta_{k}}{\left(1-\xi_{1}\right)^{2}}\left(\frac{k}{4} \xi_{\theta 3}^{2} + \sqrt{k} \xi_{\theta 3} \right) 
+ \frac{\xi_{1} \left(2-\xi_{1}\right)}{\left(1-\xi_{1}\right)^{2}} 
\sum_{j=1}^{k-1} \frac{\beta_{k} \mathscr{P}_{j}(\mathfrak{u}_{\theta k})}{\mathcal{E}_{2}(\mathfrak{u}_{\theta k}) \mathcal{E}_{2}(u_{\theta j})} .
\end{equation}
We conclude from (\ref{bound S21}) and (\ref{bound S22}) that on event $A_{1}(n, r, \delta) \cap A_{2}(n, r, \delta) \cap A_{\theta 3}(n, r, \delta)$, if  $\xi_{1}(n, r, \delta) < 1$,
\begin{equation*} \label{bound S2}
\begin{aligned}
S_{2} \leq  \frac{2\xi_{1} + \xi_{2}}{\left(1-\xi_{1}\right)^{2}}  L_{k}(\mathfrak{u}_{\theta k}) + \frac{\beta_{k}}{\left(1-\xi_{1}\right)^{2}}\left(\frac{k}{4} \xi_{\theta 3}^{2} + \sqrt{k} \xi_{\theta 3} \right) .
\end{aligned}
\end{equation*}
Similarly, since $\|u_{\mathcal{F}}\|_{L^{2}(\Omega)} > r$, we obbtain
\begin{equation*} \label{bound S3}
\begin{aligned}
S_{3} \leq  \frac{2\xi_{1} + \xi_{2}}{\left(1-\xi_{1}\right)^{2}}  L_{k}(u_{\mathcal{F}}) + \frac{\beta_{k}}{\left(1-\xi_{1}\right)^{2}}\left(\frac{k}{4} \xi_{\theta 3}^{2} + \sqrt{k} \xi_{\theta 3} \right) .
\end{aligned}
\end{equation*}
Then, from the above two estimates and (\ref{decomposition of L_k(mathfraku_theta k) - lambda_k in cumulative error}), we obtain
\begin{equation*}
\begin{aligned}
L_{k}(\mathfrak{u}_{\theta k}) - \lambda_{k} & \leq  \frac{2\xi_{1} + \xi_{2}}{\left(1-\xi_{1}\right)^{2}} \left(L_{k}(\mathfrak{u}_{\theta k})-\lambda_{k}\right) + \frac{2\xi_{1} + \xi_{2}}{\left(1-\xi_{1}\right)^{2}}  \left(L_{k}(u_{\mathcal{F}})-\lambda_{k}\right) + 2\lambda_{k} \frac{2\xi_{1} + \xi_{2}}{\left(1-\xi_{1}\right)^{2}}  \\
& \quad + \frac{2\beta_{k}}{\left(1-\xi_{1}\right)^{2}}\left(\frac{k}{4} \xi_{\theta 3}^{2} + \sqrt{k} \xi_{\theta 3} \right)+S_{1}+S_{4}+S_{5} ,
\end{aligned}
\end{equation*}
and if $4 \xi_{1} + \xi_{2} \leq 1/2$,
\begin{equation} \label{last decomposition of L_k(mathfraku_theta k) - lambda_k in cumulative error}
\begin{aligned}
L_{k}(\mathfrak{u}_{\theta k}) - \lambda_{k} 
& \leq  4\lambda_{k} \left(2\xi_{1} + \xi_{2}\right)  + \beta_{k}\left(k \xi_{\theta 3}^{2} + 4\sqrt{k} \xi_{\theta 3} \right) + 2 S_{1} + 2 S_{4} + 3 S_{5},
\end{aligned}
\end{equation}
Let $u_{\mathcal{F}}$ be given by Theorem \ref{Thm: u H1 approximation by varphi Softplus networks}. Then, the proof of Theorem \ref{Main generalization theorem when cumulative error is added} completes by substituting the bounds (\ref{statistical error bound for xi1, xi2 and xi3}) for $\{\xi_{i}\}_{i = 1}^{3}$, the bound for $S_{1}$, $S_{4}$ given by Proposition \ref{proposition: uniform bound on the cumulative error widetildeLk(u) - Lk(u)} and the bound for $S_{5}$ given by Theorem \ref{Thm: bounding the approximation error Lk(u in F)-lambdak} into (\ref{last decomposition of L_k(mathfraku_theta k) - lambda_k in cumulative error}).

\begin{proof}[Proof of Proposition \ref{proposition: Quadratic growth of cumulative error}]
Let $d_{0}=0$ and
$d_{k}=\sum_{j=1}^{k} \sqrt{\big( L_{j}\left(u_{\theta j}\right)-\lambda_{j} \big) / \beta_{j}}  .$
Since
\begin{equation*}
\begin{aligned}
\frac{L_{k}\left(u_{\theta k}\right)-\lambda_{k}}{\beta_{k}} \leq \frac{\Delta_{k}}{\beta_{k}} + 8 \sum_{j=1}^{k-1} \sqrt{\frac{\beta_{j}}{\min \left(\beta_{j}+\lambda_{1}-\lambda_{j}, \lambda_{j^{\prime}}-\lambda_{j}\right)} \cdot \frac{L_{j}\left(u_{\theta j}\right)-\lambda_{j}}{\beta_{j}}} ,
\end{aligned}
\end{equation*}
we have 
\begin{equation} \label{Pf of prop: Quadratic growth of cumulative error, recursive relation}
\left(d_{k} - d_{k-1} \right)^{2} = \frac{L_{k}\left(u_{\theta k}\right)-\lambda_{k}}{\beta_{k}} \leq \tau_{k} + 2 \rho_{k-1} d_{k-1}
\end{equation}
and then $d_{k} \leq d_{k-1}+\sqrt{\tau_{k} + 2 \rho_{k-1} d_{k-1}}.$
We claim that $d_{k} \leq \rho_{k-1} k^{2} /2 + \sqrt{\tau_{k}} k$  for all  $k \in \mathbb{N}$, which may be proved by induction. 
When $k = 0$, the bound is trivial. Assume that the claim holds for $k$. For $k+1$, since $\rho_{k-1} \leq \rho_{k}$ and $\tau_{k} \leq \tau_{k+1}$,
\begin{equation*}
\begin{aligned}
d_{k+1} & \leq \rho_{k-1} k^{2}/2 + \sqrt{\tau_{k}} k + \sqrt{\tau_{k+1} + 2 \rho_{k} \left(\rho_{k-1} k^{2}/2 + \sqrt{\tau_{k}} k\right)} \\
& = \rho_{k} k^{2}/2 + \sqrt{\tau_{k}} k + \left(\rho_{k} k + \sqrt{\tau_{k+1}} \right) \\
& \leq  \rho_{k} (k+1)^{2} /2 + \sqrt{\tau_{k+1}} (k+1),
\end{aligned}
\end{equation*}
which completes the proof of the claim. Then, the estimate (\ref{eq: conclusion of prop: Quadratic growth of cumulative error, recursive relation}) follows from the claim and (\ref{Pf of prop: Quadratic growth of cumulative error, recursive relation}) directly.
\end{proof}

\section{Simple properties of eigenvalues and eigenfunctions} \label{appendix section: Simple properties of eigenvalues and eigenfunctions}
In this part, we characterize the asymptotic distribution of the eigenvalues 
and estimate the maximum norm of the eigenfunctions.
\begin{lemma} \label{lemma: Asymptotic distribution of eigenvalues}
Assume that $\mathcal{H}$ satisfies Assumption \ref{Assumption: V is bounded above and below}. Then the $k$-th eigenvalue $\lambda_{k}$ satisfies 
$$\tilde{c} d k^{2/d} + V_{\min} \leq \lambda_{k} \leq c d k^{2/d} + V_{\max},$$ 
where $\tilde{c}, c$ are suitable absolute constants.
\end{lemma}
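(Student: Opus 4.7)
The plan is to reduce the problem to the pure Laplacian $-\Delta$ on $\Omega=(0,1)^d$ with Dirichlet boundary conditions, whose spectrum is known explicitly, and then to control the $k$-th Laplacian eigenvalue by standard lattice point counting.

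First, I will invoke the min-max (Courant--Fischer) characterisation together with Assumption~\ref{Assumption: V is bounded above and below}. Since $V_{\min}\le V(x)\le V_{\max}$ pointwise, for every $k$-dimensional subspace $E\subset H_0^1(\Omega)$ and every $u\in E\setminus\{0\}$ one has
\[
\frac{\langle u,(-\Delta)u\rangle}{\|u\|_{L^2}^2}+V_{\min}\le\frac{\langle u,\mathcal{H} u\rangle}{\|u\|_{L^2}^2}\le\frac{\langle u,(-\Delta)u\rangle}{\|u\|_{L^2}^2}+V_{\max}.
\]
Taking min--max over $E$ yields $\mu_k+V_{\min}\le\lambda_k\le\mu_k+V_{\max}$, where $\mu_k$ denotes the $k$-th Dirichlet eigenvalue of $-\Delta$ on $\Omega$. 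Thus it remains to prove $\tilde c\,d\,k^{2/d}\le\mu_k\le c\,d\,k^{2/d}$.

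Next I exploit the fact that $\{\pi^2|k|_2^2:k\in\mathbb N_+^d\}$ is exactly the Dirichlet spectrum of $-\Delta$ on $\Omega$. For the \emph{upper bound}, given $k\ge1$ choose $m=\lceil k^{1/d}\rceil$ and consider the $m^d\ge k$ eigenfunctions indexed by $\{1,\dots,m\}^d$; each of their eigenvalues is at most $\pi^2 d m^2\le 4\pi^2 d\,k^{2/d}$ (using $m\le k^{1/d}+1\le 2k^{1/d}$ for $k\ge1$). Hence $\mu_k\le 4\pi^2 d\,k^{2/d}$, which gives the stated upper bound with $c=4\pi^2$.

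For the \emph{lower bound}, I will count lattice points. Writing $N(\Lambda)=\#\{k\in\mathbb N_+^d:\pi^2|k|_2^2\le\Lambda\}$, each such $k$ defines the disjoint unit cube $\prod_{i=1}^d[k_i-1,k_i]\subset\mathbb{R}_+^d$, every point of which has Euclidean norm $\le|k|_2\le\sqrt\Lambda/\pi$. Hence
\[
N(\Lambda)\le\frac{\omega_d}{2^d}\Bigl(\frac{\sqrt\Lambda}{\pi}\Bigr)^{d},
\]
where $\omega_d$ is the volume of the unit $\ell_2$-ball. Since $k\le N(\mu_k)$, solving for $\mu_k$ gives $\mu_k\ge\pi^2\bigl(2^d k/\omega_d\bigr)^{2/d}$. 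Applying Stirling's formula, $\omega_d^{2/d}\sim 2\pi e/d$, so $\mu_k\ge\tilde c\,d\,k^{2/d}$ for a universal constant $\tilde c>0$ (concretely $\tilde c$ close to $2\pi/e$ for large $d$, with a universal lower bound for all $d\ge1$).

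The only delicate point is tracking the correct $d$-dependence: the upper bound inherits the factor $d$ from $|k|_2^2\le d m^2$, while the lower bound inherits it from the $d^{d/2}$ decay of $\omega_d$ via Stirling. Once these constants are made explicit, combining with the min--max inequality completes the proof. No obstacle beyond keeping the dimensional factors sharp is anticipated.
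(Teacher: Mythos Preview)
Your proof is correct and follows the same overall architecture as the paper: first use the min--max principle together with $V_{\min}\le V\le V_{\max}$ to sandwich $\lambda_k$ between $\mu_k+V_{\min}$ and $\mu_k+V_{\max}$, then bound the Dirichlet Laplacian eigenvalue $\mu_k$ on $(0,1)^d$. The difference lies in that second step. The paper invokes Weyl's asymptotic law as a black box to obtain $c_1 C(d)^{2/d}k^{2/d}\le\mu_k\le c_2 C(d)^{2/d}k^{2/d}$ with $C(d)=d\,2^{d-1}\pi^{d/2}\Gamma(d/2)$, and then applies Stirling to $\Gamma(d/2)$ to reduce $C(d)^{2/d}$ to a constant times $d$. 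You instead carry out the lattice-point counting directly: the upper bound via the cube $\{1,\dots,m\}^d$ with $m=\lceil k^{1/d}\rceil$, and the lower bound via the volume comparison $N(\Lambda)\le 2^{-d}\omega_d(\sqrt{\Lambda}/\pi)^d$ followed by Stirling on $\omega_d$. Your route is more elementary and self-contained, avoiding any external citation and making the constants explicit (e.g.\ $c=4\pi^2$, $\tilde c$ close to $2\pi/e$); the paper's route is shorter to write but hides the $d$-dependence inside the cited asymptotic. Both arrive at the same conclusion with absolute constants.
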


\begin{proof}
First,  when $V\equiv0$, we consider the eigenvalue problem of the Laplacian.
By Weyl's formula\cite{Weyl1912DasAV, Weyl1912ÜberDA}, the $k$-th eigenvalue $\nu_{k}$ satisfies
$$c_{1}C(d)^{2/d}k^{2/d} \leq \nu_{k} \leq c_{2}C(d)^{2/d}k^{2/d},$$
where $c_{1}, c_{2}$ are suitable absolute constants and 
$C(d) = d 2^{d-1} \pi^{d / 2} \Gamma\left(d/2\right)$
with the Euler gamma function 
\begin{equation*}
\Gamma\left(\frac{d}{2}\right)=\left\{\begin{array}{ll}
(p-1) ! , & d = 2 p, \\
2^{-2 p}(2 p) ! \pi^{1 / 2} / p ! , & d = 2 p+1,
\end{array}\right.
\end{equation*}
where  $p$  is an integer. Thanks to the fact    $\lim_{d \to \infty} d^{2/d} = 1$  and  the Stirling formula, 
\begin{equation*}
c_{3} d \leq \pi d^{2/d} \Gamma\left(d/2\right)^{2/d} \leq C(d)^{2/d} \leq 4 \pi d^{2/d} \Gamma\left(d/2\right)^{2/d} \leq c_{4} d,
\end{equation*}
where $c_{3}$, $ c_{4}$ are absolute constants.  
Thus, $\tilde{c} d k^{2/d} \leq \nu_{k} \leq c d k^{2/d}.$

Second, by the minimax principle
\begin{equation*}
\lambda_{k} = \min_{\operatorname{dim} E = k} \max_{u \in E \backslash\{0\}} \frac{\int_{\Omega} \left( |\nabla u|^{2}+V u^{2} \right) d x}{\int_{\Omega}   u^{2}  d x},
\end{equation*}
where  the minimum is taken over all  $k$-dimensional subspace $E \subset H_{0}^{1}(\Omega)$.
Since $V_{\min} \leq V(x) \leq V_{\max}, $ 
\begin{equation*}
\min_{\operatorname{dim} E = k} \max_{u \in E \backslash\{0\}} \frac{\int_{\Omega}  |\nabla u|^{2}  d x}{\int_{\Omega}   u^{2}  d x} + V_{\min} \leq \lambda_{k} \leq \min_{\operatorname{dim} E = k} \max_{u \in E \backslash\{0\}} \frac{\int_{\Omega}  |\nabla u|^{2}  d x}{\int_{\Omega}   u^{2}  d x} + V_{\max},
\end{equation*}
which implies $\nu_{k} + V_{\min} \leq \lambda_{k} \leq \nu_{k} + V_{\max}$ by the minimax principle. 
Substituting the upper and lower bounds for $\nu_{k}$ completes the proof.
\end{proof}

\begin{lemma} \label{lemma: Linfty bound for eigenfunctions w.r.t. eigenvalues}
Assume that $\mathcal{H}$ satisfies Assumption \ref{Assumption: V is bounded above and below}. 
Then the normalized eigenfunctions of $\mathcal{H}$ satisfy 
\begin{equation} \label{ineq of lemma: Linfty bound for eigenfunctions w.r.t. eigenvalues}
\|\psi_{k}\|_{L^{\infty}(\Omega)} \leq \left(c k^{2/d} +  \frac{e\left(V_{\max}- V_{\min}\right)}{\pi d}\right)^{d / 4}, 
\end{equation}
where $c$ is an absolute constant.
\end{lemma}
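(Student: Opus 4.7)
\textbf{Proof plan for Lemma \ref{lemma: Linfty bound for eigenfunctions w.r.t. eigenvalues}.}

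The strategy is to exploit the Schrödinger semigroup $e^{-t\mathcal{H}}$ together with a Gaussian-type pointwise bound on its heat kernel, following the classical on-diagonal argument. Let $p_t^{\mathcal{H}}(x,y)$ denote the Dirichlet heat kernel of $\mathcal{H}$ on $\Omega$. Since $\mathcal{H}\psi_k = \lambda_k \psi_k$, the spectral identity gives
\[
\psi_k(x) = e^{t\lambda_k}\int_{\Omega} p_t^{\mathcal{H}}(x,y)\,\psi_k(y)\,dy, \qquad t>0.
\]
Cauchy--Schwarz, $\|\psi_k\|_{L^2(\Omega)}=1$, and the reproducing identity $\int_{\Omega} p_t^{\mathcal{H}}(x,y)^2 dy = p_{2t}^{\mathcal{H}}(x,x)$ (which follows from the semigroup law and self-adjointness of $\mathcal{H}$) yield the pointwise diagonal bound
\[
|\psi_k(x)| \le e^{t\lambda_k}\sqrt{p_{2t}^{\mathcal{H}}(x,x)}.
\]
Under Assumption~\ref{Assumption: V is bounded above and below}, the splitting $V = V_{\min} + (V-V_{\min})$ with $V-V_{\min}\ge 0$ together with the Feynman--Kac formula (or equivalently the positivity-preserving comparison of semigroups) gives $p_{s}^{\mathcal{H}}(x,y)\le e^{-sV_{\min}}\,p_{s}^{-\Delta_{D}}(x,y)$, and the Dirichlet heat kernel on $\Omega$ is in turn dominated by the free Gaussian kernel on $\mathbb{R}^d$, so
\[
p_{s}^{\mathcal{H}}(x,y) \;\le\; e^{-sV_{\min}}(4\pi s)^{-d/2}.
\]
Inserting $s=2t$ on the diagonal yields
\[
|\psi_k(x)| \;\le\; e^{t(\lambda_k - V_{\min})}\,(8\pi t)^{-d/4}.
\]

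Next I will optimize over $t>0$. The minimum of the right-hand side is attained at $t^{\ast} = d/[4(\lambda_k-V_{\min})]$, which is strictly positive because $\lambda_k\ge \lambda_1>V_{\min}$ by Lemma~\ref{lemma: Asymptotic distribution of eigenvalues}. A direct substitution produces the clean intermediate bound
\[
\|\psi_k\|_{L^{\infty}(\Omega)} \;\le\; \left(\frac{e(\lambda_k - V_{\min})}{2\pi d}\right)^{d/4}.
\]
Feeding in the upper estimate $\lambda_k \le c\,d\,k^{2/d} + V_{\max}$ from Lemma~\ref{lemma: Asymptotic distribution of eigenvalues} yields
\[
\frac{e(\lambda_k - V_{\min})}{2\pi d} \;\le\; \frac{ce}{2\pi}\,k^{2/d} + \frac{e(V_{\max}-V_{\min})}{2\pi d},
\]
and absorbing the prefactor $ce/(2\pi)$ into the absolute constant $c$, together with the trivial $1/(2\pi)\le 1/\pi$, delivers the desired bound \eqref{ineq of lemma: Linfty bound for eigenfunctions w.r.t. eigenvalues}.

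The only substantive technical ingredient is the heat-kernel estimate $p_s^{\mathcal{H}}(x,y)\le e^{-sV_{\min}}(4\pi s)^{-d/2}$; both the Feynman--Kac comparison with the free semigroup and the Gaussian upper bound for the Dirichlet Laplacian kernel are classical (the latter follows, e.g., from the method of images or from a Nash / ultracontractivity argument), so no genuine obstacle is anticipated. If one prefers to avoid probabilistic language, the same bound on $p_{2t}^{\mathcal{H}}(x,x)$ can be obtained analytically from the $L^2\!\to\!L^\infty$ ultracontractivity of $e^{-t\mathcal{H}}$ via the Nash inequality on $\Omega$ combined with $V\ge V_{\min}$; the remaining optimization step is identical.
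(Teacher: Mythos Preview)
Your proposal is correct and follows essentially the same route as the paper: a Gaussian upper bound on the Schr\"odinger heat kernel, conversion to an $L^2\!\to\!L^\infty$ estimate for the semigroup, application of the eigenfunction identity $\psi_k = e^{t\lambda_k}e^{-t\mathcal{H}}\psi_k$, optimization in $t$, and finally substitution of the Weyl-type upper bound on $\lambda_k$ from Lemma~\ref{lemma: Asymptotic distribution of eigenvalues}. The only cosmetic difference is that the paper cites Davies directly for the kernel bound and extracts $\|e^{-t\mathcal{H}_1}\|_{\infty,2}\le (4\pi t)^{-d/4}$ via $L^1$--$L^\infty$ interpolation of the sub-Markovian kernel, whereas you go through the reproducing identity $\int p_t(x,y)^2\,dy = p_{2t}(x,x)$ and obtain $(8\pi t)^{-d/4}$; the resulting constants differ only by a factor absorbed into the absolute constant~$c$.
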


\begin{proof}
Combining Example 2.1.9 and Lemma 2.1.2 in \cite{davies_1989}, for any Schrödinger operator  $\mathcal{H}_{1} = -\Delta + V_{1}$  where  $0 \leq V_{1} \in L_{\mathrm{loc}}^{1}(\Omega)$, the kernel $K(t, x, y)$ of $\mathrm{e}^{-\mathcal{H}_{1} t}$ satisfies
$$0  \leq K(t, x, y) \leq (4 \pi t)^{-d / 2} \mathrm{e}^{-(x-y)^{2} / 4 t} \leq (4 \pi t)^{-d / 2},$$
which implies that $\mathrm{e}^{-\mathcal{H}_{1} t}$ is a symmetric Markov semigroup on  $L^{2}(\Omega)$ and $\mathrm{e}^{-\mathcal{H}_{1} t} : L^{2}(\Omega) \to L^{\infty}(\Omega)$ is a bounded operator with norm
$\|\mathrm{e}^{-\mathcal{H}_{1} t}\|_{\infty,2} \leq (4 \pi t)^{-d / 4}$ for all  $0<t<\infty$.
Suppose that $\phi$ is a normalized eigenfunction of 
$\mathcal{H}_{1}$ corresponding to the eigenvalue $\lambda$. Then, 
\begin{equation} \label{general Linf bounds for eigenfunctions}
\|\phi\|_{L^{\infty}(\Omega)} = \mathrm{e}^{\lambda t}\|\mathrm{e}^{-\mathcal{H}_{1} t}\phi\|_{L^{\infty}(\Omega)} \leq (4 \pi t)^{-d / 4}\mathrm{e}^{\lambda t}.
\end{equation}
Let $V_{1} = V - V_{\min}$. 
By Assumption \ref{Assumption: V is bounded above and below}, $0 \leq V_{1} \leq V_{\max} - V_{\min}$ and $\psi_{k}$ is the $k$-th normalized eigenfunction of 
$\mathcal{H}_{1}$ corresponding to the eigenvalue $\lambda_{k} - V_{\min}$.
Taking $t = d/4\lambda,$ $\phi = \psi_{k}$ and $\lambda = \lambda_{k} - V_{\min}$ in (\ref{general Linf bounds for eigenfunctions}), we have
$$\|\psi_{k}\|_{L^{\infty}(\Omega)} \leq \left(\frac{e\left(\lambda_{k}- V_{\min}\right)}{\pi d}\right)^{d / 4}, \quad \text{for all } k\geq 1.$$
Substituting the upper bound for $\lambda_{k}$ in Lemma \ref{lemma: Asymptotic distribution of eigenvalues} completes the proof.
\end{proof}

\vskip 0.2in
\bibliography{main}

\end{document}